\newtheorem{lemma}{Lemma}[section]
\newtheorem{prop}[lemma]{Proposition}
\newtheorem{cor}[lemma]{Corollary}
\newtheorem{thm}[lemma]{Theorem}
\newtheorem{example}[lemma]{Example}
\newtheorem{thm?}[lemma]{Theorem?}
\newtheorem{ques}[lemma]{Question}
\newtheorem{conj}[lemma]{Conjecture}
\newtheorem{fact}{Fact}
\newtheorem{remark}[lemma]{Remark}
\begin{document}
\title{Torsion Points on CM Elliptic Curves Over Real Number Fields}
\author{Abbey Bourdon}
\address{University of Georgia}
\email{abourdon@uga.edu}
\author{Pete L. Clark}
\address{University of Georgia}
\email{plclark@gmail.com}
\author{James Stankewicz}
\address{University of Bristol/Heilbronn Institute for Mathematical Research}
\email{j.stankewicz@bristol.ac.uk}


\newcommand{\etalchar}[1]{$^{#1}$}
\newcommand{\F}{\mathbb{F}}
\newcommand{\et}{\textrm{\'et}}
\newcommand{\ra}{\ensuremath{\rightarrow}}
\newcommand{\FF}{\F}
\newcommand{\Z}{\mathbb{Z}}
\newcommand{\N}{\mathbb{N}}
\newcommand{\ch}{}
\newcommand{\R}{\mathbb{R}}
\newcommand{\PP}{\mathbb{P}}
\newcommand{\pp}{\mathfrak{p}}
\newcommand{\C}{\mathbb{C}}
\newcommand{\Q}{\mathbb{Q}}
\newcommand{\tpqr}{\widetilde{\triangle(p,q,r)}}
\newcommand{\ab}{\operatorname{ab}}
\newcommand{\Aut}{\operatorname{Aut}}
\newcommand{\gk}{\mathfrak{g}_K}
\newcommand{\gq}{\mathfrak{g}_{\Q}}
\newcommand{\OQ}{\overline{\Q}}
\newcommand{\Out}{\operatorname{Out}}
\newcommand{\End}{\operatorname{End}}
\newcommand{\Gon}{\operatorname{Gon}}
\newcommand{\Gal}{\operatorname{Gal}}
\newcommand{\CT}{(\mathcal{C},\mathcal{T})}
\newcommand{\ttop}{\operatorname{top}}
\newcommand{\lcm}{\operatorname{lcm}}
\newcommand{\Div}{\operatorname{Div}}
\newcommand{\OO}{\mathcal{O}}
\newcommand{\rank}{\operatorname{rank}}
\newcommand{\tors}{\operatorname{tors}}
\newcommand{\IM}{\operatorname{IM}}
\newcommand{\CM}{\operatorname{CM}}
\newcommand{\Frac}{\operatorname{Frac}}
\newcommand{\Pic}{\operatorname{Pic}}
\newcommand{\coker}{\operatorname{coker}}
\newcommand{\Cl}{\operatorname{Cl}}
\newcommand{\loc}{\operatorname{loc}}
\newcommand{\GL}{\operatorname{GL}}
\newcommand{\PSL}{\operatorname{PSL}}
\newcommand{\Frob}{\operatorname{Frob}}
\newcommand{\Hom}{\operatorname{Hom}}
\newcommand{\Coker}{\operatorname{\coker}}
\newcommand{\Ker}{\ker}
\renewcommand{\ggg}{\mathfrak{g}}
\newcommand{\sep}{\operatorname{sep}}
\newcommand{\new}{\operatorname{new}}
\newcommand{\Ok}{\mathcal{O}_K}
\newcommand{\ord}{\operatorname{ord}}
\newcommand{\mm}{\mathfrak{m}}
\newcommand{\Ohell}{T_{\ell}(\OO)}

\begin{abstract}
We study torsion subgroups of elliptic curves with complex multiplication (CM) defined over number fields which admit a real embedding.  We give a complete classification of the groups which arise up to isomorphism as the torsion subgroup of a CM elliptic curve defined over a number field of odd degree: there are infinitely many. However, if we fix an odd integer $d$ and consider number fields of degree $dp$ as $p$ ranges over all prime numbers, all but finitely many torsion subgroups that appear for CM elliptic curves actually occur in a degree dividing $d$.  This implies an absolute bound on the size of torsion subgroups of CM elliptic curves defined over number fields of degree $dp$. In the case where $d=1$, there are six ``Olson groups'' which arise as torsion subgroups of CM elliptic curves over $\Q$, and there are precisely $17$ ``non-Olson'' CM elliptic curves defined over a number field of (variable) prime degree.


\end{abstract}

\maketitle

\tableofcontents

\noindent
We denote by $\mathcal{P}$ the set of all prime numbers.  For $n \in \Z^+$ let $\zeta_n = e^{\frac{2 \pi i}{n}} \in \C$, and put
$\Q(\zeta_n)^+ = \Q(\zeta_n + \zeta_n^{-1})$.  For a field $F$, let $\overline{F}$ be an algebraic closure, let $F^{\sep}$ be the maximal separable subextension of $\overline{F}/F$, and let $\ggg_F = \Aut(F^{\sep}/F) = \Aut(\overline{F}/F)$ be the absolute Galois group of $F$.
  A \textbf{real number field} is a number field which admits an embedding into $\R$.  Thus every odd degree number field is real. If $E$ is an elliptic curve with complex multiplication by an order $\OO$ of discriminant $\Delta$ in an imaginary quadratic field $K$, then $F_{\Delta}=\Q(j(E))$, $K_{\Delta}=K(j(E))$, and $h(\Delta)=\#\Pic(\OO)=[K_{\Delta}:K]$. For an imaginary quadratic field $K$, let $w(K) = \# \mathcal{O}_K^{\times}$.

\section{Introduction}

\subsection{Motivation}
\textbf{} \\ \\
\noindent
This paper continues an exploration of torsion points on elliptic curves with complex multiplication (CM) initiated by the last two authors in collaboration with
B. Cook, P. Corn and A. Rice \cite{TORS1}, \cite{TORS2}.
\\ \\
The subject of torsion points on CM elliptic curves begins with the following result.

\begin{thm}(Olson \cite{Olson74})
\label{OLSON1}
Let $E_{/\Q}$ be a CM elliptic curve.  Then $E(\Q)[\tors]$ is isomorphic to
one of: the trivial group $\{ \bullet\}$, $\Z/2\Z$, $\Z/3\Z$, $\Z/4\Z$, $\Z/6\Z$ or $\Z/2\Z \times \Z/2\Z$.  Conversely, each such group occurs for at least
one CM elliptic curve $E_{/\Q}$.
\end{thm}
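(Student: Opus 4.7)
The plan is to leverage the very restrictive nature of CM over $\Q$. Since $E/\Q$ has CM by an order $\OO$ in an imaginary quadratic field $K$, we have $j(E) \in \Q \cap F_{\Delta}$, so $F_{\Delta} = \Q$ and hence $h(\Delta) = 1$. By the Heegner--Stark classification there are exactly thirteen such orders, of discriminants $-3,-4,-7,-8,-11,-12,-16,-19,-27,-28,-43,-67,-163$. For each such $\OO$, every $E/\Q$ with $\End(E_{\overline\Q}) \cong \OO$ is obtained from a fixed model by twisting (quadratic twists in general; sextic for $j=0$ and quartic for $j=1728$). So the proof reduces to bounding $E(\Q)[\tors]$ uniformly over these finitely many families, and then producing examples to show each of the six groups is achieved.

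For the upper bound, I would use that the mod-$\ell$ Galois representation $\rho_{E,\ell}\colon \gq \to \Aut(E[\ell])$ has image in the normalizer $N$ of the Cartan subgroup $C = (\OO/\ell\OO)^{\times}$ of $\GL_2(\F_{\ell})$, with $\rho_{E,\ell}(\gk) \subseteq C$, and that complex conjugation lies in the non-trivial coset $N \setminus C$. A $\Q$-rational point of order $\ell$ is a simultaneous fixed vector of the whole image. The first step is to rule out $\ell \geq 5$: using the ramification of $\rho_{E,\ell}$ at $\ell$ (via the theory of Serre--Tate and CM Hecke characters, the inertial action factors through a Lubin--Tate type character of order $\ell - 1$ in the split/ramified cases, and the analogous non-split torus character when $\ell$ is inert), one shows the image on $E[\ell]$ admits no nonzero fixed vector once $\ell \geq 5$. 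Equivalently: if $E(\Q)$ had an $\ell$-torsion point, $E$ would carry a $\Q$-rational cyclic $\ell$-isogeny, which by CM theory forces $\ell$ to divide the index of $\OO$ in $\OO_K$ or $w(K)$, bounding $\ell$ by $3$.

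For $\ell \in \{2,3\}$ a finer, essentially case-by-case, analysis through the thirteen orders is required. One writes down the normalizer of the Cartan in $\GL_2(\Z/\ell^n\Z)$ for small $n$, uses the shape of complex conjugation inside it, and counts fixed vectors, ruling out $\Z/8\Z$, $\Z/2\Z \oplus \Z/4\Z$, $\Z/9\Z$, $\Z/12\Z$, etc. This will also be where one exploits that, for CM orders with $w(K) = 2$, any $\Q$-isogeny of degree $2$ or $3$ is accounted for by a prime of $\OO$ whose norm equals $2$ or $3$. The main obstacle is precisely this sharp prime-power analysis at $\ell = 2,3$: one must handle non-maximal orders (e.g.\ $\Delta = -12, -16, -27, -28$) where the $\ell$-part of the CM picture is more delicate, and distinguish carefully between twists that pick up or shed rational $2$-torsion. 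Finally, realizing each of the six groups is done by exhibiting explicit curves, e.g.\ $y^2 = x^3 - x$ (CM by $\Z[i]$) gives $(\Z/2\Z)^2$, $y^2 = x^3 + 1$ (CM by $\Z[\zeta_3]$) gives $\Z/6\Z$, and suitable twists of these models yield the remaining four.
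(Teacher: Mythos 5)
Your overall architecture (class-number-one list, normalizer-of-Cartan image, prime-by-prime exclusion, explicit examples) is a legitimate classical route, but the step that eliminates $\ell \geq 5$ has a genuine gap. A purely local-at-$\ell$ ramification argument cannot work when $\ell$ splits in $\OO$: there $\OO \otimes \Z_{\ell} \cong \Z_{\ell} \times \Z_{\ell}$, $T_{\ell}(E)$ splits as a sum of two rank-one eigenlines, and inertia at $\ell$ acts trivially on the \'etale line; for instance, for $K = \Q(i)$ and $\ell = 5$ the canonical lift of a curve over $\F_5$ with $a_5 = -4$ has a $\Q_5$-rational point of order $5$, so no analysis of the inertial character alone can rule out $5$-torsion. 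Your ``equivalent'' reformulation is moreover false as stated: a rational point of order $\ell$ does give a rational cyclic $\ell$-isogeny, but CM elliptic curves over $\Q$ carry rational cyclic $\ell$-isogenies for $\ell = 7, 11, 19, 43, 67, 163$ (the ramified primes of the class-number-one fields), so the existence of such an isogeny does not force $\ell$ to divide $\mathfrak{f}$ or $w(K)$, and does not bound $\ell$ by $3$. What actually kills $\ell \geq 5$ is a global statement about the full isogeny character being trivial: either ray class field containment (Theorem \ref{SCHERTZTHM} together with Proposition \ref{4.12}, i.e.\ the divisibilities of Theorem \ref{OLDNEWTHM2} with $[FK:\Q] = 2$), or the cyclotomic constraint that a fixed vector for a subgroup of the normalizer of the Cartan containing complex conjugation forces the mod $N$ cyclotomic character to be trivial on $\ggg_K$, i.e.\ $\zeta_N \in K$, whence $\varphi(N) \leq 2$. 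Note that even the crude divisibility bound does not eliminate $\ell = 7$ for $K = \Q(\sqrt{-3})$, where $w(K) = 6$ and $\tfrac{2(\ell-1)}{w(K)}h(K) = 2$ (indeed $7$-torsion does occur over $\Q(\sqrt{-3})$), so the cyclotomic refinement -- or Olson's original curve-by-curve computation -- is genuinely needed there. Separately, your $\ell \in \{2,3\}$ step, where $\Z/8\Z$, $\Z/9\Z$, $\Z/12\Z$, $\Z/2\Z \oplus \Z/4\Z$, $\Z/2\Z \oplus \Z/6\Z$ and $\Z/3\Z \oplus \Z/3\Z$ must be excluded, is only a promissory note, and that is where most of the actual work in this approach lives.

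For comparison, the paper does not reprove Olson's theorem via the thirteen discriminants at all: it cites Olson and observes that the statement is recovered by taking $F = \Q$ in Aoki's Theorem \ref{AOKITHM} and Corollary \ref{COR4.11} (equivalently, Real Cyclotomy II, Theorem \ref{REALCYCII}). That argument is uniform in $\Delta$: a rational point of order $N \geq 3$ forces $\zeta_N \in K$, so $\varphi(N) \mid 2$ and $N \in \{1,2,3,4,6\}$; the $2$-torsion results (Theorem \ref{THM4.2}, Corollary \ref{COR4.3}) and the incompatibility of full $2$-torsion (which forces $\Delta = -4$) with a rational point of order $3$ (which forces $K = \Q(\sqrt{-3})$) then exclude the remaining non-Olson combinations, and existence is by the standard examples you list. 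If you want to salvage your outline, replace the local-at-$\ell$ step by this global cyclotomic (or ray-class-field) input; as written, the exclusion of $\ell \geq 5$ does not go through.
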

\noindent
We say a finite commutative group $G$ is an \textbf{Olson group} if
it is isomorphic to one of the six groups given in the conclusion of Theorem
\ref{OLSON1}.  A CM elliptic curve $E_{/F}$ is \textbf{Olson} if $E(F)[\tors]$ is an Olson group.
\\ \\
It follows from Theorem \ref{IMPRIMITIVELEMMA}a) below that
for every Olson group $G$ and every $d \geq 2$, there are infinitely many degree $d$ number fields $F$ for which there is a CM elliptic curve $E_{/F}$ with $E(F)[\tors] \cong G$.  Similarly, whenever $d_1 \mid d_2$, the list of torsion subgroups of CM elliptic curves in degree $d_2$ will contain the corresponding list in degree $d_1$.   It is more penetrating to ask which \emph{new} groups arise in degree $d$: for $d \in \Z^+$, let $\mathcal{T}_{\CM}(d)$ be the set of isomorphism classes of torsion subgroups of CM elliptic curves defined over
number fields of degree $d$, and for $d \geq 2$ we put
\[\mathcal{T}^{\new}_{\CM}(d) = \mathcal{T}_{\CM}(d) \setminus \bigcup_{d' \mid d, \ d' \neq d} \mathcal{T}_{\CM}(d'). \]
From \cite[$\S 4$]{TORS2} we compile the following table.

\[\begin{tabular}{c|ccccccccccccc}
d & 2&3&4&5&6&7&8&9&10&11&12&13  \\ \hline
$\# \mathcal{T}^{\new}_{\CM}(d)$  & 5& 2&9&1&7&0&14&3&4&0&13& 0
\end{tabular} \]
\[ \text{\sc table 1} \]


\begin{remark}
\label{KEYREMARK}
a) The size of $\mathcal{T}^{\new}_{\CM}(d)$ is strongly influenced by the $2$-adic valuation $v_2(d)$: for all $2 \leq d_1,d_2 \leq 13$, $v_2(d_1) < v_2(d_2) \implies \# \mathcal{T}^{\new}_{\CM}(d_1) <
\# \mathcal{T}^{\new}_{\CM}(d_2)$.
\\
b) There is very little new torsion when $d$ is odd.
\\
c) When we restrict to prime values of $d$, the sequence of values is
$5,2,1,0,0,0$.
\end{remark}
\noindent
Remark \ref{KEYREMARK}c) was made to us by M. Sch\"utt.  He also asked the following question.

\begin{ques}(Sch\"utt)
\label{SCHUETT}
 Is $\# \mathcal{T}^{\new}_{\CM}(p) = 0$ for all sufficiently large primes $p$?
\end{ques}

\subsection{The Main Results of the Paper}
\textbf{} \\ \\
\noindent
Sch\"utt's question is equivalent to asking whether there is an \emph{absolute bound} on the size of the torsion subgroup of all CM elliptic curves defined over all number fields of prime degree.  The first main result of this paper is an affirmative answer.
\\ \\
For $b,c \in F$ we define the \textbf{Kubert-Tate curve}
\[E(b,c): y^2+(1-c)xy-by=x^3-bx^2. \]
For $\lambda \in F$ we define the \textbf{Hesse curve}
\[ E_{\lambda}: X^3 + Y^3 + Z^3 + \lambda XYZ = 0. \]

\begin{thm}(Prime Degree Theorem)
\label{MAINTHM}
Let $F$ be a prime degree number field, and let $E_{/F}$ be a
non-Olson elliptic curve with CM by a quadratic order of discriminant $\Delta$.  Then $F$ is isomorphic to one of the fields listed below, and over that field $E$ is isomorphic to exactly one listed curve.
{\footnotesize
\begin{center}
    \begin{tabular}{ c|c|c|c}
    Number Field $F$ & Elliptic Curve & $\Delta$ & $E(F)[\tors]$\\  \hline
    $\Q(\sqrt{-3})$ & $E_0$ & $-3$ & $\Z/3\Z \oplus \Z/3\Z$ \\   [.5 ex]
    $\Q(i)$ & $E(-\frac{1}{8},0)$ & $-4$ & $\Z/2\Z \oplus \Z/4\Z$ \\   [.5 ex]
    $\Q(\sqrt{2})$ & $E(1+\frac{3}{4}\sqrt{2},0)$ & $-4$ & $\Z/2\Z \oplus \Z/4\Z$ \\   [.5 ex]
    $\Q(\sqrt{2})$ & $E(-\frac{1}{32},0)$ & $-16$ & $\Z/2\Z \oplus \Z/4\Z$ \\   [.5 ex]
    $\Q(\sqrt{2})$ & $E(\frac{1+\sqrt{2}}{8},0)$ & $-8$ & $\Z/2\Z \oplus \Z/4\Z$ \\   [.5 ex]
    $\Q(\sqrt{-7})$ & $E(\frac{-31+3\sqrt{-7}}{512},0)$ & $-7$ & $\Z/2\Z \oplus \Z/4\Z$ \\   [.5 ex]
    $\Q(\sqrt{-7})$ & $E(\frac{-1+3\sqrt{-7}}{32},0)$ & $-7$& $\Z/2\Z \oplus \Z/4\Z$ \\   [.5 ex]
    $\Q(\sqrt{-3})$ & $E(-\frac{2}{9},-\frac{1}{3})$ & $-3$ & $\Z/2\Z \oplus \Z/6\Z$ \\   [.5 ex]
    $\Q(\sqrt{3})$ & $E(\frac{1-\sqrt{3}}{9},\frac{-2+\sqrt{3}}{3})$ & $-12$ & $\Z/2\Z \oplus \Z/6\Z$ \\   [.5 ex]
    $\Q(\sqrt{3})$ & $E(\frac{4}{9},\frac{1}{3})$ & $-12$ & $\Z/2\Z \oplus \Z/6\Z$ \\   [.5 ex]
    $\Q(\sqrt{-3})$ &$ E(\frac{-1+\sqrt{-3}}{2},-1)$ & $-3$ & $\Z/7\Z$ \\   [.5 ex]
    $\Q(i)$ & $E(i,i)$ & $-4$ & $\Z/10\Z$ \\   [.5 ex]
    $\dfrac{\Q[b]}{(b^3-15b^2-9b-1)}$ & $E(\frac{1}{4}b^2+\frac{5}{2}b+\frac{1}{4},b)$ & $-3$ & $\Z/9\Z $ \\   [.5 ex]
    $\dfrac{\Q[b]}{(b^3+105b^2-33b-1)}$ & $E(-\frac{17}{76}b^2+\frac{25}{19}b+\frac{1}{76},b)$ & $-27$ & $\Z/9\Z $ \\   [.5 ex]
    $\dfrac{\Q[b]}{(b^3-4b^2+3b+1)}$ & $E(-2b^2+4b+1,b)$ & $-7$ & $\Z/14\Z$ \\   [.5 ex]
    $\dfrac{\Q[b]}{(b^3-186b^2+3b+1)}$ & $E(\frac{2}{27}b^2+\frac{10}{27}b-\frac{1}{27},b)$ & $-28$ & $\Z/14\Z$ \\   [.5 ex]
    $\dfrac{\Q[b]}{(b^5-9b^4+6b^3+42b^2-7b-1)}$ & $E(-\frac{1}{16}b^4+\frac{1}{4}b^3+\frac{5}{8}b^2+\frac{1}{4}b-\frac{1}{16},b)$ & $-11$ & $\Z/11\Z $ \\   [.5 ex]

     \end{tabular}
\end{center}
}

\end{thm}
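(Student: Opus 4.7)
My plan is to exploit the interplay between the class number of the CM order and the prime degree of $F$, and then to call on the CM torsion degree formulas developed in \cite{TORS1, TORS2}.

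\textbf{Step 1 (CM dichotomy).} Because $F_\Delta = \Q(j(E)) \subseteq F$ and $[F:\Q] = p$ is prime, $h(\Delta) = [F_\Delta:\Q]$ must divide $p$. So either $h(\Delta) = 1$, in which case $\Delta$ ranges over the thirteen classical class-number-one discriminants and $F$ is any prime-degree extension of $\Q$, or $h(\Delta) = p$, in which case $F$ is forced to equal the ring class field $F_\Delta$. This immediately cuts the problem into two structurally different regimes.

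\textbf{Step 2 (Non-Olson torsion forces rigidity).} A non-Olson torsion subgroup must contain at least one of: a point of prime order $\ell \geq 7$, a cyclic subgroup of order $8$ or $9$, the full $2$-torsion together with a point of order $4$ or $6$, or all of $E[3]$. For each such configuration I would invoke the CM degree formulas: if $P \in E$ has order $N$, then $[F_\Delta(P) : F_\Delta]$ is given by an explicit product over the primes dividing $N$, depending on whether they are split, inert, or ramified in $K$ and on their interaction with the conductor of $\OO$ and with $\OO^\times$. Imposing that this degree equals $1$ (so $P \in E(F)$), while simultaneously $[F:\Q] \in \{1, p\}$ and $p$ is prime, leaves only a short list of admissible quadruples $(\Delta, F, N, T)$ to enumerate.

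\textbf{Step 3 (Explicit models).} For each surviving case I would exhibit a specific elliptic curve. When $h(\Delta) = 1$ I would use a $\Q$-rational Weierstrass model of $E_\Delta$ base-changed to the appropriate prime-degree extension; the passage to Kubert-Tate form $E(b,c)$ or Hesse form $E_\lambda$ is forced once a torsion point of the prescribed order is found on the $X_1(N)$ parameterization. For the sporadic cases with $h(\Delta) > 1$ (the $\Z/9\Z$, $\Z/14\Z$, and especially the $\Z/11\Z$ row), I would match each case with the known CM points on the modular curves $X_1(9)$, $X_1(14)$, $X_1(11)$; the defining polynomials in the table come from the minimal polynomial of the $X_1(N)$-coordinate of that CM point. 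Uniqueness (``exactly one listed curve'') follows from the fact that the CM isogeny class is determined by $\Delta$ and the torsion structure singles out the isomorphism class within that class.

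\textbf{Main obstacle.} The delicate part is Step 2: the CM torsion degree formulas branch into many subcases depending on $\gcd(N, \Delta)$, the splitting behaviour of each $\ell \mid N$ in $K$, and on whether $\OO = \OO_K$; combined with the need to rule out essentially every non-Olson skeleton except the seventeen listed, this is where most of the work must live. The $\Z/11\Z$ case is particularly exceptional because there is nothing linking $11$ to small CM data other than the accidental existence of a single low-degree CM point on $X_1(11)$, and verifying that no other prime $\ell \geq 11$ produces new rows requires a clean general bound on torsion orders in prime degree.
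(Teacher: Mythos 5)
Your Step 2 is where the theorem actually lives, and the mechanism you propose there does not close. You invoke ``CM degree formulas'' giving $[F_\Delta(P):F_\Delta]$ as an explicit product over primes dividing $N$ and then ``impose that this degree equals $1$''; but $P \in E(F)$ only forces the degree of the corresponding closed point of $X_1(N)$ to divide $p$, not to equal $1$ (the $\Z/11\Z$ row has $h(-11)=1$ and a degree-$5$ point), and no such exact formula is available in this paper -- only divisibility bounds of SPY type and the computed degree sequences of Table 2, which cover finitely many $N$. More seriously, nothing in your outline bounds $\ell$ and $N$ uniformly in $p$: CM points of degree $\frac{\ell-1}{2}h_{\Q(\sqrt{-\ell})}$ on $X_1(\ell)$ exist for \emph{every} prime $\ell \equiv 3 \pmod 4$ (this is exactly the paper's twisting construction, Corollary \ref{COR5.7}), so your enumeration cannot terminate without the input you yourself flag as the ``main obstacle.'' The paper supplies it via the Odd Degree Theorem and Real Cyclotomy II: for odd prime $p$ the field $F$ is real of odd degree, so a point of order $N\ge 3$ forces $\Q(\zeta_N)^+F_\Delta \subset F$, hence $E(F)[\tors]\cong \Z/\ell^n\Z$ or $\Z/2\ell^n\Z$ with $\ell\equiv 3 \pmod 4$, $K=\Q(\sqrt{-\ell})$, $h_\Delta$ odd, and $\frac{(\ell-1)h_\Delta}{2}\mid p$; primality of $p$ then gives either $\ell=3$, $p=3$, or $h_\Delta=1$ with $p=\frac{\ell-1}{2}$, and the class-number-one list $\{7,11,19,43,67,163\}$ together with the safe-prime condition leaves only $\ell\in\{7,11\}$, $p\in\{3,5\}$. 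Without this (or an equivalent real-cyclotomy/Aoki-type statement) the finiteness over all primes $p$ is simply not established.

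Two further gaps: your Step 1 dichotomy $h(\Delta)\in\{1,p\}$ is correct, but you never dispatch the $h(\Delta)=p$ branch -- there $F=\Q(j(E))$ and one needs Parish's Theorem (\ref{PARISHTHM}) to conclude $E$ is Olson; the same theorem is what reduces the degree-$2$ case to $j(E)\in\Q$ before the Table 2 computation. And your uniqueness claim in Step 3 (``the torsion structure singles out the isomorphism class within the isogeny class'') is false as stated: one must rule out distinct quadratic twists over $F$ both acquiring the point, which the paper does with Lemma \ref{LITTLEPRIMEDEGREELEMMA} -- two non-isomorphic models with an $F$-rational point of odd order $N$ would force $K^{(N)}$ into a quadratic extension of $F$, contradicting $[K^{(N)}:\Q]=42>6$ for $\ell=7$ and $110>10$ for $\ell=11$.
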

\noindent
Although the Prime Degree Theorem was our initial goal, it has become more of a gadfly, as we will now explain.  The literature of the field contains several results of the form ``If an $\OO$-CM elliptic curve over a number field $F$ has an $F$-rational point of order $N$, then $[F:\Q]$ is bounded below by some function of $N$.''  The prototype was given by Silverberg and later by Prasad-Yogananda \cite{Silverberg88} and \cite{PY01}.  These \textbf{SPY-bounds} were refined by Clark-Cook-Stankewicz \cite[Theorem 3]{TORS1}.  The proof of the Prime Degree Theorem requires further refinements, to which we found our way by observing a \textbf{real cyclotomy} phenomenon in the tables of \cite{TORS2}: for every CM elliptic curve $E_{/F}$ in the tables containing an $F$-rational point of order $N \geq 3$, $F$ contains either the CM field $K$
or $\Q(\zeta_N)^+$.  In particular, if $F$ has odd degree then $F \supset \Q(\zeta_N)^+$ and thus $\frac{\varphi(N)}{2} \mid [F:\Q]$.
\\ \indent
We present here two versions of real cyclotomy.  \textbf{Real Cyclotomy I} deals with an $\OO(\Delta)$-CM elliptic curve over a number field $F$ not containing $\Q(\sqrt{\Delta})$ under the additional hypothesis that $\gcd(N,\Delta) = 1$.  The conclusion is $(\Z/N\Z)^2 \hookrightarrow E(FK)$.  It follows that $\zeta_N \in FK$, so $\varphi(N) \mid 2[F:\Q]$.  When $F$ is real (and in some other cases), $\zeta_N \in FK$ implies $\Q(\zeta_N)^+ \subset F$, confirming real cyclotomy in this case.  In \textbf{Real Cyclotomy II} we assume moreover that $F$ is real and drop the condition $\gcd(N,\Delta) = 1$.  The conclusion is that $\zeta_N \in FK$, hence $\Q(\zeta_N)^+ \subset F$.
\\ \\
After this paper was submitted we learned of important work of N. Aoki \cite{Aoki95}, \cite{Aoki06} showing that if $E_{/F}$ is a $K$-CM elliptic curve defined over a number field $F \not \supset K$ and $E(FK)$ has a point of order $N$, then $\zeta_N \in FK$. Thus Aoki's Theorem implies Real Cyclotomy II (though not Real Cyclotomy I) and confirms the observed real cyclotomy phenomenon.  Aoki's work in fact applies to a large class of CM abelian varieties $A$ defined over a number field $F$ -- e.g. when $A$ is geometrically simple and $F$ is real \cite[Thm. 6.2]{Aoki06}.  His proof uses class field theory and Shimura-Taniyama's adelic \textbf{main theorem of complex multiplication}. \\ \indent In contrast, the main ingredients of our proof of Real Cyclotomy II are the ideal theory of imaginary quadratic orders (essentially due to Gauss), especially the complete determination of $\Pic \OO(\Delta)[2]$ given by \textbf{classical genus theory}, and the relation between genus theory and the uniformization of CM elliptic curves by \textbf{real lattices} $\Lambda = \overline{\Lambda} \subset \C$.  Thus our proof of Real Cyclotomy II is quite different from Aoki's proof of his theorem, and it proceeds by establishing some pleasant results which have a very classical feel but which in many cases we have nevertheless not been able to find in the literature.  For these reasons we have chosen to include our proof.  In fact, the idea to exploit classical genus theory leads to an improvement of Aoki's bound by a factor of $\frac{\# \Pic \OO(\Delta)}{\# (\Pic \OO(\Delta)[2])}$, which we have incorporated into our statement of Real Cyclotomy II: see (\ref{SWEETCYCEQ1}).  This extra factor simplifies the proof of the Prime Degree Theorem and is used elsewhere in the paper.
\\ \\
The hypothesis that $E_{/F}$ is an elliptic curve defined over a number field not containing the CM field is most naturally achieved by requiring that the degree $[F:\Q]$ is odd.  This forces $F$ to be real.  When $[F:\Q]$ is odd, Aoki's bound $\varphi(N) \mid 2[F:\Q]$ implies -- just using the elementary properties of Euler's $\varphi$ function -- strong restrictions on the exponent of $E(F)[\tors]$, and since $F$ does not contain the CM field the order of $E(F)[\tors]$ divides twice its exponent.  Thus the torsion subgroup $E(F)[\tors]$ is highly constrained.  This was observed by Aoki \cite[Cor. 9.4]{Aoki95}; we present Aoki's constraints in slightly refined form in Theorem \ref{JIMSLEMMA}.
\\ \indent
Our next main result shows that all the groups not eliminated above do in fact arise as torsion subgroups of CM elliptic curves in some odd degree.

\begin{thm}(Odd Degree Theorem)
\label{ODDDEGREETHMINTRO}
Let $F$ be a number field of odd degree, let $E_{/F}$ be a $K$-CM elliptic curve, and let $T = E(F)[\tors]$.  Then: \\
 a) One of the following occurs:
\begin{enumerate}
\item $T$ is isomorphic to the trivial group $\{ \bullet\}, \, \Z/2\Z, \, \Z/4\Z,$ or $\Z/2\Z \times \Z/2\Z$;
\item $T \cong \Z/\ell^n \Z$ for a prime $\ell \equiv 3 \pmod{8}$ and $n \in \mathbb{Z}^+$ and
$K = \Q(\sqrt{-\ell})$;
\item $T \cong \Z/2\ell^n \Z$ for a prime $\ell \equiv 3 \pmod{4}$ and $n \in \mathbb{Z}^+$ and
$K = \Q(\sqrt{-\ell})$.
\end{enumerate}
b) If $E(F)[\tors] \cong \Z/2\Z \oplus \Z/2\Z$, then $\End E$ has discriminant $\Delta = -4$.  \\
c) If $E(F)[\tors] \cong \Z/4\Z$, then $\End E$ has discriminant $\Delta \in \{ -4,-16\}$. \\
d) Each of the groups listed in part a) arises up to isomorphism as the torsion subgroup $E(F)$ of a CM elliptic curve $E$ defined over an odd degree number field $F$.
\end{thm}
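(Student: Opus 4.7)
The plan is to combine Real Cyclotomy II (together with its refinement by the class-group factor $h(\Delta)/\#\Pic(\OO)[2]$) with the Weil pairing and elementary arithmetic of Euler's $\varphi$ function for parts (a)--(c), and then to exhibit explicit curves for part (d).

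For part (a), since $[F:\Q]$ is odd, $F$ admits a real embedding and $F \not\supset K$. Setting $T = E(F)[\tors]$, I first bound its rank: if $E[m] \subset T$ for some $m \geq 3$, the Weil pairing gives $\mu_m \subset F$, contradicting that $F$ is real. Hence $T \cong \Z/N\Z$ or $T \cong \Z/2\Z \oplus \Z/N\Z$ with $N$ even in the latter case, where $N$ is the exponent of $T$. Next, applying Real Cyclotomy II to a point of order $N$ gives $\zeta_N \in FK$, hence $\varphi(N) \mid 2[F:\Q]$; since $[F:\Q]$ is odd, $v_2(\varphi(N)) \leq 1$. An elementary factorization argument then forces $N \in \{1,2,3,4,6\}$ or $N = \ell^n$ or $N = 2\ell^n$ with $\ell \equiv 3 \pmod 4$ an odd prime. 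To eliminate the leftover groups $\Z/3\Z, \Z/6\Z, \Z/2\Z \oplus \Z/4\Z, \Z/2\Z \oplus \Z/6\Z, \Z/2\Z \oplus \Z/2\ell^n\Z$ and to sharpen $\ell \equiv 3 \pmod 4$ to $\ell \equiv 3 \pmod 8$ in the cyclic case, I would appeal to the sharper bound (\ref{SWEETCYCEQ1}): the class-group factor injects additional $2$-divisibility into the left-hand side whenever $\Pic \OO(\Delta)$ has nontrivial odd part or certain $2$-rank, which for the CM fields forced by the presence of an $N$-torsion point (e.g.\ $K = \Q(\sqrt{-3})$ when $3 \mid N$) kills these cases in odd degree. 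The identification $K = \Q(\sqrt{-\ell})$ in cases (2) and (3) falls out of the observation that $\zeta_N \in FK$ combined with $[FK:F]=2$ pins down the quadratic CM field.

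For parts (b) and (c), the exponent $N$ is too small for the $\varphi(N)$ factor to help, so the entire constraint comes from the class-group correction. Assuming $T \supset E[2]$ (respectively $T \supset \langle P\rangle$ with $P$ of order $4$), I would combine the refined Real Cyclotomy II inequality with classical genus theory describing $\Pic\OO(\Delta)[2]$ in terms of the prime factorization of $\Delta$: writing out (\ref{SWEETCYCEQ1}) in each case and using that $F$ has odd degree shows $h(\Delta) = \# \Pic\OO(\Delta)[2]$, which together with the standard list of small CM discriminants narrows $\Delta$ to $\{-4\}$ in part (b) and to $\{-4,-16\}$ in part (c).

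For part (d), the Olson groups $\{\bullet\}, \Z/2\Z, \Z/4\Z, \Z/2\Z \oplus \Z/2\Z$ are supplied by Theorem \ref{IMPRIMITIVELEMMA}(a), which already yields CM curves realizing each of them over number fields of every odd degree $d \geq 1$. For the infinite families $\Z/\ell^n\Z$ ($\ell \equiv 3 \pmod 8$) and $\Z/2\ell^n\Z$ ($\ell \equiv 3 \pmod 4$), I would start from a $K$-CM curve with $K=\Q(\sqrt{-\ell})$ defined over the ring class field of conductor coprime to $\ell$, exploit the fact that the mod-$\ell^n$ Galois representation of a CM curve factors through the normalizer of a (split or non-split) Cartan subgroup of $\GL_2(\Z/\ell^n\Z)$, and descend along a suitable quotient of this normalizer to an odd-degree real subfield $F$ over which the desired cyclic torsion is rational. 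The main obstacle is precisely this last step: one must simultaneously arrange that the chosen subfield of the $\ell^n$-division field has odd degree over $\Q$ and that the selected point (or its $2$-torsion partner in case (3)) is $F$-rational rather than merely $FK$-rational, and this requires a careful choice of the Cartan type at $\ell$ (split versus non-split) keyed to the congruence $\ell \pmod 8$, which is exactly why the two cases of part (a) look slightly different.
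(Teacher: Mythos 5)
Your skeleton for part (a) -- Weil pairing to force $T\cong \Z/N\Z$ or $\Z/2\Z\oplus\Z/N\Z$, Real Cyclotomy II to get $\varphi(N)\mid 2[F:\Q]$, and elementary $\varphi$-arithmetic to reduce to $N\in\{1,2,3,4,6\}\cup\{\ell^n,2\ell^n:\ell\equiv 3\ (\mathrm{mod}\ 4)\}$ -- agrees with the paper. But the mechanism you propose for the remaining reductions does not work. The congruence $\ell\equiv 3\ (\mathrm{mod}\ 8)$ in case (2) is \emph{not} a degree-divisibility phenomenon, so no refinement of (\ref{SWEETCYCEQ1}) can produce it: for $\ell\equiv 7\ (\mathrm{mod}\ 8)$ points of order $\ell^n$ really do occur in odd degree (the paper constructs them), and the cyclic group is excluded only because $K=\Q(\sqrt{-\ell})$ then forces $\Delta$ even or $\Delta\equiv 1\ (\mathrm{mod}\ 8)$, whence by the $2$-division-field analysis (Theorem \ref{THM4.2}, plus quadratic-twist invariance of $2$-torsion) \emph{every} $\OO(\Delta)$-CM curve has a rational point of order $2$, so the torsion is $\Z/2\ell^n\Z$; a divisibility constraint on $[F:\Q]$ cannot distinguish $\Z/\ell^n\Z$ from $\Z/2\ell^n\Z$ over the same field. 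Likewise the class-group factor in (\ref{SWEETCYCEQ1}) is trivial for the discriminants relevant to $\Z/2\Z\oplus\Z/4\Z$ and $\Z/2\Z\oplus\Z/6\Z$ (all have $h=1$), so it ``kills'' nothing; the paper eliminates these via Corollary \ref{COR4.3} (full $2$-torsion in odd degree forces $\Delta=-4$), the incompatibility with $K=\Q(\sqrt{-\ell})$ from Theorem \ref{JIMSLEMMA}a), and the Table 2 computation behind Corollary \ref{odd2tors}. (Also, $\Z/3\Z$ and $\Z/6\Z$ must not be ``eliminated'': they occur over $\Q$ and sit inside cases (2) and (3) with $\ell=3$.) The same problem sinks part (b): genus theory plus (\ref{SWEETCYCEQ1}) cannot rule out $\Delta\in\{-8,-16\}$, whose class groups are trivial; what is actually needed is Parish's theorem \ref{PARISH2} / Theorem \ref{THM4.2} that for $\Delta<-4$ the full $2$-torsion field strictly contains $\Q(j)$.

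Part (d) is the heart of the theorem and your proposal leaves exactly the hard step unresolved, and with a misdirected idea: at $\ell$ the prime \emph{ramifies} in $K=\Q(\sqrt{-\ell})$, so there is no choice of split versus non-split Cartan at $\ell$, and the congruence $\ell\ (\mathrm{mod}\ 8)$ governs the behavior at $2$ (whether a rational $2$-torsion point is forced), not the Cartan type at $\ell$. The paper's construction runs: the unique prime $\pp_\ell$ above $\ell$ gives an $F$-rational cyclic subgroup of order $\ell$ over $F=\Q(j)$ (Proposition \ref{7.4}); ``twisting at the bottom'' (Theorem \ref{THM3}), which uses precisely that $-1$ is a nonsquare mod $\ell^n$ for $\ell\equiv 3\ (\mathrm{mod}\ 4)$, yields a twist with a point of order $\ell$ over $F(\zeta_\ell+\zeta_\ell^{-1})$; one then shows $F(E[\ell^n])=K^{(\ell^n)}$ (Lemma \ref{MinField}) and passes to the fixed field of complex conjugation, which is real, contains a point of order $\ell^n$, and has odd degree $\tfrac{1}{2}h(K)(\ell-1)\ell^{2n-1}$ because $h(K)$ is odd; finally the presence or absence of the $2$-torsion point is read off from $\Delta\ (\mathrm{mod}\ 8)$, producing $\Z/\ell^n\Z$ for $\ell\equiv 3\ (\mathrm{mod}\ 8)$ and $\Z/2\ell^n\Z$ in general. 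Without these ingredients (rational $\pp_\ell$-kernel, the twisting trick, the ray-class-field identification, and descent via complex conjugation), the existence half of the theorem is not proved.
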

\noindent
With the Odd Degree Theorem and Real Cyclotomy II in hand, we can readily prove the Prime Degree Theorem.  In fact we are able to view
 the Prime Degree Theorem as an explicit version of the $k= 1$ case of the following result.

\begin{thm}(Shifted Prime Degree Theorem)
\label{SPDTINTRO} \\
a) There is a function $C: \Z^+ \ra \Z^+$ such that: for all
$k \in \Z^+$, all primes $p$, all number fields
$F/\Q$ of degree $(2k-1)p$, and all CM elliptic curves
$E_{/F}$, we have
\[ \# E(F)[\tors] \leq C(k). \]
b) There is a function $P: \Z^+ \ra \Z^+$ such that:
for all $k \in \Z^+$, all primes $p \geq P(k)$ all number fields $F$ of degree $(2k-1)p$, and all CM elliptic curves $E_{/F}$, there is a subfield $F_0 \subset F$ of degree
dividing $(2k-1)$ and an $F_0$-rational model $(E_0)_{/F_0}$ of $E_{/F}$ such that $E_0(F_0)[\tors] = E(F)[\tors]$.
\end{thm}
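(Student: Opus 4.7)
The plan is to establish (b) first and deduce (a) as a corollary. For $p=2$ the degree $(2k-1)p = 2(2k-1)$ is bounded in terms of $k$, so standard bounds on torsion of CM elliptic curves in any fixed degree handle (a) in that case; we may thus take $P(k)\geq 3$ and restrict attention to odd primes $p$, for which $[F:\Q]$ is odd, $F$ is real, and $K\not\subset F$. The Odd Degree Theorem then forces $T := E(F)[\tors]$ into one of its three families: a group of order at most $8$; $\Z/\ell^n\Z$ with $\ell\equiv 3\pmod{8}$ and $K = \Q(\sqrt{-\ell})$; or $\Z/2\ell^n\Z$ with $\ell\equiv 3\pmod{4}$ and $K = \Q(\sqrt{-\ell})$. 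In the nontrivial cases $|\Delta| = \ell f^2$ for some conductor $f\in\Z^+$.

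Writing $N = \exp(T)$ and using that $\Q(j(E))\subseteq F$ (so $h(\Delta)\mid[F:\Q]$), the refined form of Real Cyclotomy II given in (\ref{SWEETCYCEQ1}) supplies a divisibility of the shape
\[
\frac{h(\Delta)\,\varphi(N)}{w(\OO)\cdot\#\Pic(\OO)[2]} \ \Big|\ 2(2k-1)p.
\]
For $p\geq P_0(k)$ the case $\ell = p$ is excluded: the displayed divisibility combined with Siegel's lower bound $h(-p)\gg_{\varepsilon} p^{1/2-\varepsilon}$ yields a contradiction. When $\ell\neq p$, $\gcd(\varphi(\ell^n),p)=1$ unless $p\mid\ell-1$; in either subcase, coupling the divisibility with the lower bound $h(-\ell)\to\infty$ pins $\ell$, $n$, and the conductor $f$ down to finitely many possibilities, each bounded by a function of $k$. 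Hence $|\Delta|$, and therefore $h(\Delta)$, is bounded by some $C_1(k)$. Choosing $P(k) > C_1(k)$, the relation $h(\Delta)\mid (2k-1)p$ then forces $h(\Delta)\mid (2k-1)$.

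Set $F_0 := \Q(j(E))\subseteq F$, so $[F_0:\Q] = h(\Delta)\mid 2k-1$. The remaining --- and principal --- obstacle is to exhibit an $F_0$-model $E_0$ of $E$ with $E_0(F_0)[\tors] = T$. Fix any $F_0$-model $E^\ast$; up to the small modifications required when $\OO^\times\neq\{\pm 1\}$, the $F_0$-twists of $E^\ast$ are classified by $F_0^\times/(F_0^\times)^2$, and the Galois action on $E^\ast[N]$ factors through an abelian extension of $F_0$ controlled by ring-class field theory of $\OO$. Since $\ggg_F$ acts trivially on $T$, the twist cocycle representing $E$ as an $F$-form of $E^\ast$ is invariant under $\Gal(F/F_0)$ modulo squares; a cohomological descent argument then furnishes an $F_0$-twist $E_0$ of $E^\ast$ with $E_0\times_{F_0}F\cong E$ and $E_0(F_0)[\tors]\supseteq T$, the reverse inclusion being automatic. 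Finally, (a) follows: apply (b) for $p\geq P(k)$ to reduce to a subfield of degree dividing $2k-1$, and invoke the classical uniform bound on CM torsion in degree at most $(2k-1)(P(k)-1)$ to handle $p<P(k)$.
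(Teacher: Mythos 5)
Your reduction steps (restricting to odd $p$, invoking the Odd Degree Theorem, using Real Cyclotomy II to bound $\ell$, $n$ and the conductor in terms of $k$) are in the spirit of the paper's argument, although you should note that bounding the conductor is not automatic from ``$h(-\ell)\to\infty$'': one needs that $h(\Delta)$ is odd, hence $\mathfrak{f}=2^{\delta}\ell^{m}$ by Lemma \ref{LEMMA2.5}, and then the relative class number formula gives $\ord_{\ell}h(\Delta)\geq m-1$, which together with $p\neq\ell$ bounds $m$ by $\ord_{\ell}(2k-1)+1$. The fatal problem is your final descent step. You set $F_0:=\Q(j(E))$ and claim a twist $E_0$ over $F_0$ with $E_0(F_0)[\tors]\supseteq T$. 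This is impossible whenever $T$ is non-Olson: Parish's Theorem (Theorem \ref{PARISHTHM}a)) says that \emph{every} model over $\Q(j(E))$ has Olson torsion, and indeed Real Cyclotomy II applied to the real field $\Q(j(E))$ would force $\Q(\zeta_N)^+\subset \Q(j(E))=F_{\Delta}$, which fails for $\varphi(N)>2$ because the maximal abelian subextension of $F_{\Delta}/\Q$ is $(\Z/2\Z)^{\nu}$ (and $\nu=0$ here since $h(\Delta)$ is odd). Concretely, for $\Delta=-11$ and $T\cong\Z/11\Z$ your $F_0$ would be $\Q$, contradicting Olson's Theorem; the point of order $11$ genuinely requires the degree-$5$ field $\Q(\zeta_{11})^+$. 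The cohomological argument you sketch also does not stand on its own ($F/F_0$ need not be Galois, and triviality of the $\ggg_F$-action on $T$ says nothing about whether the quadratic twisting class of $E$ over $F$ comes from $F_0^{\times}$, let alone about rationality of the torsion point over $F_0$), but the decisive issue is that the conclusion you are trying to descend to is false for this choice of $F_0$.

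The correct choice of $F_0$ must contain (essentially) $\Q(\zeta_N)^+F_{\Delta}$, not just $\Q(j)$. The paper's route: after part a) bounds $N=2^{\epsilon}\ell^{n}$ and the finitely many possible discriminants, the pair $(E,P)$ with $P$ a generator of $T$ induces one of \emph{finitely many} closed points $P_1,\dots,P_r$ on the fine moduli space $X_1(N)$, a finite list independent of $p$. Each residue field $\Q(P_i)$ has degree $d_i$ dividing $[F:\Q]=(2k-1)p$, and for each $i$ there is at most one prime $p_i$ with $d_i\mid(2k-1)p_i$ but $d_i\nmid(2k-1)$; discarding these finitely many primes forces $d_i\mid 2k-1$, and the fine moduli property gives the canonical model $(E_0,P)$ over $F_0=\Q(P_i)$ with $E_0(F_0)[\tors]=T$. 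This finiteness-of-moduli-points argument is the missing idea; without it (or a substitute), your proof of part b), and hence your deduction of part a), does not go through.
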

\noindent
We go on to analyze possible CM torsion in degree $p^2$ for a prime number $p$, in degree $2p$ for a prime number $p$, and in degree $p_1 p_2$ for distinct odd primes $p_1$, $p_2$.  In the first case we get a complete, finite classification.  In the second case we show that if there are infinitely many Sophie Germain primes in certain congruence classes (a consequence of Schinzel's Hypothesis H) then infinitely many groups arise.  In the third case we show that the infinitude of the set of groups which arise is equivalent to the infinitude of a certain set of Sophie Germain primes.
\\ \\
Our overarching moral is that by studying torsion in the CM case we can get precise classification results, well beyond what one could reasonably hope to do in the non-CM case in the near future.  Of course, whenever one gets such a result in the CM case one wonders whether it could be true for non-CM curves as well.  We end the paper with a negative result of this form (Theorem \ref{INDEXTHM}).




\subsection{The Contents of the Paper}
\textbf{} \\ \\ \noindent
We will now give a more linear guide to the contents of the paper.
\\ \\
In $\S$2 we prove a result (Theorem \ref{IMPRIMITIVELEMMA}) on the preservation of torsion subgroups under extension from one number field to another.  This was essentially known for elliptic curves; we show it here for abelian varieties.
\\ \\
$\S$3 recalls basic results about imaginary quadratic orders, CM elliptic curves and Cartan subgroups.  It also develops the theory of uniformization of elliptic curves by real lattices and, in the CM case, relates it to classical genus theory.  These results are related to work of S. Kwon \cite{Kwon99a} but differ in their perspective and in many of the details.  We also generalize a very classical result on ray class containment from $\OO_K$-CM elliptic curves to $\OO$-CM elliptic curves (Theorem \ref{SCHERTZTHM}).
\\ \\
$\S4$ is devoted to giving restrictions on the torsion subgroup of various kinds.  In $\S$4.1 we treat points of order $2$.  In $\S$4.2 we extend work of \cite{TORS1} -- itself a refinement of bounds of Silverberg and Prasad-Yogananda -- to the ramified case.  In the next three sections we state and prove Real Cyclotomy I, state Aoki's Theorem, and state and prove Real Cycltomy II.  In $\S$4.6 we discuss a theorem of Parish which implies in particular that if $F = \Q(j)$ then every CM elliptic curve is Olson.  Parish's Theorem is used only in order to rederive the classification of CM torsion in degree $2$ (already done in \cite{Clark04} and \cite{TORS2}) but reproduced here for completeness.  However this result has a place in any compilation of bounds on torsion in the CM case, and it turns out that there are some interesting relations with real cyclotomy.  $\S$4.7 gives an application of Real Cyclotomy I and includes an example that explores the boundaries between Real Cyclotomy I and Real Cyclotomy II.
\\ \\
$\S$5 gives a proof of the Odd Degree Theorem.  Aoki's contribution is revisited and slightly refined in $\S$5.1.  After a brief treatment of primes equivalent to 7 mod 8 in $\S$5.2, the remainder of the proof involves the construction of all nonexcluded torsion subgroups in odd degree.  This is done via some considerations on twisting which will be of future use (to us, at least), so $\S$5.3 contains some basic constructions at the level of generality of abelian varieties over number fields, while $\S$5.4 contains their application to construct torsion on CM elliptic curves in odd degree.  In $\S$5.5 we show that restrictions on the Galois closure of an odd degree number field $F$ force CM elliptic curves $E_{/F}$ to be Olson (Theorem \ref{100PERCENTJIM}).
\\ \\
$\S$6 begins with the Shifted Prime Degree Theorem (Theorem \ref{SPDT}).  In the rest of $\S$6 this result is applied to study torsion in prime degrees (Theorem \ref{MAINTHM}), prime squared degrees (Theorem \ref{PSDT}), fields of degree twice a prime (Theorem \ref{5.2}), and fields of degree a product of
distinct odd primes (Theorems \ref{TWOODDPRIMESTHM1} and \ref{TWOODDPRIMESTHM2}).
\\ \\
In $\S$7 we prove a degree nondivisibility theorem for elliptic curves over number fields \emph{without} complex multiplication
(Theorem \ref{INDEXTHM}).

\subsection{Acknowledgments}
\textbf{} \\ \\ \noindent
 Thanks to Jordan Ellenberg, Damien Robert, Will Sawin, and Matthias Sch\"utt for useful discussions.  We thank Paul Pollack for many fruitful discussions, yielding in particular the main idea of the proof of Theorem \ref{PSDT}, and also for providing us with the heuristic of $\S$6.5.  We are indebted to Alice Silverberg for pointers to the literature.
 \\ \\
 A.B. was supported in part by NSF grant DMS-1344994 (RTG in Algebra, Algebraic Geometry, and Number Theory, at the University of Georgia). J.S. was supported by the Villum Fonden through the network for Experimental Mathematics
in Number Theory, Operator Algebras, and Topology.

\section{Preservation of Torsion Subgroups Under Base Extension}

\begin{thm}
\label{IMPRIMITIVELEMMA}
Let $A_{/F}$ be an abelian variety over a number field, and let $d \geq 2$. \\
a) There are infinitely many $L/F$ such that $[L:F] = d$ and $A(L)[\tors] = A(F)[\tors]$. \\
b) If $d$ is prime, then for all but finitely many $L/F$ with $[L:F] = d$, we have
$A(L)[\tors] = A(F)[\tors]$.  \\
c) For all but at most finitely many quadratic twists $A^t$ of $A_{/F}$ we have
$A^t(F)[\tors] = A^t(F)[2] = A(F)[2]$.
\end{thm}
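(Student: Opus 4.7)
Write $T := A(F)[\tors]$, finite by Mordell--Weil. For any algebraic extension $L/F$, the containment $T \subseteq A(L)[\tors]$ is strict if and only if $L$ contains the field of definition $F(P)$ of some torsion point $P \in A(\overline{F})[\tors] \setminus T$; this observation underlies all three parts. My plan is to deduce (a), (b), and (c) from a single uniform torsion input: \emph{for each $d' \geq 1$ there exists $B = B(A/F,d')$ such that $|A(M)[\tors]| \leq B$ for every $M/F$ of degree $\leq d'$.} For elliptic curves this is classical (indeed Merel's theorem even gives a bound depending only on $d'$), and for higher-dimensional abelian varieties it is Masser's theorem, established via transcendence methods; I expect this uniformity to be the main technical ingredient, since it is considerably deeper for $\dim A > 1$.

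Granting the torsion bound, part (b) is fast. If $L/F$ has prime degree $d$ and contains $P \in A(L)[\tors] \setminus T$, then $F \subsetneq F(P) \subseteq L$, and the primality of $d$ forces $F(P) = L$. Since $\ord(P) \leq B := B(A/F,d)$, the point $P$ lies in the finite group $A[B!]$, so only finitely many fields $F(P)$ --- and hence only finitely many exceptional $L$ of degree $d$ --- can arise.

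For part (a) with $d$ prime, the conclusion is immediate from (b) together with the existence of infinitely many degree-$d$ extensions of $F$ (for instance $F(\sqrt[d]{a})$ as $a$ varies). For composite $d$, I would factor $d = d_1 e$ with $d_1$ prime, invoke (b) over $F$ to produce (infinitely many) $L_1/F$ of degree $d_1$ with $A(L_1)[\tors] = T$, and then induct on $d$ by applying part (a) over each such $L_1$: this yields $L/L_1$ of degree $e$ with $A(L)[\tors] = A(L_1)[\tors] = T$, so $L/F$ has degree $d$ and preserves torsion. Varying $L_1$ (and then $L/L_1$) gives infinitely many such $L$.

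For part (c), every quadratic twist corresponds to a class $t \in F^\times/F^{\times 2}$, and $A^t(F)[\tors]$ is naturally identified with the $(-1)$-eigenspace of the nontrivial $\sigma \in \Gal(F(\sqrt{t})/F)$ acting on $A(F(\sqrt{t}))[\tors]$. Because $-1$ acts trivially on $A[2]$, this eigenspace always contains $A(F)[2]$, so $A(F)[2] \subseteq A^t(F)[\tors]$ for every $t$. The reverse inclusion fails precisely when some torsion $P \in A(F(\sqrt{t}))$ of order $> 2$ satisfies $F(P) = F(\sqrt{t})$ and $\sigma(P) = -P$; part (b) applied in degree $2$ restricts this to only finitely many quadratic extensions $F(\sqrt{t})$, hence to only finitely many bad twist classes $t$.
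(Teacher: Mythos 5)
Your proposal is correct, and its backbone is the same as the paper's: Masser's uniform bound on torsion in bounded degree is the key input, part (b) is proved by exactly the paper's observation (an exceptional prime-degree $L$ must equal $F(P)$ for one of finitely many torsion points $P$ of bounded order, equivalently must sit inside $F(A[N])$), and part (c) is deduced from part (b) with $d=2$ via the identification of $A^t(F)$ with the $(-1)$-eigenspace inside $A(F(\sqrt{t}))$, just as in the paper. Where you genuinely diverge is part (a): the paper proves it directly for every $d\geq 2$, without using (b), by producing infinitely many degree-$d$ extensions $L/F$ with $L\cap F(A[N])=F$ — take $L/F$ totally ramified at a place unramified in $F(A[N])$, then iterate after enlarging $M$ — whereas you reduce composite $d$ to the prime case by factoring $d=d_1e$, invoking (b) to find torsion-preserving $L_1/F$ of prime degree $d_1$, and inducting on $e$ over the base field $L_1$. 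Your route is valid (the induction is well-founded, Masser applies over each intermediate field, and the existence of infinitely many prime-degree extensions such as $F(\sqrt[d]{p})$ for primes $p$ unramified in $F$ is standard), but it makes (a) logically dependent on (b) and on that existence statement; the paper's ramification construction buys a cleaner, uniform argument that handles all $d$ at once and yields the stronger linear-disjointness property $L\cap F(A[N])=F$, which is what makes the "infinitely many" part immediate.
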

\begin{proof} a) By work of Masser \cite[Corollary 2]{Masser87}, there is $N \in \Z^+$ depending only on $[L:\Q]$ such that $A(L)[\tors] = A(L)[N]$.
Let $M = F(A[N])$.  Then
$A(L)[\tors] \supsetneq A(F)[\tors]$ implies $A(L)[N] \supsetneq A(F)[N]$ and thus $M \cap L \supsetneq F$.
For each $d \geq 2$ there are infinitely many degree $d$ $L/F$ with $M \cap L = F$:
let $v$ be a finite place of $F$ which is unramified in $M$, and choose $L/F$ to be totally
ramified at $v$.  This gives one extension $L_1/F$; replacing $M$ with $L_1M$ gives another extension $L_2/F$; and so forth.  \\
b) If $d = [L:F]$ is prime, then $M \cap L \supsetneq F$ implies $L \subset M$. \\
c) For $t \in F^{\times}/F^{\times 2}$, we denote by $A^t_{/F}$ the quadratic
twist by $t$ and the involution $[-1]$ on $A$.  We have monomorphisms
$A(F) \hookrightarrow A(F(\sqrt{t}))$, $A^t(F) \hookrightarrow A(F(\sqrt{t}))$,
and
\[ A(F) \cap A^t(F) = A(F)[2] = A^t(F)[2]. \]
By part b), for all
but finitely many $t$ we have \\ $A(F)[\tors] = A(F(\sqrt{t}))[\tors]$ and thus $A^t(F)[\tors] = A^t(F)[2] = A(F)[2]$.
\end{proof}

\begin{remark}
In 2001, Qiu and Zhang used Merel's theorem to prove Theorem \ref{IMPRIMITIVELEMMA}b) when $A$ is an elliptic curve \cite[Theorem 1]{Qiu-Zhang01}. When $A$ is an elliptic curve and $F = \Q$, Theorem \ref{IMPRIMITIVELEMMA}c) was proved by Gouv\^ea and Mazur \cite[Proposition 1]{Gouvea-Mazur91}.  This extends to a number field $F$ unless $A$ has complex multiplication by an imaginary quadratic field $K \subset F$.  Results of Silverberg \cite{Silverberg88} handle the case of all abelian varieties with complex multiplication. Alternately, Mazur and Rubin use Merel's theorem to establish Theorem \ref{IMPRIMITIVELEMMA}c) for elliptic curves: however, as in their application $A(F)$ has no points of order $2$, they record the result (only) in the form that all but finitely many quadratic twists of $A_{/F}$ have no odd order torsion \cite[Lemma 5.5]{Mazur-Rubin10}.  The full statement of Theorem \ref{IMPRIMITIVELEMMA}c) for elliptic curves first appears in a recent work of F. Najman \cite[Theorem 12]{Najman13}.
\end{remark}


\section{$\R$-Structures, Complex Conjugation and Cartan Subgroups}

\subsection{Orders and Ideals in Imaginary Quadratic Fields}
\textbf{} \\ \\ \noindent
Let $K$ be a number field.  A \textbf{lattice} in $K$
is a $\Z$-module $\Lambda \subset K$ obtained as the $\Z$-span of a $\Q$-basis for
$K$.  An \textbf{order} $\OO$ in $K$ is a lattice which is also a subring.  The ring of integers $\OO_K$ is an order of $K$; conversely, since every element of an order $\OO$ is integral over $\Z$, $\OO \subset \OO_K$ with finite index.  For any lattice $\Lambda$,
\[ \OO(\Lambda) = \{x \in K \mid x \Lambda \subset \Lambda\} \]
is an order of $K$, and $\Lambda$ is a fractional $\OO(\Lambda)$-ideal of $K$.
For all $\alpha \in K^{\times}$ we have $\OO(\alpha \Lambda) = \OO(\Lambda)$.
For any order $\OO$, a fractional $\OO$-ideal $\Lambda$ is \textbf{proper}
if $\OO = \OO(\Lambda)$.  A fractional $\OO_K$-ideal is necessarily proper, whereas for any nonmaximal order $\OO$, $[\OO_K:\OO]\OO_K$ is an $\OO$-ideal which is not proper.


\begin{lemma}
\label{PROPERPROJECTIVE}
Let $\OO$ be an order in a \emph{quadratic} field $K$, and let $\Lambda$ be
a fractional $\OO$-ideal.  The following are equivalent: \\
(i) $\Lambda$ is a projective $\OO$-module.  \\
(ii) For every prime $p$, $\Lambda \otimes_{\Z} \Z_{(p)}$ is a principal fractional $\OO \otimes_{\Z} \Z_{(p)}$-ideal. \\
(iii) $\Lambda$ is a proper $\OO$-ideal.
\end{lemma}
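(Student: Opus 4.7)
The plan is to split the equivalence into (i)$\Leftrightarrow$(ii), which is pure commutative algebra, and then to close the loop via (ii)$\Rightarrow$(iii) and (iii)$\Rightarrow$(i). The real content—and the main obstacle—is showing that proper fractional ideals of a quadratic order are invertible; this rests on a classical Dedekind-style computation specific to quadratic orders.

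For (i)$\Leftrightarrow$(ii): $\Lambda$ is finitely generated over the Noetherian ring $\OO$ of generic rank $1$, since $\Lambda \otimes_\OO K = K$. The standard local-global criterion identifies projectivity with local freeness at every prime of $\OO$, and for a rank $1$ module local freeness is the same as local principality. The maximal ideals of the semilocal ring $\OO \otimes_\Z \Z_{(p)}$ are exactly the primes of $\OO$ above $p$, and finitely generated projectives of constant rank $1$ over a semilocal commutative ring are free; this lets me repackage local principality at every prime of $\OO$ into principality of $\Lambda \otimes_\Z \Z_{(p)}$ over $\OO \otimes_\Z \Z_{(p)}$ for every rational prime $p$.

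For (ii)$\Rightarrow$(iii): I would localize the defining equation of $\OO(\Lambda)$. A short lattice computation gives the identity $\OO(\Lambda) \otimes_\Z \Z_{(p)} = \{x \in K : x(\Lambda \otimes_\Z \Z_{(p)}) \subset \Lambda \otimes_\Z \Z_{(p)}\}$. If $\Lambda \otimes_\Z \Z_{(p)} = \alpha(\OO \otimes_\Z \Z_{(p)})$ with $\alpha \in K^\times$, the right side collapses to $\OO \otimes_\Z \Z_{(p)}$, so $\OO(\Lambda) \otimes_\Z \Z_{(p)} = \OO \otimes_\Z \Z_{(p)}$. Since this holds at every $p$ and both $\OO(\Lambda)$ and $\OO$ are $\Z$-lattices in $K$, we conclude $\OO(\Lambda) = \OO$.

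For (iii)$\Rightarrow$(i): I would invoke (or reprove) the classical fact that every proper fractional $\OO$-ideal is invertible. Letting $\alpha \mapsto \bar\alpha$ denote the nontrivial Galois involution of $K/\Q$, a direct computation with a $\Z$-basis of $\Lambda$ shows $\Lambda\bar\Lambda = N\OO$ for some $N \in \Q^\times$ whenever $\Lambda$ is proper, so $\Lambda^{-1} := N^{-1}\bar\Lambda$ is a fractional $\OO$-ideal satisfying $\Lambda\Lambda^{-1} = \OO$. Once invertibility is in hand, writing $1 = \sum_{i=1}^n \lambda_i\mu_i$ with $\lambda_i \in \Lambda$, $\mu_i \in \Lambda^{-1}$ produces $\OO$-linear maps $\Lambda \to \OO^n$, $\lambda \mapsto (\lambda\mu_1,\ldots,\lambda\mu_n)$, and $\OO^n \to \Lambda$, $(r_1,\ldots,r_n) \mapsto \sum r_i\lambda_i$, whose composite on $\Lambda$ is the identity. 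Hence $\Lambda$ is a direct summand of a free $\OO$-module, so projective.
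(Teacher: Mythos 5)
Your proof is correct and follows essentially the same route as the paper, which handles the three implications by citation: (i)$\Leftrightarrow$(ii) as a standard local-global exercise in commutative algebra, (ii)$\Rightarrow$(iii) by localizing the multiplier ring, and the remaining implication via the classical fact that proper fractional ideals of a quadratic order are invertible (the $\Lambda\overline{\Lambda} = N\OO$ computation behind the cited theorem of Lang). The only cosmetic difference is that you close the cycle as (iii)$\Rightarrow$(i) directly, using invertibility and the dual-basis/direct-summand argument, whereas the paper cites (iii)$\Rightarrow$(ii) (proper implies locally principal); both rest on the same classical computation, and your write-up simply supplies the details the paper leaves to the references.
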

\begin{proof}
To prove (i) $\Leftrightarrow$ (ii) is an exercise in commutative algebra \cite[Exercise 4.11]{Eisenbud}.
The local characterization of lattices in any number field gives (ii) $\implies$ (iii) \cite[p. 97]{LangEll} and the converse is standard \cite[Theorem 9, p. 98]{LangEll}.
\end{proof}
\noindent
From now on we assume that $K$ is an imaginary quadratic field.  If $\OO' \subset \OO$ are quadratic orders in $K$, their discriminants are related as follows:
\[ \Delta(\OO') = [\OO:\OO']^2 \Delta(\OO). \]
For an order $\OO$ in $K$, we define the \textbf{conductor} $\mathfrak{f} =
[\OO_K:\OO]$.  Let us write $\Delta_K$ for $\Delta(\OO_K)$.  Then if $\OO$ has conductor
$\mathfrak{f}$, we have
\[ \Delta(\OO) = \mathfrak{f}^2 \Delta_K. \]
Observe that $\Delta(\OO)$ is negative and congruent to $0$ or $1$ modulo $4$; we call such integers \textbf{imaginary quadratic discriminants}.  For any $K$ and $\mathfrak{f} \in \Z^+$, $\Z[\frac{\mathfrak{f^2}\Delta_K +\mathfrak{f}\sqrt{\Delta_K}}{2}]$ is the unique order $\OO$ in
$K$ of conductor $\mathfrak{f}$ \cite[p. 90]{LangEll}.  It follows that for every imaginary quadratic discriminant $\Delta$, there is a unique imaginary quadratic order $\OO(\Delta)$ of discriminant $\Delta$.

\subsection{Basics on CM Elliptic Curves}
\textbf{} \\ \\ \noindent
Let $A_{/F}$ be an abelian variety over a field $F$.  By $\End A$ we mean
the ring of endomorphisms of $A_{/F^{\sep}}$, endowed with the structure of a
$\mathfrak{g}_F$-module.  It is known that $\End^{\circ} A = \End A \otimes_{\Z} \Q$ is a semisimple $\Q$-algebra and $\End A$ is an order in $\End^{\circ} A$.
When $F$ has characteristic $0$ and $A = E$ is an elliptic curve, $\End^{\circ} E$ is either $\Q$ or an imaginary quadratic field $K$: in the latter case we say
that $E$ has \textbf{complex multiplication (CM)}.  Thus $\End E$ is\footnote{An imaginary quadratic order $\OO$ has a unique nontrivial ring automorphism (complex conjugation), so there are two different ways to identify $\OO$ with $\End E$.  As is standard, we take the identification which is compatible with the action of $\OO$ on the tangent space at the origin.} an imaginary
quadratic order $\OO$, and we say that $E$ has $\OO$-\textbf{CM}. We summarize some basic facts of CM theory \cite[Fact1]{TORS1}. Proofs are found throughout the literature \cite{Cox89, LangEll, SilvermanII}.

\begin{fact}
\label{FACT1}
\begin{enumerate}[a)]

\item There exists at least one complex elliptic curve with $\mathcal{O}$-CM.
\item Let $E$, $E'$ be any two complex elliptic curves with $\mathcal{O}(\Delta)$-CM.  The $j$-invariants $j(E)$ and $j(E')$ are
Galois conjugate algebraic integers.  In other words, $j(E)$ is a root of some monic polynomial with $\Z$-coefficients, and
if $P(t)$ is the minimal such polynomial, $P(j(E')) = 0$ also.
\item Thus there is a unique irreducible, monic polynomial $H_\Delta(t) \in \Z[t]$ whose roots are the $j$-invariants of all $\mathcal{O}(\Delta)$-CM complex elliptic curves.
\item The degree of $H_\Delta(t)$ is the class number $h(\Delta) = \# \Pic \OO(\Delta)$.  In particular, when $\OO = \OO_K$ we have $\deg(H_\Delta(t)) = h(K)$,
the class number of $K$.
\item Let $F_\Delta := \Q[t]/H_\Delta(t)$.  Then $F_\Delta$ can be embedded in the real numbers, so in particular is linearly disjoint
from the imaginary quadratic field $K$.  Let $K_\Delta$ denote the compositum of $F_\Delta$ and $K$, the
  \textbf{ring class field} of the order $\OO$.  Then $K_\Delta/K$ is abelian,
with Galois group canonically isomorphic to $\Pic(\OO)$.
\end{enumerate}

\end{fact}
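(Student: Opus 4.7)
The plan is to reduce everything to the analytic theory of complex tori combined with the Main Theorem of Complex Multiplication, treating (a) directly, (c) and (d) via a bijection with $\Pic(\OO)$, and (b) and (e) from a single application of the main theorem.

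For (a), fix an embedding $K \hookrightarrow \C$ and regard $\OO$ itself as a lattice in $\C$. The complex torus $\C/\OO$ is a complex elliptic curve, and its ring of analytic endomorphisms is $\{\alpha \in \C : \alpha \OO \subseteq \OO\} = \OO(\OO) = \OO$ by Lemma \ref{PROPERPROJECTIVE}. Since every analytic endomorphism of a complex torus is algebraic, $\C/\OO$ has $\OO$-CM.

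For (c) and (d), use the classical bijection between $\Pic(\OO)$ and the set of $\C$-isomorphism classes of $\OO$-CM elliptic curves: a proper $\OO$-ideal $\mathfrak{a}$ is sent to $\C/\mathfrak{a}$, and $\C/\mathfrak{a} \cong \C/\mathfrak{b}$ iff $\mathfrak{b} = \lambda \mathfrak{a}$ for some $\lambda \in K^\times$; surjectivity uses that any lattice with endomorphism ring $\OO$ is homothetic to a proper $\OO$-ideal. This yields exactly $h(\Delta)$ isomorphism classes, with $j$-invariants $j_1, \ldots, j_{h(\Delta)}$. These are algebraic integers (by the classical argument that $(j(E),j(E))$ is a root of a modular equation $\Phi_N(X,Y)$ whose specialization $\Phi_N(X,X)$ has leading coefficient $\pm 1$, or alternatively via ordinary good reduction at split primes), and $\gq$ permutes them since the isomorphism class of $\End E$ is Galois-invariant. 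Granting transitivity of the $\gq$-action (see next paragraph), setting $H_\Delta(t) = \prod_i (t - j_i) \in \Z[t]$ produces the required monic irreducible polynomial of degree $h(\Delta)$.

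For (b) and the identification in (e), invoke the Main Theorem of Complex Multiplication: the Artin reciprocity map induces an isomorphism $\Pic(\OO) \xrightarrow{\sim} \Gal(K(j(E))/K)$ under which $[\mathfrak{a}]$ acts on $j$-invariants by $j(\C/\Lambda) \mapsto j(\C/\mathfrak{a}^{-1}\Lambda)$. This action is simply transitive on $\{j_1, \ldots, j_{h(\Delta)}\}$, so $[K(j(E)):K] = h(\Delta)$, which both confirms the Galois conjugacy in (b) and identifies $K(j(E))$ with the ring class field $K_\Delta$ carrying the stated Galois group. The embedding $F_\Delta \hookrightarrow \R$ follows from $\overline{\OO} = \OO$ in $\C$, forcing $j(\C/\OO) \in \R$; since $K$ is imaginary, $F_\Delta$ and $K$ are linearly disjoint over $\Q$, so $K_\Delta = F_\Delta K$ has degree $2h(\Delta)$ over $\Q$. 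The main obstacle is the Main Theorem of CM for a possibly non-maximal $\OO$: the case $\OO = \OO_K$ reduces to the construction of the Hilbert class field of $K$ via singular moduli, but the general case requires either the adelic Shimura--Taniyama formulation (typically applied through the maximal-order companion $\C/\OO_K$ and isogenies of conductor $\mathfrak{f}$) or a direct treatment via modular functions for $\Gamma_0(\mathfrak{f})$, as in Cox Ch.~11 or Lang Ch.~10. All remaining ingredients are formal consequences of analytic uniformization.
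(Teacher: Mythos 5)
Your proposal is correct, and it matches the paper's treatment: the paper does not prove Fact \ref{FACT1} at all, but states it as known, citing \cite{TORS1} and the standard references \cite{Cox89, LangEll, SilvermanII}, and your sketch is precisely the standard argument found there (uniformization and the bijection with $\Pic\OO$, integrality of $j$ via the modular equation or potentially good reduction, and the main theorem of complex multiplication for not-necessarily-maximal orders giving the ring class field). The only point worth flagging is cosmetic: in part (a) the identity $\OO(\OO)=\OO$ is immediate from $1\in\OO$ and is not really the content of Lemma \ref{PROPERPROJECTIVE}, and in the modular-equation argument one should note that the chosen norm must be a non-square for $\Phi_N(X,X)$ to have leading coefficient $\pm 1$ -- both standard details, not gaps.
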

\noindent
Let $E_{/\C}$ be an elliptic curve with $\OO$-CM; by the Uniformization
Theorem there is a lattice $\Lambda \subset \C$ such that $1 \in \Lambda$ and $E \cong \C/\Lambda$.  Then $\Lambda$ is a fractional $\OO$-ideal of $K$ and
$\OO(\Lambda) = \OO$, so by Lemma \ref{PROPERPROJECTIVE} $\Lambda$ is a  projective $\OO$-module. Conversely, if $\Lambda$ is a rank one projective $\OO$-module, then
$E_{\Lambda} = (\Lambda \otimes_{\OO} \C)/\Lambda$ is an elliptic curve, and the $\C$-isomorphism class of $E_{\Lambda}$ depends only on the isomorphism
class of $\Lambda$ as an $\OO$-module.  The map $\Lambda \mapsto E_{\Lambda}$
induces a bijection from $\Pic \OO$ to the set of isomorphism classes of
elliptic curves $E_{/\C}$ with $\End E \cong \OO$.

\subsection{$\R$-structures on Elliptic Curves}
\textbf{} \\ \\ \noindent
For a subset $S \subset \C$, we put $\overline{S} = \{\overline{z} \mid z \in S\}$.  We say a lattice $\Lambda \subset \C$ is \textbf{real} if $\overline{\Lambda} = \Lambda$.  For a lattice $\Lambda \subset \C$, we associate the complex torus $\C/\Lambda$ to the Weierstrass equation
\[ E_{\Lambda}: y^2 = 4x^3 - g_2(\Lambda)x - g_3(\Lambda), \]
via the Eisenstein series $g_2, g_3$ \cite[Proposition VI.3.6]{SilvermanII}.

\begin{lemma}
\label{realEC}
a) Let $E_{/\C}$ be an elliptic curve.  The following are equivalent: \\
(i) There is an elliptic curve $(E_0)_{/\R}$ such that $(E_0)_{/\C} \cong E$. \\
(ii) $j(E) \in \R$.  \\
(iii) $E \cong E_{\Lambda}$ for a real lattice $\Lambda$. \\
b) Let $\Lambda_1, \Lambda_2$ be real lattices.  The following are equivalent: \\
(i) There is $\alpha \in \R^{\times}$ such that $\Lambda_2 = \alpha \Lambda_1$.  \\
(ii) $E_{\Lambda_1}$ and $E_{\Lambda_2}$ are isomorphic as elliptic curves over $\R$.
\end{lemma}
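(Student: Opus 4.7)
My plan is to handle each equivalence using the explicit uniformization of complex tori and the transformation properties of $g_2$, $g_3$, and $j$ under complex conjugation of lattices.

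For part a), the implication (iii) $\Rightarrow$ (i) follows immediately: the series $g_2(\Lambda) = 60 \sum_{\omega \neq 0} \omega^{-4}$ and $g_3(\Lambda) = 140 \sum_{\omega \neq 0} \omega^{-6}$ pair each $\omega$ with $\bar\omega$ when $\Lambda$ is real, so $g_2(\Lambda), g_3(\Lambda) \in \R$ and the Weierstrass equation displayed before the statement defines a model of $E_\Lambda$ over $\R$. The implication (i) $\Rightarrow$ (ii) is standard, as $j$ is a rational function with integer coefficients in the Weierstrass data.

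The heart of part a) is (ii) $\Rightarrow$ (iii). Writing $E \cong E_\Lambda$ for some lattice $\Lambda$, I will verify $j(\overline{\Lambda}) = \overline{j(\Lambda)}$ by conjugating the integer-coefficient $q$-expansion $j(\tau) = q^{-1} + 744 + \cdots$, which gives $\overline{j(\tau)} = j(-\bar\tau)$, and then noting $\Lambda_{-\bar\tau}$ is homothetic to $\overline{\Lambda_\tau}$. If $j(\Lambda) \in \R$, then $\overline{\Lambda}$ is homothetic to $\Lambda$, so $\overline{\Lambda} = \beta \Lambda$ for some $\beta \in \C^\times$; conjugating again forces $|\beta|^2 \Lambda = \Lambda$, hence $|\beta| = 1$. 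Writing $\beta = e^{2i\theta}$ and setting $\alpha = e^{i\theta}$, one checks $\alpha/\bar\alpha = \beta$, whence $\overline{\alpha \Lambda} = \bar\alpha\, \overline{\Lambda} = \bar\alpha \beta \Lambda = \alpha \Lambda$. Thus $\alpha \Lambda$ is real and $E \cong E_{\alpha\Lambda}$, proving (iii).

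For part b), I will use that when $\Lambda$ is real, the canonical $\R$-structure on $E_\Lambda$ induces complex conjugation on $E_\Lambda(\C)$ given by $z \mapsto \bar z$ on $\C/\Lambda$: the identity $\overline{\wp(z;\Lambda)} = \wp(\bar z; \overline{\Lambda}) = \wp(\bar z; \Lambda)$, together with the analogous identity for $\wp'$, shows that conjugation on Weierstrass coordinates pulls back to $z \mapsto \bar z$ on the uniformization. Granted this, (i) $\Rightarrow$ (ii) follows since $z \mapsto \alpha z$ with $\alpha \in \R^\times$ commutes with $z \mapsto \bar z$; concretely, it is the map $(x,y) \mapsto (\alpha^{-2} x, \alpha^{-3} y)$, defined over $\R$. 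Conversely, any $\C$-isomorphism $E_{\Lambda_1} \to E_{\Lambda_2}$ of elliptic curves lifts uniquely to $z \mapsto \alpha z$ with $\alpha \Lambda_1 = \Lambda_2$; if this map descends to $\R$ it commutes with conjugation, and the relation $\alpha \bar z = \overline{\alpha z} = \bar\alpha\, \bar z$ for all $z \in \C$ forces $\alpha = \bar\alpha \in \R^\times$. The main technical point is the homothety step in (ii) $\Rightarrow$ (iii), combining the classical parametrization of lattices by $j$ with the calculation producing the correcting scalar $\alpha$; everything else is routine manipulation of the uniformization.
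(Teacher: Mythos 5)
Your proof is correct, but it takes a genuinely different route from the paper's for the two substantive implications. For a)(ii) $\Rightarrow$ (iii), the paper starts from a real Weierstrass equation with the prescribed $j$-invariant and invokes the uniqueness of the lattice with given $(g_2,g_3)$, so that $g_i(\Lambda) = \overline{g_i(\Lambda)} = g_i(\overline{\Lambda})$ forces $\overline{\Lambda} = \Lambda$; you instead work entirely with lattices, using $j(\overline{\Lambda}) = \overline{j(\Lambda)}$ and the injectivity of $j$ on homothety classes to get $\overline{\Lambda} = \beta\Lambda$ with $|\beta| = 1$, then rotating by a square root of $\beta$ to land on a real lattice in the same homothety class -- a nice trick that avoids quoting the existence of a real model with given real $j$ and the $(g_2,g_3)$-uniqueness statement, at the cost of the $q$-expansion computation. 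For b), the paper argues through Weierstrass change-of-variables: a real isomorphism produces $\alpha \in \R^{\times}$ with $g_2(\Lambda_2) = g_2(\alpha^{-1}\Lambda_1)$ and $g_3(\Lambda_2) = g_3(\alpha^{-1}\Lambda_1)$, and lattice uniqueness again yields $\Lambda_2 = \alpha^{-1}\Lambda_1$; you instead identify the conjugation induced by the real model with $z \mapsto \bar z$ on $\C/\Lambda$ via $\overline{\wp(z;\Lambda)} = \wp(\bar z;\Lambda)$ and use rigidity of isomorphisms of complex tori. Two small points to tighten: the step from $|\beta|^2\Lambda = \Lambda$ to $|\beta| = 1$ should cite discreteness (or covolume) of the lattice, and in b)(ii) $\Rightarrow$ (i) the identity $\alpha \bar z \equiv \bar\alpha \bar z$ holds only modulo $\Lambda_2$, so you should conclude $\alpha = \bar\alpha$ by continuity of $z \mapsto (\alpha - \bar\alpha)\bar z$ into the discrete set $\Lambda_2$; both repairs are routine.
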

\begin{proof}
To prove a), it is immediate that (i) $\implies$ (ii). For (ii) $\implies$ (iii), take $g_2,g_3 \in \R$ such that
$E': y^2 = 4x^3-g_2x - g_3$ is an elliptic curve with $j$-invariant $j(E)$ \cite[Proposition III.1.4]{Silverman}.  There is a \emph{unique} lattice $\Lambda \subset \C$ such that $g_2(\Lambda) = g_2$ and $g_3(\Lambda) = g_3$ and thus $E' \cong \C/\Lambda$ \cite[Theorem VI.5.1]{Silverman}.  Since $j(E) = j(E')$ and $\C$ is algebraically closed, we have $E \cong \C/\Lambda$.  Since $g_2(\Lambda) = \overline{g_2(\Lambda)} = g_2(\overline{\Lambda})$,  $g_3(\Lambda) = \overline{g_3(\Lambda)} = g_3(\overline{\Lambda})$, we have $\overline{\Lambda} = \Lambda$.  Finally, if $\Lambda$ is a real lattice then $g_2(\Lambda),g_3(\Lambda)\in \R$ and (iii) $\implies$ (i) is immediate.  (Alternately,
since $\overline{\Lambda} = \Lambda$, complex conjugation on $\C$ descends to an antiholomorphic involution on $\C/\Lambda$ and thus gives descent data for an
$\R$-structure on $E$.)  \\
To prove b), if  $\Lambda_1,\Lambda_2 \subset \C$, we have $\C/\Lambda_1 \cong \C/\Lambda_2$ iff $\Lambda_2 = \alpha \Lambda_1$ for some $\alpha \in \C^{\times}$. In terms of Weierstrass equations, $E_{\alpha \Lambda}$ is the quadratic twist of $E_{\Lambda}$ by $\alpha^2$.
Thus $E_{\Lambda_1} \cong E_{\alpha \Lambda_1} = E_{\Lambda_2}$ if $\alpha\in \R$.  Conversely, if $E_{\Lambda_1} \cong E_{\Lambda_2}$,
then the standard theory of Weierstrass equations \cite[$\S$ III.1]{Silverman}
shows: there is $\alpha \in \R^{\times}$ with $g_2(\Lambda_2) = \alpha^4
g_2(\Lambda_1) = g_2(\alpha^{-1} \Lambda_1)$, $g_3(\Lambda_2) = \alpha^6 g_3(\Lambda_1) = g_3(\alpha^{-1} \Lambda_2)$ and thus $\Lambda_2 = \alpha^{-1}
\Lambda_1$.
\end{proof}

\begin{lemma}
\label{LEMMA2.1}
a) Let $\Lambda$ be a real lattice.  If $j(E_{\Lambda}) \neq 1728$,
then $E_{i\Lambda}$ and $E_{\Lambda}$ are isomorphic as $\C$-elliptic curves
but \emph{not} as $\R$-elliptic curves.  If $j(E_{\Lambda}) = 1728$,
then $E_{\zeta_8 \Lambda}$ and $E_{\Lambda}$ are isomorphic as $\C$-elliptic curves but \emph{not} as $\R$-elliptic curves. \\
b) Let $j \in \R$.  Then there are precisely two $\R$-isomorphism classes
of elliptic curves $E_{/\R}$ with $j(E) = j$.
\end{lemma}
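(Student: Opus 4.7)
My plan is to leverage Lemma \ref{realEC}b), which translates $\R$-isomorphism into the concrete lattice equality $\Lambda_2 = \alpha \Lambda_1$ for some $\alpha \in \R^\times$, together with the standard fact that for any lattice $\Lambda \subset \C$ the endomorphism ring $\End E_\Lambda$ is realized as $\{\beta \in \C : \beta\Lambda \subset \Lambda\}$ and every element of $\Aut E_\Lambda$ (the unit group) satisfies $|\beta| = 1$, because multiplication by $\beta$ preserves $\Lambda$ as a $\Z$-module, hence has determinant $\pm 1$ on $\C \cong \R^2$.

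For part a), the $\C$-isomorphism $E_{\alpha\Lambda} \cong E_\Lambda$ is automatic in each twist. For realness: $\overline{i\Lambda} = -i\Lambda = i\Lambda$ since $-\Lambda = \Lambda$, and $\overline{\zeta_8 \Lambda} = \zeta_8^{-1}\Lambda = \zeta_8 \cdot (-i)\Lambda$, which equals $\zeta_8\Lambda$ in the case $j(E_\Lambda)=1728$ because then $i\Lambda = \Lambda$. To preclude $\R$-isomorphism when $j(E_\Lambda) \neq 1728$, I suppose $i\Lambda = \alpha\Lambda$ with $\alpha \in \R^\times$; then $i/\alpha \in \Aut E_\Lambda$, so $|\alpha|=1$, hence $\alpha = \pm 1$, forcing $\pm i \in \End E_\Lambda$. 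This makes $\End E_\Lambda \supseteq \Z[i] = \OO_{\Q(i)}$, which is maximal, so $\End E_\Lambda = \Z[i]$ and $j(E_\Lambda) = 1728$, a contradiction. An identical argument handles $j(E_\Lambda) = 1728$: $\zeta_8\Lambda = \alpha\Lambda$ forces $\alpha = \pm 1$ and then $\pm\zeta_8 \in \Z[i]^\times = \{\pm 1, \pm i\}$, which is false.

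For part b), the lower bound of two classes is immediate from Lemma \ref{realEC}a) together with part a). For the upper bound, any $E_{/\R}$ with $j$-invariant $j$ corresponds to some real lattice $\alpha\Lambda$ with $\alpha \in \C^\times$, and realness forces $\overline{\alpha}/\alpha \in \Aut E_\Lambda$; the $\R$-isomorphism class of $E_{\alpha\Lambda}$ is then the orbit of $\alpha$ under the commuting actions of $\R^\times$ (by scaling) and $\Aut E_\Lambda$ (on the right, since $\alpha u \Lambda = \alpha\Lambda$ for each unit $u$). A short case analysis on $\Aut E_\Lambda \in \{\mu_2, \mu_4, \mu_6\}$ should then exhibit exactly two orbits each time, matching the pair produced in part a). The main bookkeeping hurdle is the $j=0$ case, where $\Aut E_\Lambda = \mu_6$: here the map $\alpha \mapsto \overline\alpha/\alpha$ identifies $\{\alpha \in \C^\times : \overline\alpha/\alpha \in \mu_6\}/\R^\times$ with $\mu_6$, and the residual $\mu_6$-action transforms $x$ by $x \mapsto u^{-2}x$ with image $\mu_3$, leaving $\mu_6/\mu_3 \cong \Z/2$ and hence exactly two classes.
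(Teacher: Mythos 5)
Your argument is correct, but it proceeds differently from the paper's. For part a) the paper argues purely with the weights of the Eisenstein series: $g_3(i\Lambda)=i^{-6}g_3(\Lambda)=-g_3(\Lambda)$ and $g_2(\zeta_8\Lambda)=\zeta_8^{-4}g_2(\Lambda)=-g_2(\Lambda)$, while any real homothety rescales $g_3$ (resp.\ $g_2$) by the \emph{positive} factor $\alpha^{-6}$ (resp.\ $\alpha^{-4}$), so the sign change rules out an $\R$-isomorphism in one line; your route instead goes through the endomorphism ring, showing that $i\Lambda=\alpha\Lambda$ with $\alpha\in\R^\times$ forces $|\alpha|=1$, hence $i\Lambda=\Lambda$ and $\End E_\Lambda=\Z[i]$, contradicting $j\neq 1728$ (and similarly $\pm\zeta_8\notin\Z[i]^\times$ when $j=1728$). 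Both are valid; the paper's sign computation is shorter and avoids any CM input, whereas your unit-group argument has the advantage that exactly the same machinery carries part b): you count orbits of admissible $\alpha$ (those with $\overline{\alpha}/\alpha\in\Aut E_\Lambda$) under the commuting actions of $\R^\times$ and $\Aut E_\Lambda$, getting two classes in each of the cases $\mu_2,\mu_4,\mu_6$, where the paper simply cites Silverman \cite[Prop.\ V.2.2]{SilvermanII}. Your part b) is thus more self-contained than the paper's; the only things you leave tacit are routine -- the $\mu_2$ and $\mu_4$ orbit counts (your $\mu_6$ computation is the hardest and is done correctly), and the fact that every elliptic curve over $\R$ is $\R$-isomorphic to some $E_{\Lambda'}$ with $\Lambda'$ a real lattice, which follows from the proof of Lemma \ref{realEC}a) (real $g_2,g_3$ determine a unique, necessarily real, lattice). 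Also, your explicit check that $i\Lambda$ and $\zeta_8\Lambda$ are again real lattices is a point the paper leaves implicit, and it is needed for the statement to make sense, so it is a welcome addition.
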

\begin{proof}
If $j \neq 1728$ then $g_3(\Lambda) \neq 0$, so
$g_3(i\Lambda) = - g_3(\Lambda)$, whereas as above any real change of variables
takes $g_3(\Lambda) \mapsto \alpha^{-6} g_3(\Lambda)$ for some $\alpha \in \R^{\times}$.  If $j = 1728$ then
$g_3(\Lambda) = 0$, so the above argument shows instead that $i\Lambda = \Lambda$
(as it should, since $\Lambda$ is homothetic to $\Z[i]$).  In this case $g_2(\Lambda) \neq 0$ and $g_2(\zeta_8 \Lambda) = -g_2(\Lambda)$, whereas
any real change of variables takes $g_3(\Lambda) \mapsto \alpha^{-4} g_3(\Lambda)$ for some $\alpha \in \R^{\times}$. The standard theory of real elliptic curves gives b) \cite[Prop. V.2.2]{SilvermanII}.
\end{proof}

\begin{lemma}
\label{LEMMA2.2}
Let $\OO$ be an order in the imaginary quadratic field $K$, and let $I$ be a proper fractional $\OO$-ideal.  The following are equivalent: \\
(i) $[I] = [\overline{I}] \in \Pic \OO$. \\
(ii) $I^2$ is principal. \\
(iii) $j(E_I) \in \R$.
\end{lemma}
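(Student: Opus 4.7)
Proof plan.

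My plan is to prove the cycle (i) $\Leftrightarrow$ (ii) and (i) $\Leftrightarrow$ (iii) separately, using essentially two ingredients: the ``norm-is-principal'' relation $I\overline{I} = N(I)\OO$ for proper $\OO$-ideals, and the bijection between $\Pic \OO$ and $\C$-isomorphism classes of $\OO$-CM elliptic curves furnished by $\Lambda \mapsto E_\Lambda$ (Fact \ref{FACT1} combined with the discussion following it).

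For (i) $\Leftrightarrow$ (ii), the key claim is that $I\overline{I}$ is a principal fractional $\OO$-ideal. Granting this, $[\overline{I}] = [I]^{-1}$ in $\Pic \OO$, so $[I] = [\overline{I}]$ if and only if $[I]^2 = 1$, which is exactly the statement that $I^2$ is principal. To establish the claim I would use Lemma \ref{PROPERPROJECTIVE}: because $I$ is proper, $I \otimes_\Z \Z_{(p)}$ is principal over $\OO \otimes_\Z \Z_{(p)}$ for every prime $p$, say generated by some $\alpha_p \in K^\times$. Complex conjugation commutes with localization at $p$ (it only permutes the primes above $p$ in $\OO$ without altering the underlying rational prime), so $\overline{I} \otimes_\Z \Z_{(p)}$ is generated by $\overline{\alpha_p}$, and hence $(I\overline{I}) \otimes_\Z \Z_{(p)}$ is generated by $\alpha_p \overline{\alpha_p} = N_{K/\Q}(\alpha_p) \in \Q^\times$. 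Thus $I\overline{I}$ is locally a principal $\Z_{(p)}$-module at every prime, generated by a rational number, which forces $I\overline{I} = c\,\OO$ for some $c \in \Q^\times$.

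For (i) $\Leftrightarrow$ (iii), I use that the assignment $[I] \mapsto j(E_I)$ is a well-defined bijection from $\Pic \OO$ onto the set of $j$-invariants of $\OO$-CM elliptic curves over $\C$. Since $E_{\overline{I}} = \C/\overline{I}$ is obtained from $E_I = \C/I$ by applying complex conjugation on $\C$, one has $j(E_{\overline{I}}) = \overline{j(E_I)}$. Therefore $j(E_I) \in \R$ if and only if $j(E_I) = j(E_{\overline{I}})$, if and only if $E_I \cong_\C E_{\overline{I}}$, if and only if $[I] = [\overline{I}]$ in $\Pic \OO$.

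The main obstacle is the ``norm is principal'' claim $I\overline{I} = N(I)\OO$ for proper $\OO$-ideals; once this is in hand, both equivalences are essentially formal. The local/global argument above sidesteps the need to manipulate explicit $\Z$-bases of $I$ (which becomes awkward for nonmaximal orders), and it fits naturally with the projective/locally principal viewpoint already established in Lemma \ref{PROPERPROJECTIVE}. The only subtlety to flag is that complex conjugation restricts to a ring automorphism of $\OO$ fixing $\Z$, so it commutes with the formation of $\OO \otimes_\Z \Z_{(p)}$; this is what lets us conclude that the local generator of $\overline{I}$ is simply the conjugate of the local generator of $I$.
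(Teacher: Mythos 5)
Your proof is correct. On the equivalence (i) $\Leftrightarrow$ (ii) you follow essentially the same route as the paper, except that the paper simply asserts the identity $I\overline{I} = N_{K/\Q}(I)\,\OO$ for proper $\OO$-ideals, whereas you derive the principality of $I\overline{I}$ from the locally principal characterization of properness (Lemma \ref{PROPERPROJECTIVE}); that localization argument is sound, the one step left implicit being the local--global principle for lattices ($\Lambda = \bigcap_p \Lambda \otimes_{\Z} \Z_{(p)}$, the same fact cited in Lemma \ref{PROPERPROJECTIVE}), which is what lets you pass from ``locally generated by a rational number at every $p$'' to $I\overline{I} = c\,\OO$ with $c \in \Q^{\times}$. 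Where you genuinely diverge is the analytic half: the paper handles (ii) $\Leftrightarrow$ (iii) by citing a result of Shimura, while you prove (i) $\Leftrightarrow$ (iii) directly from the bijection between $\Pic \OO$ and $\C$-isomorphism classes of $\OO$-CM curves together with the observation that conjugating the lattice conjugates $g_2$ and $g_3$, hence $j(E_{\overline{I}}) = \overline{j(E_I)}$. Your route is self-contained and in the same spirit as the proof of Lemma \ref{realEC} (it uses nothing beyond the uniformization dictionary already set up in the paper), at the cost of a slightly longer argument; the paper's proof is shorter but imports the equivalence from the literature. Note also that your reduction of (i) $\Leftrightarrow$ (ii) to principality of $I\overline{I}$ is genuinely needed: conjugation-stability of $I\overline{I}$ alone would not suffice, since by genus theory conjugation-fixed classes need not be trivial, so the rationality of the local generators $N_{K/\Q}(\alpha_p)$ is exactly the right point to emphasize.
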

\begin{proof}
a) Since $I \overline{I} = N_{K/\Q}(I) \OO$, we have $[\overline{I}] = [I]^{-1} \in \Pic \OO$, so (i) $\iff$ (ii).  Work of Shimura gives (ii) $\iff$ (iii) \cite[(5.4.3)]{Shimura}.
\end{proof}
\noindent
For an imaginary quadratic discriminant $\Delta$, let $\tau_\Delta = \frac{\Delta+\sqrt{\Delta}}{2}$, so $\OO(\Delta) = \Z[\frac{\Delta+\sqrt{\Delta}}{2}]$ is the imaginary quadratic order of  discriminant $\Delta$.  Then $j(\C/\OO) = j(\tau_\Delta)$.  Let $\sigma_1,\ldots,\sigma_{h(\Delta)}: \Q(j(\tau_\Delta))/\Q \hookrightarrow \C$ be the $\# \Pic \OO(\Delta)$ field embeddings, with $\sigma_1$ taken to be inclusion.  By Lemma \ref{LEMMA2.2}, $j(\tau_\Delta) \in \R$.  The other embeddings $\sigma_2,\ldots,\sigma_h$ may in general be either real or complex: Lemma \ref{LEMMA2.2} implies that the number of real embeddings is $\# (\Pic \OO)[2]$.

\begin{lemma}
\label{LEMMA2.5}
For an imaginary quadratic discriminant $\Delta$, let $r$ be the number of distinct
odd prime divisors of $\Delta$.  We define $\nu \in \N$ as follows: \\
\[\nu =
\begin{cases}
r-1, \ \Delta \equiv 1 \pmod{4} \text{ or }  \Delta \equiv 4 \pmod{16} \\
r, \ \Delta \equiv 8,12 \pmod{16} \text{ or } \Delta \equiv 16 \pmod{32} \\
 r+1, \ \Delta \equiv 0 \pmod{32}.
\end{cases} \]
a) We have $\Pic \OO(\Delta)[2] \cong (\Z/2\Z)^{\nu}$. \\
b) In particular, $h(\Delta)$ is odd iff $\Delta \in \{-4,-8,-16\}$ or $\Delta = - 2^{\epsilon} \ell^{2a+1}$ for $\epsilon \in \{0,2\}$, $\ell \equiv 3 \pmod{4}$ a prime and $a \in \N$. \\
c) There are precisely $2^{\nu+1}$ $\R$-homothety classes of $\OO$-CM real lattices.
\end{lemma}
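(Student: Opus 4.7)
My plan for part (a) is to invoke classical Gauss genus theory of imaginary quadratic orders. Since $\Pic\OO(\Delta)$ is a finite abelian group, $|\Pic\OO(\Delta)[2]|$ equals $|\Pic\OO(\Delta)/\Pic\OO(\Delta)^2|$, which is the number of genera of primitive positive definite binary quadratic forms of discriminant $\Delta$. The Gauss genus theorem (cf.\ Cox, \emph{Primes of the form $x^2+ny^2$}, Propositions 3.11 and 7.7) asserts that this number is $2^{\mu-1}$, where $\mu$ is the number of ``assigned characters'' of $\Delta$: one per odd prime divisor, plus an additional $2$-adic contribution determined by $\Delta\pmod{32}$. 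I would then verify, by a direct enumeration of the prime discriminants dividing $\Delta$, that the extra $2$-adic count is $0$ when $\Delta \equiv 1 \pmod 4$ or $\Delta \equiv 4\pmod{16}$, $1$ when $\Delta \equiv 8,12\pmod{16}$ or $\Delta \equiv 16\pmod{32}$, and $2$ when $\Delta \equiv 0\pmod{32}$, so that $\mu-1 = \nu$ case by case. A more intrinsic proof would instead identify $\Pic\OO(\Delta)[2]$ with the group of ambiguous classes --- using $I\overline I = N(I)\OO$, which gives $[\overline I] = [I]^{-1}$, so that $[I]^2 = 1$ iff $I \sim \overline I$ --- and exhibit generators indexed by the prime discriminants of $\Delta$, with a single product relation. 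The main technical obstacle in either approach is the $2$-adic case analysis; everything else follows by bookkeeping.

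For part (b) I would read off from (a) that $h(\Delta)$ is odd iff $\Pic\OO(\Delta)[2]$ is trivial iff $\nu = 0$, and then solve $\nu=0$ in the three cases. The equation $\nu = r-1 = 0$ forces $r = 1$: writing the unique odd prime divisor as $\ell$, compatibility with $\Delta<0$ and $\Delta \equiv 1\pmod 4$ (resp.\ $\Delta \equiv 4 \pmod{16}$) forces $\ell \equiv 3\pmod 4$ with odd exponent, yielding $\Delta = -\ell^{2a+1}$ (resp.\ $\Delta = -4\ell^{2a+1}$), that is, $\Delta = -2^\epsilon\ell^{2a+1}$ with $\epsilon \in \{0,2\}$. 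The equation $\nu = r = 0$ forces $\Delta = -2^n$, and the congruence constraints isolate $n \in \{2,3,4\}$, giving $\Delta \in \{-4,-8,-16\}$. The third case $\nu = r+1$ never vanishes. Assembling these gives the stated classification.

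For part (c) I would combine the three previous lemmas. By Lemma \ref{realEC}(b), the map $\Lambda \mapsto E_\Lambda$ induces a bijection between $\R$-homothety classes of real lattices and $\R$-isomorphism classes of real elliptic curves arising as $E_\Lambda$ for some real $\Lambda$. I would check that every $\R$-model of an $\OO$-CM elliptic curve arises this way: Lemma \ref{realEC}(a) yields one real-lattice representative, and Lemma \ref{LEMMA2.1} exhibits its nontrivial quadratic twist as $E_{i\Lambda}$ (or $E_{\zeta_8\Lambda}$ when $j = 1728$), with the new lattice again real and not $\R$-homothetic to $\Lambda$. Then Lemma \ref{LEMMA2.1}(b) gives exactly two $\R$-isomorphism classes per real $j$-invariant, while Lemma \ref{LEMMA2.2} identifies the set of real $\OO$-CM $j$-invariants with $\Pic\OO[2]$, of order $2^\nu$ by (a). Multiplying yields the count $2\cdot 2^{\nu} = 2^{\nu+1}$.
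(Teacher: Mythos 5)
Your proposal is correct and follows essentially the same route as the paper: part a) is the Gauss genus-theory count (the paper simply cites \cite[Prop. 3.11]{Cox89} and \cite[Thm. 5.6.11]{Halter-Koch}, whose case analysis matches your $2$-adic bookkeeping), part b) is the immediate solution of $\nu=0$, and part c) is exactly the paper's combination of part a) with Lemmas \ref{LEMMA2.1} and \ref{LEMMA2.2} (two $\R$-classes per real $j$, and $2^{\nu}$ real $\OO$-CM $j$-invariants). You have merely filled in the details the paper leaves to the citations.
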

\begin{proof}
Part a) is essentially due to Gauss \cite[Proposition 3.11]{Cox89},\cite[Theorem 5.6.11]{Halter-Koch}.  Part b) follows immediately.  Part c) is obtained by combining part a) with Lemmas \ref{LEMMA2.1} and \ref{LEMMA2.2}.
\end{proof}
\noindent
A fractional $\OO$-ideal $I$ is \textbf{primitive} if $I \subset \OO$ and for all $e \geq 2$, $I \not \subset e \OO$.

\begin{lemma}
\label{lemma3.6}
a) Let $E \cong_{\R} E_{\Lambda}$ be a real $\OO$-CM elliptic curve.  The $\R$-homothety class of $\Lambda$ contains a unique primitive $\OO$-ideal $I$.
The ideal $I$ is proper and real.  \\
b) (\cite[Theorem 5.6.4]{Halter-Koch}) There are precisely $2^{\mu}$ primitive proper real $\OO$-ideals.
\end{lemma}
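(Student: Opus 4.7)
The plan is to establish existence in part a) via a Hilbert 90 argument, handle uniqueness by an elementary scaling observation, and derive part b) from part a) together with Lemma \ref{LEMMA2.5}c).

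For existence in part a), fix a primitive proper $\OO$-ideal $I \subset K$ in the $\C^\times$-homothety class of $\Lambda$, so that $\Lambda = \alpha I$ for some $\alpha \in \C^\times$ (such $I$ exists since the $\C$-isomorphism class of $E_\Lambda$ is parametrized by $\Pic \OO$, and each class in $\Pic \OO$ has a primitive integral representative). The realness hypothesis $\overline{\Lambda} = \Lambda$ translates to $\overline{\alpha}\,\overline{I} = \alpha I$, hence $\overline{I} = u I$ where $u := \alpha/\overline{\alpha}$. Since $I$ contains a $\Q$-basis of $K$ and $uI = \overline{I} \subset K$, we have $u \in K^\times$, and plainly $N_{K/\Q}(u) = u\overline{u} = 1$. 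The decisive step is to invoke Hilbert 90 for the cyclic extension $K/\Q$: there exists $k \in K^\times$ with $u = k/\overline{k}$, so $\alpha/k$ is fixed by complex conjugation, giving $r := \alpha/k \in \R^\times$. Thus $\Lambda = r(kI)$. The fractional $\OO$-ideal $kI \subset K$ admits a unique factorization $kI = qJ$ with $q \in \Q^+$ and $J$ primitive proper integral (take $q$ to be the smallest positive rational with $q^{-1}(kI) \subset \OO$; minimality forces $J$ primitive). Then $\Lambda = (rq)J$ with $rq \in \R^\times$, and $\overline{J} = J$ follows from $\overline{\Lambda} = \Lambda$, so $J$ is the sought primitive proper real $\OO$-ideal.

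For uniqueness, suppose $I_1, I_2$ are primitive proper $\OO$-ideals with $I_2 = s I_1$, $s \in \R^\times$. Since $I_1$ contains a $\Q$-basis of $K$ and $sI_1 \subset K$, we get $s \in K \cap \R = \Q^\times$. Writing $|s| = a/b$ in lowest terms: if $b \geq 2$, then fixing any prime $\ell \mid b$, the inclusion $aI_1 \subset b\OO \subset \ell\OO$ combined with $\gcd(a,\ell) = 1$ forces $I_1 \subset \ell\OO$, contradicting primitivity; so $b = 1$. Then $a \geq 2$ would give $I_2 = \pm a I_1 \subset a\OO$, again contradicting primitivity. Hence $s = \pm 1$ and $I_1 = I_2$.

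The main obstacle is conceptual: recognizing that the topological condition $\overline{\Lambda} = \Lambda$ translates to the arithmetic condition $N_{K/\Q}(u) = 1$, which by Hilbert 90 is automatically a coboundary $k/\overline{k}$. This observation bypasses any special-case analysis for the exceptional orders $\Z[i]$ and $\Z[\zeta_3]$, where the extra units could naively seem to obstruct a direct primitive-representative construction. For part b), since part a) provides a bijection between $\R$-homothety classes of real $\OO$-CM lattices and primitive proper real $\OO$-ideals, the count $2^{\nu+1}$ of Lemma \ref{LEMMA2.5}c) yields $\mu = \nu + 1$; alternatively, one may appeal directly to the cited Halter-Koch Theorem 5.6.4.
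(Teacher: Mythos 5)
Your proof is correct in substance but reaches the real representative by a different route than the paper. For existence the paper argues directly on the lattice: since $\overline{\Lambda}=\Lambda$, any element $a+bi\in\Lambda$ with $a\neq 0$ gives $2a=(a+bi)+(a-bi)\in\Lambda$, and dividing by the nonzero real number $2a$ already produces a proper real fractional $\OO$-ideal which is $\R$-homothetic to $\Lambda$; a final rational rescaling makes it primitive. You instead start from an arbitrary primitive proper ideal $I$ in the $\C$-homothety class, note that realness of $\Lambda=\alpha I$ forces $\overline{I}=(\alpha/\overline{\alpha})I$ with $u=\alpha/\overline{\alpha}\in K^{\times}$ of norm $1$, and invoke Hilbert 90 to write $u=k/\overline{k}$, so that $kI$ is real and $\alpha/k\in\R^{\times}$. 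This is valid (complex conjugation restricts to the nontrivial element of $\Gal(K/\Q)$) and is a pleasant cohomological repackaging, but it is heavier machinery than needed: the paper's extraction of a nonzero real element of $\Lambda$ is in effect an explicit trivialization of the same norm-one cocycle, and it buys a shorter, more elementary argument. Your uniqueness argument and your deduction of part b) from Lemma \ref{LEMMA2.5}c) (so that $\mu=\nu+1$) are essentially the paper's.

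One local slip to fix: when you pass from $kI$ to a primitive ideal you take ``the smallest positive rational $q$ with $q^{-1}(kI)\subset\OO$''. No smallest such $q$ exists: if $q$ works, then $(q/2)^{-1}(kI)=2\,q^{-1}(kI)\subseteq 2\OO\subseteq\OO$, so $q/2$ works as well. You want the \emph{largest} admissible $q$ (for instance, let $d$ be the least positive integer with $d\,kI\subseteq\OO$ and $e$ the largest positive integer with $d\,kI\subseteq e\OO$, and set $q=e/d$); it is maximality, not minimality, that forces $J=q^{-1}(kI)\not\subseteq m\OO$ for all $m\geq 2$, i.e.\ primitivity. One should also note the admissible $q$ are bounded above (norm considerations) so a largest one exists. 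With that correction your argument is complete.
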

\begin{proof}
The lattice $\Lambda$ contains an element $a+bi$ with $a \neq 0$.  Since $\Lambda$ is real by Lemma \ref{realEC}, we have $a-bi \in \Lambda$ and thus also $2a = (a+bi) + (a-bi) \in \Lambda$.
Then $\frac{1}{2a} \Lambda$ is a proper $\OO$-ideal which is $\R$-homothetic to $\Lambda$.
If two fractional $\OO$-ideals are
$\R$-homothetic, then one is real iff the other is real, and the $\R$-homothety class of any fractional $\OO$-ideal contains a unique primitive $\OO$-ideal. To prove part b), combine part a) with Lemma \ref{LEMMA2.5}b).
\end{proof}
\noindent
Ideals of this type are completely classified.  We use the following notation:
for $\alpha,\beta \in \C$ which are linearly independent over $\R$, we define the lattice
\[ [\alpha,\beta] = \{ a \alpha + b \beta \mid a,b \in \Z\}. \]

\begin{thm}
\label{HalterKoch} \cite[Theorem 5.6.4]{Halter-Koch}
Let $\mathcal{O}$ be an order in $K$ of discriminant $\Delta$. A primitive proper $\OO$-ideal $I$ is real iff it is one of the following two types:
\begin{equation}
\label{HKONE}
I =\left[a, \frac{\sqrt{\Delta}}{2} \right], \text{ where } a \in \Z^+, \ 4a | \Delta \text{ and } \gcd \left(a, \frac{\Delta}{4a}\right)=1
\end{equation}
\begin{equation}
\label{HKTWO}
I=\left[a, \frac{a+\sqrt{\Delta}}{2} \right], \text{ where } a \in \Z^+,\ 4a | a^2-\Delta \text{ and } \gcd \left(a, \frac{a^2-\Delta}{4a}\right)=1.
\end{equation}
Moreover, if $\Delta \equiv 1 \pmod{4}$, there are no such ideals of type (1).
\end{thm}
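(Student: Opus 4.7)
The plan is to combine the classical ``reduced form'' parametrization of primitive proper $\OO(\Delta)$-ideals with a direct computation of the reality condition.

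First, I would invoke the standard parametrization: every primitive proper fractional $\OO(\Delta)$-ideal has a unique representation
\[
I = \left[a,\ \tfrac{b + \sqrt{\Delta}}{2}\right]
\]
with $a \in \Z^+$, $b$ an integer reduced modulo $2a$, satisfying $b^2 \equiv \Delta \pmod{4a}$ and the primitivity condition $\gcd\!\left(a,\,b,\,\tfrac{b^2-\Delta}{4a}\right)=1$. The congruence $b^2 \equiv \Delta \pmod 4$ forces $b \equiv \Delta \pmod 2$, which is exactly what is needed for $\tfrac{b+\sqrt{\Delta}}{2}$ to lie in $\OO(\Delta)$. This is classical; I would take it from Halter-Koch or derive it via the standard dictionary with binary quadratic forms.

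Next, I compute $\overline{I}$ directly. Since $\overline{\sqrt{\Delta}} = -\sqrt{\Delta}$, one has $\overline{\tfrac{b+\sqrt{\Delta}}{2}} = b - \tfrac{b+\sqrt{\Delta}}{2}$, so $\overline{I} = [a,\,b - \tfrac{b+\sqrt{\Delta}}{2}]$ is contained in $I$ iff $b \in I$. An element $xa + y\tfrac{b+\sqrt{\Delta}}{2}$ lies in $\Q$ iff $y=0$, hence $I \cap \Z = a\Z$, and so $b \in I$ precisely when $a \mid b$. Because $\overline{I}$ and $I$ have the same $\Z$-covolume, containment upgrades to equality, and thus \emph{$I$ is real iff $a \mid b$}.

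With $0 \leq b < 2a$, the divisibility $a \mid b$ forces $b \in \{0, a\}$, yielding the two forms in the theorem. Substituting $b=0$ into the congruence and primitivity conditions gives $4a \mid \Delta$ and $\gcd(a, \Delta/(4a)) = 1$, which is (\ref{HKONE}); substituting $b=a$ gives $4a \mid a^2-\Delta$ and $\gcd(a, (a^2-\Delta)/(4a)) = 1$, which is (\ref{HKTWO}). The last clause is automatic: $b=0$ requires $b^2 \equiv \Delta \pmod 4$, i.e.\ $\Delta \equiv 0 \pmod 4$, so no ideal of type (\ref{HKONE}) can exist when $\Delta \equiv 1 \pmod 4$.

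The main subtlety lies in step one: verifying that the displayed form together with the $\gcd$ condition simultaneously captures primitivity \emph{and} properness (i.e.\ $\OO(I)=\OO(\Delta)$, not merely containment) in the possibly non-maximal order $\OO(\Delta)$. This is the one place where care is required; the Halter-Koch reference does this work, and once it is in hand the remainder of the argument is an algebraic identity plus bookkeeping.
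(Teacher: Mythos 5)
Your proposal is correct, but note that the paper does not prove this statement at all: it is quoted verbatim from Halter-Koch \cite[Theorem 5.6.4]{Halter-Koch}, so there is no internal argument to compare against. What you have written is a sound self-contained derivation from the classical parametrization of primitive ideals: writing $I=\left[a,\frac{b+\sqrt{\Delta}}{2}\right]$ with $b^2\equiv \Delta \pmod{4a}$ and $b$ reduced modulo $2a$, your computation that $\overline{I}\subseteq I$ iff $b\in I$ iff $a\mid b$ (using $I\cap\Q=a\Z$), together with the equal-covolume upgrade to equality, correctly reduces the reality condition to $b\in\{0,a\}$, and substituting these two values reproduces (\ref{HKONE}) and (\ref{HKTWO}) exactly, including the final clause for $\Delta\equiv 1\pmod 4$. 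Two small points of bookkeeping: the input you defer to is itself Halter-Koch's Theorem 5.4.2 (which the paper already invokes for Corollary \ref{COR3.8}), and in that dictionary the shape of the basis $\left[a,\frac{b+\sqrt{\Delta}}{2}\right]$ (coefficient $1$ on the $\omega$-part of the Hermite normal form) is what encodes \emph{primitivity}, while the condition $\gcd\left(a,b,\frac{b^2-\Delta}{4a}\right)=1$ encodes \emph{properness}, i.e.\ $\OO(I)=\OO(\Delta)$ rather than a larger order; you call the gcd the ``primitivity condition,'' which is a naming slip only, and you rightly flag that this equivalence is the one nontrivial ingredient. Also, since $\Delta<0$, the gcds should be read up to sign (equivalently with $|\Delta|$), which is how the cited source states them. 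With those understandings, your argument buys a short, transparent proof of a statement the paper simply imports.
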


\begin{cor}
\label{COR3.8}
Let $I$ be a primitive proper real $\OO$-ideal.  Then $[\OO:I] \mid \Delta$.
\end{cor}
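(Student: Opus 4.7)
The plan is to use Theorem \ref{HalterKoch} to reduce to two explicit families of ideals and then compute the index $[\OO:I]$ in each case by a direct determinant computation.

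By Theorem \ref{HalterKoch}, $I$ has one of the two shapes (\ref{HKONE}) or (\ref{HKTWO}). I will show in both cases that $[\OO:I] = a$, at which point the divisibility conditions attached to (\ref{HKONE}) and (\ref{HKTWO}) immediately give $a \mid \Delta$. To compute the index I will pick a convenient $\Z$-basis for $\OO(\Delta)$, depending on the residue of $\Delta$ modulo $4$: when $\Delta \equiv 0 \pmod 4$ I will use $\{1, \sqrt{\Delta}/2\}$, and when $\Delta \equiv 1 \pmod 4$ I will use $\{1, (1+\sqrt{\Delta})/2\}$. In each case I express the given generators of $I$ in this basis and read off $[\OO:I]$ as the absolute value of the determinant of the resulting $2\times 2$ integer matrix.

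For type (\ref{HKONE}) the condition $4a \mid \Delta$ forces $\Delta \equiv 0 \pmod 4$, so $\OO = [1, \sqrt{\Delta}/2]$ and the generators $a$ and $\sqrt{\Delta}/2$ of $I$ correspond to rows $(a,0)$ and $(0,1)$; the determinant is $a$, and $4a \mid \Delta$ obviously implies $a \mid \Delta$. For type (\ref{HKTWO}), I split on the parity of $\Delta$. If $\Delta \equiv 0 \pmod 4$, then $4a \mid a^2 - \Delta$ forces $a$ to be even, say $a = 2m$, and the generator $(a+\sqrt{\Delta})/2 = m + \sqrt{\Delta}/2$ has coordinates $(m,1)$ in the chosen basis, giving a matrix $\bigl(\begin{smallmatrix} a & m \\ 0 & 1 \end{smallmatrix}\bigr)$ of determinant $a$. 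If $\Delta \equiv 1 \pmod 4$, then $4a \mid a^2 - \Delta$ forces $a$ to be odd, and $(a+\sqrt{\Delta})/2 = (a-1)/2 + (1+\sqrt{\Delta})/2$ has coordinates $((a-1)/2, 1)$ in the basis $\{1,(1+\sqrt{\Delta})/2\}$, again giving determinant $a$. In both sub-cases of type (\ref{HKTWO}) the divisibility $a \mid a^2 - \Delta$ (implied by $4a \mid a^2 - \Delta$) combined with $a \mid a^2$ yields $a \mid \Delta$.

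There is no substantive obstacle here; the step requiring a little care is simply the parity bookkeeping that ensures the stated generators of $I$ actually lie in $\OO$ and that the change-of-basis matrix has integer entries. Once the correct $\Z$-basis of $\OO(\Delta)$ is chosen, the index drops out as $a$ and the conclusion $[\OO:I] \mid \Delta$ is automatic from the congruences in Theorem \ref{HalterKoch}.
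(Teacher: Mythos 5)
Your proposal is correct and follows essentially the same route as the paper: reduce to the two types of Theorem \ref{HalterKoch} and observe that $[\OO:I]=a$ together with the stated congruences gives $a \mid \Delta$. The only difference is that where the paper cites Halter-Koch for the equality $[\OO:I]=a$, you verify it directly by the determinant computation in a suitable $\Z$-basis of $\OO$, and your parity bookkeeping in the two sub-cases of type (\ref{HKTWO}) is accurate.
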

\begin{proof}
For all ideals $I$ of the form (\ref{HKONE}) and (\ref{HKTWO}) above, we have that $[\OO:I] = a \mid \Delta$ \cite[Theorem 5.4.2]{Halter-Koch}.
\end{proof}

\noindent
Let $R$ be a domain with fraction field $K$.  For fractional $R$-ideals $I$ and $J$ we define the \textbf{colon ideal}
\[ (J:I) = \{x \in K \mid x I \subset J\}; \]
it is also a fractional $R$-ideal.  If $I$ is invertible, then $(J:I) = I^{-1} J$.
\\ \\
Let $E_{/F}$ be an $\OO$-CM elliptic curve defined over a number field, and let $I$ be a nonzero
ideal of $\OO$.  We then have an \textbf{I-torsion kernel}
\[ E[I] = \{x \in E(\overline{F}) \mid \forall \alpha \in I, \  \alpha x = 0 \}. \]
If $I \subset J$ then $E[J] \subset E[I]$.  Since $I$ contains the positive
integer $[\OO:I]$, $E[I] \subset E[[\OO:I]]$ and thus $E[I]$ is a finite $\OO$-submodule of $E(\overline{F})$.  Evidently $E[I]$ is stabilized by the
action of $\mathfrak{g}_{FK}$, so it corresponds to a finite \'etale $FK$-subgroup scheme
of $E$.  If $FK \supsetneq F$, then the nontrivial element $c \in \Aut(FK/F)$
acts as complex conjugation on $\End^0 E$ and thus $c(E[I]) = E[\overline{I}]$.
It follows that $E[I]$ is defined over $F$ iff $I$ is real.  For for any nonzero ideal $I$ of $\OO$ we have an isogeny $E \ra E/E[I]$, defined over $FK$ in general and over $F$ if $I$ is real. \\ \indent  Let $\iota: F \hookrightarrow \C$ be a field embedding.  Then $E \cong_{\C} E_{\Lambda}$
for some proper $\OO$-ideal $\Lambda$.  Observe that the kernel of the natural map
$E_{\Lambda} \ra E_{(\Lambda:I)}$ is $(\Lambda:I)/\Lambda = (\C/\Lambda)[I]$, so
$E/E[I] \cong_{\C} E_{(\Lambda:I)}$.  Furthermore, if $I$ is invertible,
then $E_{(\Lambda:I)} = E_{I^{-1} \Lambda}$.  Thus we get an explicit
description of the $I$-torsion kernel and the associated isogeny in terms of
uniformizing lattices.  \\ \indent
Now let $\iota: F \hookrightarrow \R$ be a field embedding.  By Lemma \ref{realEC} we may choose $\Lambda$ to be a real lattice.  Suppose that $I$ is moreover real.  Then so is $(\Lambda:I)$ and thus $E_{\Lambda} \ra E_{(\Lambda:I)}$ is an explicit description of the $I$-torsion kernel and the associated isogeny in
terms of uniformizing real lattices.

\begin{thm}
\label{THM3.9}
Let $\Delta$ be an imaginary quadratic discriminant. \\
a) Let $F \subset \R$, and let $E_{/F}$ be an $\OO$-CM elliptic curve.  Then there is an $\OO$-CM elliptic curve $E'_{/F}$ such that $E' \cong_{\R} E_{\OO}$ and
an $F$-rational isogeny $\varphi: E \ra E'$.  \\
b) Let $N$ be a positive integer which is prime to $\Delta$.  Then the isogeny
$\varphi$ of part a) induces a $\mathfrak{g}_F$-module isomorphism $E[N] \stackrel{\sim}{\ra} E'[N]$.
\end{thm}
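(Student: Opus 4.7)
The plan is to use the real uniformization supplied by the previous lemmas to construct the isogeny analytically over $\R$, then descend it to $F$ using the reality of the relevant ideal. Fix an embedding $F \hookrightarrow \R$ throughout. Combining Lemma \ref{realEC}(a) and Lemma \ref{lemma3.6}(a), there is a primitive proper real $\OO$-ideal $I$ with $E_{/\R} \cong_{\R} E_I$; fix such an $\R$-isomorphism, noting that it is automatically compatible with the $\OO$-actions (after the standard choice of sign on $\OO$).

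For part (a), reality of $I$ means $E[I] = E[\overline{I}]$ is stable under $\mathfrak{g}_F$, as explained in the paragraph preceding the theorem. Hence the quotient $\varphi\colon E \to E' := E/E[I]$ is an $F$-rational isogeny of elliptic curves. Base-changing $\varphi$ to $\R$ and transporting along the chosen identification realizes $\varphi_{/\R}$ as the analytic quotient $\C/I \to \C/\OO$, whose kernel is $\OO/I = E_I[I]$ and whose target is $E_\OO$. Therefore $E'_{/\R} \cong_{\R} E_\OO$, which is what we want.

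For part (b), the separable isogeny $\varphi$ has degree equal to $|E[I]| = [\OO:I]$, which divides $\Delta$ by Corollary \ref{COR3.8}, hence is coprime to $N$ by hypothesis. The restriction $\varphi|_{E[N]}\colon E[N] \to E'[N]$ is a $\mathfrak{g}_F$-equivariant homomorphism, and its kernel is
\[ E[N] \cap E[I] = E[N\OO + I]. \]
The finite $\OO$-module $\OO/(N\OO + I)$ is a quotient of both $\OO/N\OO$ (of order $N^2$) and $\OO/I$ (of order $[\OO:I]$), so its order divides $\gcd(N^2,[\OO:I]) = 1$. Thus $N\OO + I = \OO$, the kernel is $E[\OO] = 0$, and a count of orders ($|E[N]| = |E'[N]| = N^2$) forces $\varphi|_{E[N]}$ to be a $\mathfrak{g}_F$-equivariant isomorphism.

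The substantive step in this plan is the descent in part (a): one must know both that the analytically defined quotient is $F$-rational and that its target lands in the correct $\R$-isomorphism class (there are two classes with $j$-invariant $j(\OO)$, by Lemma \ref{LEMMA2.1}(b)). Both points are resolved by the reality of $I$: it yields $\mathfrak{g}_F$-stability of $E[I]$, hence the $F$-rational model for the quotient, and the analytic target is literally $\C/\OO = E_\OO$ rather than any quadratic twist thereof.
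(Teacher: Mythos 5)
Your proposal is correct and follows essentially the same route as the paper: both use Lemma \ref{realEC} and Lemma \ref{lemma3.6} to uniformize $E$ over $\R$ by a primitive proper real $\OO$-ideal, take the $F$-rational quotient by its torsion kernel (rational because the ideal is real, per the discussion preceding the theorem), identify the target analytically as $E_{\OO}$, and invoke Corollary \ref{COR3.8} to see the degree divides $\Delta$, hence is prime to $N$, so the isogeny restricts to a $\mathfrak{g}_F$-isomorphism on $N$-torsion. Your explicit verification that $N\OO+I=\OO$ is just a slight elaboration of the paper's coprimality-of-degree argument.
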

\begin{proof}
There is a primitive, proper, real $\OO$-ideal $\Lambda$ such that
$E \cong_{\R} E_{\Lambda}$.  Let $I$ be any proper, real $\OO$-ideal.  As above
there is an $F$-rational isogeny $\varphi_I: E \ra E/E[I]$ which over $\R$ is given as $E_{\Lambda} \ra E_{(\Lambda:I)} = E_{I^{-1} \Lambda}$.  Taking $\varphi = \varphi_{\Lambda}$ establishes part a).   The degree of $\varphi_{\Lambda}$ is
$[\OO:\Lambda]$, which by Corollary \ref{COR3.8} divides $\Delta$.  It follows
that $\deg \varphi_{\Lambda}$ is prime to $N$ and thus $\varphi_{\Lambda}: E[N] \stackrel{\sim}{\ra} E'[N]$.
\end{proof}

\begin{remark}
A work of S. Kwon \cite{Kwon99a} contains related results.  Especially, the discussion preceding Theorem \ref{THM3.9}
generalizes \cite[Prop. 2.3.1]{Kwon99a}, and the classification of primitive, proper real ideals in an imaginary quadratic order is given in \cite[$\S 3$]{Kwon99a} and is used to classify cyclic isogenies on CM elliptic curves rational over $\Q(j(E))$.
\end{remark}

\subsection{Cartan Subgroups}
\textbf{} \\ \\ \noindent
Let $\Lambda \subset \C$ be a lattice, and let $E_{\Lambda} = \C/\Lambda$.  For $N \in \Z^+$ and $\ell \in \mathcal{P}$, put
\[ \Lambda_N = (\frac{1}{N})\Lambda/\Lambda = E_{\Lambda}[N] \]
%
\[ T_{\ell} \Lambda = \varprojlim \Lambda_{\ell^n} = T_{\ell}(E_{\Lambda}). \]
If $F \subset \C$ and $E_{/F}$ is an elliptic curve, then $E_{/\C} \cong E_{\Lambda}$ for some lattice $\Lambda$, uniquely determined up to homothety.  If $F \subset \R$, then $E_{/\R} \cong E_{\Lambda}$ for some real lattice $\Lambda$, uniquely determined up to real homothety.
\\ \\
We have $E(\C)[\tors] = E(\overline{F})[\tors]$, so $\Aut(\C/F)$ acts on
$\Lambda_N$, $T_{\ell} \Lambda$ and $\widehat{\Lambda}$.   We assume that
$\Aut(\C/\overline{F})$ is a normal subgroup of $\Aut(\C/F)$: this holds
if $F$ is a number field or if $F = \R$.  Then we get an induced action of
$\mathfrak{g}_F$ on $\Lambda_N$.  If moreover $F \subset \R$, then complex conjugation $c \in \Aut(\C/\R) \subset \Aut(\C/F) \twoheadrightarrow \mathfrak{g}_F$
acts on $\Lambda_N$.
\\ \\
Let $\OO$ be an imaginary quadratic order.  For $N,\ell$ as above, consider the $\OO$-algebras
\[ \OO_N = \OO \otimes_{\Z} \Z/N\Z, \]
\[ T_{\ell}(\OO) =  \OO \otimes_{\Z} \Z_{\ell} = \varprojlim \OO_{\ell^n}. \]
Let $E_{/F}$ be an $\OO$-CM elliptic curve.  As above, there is a proper integral
$\OO$-ideal $\Lambda$, with uniquely determined class in $\Pic \OO$, such that $E_{/\C} \cong E_{\Lambda}$.  Then $\Lambda_N$ (resp. $T_{\ell} \Lambda$) has a natural $\OO_N$-module (resp. $T_{\ell}( \OO)$-module) structure.  If $F \subset \R$ we may take $\Lambda$ to be a
real ideal: $\overline{\Lambda} = \Lambda$.

\begin{lemma}
\label{LEMMA2.6} \textbf{} \\
a) For every $N \in \Z^+$, $\Lambda_N = E[N]$ is free of rank $1$ as an $\OO_N$-module.  \\
b) $T_{\ell}\Lambda = T_{\ell}(E)$ is free of rank $1$ as $T_{\ell}(\OO)$-module.
\end{lemma}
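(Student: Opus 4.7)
The plan is to reduce both parts to the statement that the proper $\OO$-ideal $\Lambda$ uniformizing $E$ is a locally principal $\OO$-module, which is already known from Lemma \ref{PROPERPROJECTIVE}. Everything else is formal manipulation with tensor products and limits.

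For part a), I would first identify $\Lambda_N = (1/N)\Lambda/\Lambda$ with $\Lambda/N\Lambda \cong \Lambda \otimes_{\Z} \Z/N\Z$ via multiplication by $N$. Then the $\OO$-action makes this an $\OO \otimes_{\Z} \Z/N\Z = \OO_N$-module in the obvious way. Since $\Lambda$ is a proper $\OO$-ideal, Lemma \ref{PROPERPROJECTIVE} gives that for each rational prime $p$ the localization $\Lambda \otimes_{\Z} \Z_{(p)}$ is a principal fractional $\OO \otimes_{\Z} \Z_{(p)}$-ideal, hence free of rank one. Reducing modulo $p^{v_p(N)}$ shows that $\Lambda \otimes_{\Z} \Z/p^{v_p(N)}\Z$ is free of rank one over $\OO \otimes_{\Z} \Z/p^{v_p(N)}\Z$. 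Applying the Chinese Remainder Theorem to $\Z/N\Z$ yields
\[ \Lambda_N \cong \bigoplus_{p \mid N} \Lambda \otimes_{\Z} \Z/p^{v_p(N)}\Z, \qquad \OO_N \cong \bigoplus_{p \mid N} \OO \otimes_{\Z} \Z/p^{v_p(N)}\Z, \]
and assembling the local free generators produces a free rank one $\OO_N$-generator of $\Lambda_N$.

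For part b), the same local principality is even more directly applicable: since $\Z_\ell$ is a localization/completion of $\Z_{(\ell)}$ (and both are flat over $\Z$), we have
\[ T_\ell \Lambda = \varprojlim_n \Lambda/\ell^n\Lambda \cong \Lambda \otimes_{\Z} \Z_\ell \cong \bigl(\Lambda \otimes_{\Z} \Z_{(\ell)}\bigr) \otimes_{\Z_{(\ell)}} \Z_\ell. \]
By Lemma \ref{PROPERPROJECTIVE} the first factor is free of rank one over $\OO \otimes_{\Z} \Z_{(\ell)}$, so base-changing to $\Z_\ell$ gives $T_\ell\Lambda$ free of rank one over $\OO \otimes_{\Z} \Z_\ell = T_\ell(\OO)$. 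Alternatively one can take a compatible system of generators: lift an $\OO/\ell\OO$-generator of $\Lambda/\ell\Lambda$ to $\Lambda/\ell^n\Lambda$ for each $n$ using Nakayama, and check that the inverse limit of these free generators is a $T_\ell(\OO)$-generator.

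The only mildly subtle point is the passage from local principality at the prime $p$ in the sense of Lemma \ref{PROPERPROJECTIVE} (i.e.\ after tensoring with $\Z_{(p)}$) to principality after tensoring with $\Z/N\Z$ or $\Z_\ell$. This is purely formal once one remembers that $\Z_{(p)} \to \Z/p^k\Z$ and $\Z_{(\ell)} \to \Z_\ell$ are both surjective (respectively, flat) ring maps under which a free rank one module stays free of rank one, so there is no real obstacle; the content of the lemma lies entirely in Lemma \ref{PROPERPROJECTIVE}.
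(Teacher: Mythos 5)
Your argument is correct, but it is not the route the paper takes: the paper gives no self-contained proof of this lemma at all, simply citing Serre--Tate (for the statement about $T_{\ell}(E)$) and Parish (for the statement about $E[N]$) and remarking that either part implies the other. What you do instead is derive both parts directly from Lemma \ref{PROPERPROJECTIVE}: since the uniformizing lattice $\Lambda$ is a proper, hence locally principal, fractional $\OO$-ideal, the identifications $\Lambda_N \cong \Lambda \otimes_{\Z} \Z/N\Z$ and $T_{\ell}\Lambda \cong \Lambda \otimes_{\Z} \Z_{\ell} \cong (\Lambda \otimes_{\Z} \Z_{(p)}) \otimes_{\Z_{(p)}} \Z_{\ell}$ together with CRT and base change reduce everything to the fact that a free rank-one module stays free of rank one under any base change. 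This is sound (note the module-theoretic statement is all that is claimed; Galois equivariance is not part of the lemma, and the paper has already set up $E[N] = \Lambda_N$ with its $\OO_N$-structure), and it buys a self-contained proof resting only on the ideal theory already developed in $\S 3.1$, whereas the paper's citation outsources the content. Two small points to tidy: $\Z_{\ell}$ is the completion, not a localization, of $\Z_{(\ell)}$ --- harmless, since all you need is that $\Z_{\ell}$ is a $\Z_{(\ell)}$-algebra and that freeness of rank one is preserved by arbitrary base change; and in your Nakayama alternative, lifting a generator only gives a surjection $\OO/\ell^n\OO \twoheadrightarrow \Lambda/\ell^n\Lambda$, so you should add the one-line cardinality count $\#\Lambda/\ell^n\Lambda = \ell^{2n} = \#\OO/\ell^n\OO$ (or just stick with the base-change argument) to conclude freeness.
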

\begin{proof} Part b) is known by work of Serre and Tate \cite[p. 502]{Serre-Tate68} while part a) can be deduced from work of Parish \cite[Lemma 1]{Parish89}. Either part can be used to deduce the other.
\end{proof}
\noindent
In particular, for all primes $\ell$ we have a homomorphism of $\Z_{\ell}$-algebras
\[ \iota_{\ell}: T_{\ell}(\OO) \rightarrow \End T_{\ell} (E). \]
The map $\iota_{\ell}$ is $\ggg_F$-equivariant; further, it is injective with torsion-free cokernel \cite[Lemma 12.2]{Milne}.  Tensoring with $\Z/\ell^n\Z$ and applying primary decomposition, we get for each $N \in \Z^+$ an injective $\ggg_F$-equivariant ring homomorphism
\[ \OO_N \hookrightarrow \End E[N]. \]
Tensoring to $\Q_{\ell}$ gives
\[ \iota_{\ell}^0: V_{\ell}(\OO) \hookrightarrow \End V_{\ell}(E). \]
\\
We define the \textbf{Cartan subalgebras}
\[ \mathcal{C}_{\ell} = \iota_{\ell}(T_{\ell}(\OO)) \subset \End T_{\ell}(E) ,\]
\[ \mathcal{C}_{\ell}^0 = \iota_{\ell}^0(V_{\ell}(\OO)) \subset \End V_{\ell}(E) \]
and the \textbf{Cartan subgroups}
\[ \mathcal{C}_{\ell}^{\times} \subset \Aut T_{\ell}(E),\]
\[ (\mathcal{C}_{\ell}^0)^{\times} \subset \Aut V_{\ell}(E). \]
Then $\mathcal{C}_{\ell}^0 \cong K \otimes \Q_{\ell}$ is a maximal \'etale
subalgebra of $\End V_{\ell}(E) \cong M_2(\Q_{\ell})$.  We may view
$\mathcal{C}_{\ell}^{0} \hookrightarrow M_2(\Q_{\ell})$ as the regular representation.
We write $C(\mathcal{C}_{\ell}^0)$ for the commutant and $N(\mathcal{C}_{\ell}^0)$
for the normalizer of $\mathcal{C}_{\ell}^0$ inside $\End V_{\ell}(E)$.  By the Double Centralizer Theorem  \cite[$\S 12.7$]{Pierce}, we have \[C(\mathcal{C}_{\ell}^0) = \mathcal{C}_{\ell}^0. \]
Using the Skolem-Noether Theorem \cite[$\S 12.6$]{Pierce}, we find that
\[ N \mathcal{C}_{\ell}^0/ (\mathcal{C}_{\ell}^0)^\times \cong \Aut_{\Q_{\ell}}
\mathcal{C}_{\ell}^0 \]
has order $2$.

The fixed field of the kernel of the representation $\ggg_F \ra V_{\ell}(\OO)$ is
$FK$, so
\[ \rho_{\ell^{\infty}}(\ggg_{FK}) \subset \mathcal{C}_{\ell}^0, \]
and if $FK \supsetneq F$ then
\[ \rho_{\ell^{\infty}}(\ggg_F) \not \subset \mathcal{C}_{\ell}^0. \]
In fact \cite[$\S 4$, Corollary 2]{Serre-Tate68} we have
\[ \rho_{\ell^{\infty}}(\ggg_{FK}) \subset
\iota_{\ell}(T_\ell(\OO)^\times). \]
Moreover, $\ggg_F$-equivariance of $\iota_{\ell}^0$ gives
\[ \rho_{\ell^{\infty}}(\ggg_F) \subset N \mathcal{C}_{\ell}^0. \]
This recovers a standard result of Serre \cite[Theorem 5]{Serre66}.



\begin{lemma}
Let $G_{\ell^{\infty}} = \rho_{\ell^{\infty}}(\ggg_F)$ be the image of the $\ell$-adic Galois representation.  The following are equivalent: \\
(i) $G_{\ell^{\infty}}$ lies in the Cartan subgroup.  \\
(ii) $G_{\ell^{\infty}}$ is commutative.  \\
(iii) $K \subset F$.
\end{lemma}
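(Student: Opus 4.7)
The plan is to dispatch (iii) $\Rightarrow$ (i) and (i) $\Rightarrow$ (ii) in a sentence apiece, and to concentrate on the contrapositive of (ii) $\Rightarrow$ (iii). For (iii) $\Rightarrow$ (i), if $K \subset F$ then $FK = F$ and $\ggg_F = \ggg_{FK}$, so the inclusion $\rho_{\ell^\infty}(\ggg_{FK}) \subset \iota_\ell(T_\ell(\OO)^\times) = \mathcal{C}_\ell^\times$ recorded in the preceding discussion finishes the job. For (i) $\Rightarrow$ (ii), the Cartan $\mathcal{C}_\ell = \iota_\ell(\OO \otimes_\Z \Z_\ell)$ is a commutative ring, so $\mathcal{C}_\ell^\times$ is abelian and any subgroup of it is commutative.

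For (ii) $\Rightarrow$ (iii) I will argue the contrapositive. Assume $K \not\subset F$, so $FK \supsetneq F$. Then, as noted above, $\rho_{\ell^\infty}(\ggg_F) \not\subset \mathcal{C}_\ell^0$, and I pick $\sigma \in \ggg_F$ with $g := \rho_{\ell^\infty}(\sigma) \in N\mathcal{C}_\ell^0 \setminus \mathcal{C}_\ell^0$. Since $N\mathcal{C}_\ell^0 / (\mathcal{C}_\ell^0)^\times \cong \Aut_{\Q_\ell}(\mathcal{C}_\ell^0)$ has order $2$, conjugation by $g$ realizes the unique nontrivial $\Q_\ell$-algebra automorphism of $\mathcal{C}_\ell^0 \cong K \otimes_\Q \Q_\ell$, whose fixed subalgebra is $\Q_\ell \cdot 1$. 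If $G_{\ell^\infty}$ were commutative, then $\rho_{\ell^\infty}(\tau) \in \mathcal{C}_\ell^\times$ would commute with $g$ for every $\tau \in \ggg_{FK}$, and would therefore be fixed by conjugation by $g$, forcing $\rho_{\ell^\infty}(\ggg_{FK})$ into the $g$-fixed subgroup $\Z_\ell^\times \cdot 1 \subset \mathcal{C}_\ell^\times$.

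To contradict this I intend to invoke the classical openness theorem of Serre--Tate: the preimage $\iota_\ell^{-1}(\rho_{\ell^\infty}(\ggg_{FK}))$ is open in $T_\ell(\OO)^\times$. Since $T_\ell(\OO)^\times$ is an $\ell$-adic Lie group of $\Q_\ell$-dimension $2$ whereas $\Z_\ell^\times \cdot 1$ has dimension $1$, no open subgroup of the former can be contained in the latter. The principal obstacle is precisely this last step: the bare inclusion of the image in the Cartan is not enough, and I need the image to be full-dimensional. Openness delivers this at once; as a fallback I could appeal to the CM description of Frobenius at an unramified prime $v$ of $FK$ that splits in $K/\Q$, whose image under $\iota_\ell^{-1} \circ \rho_{\ell^\infty}$ lies in $\OO_\ell^\times \setminus \Z_\ell^\times$ and so witnesses non-scalar behavior directly.
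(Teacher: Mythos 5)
Your argument is correct, and it fills in the one step the paper leaves implicit: the lemma is stated in the paper without proof, as a summary of the discussion that precedes it, and the only nontrivial implication is (ii) $\Rightarrow$ (iii), exactly where you put your effort. Your reductions for (iii) $\Rightarrow$ (i) and (i) $\Rightarrow$ (ii) are the same as the paper's implicit ones, and your observation that an element of $\mathcal{C}_{\ell}^{\times}$ commuting with $g \in N\mathcal{C}_{\ell}^0 \setminus \mathcal{C}_{\ell}^0$ must lie in $\Z_{\ell}^{\times} \cdot 1$ (the fixed subalgebra of the nontrivial automorphism of $K \otimes \Q_{\ell}$ being $\Q_{\ell}$) is right in both the split and nonsplit cases. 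The only caveat is attribution and justification of the final ingredient: the cited Serre--Tate corollary gives only the containment $\rho_{\ell^{\infty}}(\ggg_{FK}) \subset \iota_{\ell}(T_{\ell}(\OO)^{\times})$, not openness, so you should either invoke the main theorem of complex multiplication for openness, or note that the weaker statement you actually need -- that $\rho_{\ell^{\infty}}(\ggg_{FK})$ is not contained in the scalars -- already follows from tools inside the paper: by Theorem \ref{SCHERTZTHM} and Proposition \ref{4.12}, $[FK(E[\ell^n]):FK]$ grows on the order of $\ell^{2n}$, whereas a scalar image would bound it by $\varphi(\ell^n)$, a contradiction for $n$ large. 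Your fallback via Frobenius at a prime of ordinary (split) reduction, which acts as $\pi_v \in \OO \setminus \Z$ and hence non-scalarly, is also a complete and perfectly standard way to finish, so the proof goes through.
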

\noindent
We now deduce a stronger version of a result of Serre \cite[Lemma 15]{TORS1}. Let us first note the following in the case $\Lambda = \OO$.

\begin{lemma}
\label{LEMMA2.3}
Let $K = \Q(\sqrt{\Delta_0})$ be an imaginary quadratic field, and let
$\OO$ be an order in $K$ of discriminant $\Delta = \mathfrak{f}^2 \Delta_0$: thus $\OO = \Z\left[\frac{\Delta+\sqrt{\Delta}}{2}\right]$.  Let $c$ be the nontrivial element of
$\Aut(K/\Q)$.  Let $N \geq 2$, put $\OO_N = (1/N)\OO/\OO$ and $i \OO_N =
(\frac{i}{N})\OO/(i\OO)$. \\
a) If $\Delta$ is even or $N$ is odd, then $\frac{1}{N},\frac{\sqrt{\Delta}}{N}$ (resp. $\frac{i}{N}, \frac{\sqrt{-\Delta}}{N}$) is a $\Z/N\Z$-basis for $\OO_N$ (resp. $i\OO_N)$.  The corresponding matrix of $c$  is $\left[ \begin{array}{cc} 1 & 0 \\
 0 & -1 \end{array} \right]$ (resp.  $\left[ \begin{array}{cc} -1 & 0 \\
 0 & 1 \end{array} \right]$).  \\
b) In all cases $\frac{1}{N}$, $\frac{\Delta+\sqrt{\Delta}}{2N}$ (resp. $i \left(\frac{\Delta+\sqrt{\Delta}}{2N}\right)$) is a $\Z/N\Z$-basis for
$\OO_N$ (resp. $i\OO_N$).  The corresponding matrix of $c$ is
$\left[ \begin{array}{cc} 1 & \Delta \\ 0 & -1 \end{array} \right]$ (resp. $\left[ \begin{array}{cc} -1 & -\Delta \\ 0 & 1 \end{array} \right]$).
\end{lemma}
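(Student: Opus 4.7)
The plan is to prove both parts by a direct linear-algebraic computation using the standard $\Z$-basis $\{1, \omega\}$ of $\OO$, where $\omega = \frac{\Delta+\sqrt{\Delta}}{2}$. The key arithmetic inputs are that complex conjugation $c$ satisfies $c(\omega) = \Delta - \omega$ (since $\omega + \overline{\omega} = \operatorname{Tr}(\omega) = \Delta$) and $c(i) = -i$.

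First I will address part b). Since $\OO = \Z \oplus \Z\omega$, the module $\OO_N = \tfrac{1}{N}\OO/\OO$ is a free $\Z/N\Z$-module on $\{e_1, e_2\} := \{\tfrac{1}{N}, \tfrac{\omega}{N}\}$, and multiplication by $i$ gives the basis $\{ie_1, ie_2\} = \{\tfrac{i}{N}, \tfrac{i\omega}{N}\}$ for $i\OO_N$. Applying $c$ directly: $c(e_1) = e_1$ and $c(e_2) = \tfrac{\Delta - \omega}{N} = \Delta e_1 - e_2$, yielding the matrix $\left[\begin{smallmatrix} 1 & \Delta \\ 0 & -1 \end{smallmatrix}\right]$. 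For the twisted basis, $c(ie_1) = -ie_1$ and $c(ie_2) = -i \cdot \tfrac{\Delta - \omega}{N} = -\Delta(ie_1) + ie_2$, producing $\left[\begin{smallmatrix} -1 & -\Delta \\ 0 & 1 \end{smallmatrix}\right]$.

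For part a), the identity $\sqrt{\Delta} = 2\omega - \Delta$ gives $\tfrac{\sqrt{\Delta}}{N} = -\Delta e_1 + 2 e_2$ in $\OO_N$, so the change of basis from $\{e_1, e_2\}$ to $\{e_1, \tfrac{\sqrt{\Delta}}{N}\}$ is $\left[\begin{smallmatrix} 1 & -\Delta \\ 0 & 2 \end{smallmatrix}\right]$, of determinant $2$, which is invertible over $\Z/N\Z$ precisely when $N$ is odd; when $\Delta \equiv 0 \pmod 4$ I will instead exploit the alternate $\Z$-basis $\{1, \sqrt{\Delta}/2\}$ of $\OO$ and carry out a parallel computation with appropriate rescaling. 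Once the new pair is verified to be a basis, the matrix of $c$ is immediate from $c(\sqrt{\Delta}) = -\sqrt{\Delta}$, giving $\left[\begin{smallmatrix} 1 & 0 \\ 0 & -1 \end{smallmatrix}\right]$. The $i\OO_N$ case runs in parallel via $c(i) = -i$ and $c(\sqrt{-\Delta}) = \sqrt{-\Delta}$ (since $\sqrt{-\Delta}$ is a positive real number), yielding $\left[\begin{smallmatrix} -1 & 0 \\ 0 & 1 \end{smallmatrix}\right]$.

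The only genuine step to get right is the basis verification in part a); the four matrix computations themselves are routine. In particular, the interplay between the parity of $N$ and the $2$-adic valuation of $\Delta$ must be bookkept carefully so as to select the appropriate $\Z$-generators of $\OO$ in each case, since the naive pair $\{\tfrac{1}{N}, \tfrac{\sqrt{\Delta}}{N}\}$ already spans $\OO_N$ over $\Z + 2\Z\omega$-scales only.
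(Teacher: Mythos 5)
The paper gives no proof of this lemma at all -- it is stated as an elementary computation and immediately used -- so there is no argument of the authors to compare routes with; your direct computation in the basis $1,\omega$ with $\omega=\frac{\Delta+\sqrt{\Delta}}{2}$, $c(\omega)=\Delta-\omega$, is exactly the expected one. Part b) of your proposal is correct and complete, including the twisted case via $c(i)=-i$, and your determinant-$2$ observation correctly settles part a) whenever $N$ is odd.

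The genuine gap is the case you defer to ``a parallel computation with appropriate rescaling,'' namely $4\mid\Delta$ with $N$ even: there no rescaling can succeed, because the literal statement of part a) is false in that case. Writing $\Delta=4D$ one has $\OO=\Z\oplus\Z\sqrt{D}$ and $\frac{\sqrt{\Delta}}{N}=2\cdot\frac{\sqrt{D}}{N}$, so the pair $\frac{1}{N},\frac{\sqrt{\Delta}}{N}$ generates a submodule of index $2$ in $\OO_N$ when $N$ is even (for $\Delta=-4$, $N=2$ one even has $\frac{\sqrt{\Delta}}{N}=i\equiv 0$ in $\OO_2$), and likewise for $\frac{\sqrt{-\Delta}}{N}$ in $i\OO_N$. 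What is true, and what the paper actually uses later (e.g.\ in the proof of Theorem \ref{THM4.2}a) with $N=2$), is the statement with $\frac{\sqrt{\Delta}}{N}$ replaced by $\frac{\sqrt{\Delta}}{2N}=\frac{\sqrt{D}}{N}$ (resp.\ $\frac{\sqrt{-\Delta}}{2N}$), i.e.\ precisely the basis $\{1,\sqrt{D}\}$ of $\OO$ you mention; then $c(\sqrt{D})=-\sqrt{D}$ gives $\left[\begin{smallmatrix} 1 & 0\\ 0 & -1\end{smallmatrix}\right]$ and $\left[\begin{smallmatrix} -1 & 0\\ 0 & 1\end{smallmatrix}\right]$ as desired. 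So you should make the case split explicit: for $N$ odd use $\frac{1}{N},\frac{\sqrt{\Delta}}{N}$ as you do; for $\Delta$ even and $N$ even prove the corrected form with the extra factor $\frac{1}{2}$, and say plainly that the printed basis must be so amended -- your closing remark that the naive pair ``spans over $\Z+2\Z\omega$-scales only'' gestures at this obstruction but is not a precise statement, and as written your plan leaves the decisive point unproved (indeed unprovable as stated).
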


\begin{cor}
\label{COR2.4}
a) If $N \geq 3$, then $c$ acts nontrivially on $\OO_N$.  \\
b) If $N = 2$, then $c$ acts nontrivially on $\OO_N$ iff $\Delta$ is odd.
\end{cor}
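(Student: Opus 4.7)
The plan is to read the corollary directly off the matrix computations in Lemma \ref{LEMMA2.3}. The universally applicable description is part (b): in the $\Z/N\Z$-basis $\tfrac{1}{N}, \tfrac{\Delta+\sqrt{\Delta}}{2N}$ of $\OO_N$, complex conjugation $c$ acts by the matrix
\[
M = \begin{pmatrix} 1 & \Delta \\ 0 & -1 \end{pmatrix} \pmod{N}.
\]
So $c$ acts trivially on $\OO_N$ iff $M \equiv I_2 \pmod{N}$, which reduces to the two congruences $-1 \equiv 1 \pmod{N}$ and $\Delta \equiv 0 \pmod{N}$.

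For part (a), suppose $N \geq 3$. Then $-1 \not\equiv 1 \pmod{N}$, so $M \not\equiv I_2 \pmod{N}$ regardless of the parity or divisibility properties of $\Delta$, and hence $c$ acts nontrivially on $\OO_N$.

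For part (b), take $N = 2$. The congruence $-1 \equiv 1 \pmod{2}$ holds automatically, so $c$ acts trivially on $\OO_2$ iff $\Delta \equiv 0 \pmod{2}$, i.e.\ iff $\Delta$ is even; equivalently $c$ acts nontrivially iff $\Delta$ is odd. There is no real obstacle here; the only thing to notice is that one should use the universal basis of Lemma \ref{LEMMA2.3}(b) rather than that of \ref{LEMMA2.3}(a), since the latter requires $\Delta$ even or $N$ odd and so does not by itself cover the boundary case $(N,\Delta) = (2, \text{odd})$ which is the whole point of part (b).
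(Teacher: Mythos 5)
Your argument is exactly the intended one: the paper states Corollary \ref{COR2.4} as an immediate consequence of Lemma \ref{LEMMA2.3}, and reading the action of $c$ off the universal basis of Lemma \ref{LEMMA2.3}b), so that triviality on $\OO_N$ amounts to $-1 \equiv 1 \pmod{N}$ and $\Delta \equiv 0 \pmod{N}$, is precisely the implicit proof. Your remark about using the basis of part b) rather than part a) in the boundary case $(N,\Delta)=(2,\text{odd})$ is the right point of care; nothing is missing.
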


\begin{lemma}
\label{LEMMA2.7}
Let $K$ be an imaginary quadratic field, and let $\OO$ be an order in $K$
of discriminant $\Delta = \mathfrak{f}^2 \Delta_0$.   Let $F$ be a field of characteristic $0$, and let $E_{/F}$ be an $\OO$-CM elliptic curve.  Let $N \in \Z^+$, and suppose at least one of the following holds: \\
$\bullet$ $N \geq 3$; \\
$\bullet$ $N = 2$ and $\Delta$ is odd. \\
Then $F(E[N]) \supset K$.
\end{lemma}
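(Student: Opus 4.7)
The plan is to argue by contradiction: assume $K \not\subset F(E[N])$ and derive a contradiction with Corollary \ref{COR2.4}. If $K \subset F$, the conclusion is trivial, so from now on $K \not\subset F$ and $[FK:F] = 2$. Put $L = F(E[N])$; by assumption $K \not\subset L$, so the unique nontrivial element $\sigma \in \Gal(LK/L)$ lifts to some $\tilde\sigma \in \mathfrak{g}_L$. On the one hand $\tilde\sigma$ fixes $E[N]$ pointwise (because $E[N] \subset E(L)$). On the other hand, the $\mathfrak{g}_F$-action on $\End E \cong \OO$ factors through $\Gal(FK/F)$, with the nontrivial element acting as complex conjugation, so $\tilde\sigma$ acts on $\OO$ as the nontrivial automorphism $\alpha \mapsto \bar\alpha$.

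Next I would invoke Lemma \ref{LEMMA2.6}a) to fix a generator $P$ of $E[N]$ as a free $\OO_N$-module of rank one; in particular, the annihilator of $P$ in $\OO_N$ is zero. Using $\mathfrak{g}_F$-equivariance of the embedding $\OO_N \hookrightarrow \End E[N]$, I compute, for any $\alpha \in \OO$,
\[
\alpha P = \tilde\sigma(\alpha P) = \tilde\sigma(\alpha)\,\tilde\sigma(P) = \bar\alpha\, P,
\]
so that $(\alpha - \bar\alpha)P = 0$ in $E[N]$. Since $P$ generates a free $\OO_N$-module, this forces $\alpha \equiv \bar\alpha \pmod{N\OO}$ for every $\alpha \in \OO$; equivalently, complex conjugation acts trivially on $\OO_N$.

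This directly contradicts Corollary \ref{COR2.4}: under either hypothesis $N \geq 3$, or $N = 2$ with $\Delta$ odd, conjugation on $\OO_N$ is nontrivial. Hence $K \subset F(E[N])$, as desired.

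The only real obstacle is the semi-linearity step, i.e., justifying that the lift $\tilde\sigma$ acts on $\End E$ as complex conjugation while fixing $E[N]$. This is precisely where the hypothesis that $\OO_N \hookrightarrow \End E[N]$ is $\mathfrak{g}_F$-equivariant (established in the preceding Cartan subalgebra discussion) is used, together with the standard fact that $FK$ is the fixed field of the kernel of the action of $\mathfrak{g}_F$ on $\End E$. Once this is in place the argument is a short module-theoretic computation reducing everything to Corollary \ref{COR2.4}.
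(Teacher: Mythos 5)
Your proof is correct and follows essentially the same route as the paper: in both arguments, a Galois element that acts as complex conjugation on $\OO$ while fixing $E[N]$ pointwise would force conjugation to act trivially on $\OO_N$, contradicting Corollary \ref{COR2.4}. The only difference is bookkeeping: you invoke the freeness of $E[N]$ as a rank-one $\OO_N$-module (Lemma \ref{LEMMA2.6}a)) and compute with a generator, whereas the paper uses the injective $\ggg_F$-equivariant embedding $\OO_N \hookrightarrow \End E[N]$ together with the observation that an element acting trivially on a module acts trivially on its endomorphism ring.
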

\begin{proof}
We may certainly assume $K \not \subset F$.  Let $\sigma \in \ggg_F$ be any element
which restricts nontrivially to $KF$.  Then $\sigma$ acts on  $\OO_N$ as $c$ acts on $\OO_N$, and by Corollary \ref{COR2.4}, this action is nontrivial.  Since $\iota_N: \OO_N \hookrightarrow \End E[N]$
is injective and $\ggg_F$-equivariant, it follows that $\sigma$ acts nontrivially
on $\End E[N]$. For any $G$-module $M$, if $\sigma\in G$ acts nontrivially on $\End(M)$ then $\sigma$ acts nontrivially on $M$.
\end{proof}

\subsection{Ray Class Field Containment}

\begin{thm}
\label{SCHERTZTHM}
\label{3.16}
Let $\OO$ be an order in an imaginary quadratic field $K$.  Let $F$ be a field of characteristic $0$, and let $E_{/F}$ be an $\OO$-CM elliptic curve.  Let
$N \in \Z^+$.  Let
$\mathfrak{h}_{/F}: E \ra \PP^1$ be a \textbf{Weber function} for $E$: that is, $\mathfrak{h}$ is the composition of the quotient map $E \ra E/(\Aut E)$ with an isomorphism
$E/(\Aut E) \cong \PP^1$. Then the field $FK(\mathfrak{h}(E[N]))$ contains the $N$-ray class field $K^{(N)}$ of $K$.
\end{thm}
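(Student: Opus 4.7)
The strategy is to reduce to the classical theorem of Shimura (and Weber): if $E'_{/F}$ has $\OO_K$-CM and $F \supseteq K$, then $K(j(E'), \mathfrak{h}'(E'[N])) = K^{(N)}$. The new ingredient is the handling of the conductor $\mathfrak{f} := [\OO_K:\OO]$.

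First I would replace $F$ by $FK$ and thereby assume $F \supseteq K$, so that $F \supseteq K(j(E)) = K_\Delta$. Next, using the complex uniformization $E \cong_{\C} \C/\Lambda$ with $\Lambda$ a proper $\OO$-ideal, I set $\Lambda' := \OO_K \Lambda$. The complex torus $\C/\Lambda'$ is an $\OO_K$-CM elliptic curve whose $j$-invariant lies in $H_K \subseteq K_\Delta \subseteq F$, and the natural analytic quotient $\C/\Lambda \to \C/\Lambda'$ has kernel $\Lambda'/\Lambda$, which is an $\OO$-submodule of $E$ of order $\mathfrak{f}$; since $F \supseteq K$ the full CM acts on $E$ over $F$, so this kernel is $\gk$-stable. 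By Galois descent I obtain an $F$-rational model $E'_{/F}$ and an $F$-rational isogeny $\iota : E \to E'$ of degree $\mathfrak{f}$.

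Applying Shimura's theorem to $E'$ yields $F(\mathfrak{h}'(E'[N])) \supseteq K^{(N)}$. The isogeny $\iota$ is $\OO^\times$-equivariant because $\OO^\times \subseteq \OO_K^\times = \Aut(E')$, so it descends to an $F$-rational morphism of Weber quotients $E/\OO^\times \to E'/\OO_K^\times$, i.e.\ an $F$-rational map $\PP^1 \to \PP^1$. Consequently $\mathfrak{h}'(\iota(P)) \in F(\mathfrak{h}(P))$ for every $P \in E$. When $\gcd(N, \mathfrak{f}) = 1$, $\iota$ restricts to an isomorphism $E[N] \xrightarrow{\sim} E'[N]$, so $F(\mathfrak{h}'(E'[N])) \subseteq F(\mathfrak{h}(E[N]))$ and we are done.

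The main obstacle is the case $\gcd(N, \mathfrak{f}) > 1$: here $\iota(E[N]) \subsetneq E'[N]$, and the above only gives the weaker containment $F(\mathfrak{h}'(\iota(E[N]))) \subseteq F(\mathfrak{h}(E[N]))$, so one obtains $K^{(N)} \subseteq F(\mathfrak{h}(E[N\mathfrak{f}]))$ instead of the desired $K^{(N)} \subseteq F(\mathfrak{h}(E[N]))$. To close this gap I would proceed prime-by-prime, writing $K^{(N)}$ as the compositum of the $K^{(\ell^{v_\ell(N)})}$ over primes $\ell \mid N$. For $\ell \nmid \mathfrak{f}$ the argument above handles $K^{(\ell^{v_\ell(N)})}$ directly. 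For $\ell \mid \mathfrak{f}$, I would exploit the fact that $K_\Delta \subseteq F$ already encodes the ring-class ramification of $\OO$ at $\ell$, and combine this with the $(\OO/\ell^n\OO)^\times$-action on $E[\ell^n]$ furnished by Lemma~\ref{LEMMA2.6} and the Cartan subgroup formalism of $\S$3.4 to show that the $\ell$-local component of $K^{(\ell^{v_\ell(N)})}$ still lies in $F(\mathfrak{h}(E[\ell^{v_\ell(N)}]))$. The cleanest execution of this final step would invoke the adelic main theorem of complex multiplication adapted to the non-maximal order $\OO$, from which the containment becomes a formal consequence of the Shimura reciprocity law for $\OO$.
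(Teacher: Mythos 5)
Your argument is complete only when $\gcd(N,\mathfrak{f})=1$, where $\mathfrak{f}=[\OO_K:\OO]$, and in that range it is correct and genuinely more elementary than the paper's proof: pass to $FK$, descend the isogeny $\iota:E\to E'$ with $E'\cong_{\C}\C/\OO_K\Lambda$ (the kernel $\OO_K\Lambda/\Lambda$ is an $\OO$-submodule of $E[\mathfrak{f}]$, hence Galois-stable because over $FK$ the Galois action on the free rank-one $\OO/\mathfrak{f}\OO$-module $E[\mathfrak{f}]$ is through $(\OO/\mathfrak{f}\OO)^{\times}$, by Lemma \ref{LEMMA2.6} and $\S$3.4), compare Weber functions through the induced map of quotients, and quote the classical $\OO_K$-theory. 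But the theorem carries no coprimality hypothesis, and the paper needs it precisely when $\ell\mid\Delta$ (e.g.\ the ramified case of Theorem \ref{OLDNEWTHM2}), which your coprime argument does not reach. For $\ell\mid\gcd(N,\mathfrak{f})$ what you write is not a proof: the observation that $K_{\Delta}\subset FK$ ``encodes the ring-class ramification at $\ell$'' only accounts for the ring class field, which is far smaller than the $\ell$-part of $K^{(N)}$, and your proposed ``cleanest execution'' -- the adelic main theorem of CM adapted to the non-maximal order $\OO$ -- is exactly the engine of the paper's own proof (Lang, $\S 10.3$, Theorem 7, stated there for arbitrary lattices, applied with $\mathfrak{a}=\OO$ and $\mathfrak{a}=\OO_K$, $u=\frac{1}{N}$, together with the observation that $b\OO=\OO$ forces $b\OO_K=\OO_K$). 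Naming that tool without carrying out the application leaves the hard case unproved.

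The gap can in fact be closed inside your framework without any adelic input, if you use the pointwise form of the classical main theorem (Lang, Ch.\ 10, Thm.\ 2): for an $\OO_K$-CM curve $E'$ and a point $w$ whose $\OO_K$-annihilator is exactly $N\OO_K$, one has $K(j(E'),\mathfrak{h}'(w))=K^{(N)}$. You then do not need $\iota(E[N])=E'[N]$; you only need one point $P\in E[N]$ with $\iota(P)$ of annihilator exactly $N\OO_K$. Such a $P$ exists for every $N$: since $\Lambda$ is a proper $\OO$-ideal it is locally principal, so at each prime $\pp\mid N\OO_K$ the image of $\Lambda$ in $\OO_K\Lambda/\pp\,\OO_K\Lambda$ is nonzero; as at most two primes of $\OO_K$ lie over each rational prime dividing $N$ and a finite group is never the union of two proper subgroups, a CRT argument yields $x\in\Lambda$ whose image generates $\OO_K\Lambda/N\OO_K\Lambda$ as an $\OO_K/N\OO_K$-module, and one takes $P=\frac{x}{N}+\Lambda$. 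Then $\mathfrak{h}'(\iota(P))\in FK(\mathfrak{h}(E[N]))$ by your Weber-quotient comparison, and the pointwise theorem gives $K^{(N)}\subset FK(\mathfrak{h}(E[N]))$ with no restriction on $\gcd(N,\mathfrak{f})$. As submitted, however, your proposal establishes the theorem only for $N$ prime to the conductor.
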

\begin{remark}
When $\OO = \OO_K$, the \emph{equality} $K(j(\C/\OO_K), \mathfrak{h}(E[N])) = K^{(N)}$ is one of the central results of classical CM theory \cite[Theorem II.5.6]{SilvermanII}.
\end{remark}

\begin{proof} We use the results and notation of Lang \cite[$\S 10.3$]{LangEll}.  Applying Theorem 7 first with $\mathfrak{a} = \OO$ and $u = \frac{1}{N}$ and then with $\mathfrak{a} = \OO_K$ and $u = \frac{1}{N}$. We observe that for an idele $b$, $b \OO = \OO \implies b \OO_K = \OO_K$.  This is much as in \cite[Thm. 6, $\S 10.3$]{LangEll}.  We conclude
\[ L \supset K(j(\C/\OO),\mathfrak{h}(\frac{1}{N}+\OO)) \supset K(j(\C/\OO_K),\mathfrak{h}(\frac{1}{N}+\OO_K)). \]
But as an $\OO_K$-module, $\frac{1}{N} \OO_K/\OO_K$ is generated by $\frac{1}{N} + \OO_K$ \cite[p. 135]{LangEll}, so
\[ K(j(\C/\OO_K),\mathfrak{h}(\frac{1}{N}+\OO_K)) = K(j(\C/\OO_K),\mathfrak{h}(E[N])) = K^{(N)}.  \qedhere\]
\end{proof}

\section{Restrictions on the Torsion Subgroup}
\noindent
The goal of this section is to assemble a toolkit of results on torsion points on CM elliptic curves defined over number fields $E_{/F}$.  In many of these results the number field $F$ is not as restricted as it will be in the later results of the paper but the conclusion is of a
rather intricate or technical nature.  There is also a mixture of old and new: we draw from the work of Parish \cite{Parish89}, Silverberg \cite{Silverberg88}, \cite{Silverberg92}, Prasad-Yogananda \cite{PY01} and Aoki \cite{Aoki95}, \cite{Aoki06} and establish variants,
make refinements and extract key consequences.
\\ \\
Throughout this section we will use the following setup: $\OO$ is an imaginary quadratic order with fraction field $K$.  Let $\Delta_K$ be the discriminant of $\OO_K$, $\mathfrak{f}$ the conductor of $\OO$, and $\Delta$ the discriminant of $\OO$, so $\Delta = \mathfrak{f}^2 \Delta_K$.  Let $F$ be a subfield of $\C$, and let $E_{/F}$ be an $\OO$-CM elliptic curve. Again $\mathfrak{h}_{/F}$ will denote a Weber function.

\subsection{Points of Order 2}
\textbf{} \\ \\ \noindent
Let $\OO$ be an imaginary quadratic order of discriminant $\Delta < -4$, with fraction field $K$.  Let $E_{/\C}$ be an $\OO$-CM elliptic curve.  Let $F = \Q(j(E))$ and let $L = F(E[2])$, so $L/F$ is Galois of
degree dividing $6$.  By Fact 1, the isomorphism class of $F$ depends only on $\Delta$.  Since $\Delta < -4$, the $x$-coordinate is a Weber function on $E$, and thus $L$ does not depend upon the chosen Weierstrass model (any two $\OO$-CM elliptic curves with the same $j$-invariant are \emph{quadratic} twists of each other, and $2$-torsion points are invariant under quadratic twist).  Thus as an abstract number field and a Galois extension thereof, $F$ and $L$
depend only on $\Delta$.

\begin{thm}(Parish)
\label{PARISH2}
\label{4.1}
For all $\Delta< -4$, $F \subsetneq L$.
\end{thm}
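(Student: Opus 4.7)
The plan is to split the argument on the parity of $\Delta$.

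When $\Delta$ is odd, I will apply Lemma \ref{LEMMA2.7} with $N=2$: the hypothesis ``$N=2$ and $\Delta$ odd'' is satisfied, so $F(E[2]) \supset K$. Since $F = F_\Delta$ embeds into $\R$ by Fact \ref{FACT1}(e), it cannot contain the imaginary quadratic field $K$, and hence $F \subsetneq F(E[2]) = L$.

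When $\Delta$ is even, the module-theoretic argument of Lemma \ref{LEMMA2.7} degenerates because complex conjugation acts trivially on $\OO_2 := \OO/2\OO$, so I will analyze the $2$-isogeny structure of $E$ instead. Writing $\OO = \Z[\theta]$, the minimal polynomial $t^2 + at + b$ of $\theta$ has discriminant $a^2 - 4b = \Delta$; thus $\Delta$ even forces $a$ even, so $\OO_2 \cong \F_2[t]/(t^2 + b \bmod 2) \cong \F_2[\epsilon]/(\epsilon^2)$. By Lemma \ref{LEMMA2.6}, $E[2]$ is free of rank one over $\OO_2$, so among the three order-$2$ subgroups of $E[2]$ exactly one is $\OO$-stable; the other two are not.

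Suppose for contradiction that $E[2] \subset E(F)$. Then each of these subgroups is $\ggg_F$-stable, so each of the three $2$-isogenies out of $E$ is defined over $F$ and its target is an elliptic curve over $F$. Let $H$ be one of the two non-$\OO$-stable order-$2$ subgroups, and put $E' = E/H$. The order $\OO' = \{\alpha \in K : \alpha H \subset H\} = \End(E')$ contains $2\OO$ (as $2H = 0$) but not $\OO$; because $\OO(4\Delta) = \Z + 2\OO$ has index $2$ in $\OO$, this forces $\OO' = \OO(4\Delta)$. Thus $j(E') \in F$ is a root of the Hilbert class polynomial $H_{4\Delta}(t)$, giving $F_{4\Delta} \subset F = F_\Delta$.

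The contradiction comes from a degree comparison. For $\Delta < -4$ we have $\OO(\Delta)^\times = \OO(4\Delta)^\times = \{\pm 1\}$, and the class number formula for nonmaximal imaginary quadratic orders yields $h(4\Delta) = 2h(\Delta)$ whenever $\Delta$ is even --- both when $2 \mid \mathfrak{f}$ (the conductor product is unchanged) and when $2 \nmid \mathfrak{f}$, in which case $\Delta_K$ is even and the new factor at $p = 2$ equals $1$. Therefore $[F_{4\Delta}:\Q] = 2h(\Delta) > h(\Delta) = [F:\Q]$, contradicting $F_{4\Delta} \subset F$. I expect the crux to be the identification $\OO' = \OO(4\Delta)$ from the non-stability of $H$; the subsequent class number comparison is then routine.
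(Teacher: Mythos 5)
The paper's own ``proof'' of this statement is only a citation of Parish's Table 1, so your self-contained argument is a genuinely different route. Your odd-$\Delta$ case is exactly the mechanism the paper itself uses in Theorem \ref{THM4.2}a): Lemma \ref{LEMMA2.7} with $N=2$ gives $K \subset L$, while $F = F_\Delta$ is real by Fact \ref{FACT1}, so $F \subsetneq L$; this is correct. Your even-$\Delta$ strategy -- quotient by one of the two non-$\OO$-stable order-$2$ subgroups, identify the quotient's CM order as $\OO(4\Delta)$, and contradict $[F_{4\Delta}:\Q] = 2h(\Delta) > h(\Delta) = [F:\Q]$ -- is sound in outline, and the ingredients you import (rank-one freeness of $E[2]$ over $\OO_2 \cong \F_2[\epsilon]/(\epsilon^2)$ from Lemma \ref{LEMMA2.6}, hence exactly one $\OO$-stable order-$2$ subgroup; the class-number relation $h(4\Delta)=2h(\Delta)$ for even $\Delta<-4$) are all used correctly. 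What your argument buys is an elementary, citation-free proof; what the paper's citation buys is Parish's finer information (which $2$-torsion structures occur over $\Q(j)$ for each $\Delta$), which is reused later.

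The step you yourself call the crux is, however, incompletely justified as written. First, $\{\alpha \in K : \alpha H \subset H\}$ is not literally defined for $\alpha \notin \OO$; you should pass to lattices: write $E \cong \C/\Lambda$ with $\OO(\Lambda)=\OO$, let $\Lambda \subset \Lambda' \subset \frac{1}{2}\Lambda$ be the index-$2$ superlattice with $\Lambda'/\Lambda = H$, and set $\OO' := \OO(\Lambda') = \End(E')$. More seriously, the inference ``$\OO' \supseteq \Z+2\OO$, $\OO' \not\supseteq \OO$, and $[\OO:\Z+2\OO]=2$, hence $\OO' = \Z+2\OO$'' does not follow in general: the orders containing $\Z+2\OO$ are exactly those of conductor dividing $2\mathfrak{f}$ (where $\mathfrak{f}$ is the conductor of $\OO$), and whenever $\mathfrak{f}$ has an odd divisor $m>1$ the order of conductor $2\mathfrak{f}/m$ also contains $\Z+2\OO$ and fails to contain $\OO$ (e.g.\ for $\Delta = -72$, the conductor-$2$ order of $\Q(\sqrt{-2})$ is such a candidate), so your stated constraints do not pin down $\OO'$. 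The fix is the one-line containment symmetric to the one you did prove: since $2\Lambda' \subset \Lambda$, we get $2\OO'\Lambda \subset 2\OO'\Lambda' \subset 2\Lambda' \subset \Lambda$, i.e.\ $2\OO' \subseteq \OO$. Then the conductor $\mathfrak{f}'$ of $\OO'$ satisfies $\mathfrak{f}' \mid 2\mathfrak{f}$, $\mathfrak{f} \mid 2\mathfrak{f}'$ and $\mathfrak{f}' \nmid \mathfrak{f}$, which does force $\mathfrak{f}' = 2\mathfrak{f}$ and $\OO' = \OO(4\Delta)$. (As you wrote it, the identification is already complete when $\mathfrak{f}$ is a power of $2$, in particular when $\OO = \OO_K$.) With this insertion the degree comparison goes through and your proof is correct.
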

\begin{proof}
Equivalently, $E$ does not have full $2$-torsion defined over $F = \Q(j(E))$.
This follows from more precise results of Parish \cite[Table 1]{Parish89}.
\end{proof}

\begin{thm}
\label{THM4.2}
\label{4.2}
Let $\OO$ be an imaginary quadratic order of discriminant $\Delta < -4$ and with fraction field $K$.  Let $E_{/\C}$ be an elliptic curve with $\OO$-CM.  Let $F = \Q(j(E))$, and let $L = F(E[2])$.  \\
a) We have $K \subset L$ iff $\Delta$ is odd. \\
b) If $\Delta \equiv 1 \pmod{8}$, then $L = FK$ and $[L:F] = 2$.  \\
c) If $\Delta \equiv 5 \pmod{8}$ and $K \neq \Q(\sqrt{-3})$ then $L = K^{(2)}F$
and $[L:F] = 6$.  \\
d) If $\Delta$ is even, then $[L:F] = 2$.
\end{thm}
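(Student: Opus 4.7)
The plan is to carry out a unified case analysis on $\Delta \bmod 8$ of the mod-$2$ Galois representation
\[
\rho_2 : \mathfrak{g}_F \to \GL(E[2]) \cong \GL_2(\F_2) \cong S_3.
\]
By Lemma~\ref{LEMMA2.6}a), $E[2]$ is free of rank one over $\OO/2\OO$, so $\rho_2(\mathfrak{g}_{FK})$ lies in the Cartan $\mathcal{C} := (\OO/2)^\times$ and $\rho_2(\mathfrak{g}_F)$ lies in its normalizer $N(\mathcal{C})$. Three cases arise: for $\Delta \equiv 1 \pmod{8}$, $\OO/2 \cong \F_2 \oplus \F_2$ with $\mathcal{C}$ trivial; for $\Delta \equiv 5 \pmod{8}$, $\OO/2 \cong \F_4$ with $|\mathcal{C}| = 3$ and $N(\mathcal{C}) = S_3$; and for $\Delta$ even, $\OO/2 \cong \F_2[\epsilon]/(\epsilon^2)$ with $|\mathcal{C}| = 2$ self-centralizing in $S_3$. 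Corollary~\ref{COR2.4} tells us that $c$ acts trivially on $\OO/2$ precisely when $\Delta$ is even; together with the $\mathfrak{g}_F$-equivariance of $\iota_2$, this forces $\rho_2(\mathfrak{g}_F) \subseteq C(\mathcal{C}) = \mathcal{C}$ in the even case. Throughout, Parish's Theorem~\ref{PARISH2} supplies the lower bound $|\rho_2(\mathfrak{g}_F)| \geq 2$.

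Parts (b) and (d) are immediate from this setup: for (b), triviality of $\mathcal{C}$ yields $L \subseteq FK$, and Parish together with $[FK:F]=2$ forces $L = FK$; for (d), the even case gives $\rho_2(\mathfrak{g}_F) = \mathcal{C}$ of order $2$, so $[L:F]=2$. For (c), Lemma~\ref{LEMMA2.7} already gives $K \subseteq L$, hence $L \supseteq FK$, and then Theorem~\ref{SCHERTZTHM} upgrades this to $L = L \cdot FK \supseteq K^{(2)}$. I will verify $[K^{(2)}:K_{\Delta_K}]=3$ (using $\Delta_K \equiv 5 \pmod 8$, so $(\OO_K/2)^\times \cong \F_4^\times$, and the hypothesis $K \neq \Q(\sqrt{-3})$, under which the image of $\OO_K^\times$ in $(\OO_K/2)^\times$ is trivial) and $K^{(2)} \cap FK = K_{\Delta_K}$ (since $\mathfrak{f}$ is odd, the ring-class conductor of $FK/K$ is coprime to $2$). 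Combining, $[K^{(2)}FK : FK] = 3 = |\mathcal{C}|$ forces $\rho_2(\mathfrak{g}_{FK}) = \mathcal{C}$, $\rho_2(\mathfrak{g}_F) = S_3$, and $[L:F] = 6 = [K^{(2)}FK : F]$, giving $L = K^{(2)}FK = K^{(2)}F$ (the last equality since $K \subseteq K^{(2)}$).

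For part (a), the forward direction is Lemma~\ref{LEMMA2.7} with $N=2$. For the converse, assume $\Delta$ is even; by (d) and $[FK:F]=2$, the inclusion $K \subseteq L$ would force $L = FK$, so it suffices to exhibit a real embedding of $L$, since $FK = F(\sqrt{\Delta_K})$ is totally imaginary. For $\Delta \equiv 0 \pmod 4$, Theorem~\ref{HalterKoch} shows that $\Lambda := [1,\sqrt{\Delta}/2] = \OO$ is always a primitive proper real $\OO$-ideal of type~(\ref{HKONE}); a direct computation with any such $\Lambda = [a,\sqrt{\Delta}/2]$ shows complex conjugation fixes $(1/2)\Lambda/\Lambda$ pointwise (whereas for type~(\ref{HKTWO}) ideals this fails). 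Hence under the real embedding $\iota : F \hookrightarrow \R$ whose associated lattice class is represented by such a type-(\ref{HKONE}) ideal, $E[2] \subseteq E(\R)$, and $L$ inherits a real embedding from $\iota$, giving $L \neq FK$. The main obstacle is verifying that for our specific $E/F$ some real embedding of $F$ does correspond to a type-(\ref{HKONE}) lattice class; I expect this to follow from the correspondence of Section~3 between real embeddings of $F$ and $\R$-homothety classes of real $\OO$-CM lattices, combined when necessary with the $F$-rational isogenies of Theorem~\ref{THM3.9}, which allow reduction to a canonical-lattice model without changing the square class of the $2$-torsion field.
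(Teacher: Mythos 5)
Your treatment of parts (b), (c) and (d) is correct and is essentially the paper's argument in light disguise: the Cartan/normalizer setup, the trivial Cartan for $\Delta \equiv 1 \pmod 8$, the computation $[K^{(2)}:K^{(1)}]=3$ together with $K^{(2)} \cap FK = K^{(1)}$ (the paper phrases this as linear disjointness of $K(j)$ and $K^{(2)}$ over $K^{(1)}$ since $\mathfrak{f}$ is odd), and the order-$2$ bound in the even case (the paper bounds $[L:FK]$ by the Cartan and then invokes part (a) or the divisibility $[L:F] \mid 6$, whereas you use the centralizer; both are fine, and Parish supplies the lower bound in either version).

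The genuine gap is exactly where you flagged it, in the converse direction of part (a): you need to know that \emph{your} curve $E_{/F}$, under \emph{some} real embedding, is uniformized by a real lattice on which complex conjugation acts trivially modulo $2$, and you only assert an expectation that this can be arranged. Moreover, the repair you propose does not work as stated: Theorem \ref{THM3.9}b) requires $\gcd(N,\Delta)=1$, which fails precisely in the case at hand ($N=2$, $\Delta$ even), and the isogeny of Theorem \ref{THM3.9}a) has degree $[\OO:\Lambda] \mid \Delta$, which may be even, so it need not induce an isomorphism of $2$-torsion Galois modules; it can change $F(E[2])$. The way to close the gap (and this is what the paper does, via Lemma \ref{LEMMA2.3}) is to use the real embedding $\iota\colon F \hookrightarrow \R$ with $\iota(j(E)) = j(\C/\OO)$: for this embedding the associated real lattice is $\R$-homothetic to $\OO$ or to $i\OO$ (these are the two real forms attached to this $j$, Lemma \ref{LEMMA2.1}), and Lemma \ref{LEMMA2.3}a) computes the matrix of $c$ on \emph{both} $\OO_2$ and $i\OO_2$ to be $\equiv I \pmod 2$ when $\Delta$ is even. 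Equivalently, since $\Delta < -4$ the two real forms over a fixed real $j$ are quadratic twists of one another and $2$-torsion is twist-invariant, so the $c$-action on $E[2]$ depends only on the ideal class of the real lattice; the class of the distinguished embedding is the principal class, which contains the type-(\ref{HKONE}) ideal $\OO = [1,\sqrt{\Delta}/2]$, and your computation then applies. Without one of these two observations (the $i\OO$ computation or the twist-invariance reduction), your "exhibit a real embedding of $L$" step is not established, since a priori the real lattice attached to $\iota$ could be the other real form in its class, and, as your own $\Delta \equiv 0$ analysis of type-(\ref{HKTWO}) ideals shows, triviality of the $c$-action is not automatic for an arbitrary real lattice.
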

\begin{proof}
a) If $\Delta$ is odd, then Lemma \ref{LEMMA2.7} gives $K \subset L$.  Suppose
$\Delta$ is even.  Then Lemma \ref{LEMMA2.3} implies $E[2](\R) = \Z/2\Z \times \Z/2\Z$.  Since
$K \not \subset \R$, the result follows.  \\
b) If $\Delta \equiv 1 \pmod{8}$, then the mod $2$ Cartan subgroup is isomorphic
to $(\Z/2\Z)^{\times} \times (\Z/2\Z)^{\times}$, better known as the trivial group.  It
follows that $FK(E[2]) = FK$.  Together with part a) this shows
$L = F(E[2]) = FK$.  \\
c) By part a) we have $K \subset L$, so by Theorem \ref{SCHERTZTHM}
we have $L \supset K^{(2)}F$.  Since $\Delta$ is odd, $K(j)$ and $K^{(2)}$
are linearly disjoint over $K^{(1)}$, so
\[ [K^{(2)}:K^{(1)}] = [K(j)K^{(2)}:K(j)K^{(1)}] = [FK^{(2)}:FK^{(1)}] \mid
[L:FK] = \frac{[L:F]}{2}. \]  Since $K \neq \Q(\sqrt{-3})$,
we have (c.f. Proposition \ref{4.12}) $[K^{(2)}:K^{(1)}] = 3$.
 Since for any
elliptic curve $E_{/F}$ we have $[F(E[2]):F] \mid 6$, the result
follows.  \\
d) Since $\Delta$ is even, the mod $2$ Cartan subgroup is cyclic of order $2^2-2 = 2$, and thus $[FK(E[2]):FK] \mid 2$.  It follows by using the result of part a) that $K \not \subset F(E[2])$, or just the fact that $[L:F] \mid 6$ so
we cannot have $[L:F] = 4$, that $[L:F] \mid 2$.  Combining with Theorem \ref{PARISH2} we get the result.
\end{proof}

\begin{cor}
\label{COR4.3}
\label{4.3}
Suppose $\Delta \neq -4$.   Let
$F$ be a number field, and let $E_{/F}$ be an $\OO(\Delta)$-CM elliptic curve.
If $\Z/2\Z \times \Z/2\Z \subset E(F)$, then $2 \mid [F:\Q]$.
\end{cor}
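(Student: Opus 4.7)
The plan is to split into cases according to the parity of $\Delta$, handling $\Delta = -3$ together with the other odd discriminants.

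First I would dispose of the case where $\Delta$ is odd (this includes $\Delta = -3$ as well as all $\Delta \leq -7$ with $\Delta \equiv 1 \pmod 4$). Here I would simply invoke Lemma \ref{LEMMA2.7} with $N = 2$: since $\Delta$ is odd, the hypotheses are satisfied and so $F(E[2]) \supset K$. But the assumption $\Z/2\Z \times \Z/2\Z \subset E(F)$ means $E[2] \subset E(F)$, hence $F(E[2]) = F$, and thus $K \subset F$. Since $K$ is imaginary quadratic, $[K:\Q] = 2$ divides $[F:\Q]$.

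Next I would handle the case where $\Delta$ is even with $\Delta < -4$. Here Lemma \ref{LEMMA2.7} is not available at $N = 2$, so I would instead appeal to Theorem \ref{THM4.2}(d), which asserts $[L:F_\Delta] = 2$ where $F_\Delta = \Q(j(E))$ and $L = F_\Delta(E[2])$. The $j$-invariant of $E$ lies in $F$ by hypothesis, so $F_\Delta \subset F$. Since full $2$-torsion is $F$-rational, $L \subset F$ as well. Therefore $[L:F_\Delta] = 2$ divides $[F:F_\Delta]$, which in turn divides $[F:\Q]$, giving the desired parity.

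Since $\Delta \neq -4$ by hypothesis, these two cases exhaust all possibilities, and the corollary follows. The only mild subtlety is that Theorem \ref{THM4.2} is stated for $\Delta < -4$ and thus cannot directly cover $\Delta = -3$, but Lemma \ref{LEMMA2.7} handles the odd case uniformly, so no additional argument is needed. No step here is a serious obstacle; the work has already been done in Lemma \ref{LEMMA2.7} and Theorem \ref{THM4.2}, and the corollary is essentially a matter of packaging those results together.
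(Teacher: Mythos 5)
Your proof is correct and follows essentially the same route as the paper's: the paper splits into $\Delta = -3$ (citing Theorem \ref{THM4.2}a), which ultimately rests on Lemma \ref{LEMMA2.7}) versus $\Delta < -4$ (reducing to a model over $\Q(j(E))$ and applying Theorem \ref{THM4.2}), while you split by the parity of $\Delta$, using Lemma \ref{LEMMA2.7} uniformly for odd $\Delta$ and Theorem \ref{THM4.2}d) for even $\Delta$. Your odd case is marginally cleaner, since it avoids invoking Theorem \ref{THM4.2} outside its stated hypothesis $\Delta < -4$; your even case tacitly uses, just as the paper does, that for $\Delta < -4$ the $2$-torsion field over $\Q(j(E))$ is independent of the chosen model (quadratic twisting does not change the $2$-torsion Galois module), so that $L \subset F$ indeed follows from the $F$-rationality of $E[2]$.
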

\begin{proof}
If $\Delta = -3$, then by Theorem \ref{4.2}a) we have $F \supset K$ hence $2 \mid [F:\Q]$.  Otherwise we have $\Delta < -4$, so $\Q(E[2]) = \Q(\mathfrak{h}(E[2]))$, and the $2$-torsion field is
independent of the model of $E$. We may thus assume without loss of generality that
$E$ is obtained by base extension from an elliptic curve $E_{/\Q(j(E))}$.  Applying Theorem \ref{THM4.2} we get
\[ 2 \mid [\Q(j(E),E[2]):\Q(j(E))] \mid [F:\Q]. \qedhere \]
\end{proof}

\begin{remark}
\label{4.4}
The elliptic curve $E_{/\Q}: y^2 = x^3-x$ shows that the hypothesis $\Delta \neq -4$ in Corollary \ref{COR4.3} is necessary.
\end{remark}

\begin{cor}\label{odd2tors} \label{4.5} If $[F:\Q]$ is odd, $E_{/F}$ is a CM elliptic curve and $(\Z/2\Z)^2\subset E(F)$ then $E(F)[12] = (\Z/2\Z)^2$.
\end{cor}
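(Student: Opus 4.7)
The plan is to first reduce to $\Delta = -4$ using Corollary~\ref{COR4.3}, then separately rule out $F$-rational points of order $3$ (via Aoki's Theorem) and of order $4$ (by an elementary computation on a Weierstrass model). Since $(\Z/2\Z)^2 \subseteq E(F)$ and $[F:\Q]$ is odd, Corollary~\ref{COR4.3} forces $\Delta = -4$, so $E$ has CM by $\Z[i]$, $K = \Q(i)$, $j(E) = 1728$, and $K \not\subset F$ because $F$ is real.

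A point of order $3$ in $E(F) \subseteq E(FK)$ would force $\zeta_3 \in FK = F(i)$ by Aoki's Theorem. Writing $\zeta_3 = -\tfrac{1}{2} + \tfrac{\sqrt{3}}{2}\,i$ and noting $F(i) = F + Fi$, this would yield $\sqrt{3} \in F$, contradicting the oddness of $[F:\Q]$.

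For order $4$, since $j(E) = 1728$, $E$ admits a Weierstrass model $y^2 = x^3 + Ax$ over $F$, and full $2$-torsion forces $A = -e^2$ for some $e \in F^\times$, giving $E : y^2 = x(x-e)(x+e)$. If $P \in E(F)$ has order $4$, then $2P$ is a nontrivial $2$-torsion point, and the duplication formula reads
\[ x(2P) = \frac{(x^2+e^2)^2}{4x(x^2-e^2)}. \]
Setting $x(2P) = 0$ yields $x_P = \pm ie \notin F$. Setting $x(2P) = \pm e$ and substituting $w = x \mp e^2/x$ reduces the defining quartic in $x_P$ to a perfect square in $w$, yielding $x_P = \pm e(1\pm\sqrt{2})$, which requires $\sqrt{2} \in F$ and is again impossible. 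Hence $E(F)$ has no point of order $4$.

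Combining, $E(F)$ has no point of order $3$ or $4$, so none of order $6$, $9$, or $12$, and therefore $E(F)[12] = E(F)[2] = (\Z/2\Z)^2$. The main obstacle is the order-$4$ step; one could alternatively carry it out via complete $2$-descent on the above model, using that $-1$ and $2$ are non-squares in the odd-degree real field $F$.
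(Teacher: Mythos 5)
Your proof is correct, but it is a genuinely different argument from the one in the paper. Both proofs begin the same way, invoking Corollary~\ref{COR4.3} to force $\Delta=-4$; after that the paper simply observes that it suffices to exclude subgroups $\Z/6\Z$ and $\Z/2\Z\oplus\Z/4\Z$ and reads this off from the degree-sequence data in Table 2 (the $\Delta=-4$ entries in the $(1,6)$ and $(2,4)$ rows are all even), so its content at this point is an appeal to the computations behind that table. You instead give a self-contained argument: order $3$ is killed by Aoki's Theorem (Corollary~\ref{COR4.11}), since $\zeta_3\in FK=F(i)$ with $F$ real and of odd degree would force $\sqrt{3}\in F$; and order $4$ is killed by an explicit $2$-division computation on the model $y^2=x(x-e)(x+e)$, where each choice of $2P$ among the three $2$-torsion points forces $\sqrt{-1}$ or $\sqrt{2}$ into $F$. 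Your computation checks out (the substitution $w=x\mp e^2/x$ does collapse the quartic to $(w\mp 2e)^2=0$), and the alternative you mention is indeed the cleanest packaging: $(e,0)\in 2E(F)$ iff $e$ and $2e$ are squares in $F$, etc., which immediately needs $\sqrt{2}$ or $\sqrt{-1}$ in $F$. What your route buys is independence from the machine computations underlying Table 2 (and from Theorem~\ref{THM4.2}-type bookkeeping), at the cost of being special to $j=1728$; what the paper's route buys is brevity and uniformity, since Table 2 is already needed elsewhere (e.g.\ in the proof of Theorem~\ref{MAINTHM}). One small stylistic remark: your exclusion of order $3$ could equally be quoted from Theorem~\ref{JIMSLEMMA}a), but your direct use of Corollary~\ref{COR4.11} is preferable here since Theorem~\ref{JIMSLEMMA}b) itself cites the present corollary, and your choice avoids any appearance of circularity.
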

\begin{proof}
By Corollary \ref{COR4.3} we may assume $\Delta = -4$.  It is
enough to show $E(F)$ has no subgroup isomorphic to $\Z/6\Z$ or
$\Z/2\Z \times \Z/4\Z$.  This follows from Table 2.
\end{proof}
\noindent
This result will be sharpened in $\S 5$.

\subsection{Bounds of Silverberg and Prasad-Yogananda Type}


\begin{thm}
\label{OLDNEWTHM2}
\label{4.6}
Let $\OO$ be an imaginary quadratic order of discriminant $\Delta=\mathfrak{f}^2\Delta_K$; put $K = \Q(\sqrt{\Delta})$.  Let $F$ be a number field, and let $E_{/F}$ an elliptic curve
with $\mathcal{O}$-CM.  Let $w(K) = \# \mathcal{O}_K^{\times}$.  Suppose  $E(F)[\tors]$ contains a point of prime order $\ell > 2$.  \\
a) If $(\frac{\Delta}{\ell}) = -1$, then
\[\left( \frac{2(\ell^2-1)}{w(K)} \right)h(K) \mid [FK:\Q]. \]
b) If $(\frac{\Delta}{\ell}) = 1$, then
\[ \left( \frac{2(\ell-1)}{w(K)} \right) h(K) \mid [FK:\Q]. \]
c) If $(\frac{\Delta}{\ell}) = 0$, then:
\begin{enumerate}
\item[1.] $ (\ell-1) h(K) \mid [FK:\Q]$ if $\ell$ is ramified in $K$.
\vspace{1mm}
\item[2.] $ \left( \dfrac{2(\ell-1)^2}{w(K)} \right) h(K) \mid [FK:\Q]$ if $\ell$ is split in $K$.
\vspace{1mm}
\item[3.] $ \left( \dfrac{2(\ell^2-1)}{w(K)} \right) h(K) \mid [FK:\Q]$ if $\ell$ is inert in $K$.
\end{enumerate}

\end{thm}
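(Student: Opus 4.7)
The plan is to combine the Cartan subgroup analysis of $\S 3.4$ with the ray class field containment of Theorem \ref{SCHERTZTHM}, and then apply class field theory. Set $L = FK$. Since $P \in E(F) \subseteq E(L)$ is fixed by $\mathfrak{g}_L$ and the $\OO$-action on $E[\ell]$ commutes with $\mathfrak{g}_L$ over $L$ (Lemma \ref{LEMMA2.6}), the cyclic $\OO$-submodule $M := \OO \cdot P \subseteq E[\ell]$ is contained in $E(L)$. Writing $M = E[\mathfrak{a}]$ where $\mathfrak{a} \supseteq \ell\OO$ is an ideal of $\OO$, we obtain (from Theorem \ref{SCHERTZTHM} together with its natural ideal-theoretic extension, via the same Lang-idelic proof) that $L \supseteq K^{(\mathfrak{a}_0)}$ for the corresponding $\OO_K$-ideal $\mathfrak{a}_0$.

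The case analysis is driven by the structure of $\OO/\ell\OO$. In case (a) [$\ell$ inert in $\OO$], $\OO/\ell\OO \cong \F_{\ell^2}$ is a field, so $M = E[\ell]$ and $\mathfrak{a}_0 = \ell\OO_K$. In case (b) [$\ell$ split in $\OO$], $\OO/\ell\OO \cong \F_\ell \oplus \F_\ell$ and the worst subcase has $M = E[\mathfrak{l}_i]$ for a prime $\mathfrak{l}_i \mid \ell\OO$, giving $\mathfrak{a}_0$ a prime $\mathfrak{l}'_i$ of $\OO_K$. In case (c.1) [$\ell \mid \Delta_K$, $\ell \nmid \mathfrak{f}$], $\ell\OO = \mathfrak{L}^2$ and the worst subcase has $M = E[\mathfrak{L}]$ with $\mathfrak{a}_0$ the ramified prime of $\OO_K$. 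In cases (c.2)/(c.3) [$\ell \mid \mathfrak{f}$], $\OO/\ell\OO$ has nonzero nilradical and the unique prime $\mathfrak{L}$ of $\OO$ above $\ell$ equals $\ell\OO_K$ as a subset (in particular, $\mathfrak{L}$ is not a proper $\OO$-ideal). Here we pass to the $\OO_K$-CM curve $E^K := \C/\OO_K\Lambda$ via the canonical isogeny $\phi\colon E \to E^K$, which has kernel $E[\mathfrak{L}]$ and is defined over $F$ since $\mathfrak{L}$ is Galois-stable; the Galois-stability of $\ker\widehat{\phi} \subseteq E^K[\ell]$, combined with the Weil pairing and the normalizer structure of the $\OO_K$-Cartan, forces $E^K[\ell] \subseteq E^K(L)$, whence $L \supseteq K^{(\ell)}$ by Theorem \ref{SCHERTZTHM} applied to $E^K$.

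Class field theory completes the argument: the standard exact sequence gives
\[
[K^{(\mathfrak{a}_0)}:K] \;=\; \frac{h(K)\,|(\OO_K/\mathfrak{a}_0)^\times|}{|\operatorname{image}(\OO_K^\times \to (\OO_K/\mathfrak{a}_0)^\times)|},
\]
and for $\ell > 2$ the map $\OO_K^\times \to (\OO_K/\ell\OO_K)^\times$ is injective (no nontrivial root of unity in $\OO_K$ is congruent to $1$ modulo $\ell$, by a simple norm estimate on $u-1$), so the image has order $w(K)$ in cases (a), (b), (c.2), (c.3). In case (c.1) the image in $(\OO_K/\mathfrak{l}_0)^\times = \F_\ell^\times$ has order $\gcd(w(K),\ell-1) = 2$, except in the single exceptional configuration $(K,\ell) = (\Q(\sqrt{-3}),3)$ where the bound $(\ell-1)h(K) = 2$ holds trivially via $K \subseteq L$. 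Plugging in the values $|(\OO_K/\mathfrak{a}_0)^\times| \in \{\ell^2-1,\,\ell-1,\,(\ell-1)^2\}$ and using $[L:\Q] = 2[L:K]$ yields the stated divisibilities. The main obstacle is cases (c.2)/(c.3): the nilpotent structure of $\OO/\ell\OO$ and the non-properness of $\mathfrak{L}$ force the passage to $E^K$ via $\phi$, and establishing the full Galois triviality on $E^K[\ell]$ requires carefully tracking how the $F$-rationality of $P$ transfers through the isogeny to the maximal-order curve — this is the heart of the refinement beyond the classical unramified Silverberg/Prasad-Yogananda/Clark-Cook-Stankewicz analysis.
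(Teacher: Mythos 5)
Your overall scaffolding (Cartan structure mod $\ell$, ray class field containment via Theorem \ref{SCHERTZTHM}, then the class field theory degree formula of Proposition \ref{hmgen}) is the right one, and for $\ell \nmid \Delta$ your sketch essentially re-derives what the paper simply imports from \cite[Theorem 2]{TORS1} --- though even there you lean on an ``ideal-theoretic extension'' of Theorem \ref{SCHERTZTHM} to ideal moduli for \emph{non-maximal} orders, which is neither in the paper (it only proves the integer-modulus containment) nor supplied by you. The genuine gap is in the only case the paper actually has to prove, namely $\ell \mid \Delta$, and above all $\ell \mid \mathfrak{f}$. First, two of your structural claims there are false: the unique prime $\mathfrak{L}$ of $\OO$ above $\ell$ equals $\ell \OO_K$ as a set only when $\mathfrak{f} = \ell$ (compare indices: $[\OO:\mathfrak{L}] = \ell$ while $[\OO:\ell\OO_K] = \ell^2/\mathfrak{f}$ when $\ell\OO_K \subset \OO$ at all); and the canonical isogeny $\C/\Lambda \to \C/\OO_K\Lambda$ has kernel $\OO_K\Lambda/\Lambda$ of order $\mathfrak{f}$, not $E[\mathfrak{L}]$ of order $\ell$ --- its degree is divisible by $\ell$, which is precisely why the order-$\ell$ point does not transfer painlessly to the maximal-order curve. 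Second, the decisive assertion ``the Galois-stability of $\ker\widehat{\phi}$ \ldots forces $E^K[\ell] \subseteq E^K(L)$'' is exactly the hard point, and you do not prove it: you yourself defer it as ``the heart of the refinement.'' Note that this assertion is equivalent to $K^{(\ell)} \subseteq FK$, which is \emph{stronger} than what the theorem claims and than what the paper establishes.

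The paper's proof deliberately avoids any such full-torsion statement over $FK$. For $\ell \mid \Delta$ one has $\OO_\ell \cong \F_\ell[t]/(t^2)$, and fixing the $F$-rational point of order $\ell$ forces $\rho_\ell(\ggg_{FK})$ into the unipotent part of this Cartan, so it has order $1$ or $\ell$. If it is trivial, then indeed $FK \supseteq K^{(\ell)}$ and the degree formula finishes (with a bonus factor of $\ell$ when $\ell \mid \Delta_K$). But if it has order $\ell$ --- a possibility your argument would have to rule out and does not --- the paper only gets $K^{(\ell)} \subseteq F(E[\ell])$ with $[F(E[\ell]):FK] = \ell$, and then extracts the prime-to-$\ell$ part of $[K^{(\ell)}:K^{(1)}]$ as a divisor of $[FK:K^{(1)}]$, adding the factor $2h(K)$ from $K^{(1)} \subseteq FK$; this is exactly why the ramified bound in c.1 is the comparatively weak $(\ell-1)h(K)$, with no factor of $\ell$. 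Finally, there is a case-coverage slip: the theorem's case c.1 is ``$\ell$ ramified in $K$,'' which includes $\ell \mid \mathfrak{f}$, whereas your (c.1) assumes $\ell \nmid \mathfrak{f}$ and your (c.2)/(c.3) are labeled split/inert, so the configuration $\ell \mid \Delta_K$, $\ell \mid \mathfrak{f}$ (e.g.\ $\Delta = -27$, $\ell = 3$) is not handled by any branch as written. As it stands, the proposal proves the previously known cases and only gestures at the new one.
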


\begin{proof}
\noindent
The cases $\OO = \OO_K$ and $\ell \nmid \Delta$ of Theorem \ref{OLDNEWTHM2} were proved in \cite[Theorem 2]{TORS1}.  The hypothesis $\OO = \OO_K$ comes into the proof only via the statement that $K(j(E))(\mathfrak{h}(E[N])) = K^{(N)}$, the $N$-ray class field of $K$.  And in fact we used only that the former field contains the latter field, which holds for all $\OO$ by Theorem \ref{SCHERTZTHM}.  So it remains to
consider the case in which $\ell \mid \Delta$.  If $\Delta = -3$ then $\ell = 3$ and the result is clear; if $\Delta = -4$ there
is no such $\ell$.  Henceforth we assume $\Delta < -4$.
Since $\ell \mid \Delta$ we have by \cite[$\S 2.3$]{TORS1} that
\[ \OO_{\ell} = \OO \otimes \Z/\ell \Z \cong \F_{\ell}[t]/(t^2). \]
Thus its image $C_{\ell} = \iota(\OO_{\ell}) \subset \End E[\ell] \cong M_2(\F_{\ell})$ is generated over the scalar matrices by a single nonzero nilpotent matrix
$g$.  Since the eigenvalues of $g$ are $\F_{\ell}$-rational we can put it in Jordan canonical form over $\F_{\ell}$.  We get a choice of basis $e_1,e_2$ of $E[\ell]$ such that
\[ \mathcal{C}_{\ell} \cong \left\{ \left[ \begin{array}{cc} \alpha & \beta \\ 0 & \alpha \end{array} \right] \mid  \alpha,\beta \in \F_{\ell} \right\} .\]
\\
Let $x=a e_1+b e_2 \in E(F)$ have order $\ell$.  For all $S = \left[
  \begin{array}{ c c }
     \alpha & \beta \\
     0 & \alpha
  \end{array} \right] \in \rho_{\ell}(\ggg_{FK})$
we have \[(\alpha a + \beta b)e_1 + (\alpha b) e_2 = Sx = x = a e_1 + b e_2, \]
and thus
\[ (\alpha-1)b = (\alpha-1)a + \beta b = 0. \]
If $\alpha \neq 1$, then $b = 0$ and thus also $a = 0$ -- contradiction -- so $\alpha = 1$ and $\rho_{\ell}(\ggg_{FK})$ consists of elements of the form
$ \left[
  \begin{array}{ c c }
     1 & \beta \\
     0 & 1
  \end{array} \right].$
Hence $\rho_{\ell}(\ggg_{FK})$ has size 1 or $\ell$.
\\
Case 1: Suppose $\# \rho_{\ell}(\ggg_{FK}) = 1$.  By Theorem \ref{SCHERTZTHM} we have $FK \supset K^{(\ell)}$, and the expression for $[K^{(\ell)}:K^{(1)}]$ found in \cite[Corollary 9]{TORS1} gives
\[  [K^{(\ell)}:\Q] = \frac{2(\ell-1)h(K)}{w(K)} \left(\ell-\left(\frac{\Delta_K}{\ell}\right)\right) \mid
[FK: \Q]. \]
This gives the result, in fact with an extra factor of $\ell$ when $\ell \mid \Delta_K$. \\

\noindent Case 2: If $\# \rho_{\ell}(\ggg_{FK}) = \ell$, then by Lemma \ref{LEMMA2.7} and Theorem \ref{SCHERTZTHM} we have the following diagram of fields. Note $K^{(1)} \subset K(j(E))$, the ring class field of $K$ with conductor $\mathfrak{f}$.
\begin{center}
\begin{tikzpicture}[node distance=2cm]
\node (Q)                  {$\mathbb{Q}$};
\node (K) [above of=Q, node distance=1cm] {$K$};
\node (Kj) [above of=K, node distance=1.5cm] {$K^{(1)}$};
\node (Kl) [above of =Kj, node distance=2 cm] {$K^{(\ell)}$};
\node (FK)  [above right of=Kj, node distance=2.5 cm]   {$FK$};
\node (Ftor) [above of =FK, node distance=1.8cm] {$F(E[\ell]))$};

 \draw[-] (Kj) edge node[auto] {} (FK);
 \draw[-] (Q) edge node[left] {2} (K);
 \draw[-] (FK) edge node[right] {$\ell$} (Ftor);
 \draw[-] (Kj) edge node[left] {$\frac{\ell-1}{w(K)}\left(\ell-\left(\frac{\Delta_K}{\ell}\right)\right)$} (Kl);
 \draw[-] (Kj) edge node[left] {$h(K)$} (K);
 \draw (Kl) -- (Ftor);

\end{tikzpicture}
\end{center}

\noindent Thus $\frac{(\ell-1)}{w(K)} \left(\ell-\left(\frac{\Delta_K}{\ell}\right)\right)$ divides $[FK: K^{(1)}] \cdot \ell$.
\begin{enumerate}
\item If $(\frac{\Delta_K}{\ell})=0$ and $w(K)=2$, this implies $\frac{(\ell-1)}{2}\mid [FK: K^{(1)}]$.
\item Suppose $(\frac{\Delta_K}{\ell})=1$. Since $\frac{(\ell-1)^2}{w(K)}$ is prime to $\ell$, it follows that $\frac{(\ell-1)^2}{w(K)} \mid [FK: K^{(1)}]$.
\item Suppose $(\frac{\Delta_K}{\ell})=-1$. Since $\frac{(\ell^2-1)}{w(K)}$ is prime to $\ell$, it follows that $\frac{(\ell-1)^2}{w(K)} \mid [FK: K^{(1)}]$.
\end{enumerate}
Accounting for the additional factor of $2 \cdot h(K)$ obtained from $K^{(1)}\subset FK$ yields the result.\end{proof}

\subsection{Real Cyclotomy I}

\begin{lemma}
\label{CYCLOLEMMA}
\label{4.7}
Let $\Delta$ be an imaginary quadratic discriminant, and let $K = \Q(\sqrt{\Delta})$.  Let $F$ be a number field, and let $E_{/F}$ be an elliptic
curve.  Let $N \geq 3$, and suppose $(\Z/N\Z)^2 \subset E(FK)$.  Then: \\
a) We have $[\Q(\zeta_N):\Q(\zeta_N) \cap F] \leq 2$. \\
b) Suppose $F$ is real.  Then $\Q(\zeta_N) \cap F = \Q(\zeta_N)^+$. \\
c) Suppose $\gcd(N,\Delta_K) = 1$.  Then $\Q(\zeta_N) \subsetneq FK$. \\
d) If $K \not \subseteq F(\zeta_N)$, then $\Q(\zeta_N) \subset F$. \\
e) Suppose $N$ is an odd prime power.  Then $\Q(\zeta_N)^+ \subset F$. \\
f) Suppose $N = 2^a$ with $a \geq 3$.  Then $\Q(\zeta_{N/2})^+ \subsetneq F$.
\end{lemma}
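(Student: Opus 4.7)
The overall plan is to derive everything from one fundamental fact: since $(\Z/N\Z)^2 \hookrightarrow E(FK)$, the Galois-equivariant, surjective Weil pairing $E[N] \times E[N] \to \mu_N$ forces $\mu_N \subset FK$, i.e., $\Q(\zeta_N) \subset FK$. Combined with $[FK:F] \leq 2$, this immediately yields (a): $[\Q(\zeta_N):\Q(\zeta_N) \cap F] = [F(\zeta_N):F] \leq [FK:F] \leq 2$. For (b), the hypothesis $F \subset \R$ forces $\Q(\zeta_N) \cap F \subset \Q(\zeta_N) \cap \R = \Q(\zeta_N)^+$; since $[\Q(\zeta_N):\Q(\zeta_N)^+] = 2$, combining with (a) gives equality.

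For (c), I would invoke the conductor-discriminant correspondence: a quadratic field $K$ with discriminant $\Delta_K$ embeds in $\Q(\zeta_n)$ iff $|\Delta_K| \mid n$. Since $K$ is imaginary quadratic, $|\Delta_K| \geq 3$, and the hypothesis $\gcd(N,\Delta_K)=1$ precludes $K \subset \Q(\zeta_N)$; thus $\Q(\zeta_N) \subsetneq K \cdot \Q(\zeta_N) \subset FK$. For (d): if $K \subset F$ the claim is immediate from $FK = F \supset \Q(\zeta_N)$; otherwise $[FK:F] = 2$, and the hypothesis $K \not\subset F(\zeta_N)$ forces $F(\zeta_N) \subsetneq FK$, whence $F(\zeta_N) = F$.

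For (e), with $N = \ell^n$ and $\ell$ an odd prime, $\Gal(\Q(\zeta_N)/\Q)$ is cyclic of even order, hence has a unique index-$2$ subfield, necessarily $\Q(\zeta_N)^+$. By (a), $M := \Q(\zeta_N) \cap F$ is either $\Q(\zeta_N)$ itself or $\Q(\zeta_N)^+$, and in both cases $\Q(\zeta_N)^+ \subset M \subset F$.

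Part (f) is the main obstacle, since for $a \geq 3$ the group $(\Z/2^a\Z)^\times \cong \Z/2\Z \times \Z/2^{a-2}\Z$ is no longer cyclic. The plan is to identify explicitly every subgroup of $\Gal(\Q(\zeta_N)/\Q)$ of order at most $2$: its three involutions are $-1$, $1 + 2^{a-1}$, and $-1 + 2^{a-1}$, each satisfying $k \equiv \pm 1 \pmod{N/2}$. Since the direct computation $\sigma_k(\zeta_{N/2} + \zeta_{N/2}^{-1}) = \zeta_{N/2}^k + \zeta_{N/2}^{-k}$ shows that $\{k : k \equiv \pm 1 \pmod{N/2}\}$ is precisely the fixer of $\Q(\zeta_{N/2})^+$, every order-$\leq 2$ subgroup fixes $\Q(\zeta_{N/2})^+$. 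Applying this to $\Gal(\Q(\zeta_N)/M)$, whose order is $\leq 2$ by (a), yields $\Q(\zeta_{N/2})^+ \subset M \subset F$. Strict containment follows from the degree bound $[F:\Q] \geq [M:\Q] \geq 2^{a-2} > 2^{a-3} = [\Q(\zeta_{N/2})^+:\Q]$.
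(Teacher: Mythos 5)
Your proposal is correct and takes essentially the same route as the paper: everything rests on the Weil pairing giving $\zeta_N \in FK$ together with $[FK:F] \le 2$, followed by standard cyclotomic Galois theory for parts c)--f). Your field-intersection phrasing is equivalent to the paper's use of the image $H_F$ of the mod-$N$ cyclotomic character, and your conductor--discriminant argument in c) and your degree count for strictness in f) are only cosmetic variants of the paper's ramification and proper-subgroup arguments.
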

\begin{proof}
a) Let $\chi_N: \mathfrak{g}_F \ra (\Z/N\Z)^{\times}$ be the mod $N$ cyclotomic
character, and let $H_F = \chi_N(\mathfrak{g}_F)$.  As usual the Weil pairing gives $\Q(\zeta_N) \subset FK$, so $\chi_N(\mathfrak{g}_{FK}) \equiv 1$, and thus $\# H_F \leq 2$.  Moreover
$F = F(\zeta_N)^{H_F} \supset \Q(\zeta_N)^{H_F}$, and the result follows.  \\
b) Since $N \geq 3$, $\Q(\zeta_N) \not \subseteq F$ and by part a) we have
$[\Q(\zeta_N):\Q(\zeta_N) \cap F] = 2$.  Further, $\Q(\zeta_N) \cap F \subset \Q(\zeta_N) \cap \R = \Q(\zeta_N)^+$.  \\
c) We have $\Q(\zeta_N) \subset FK$; if equality held, then $K \subset \Q(\zeta_N)$.  But $K$ is ramified at some prime $\ell \nmid N$ and
$\Q(\zeta_N)$ is ramified only at primes dividing $N$. \\
d) The hypothesis implies that $FK$ and $F(\zeta_N)$ are linearly disjoint over $F$, $\chi_N|_{\mathfrak{g}_{FK}} \equiv 1$ implies $\# H_F = \{1\}$ and
$\Q(\zeta_N) \subset F$.  \\
e) If $N$ is an odd prime power, then $(\Z/N\Z)^{\times}$ is cyclic, so either
$H_F = 1$ and $\Q(\zeta_N) = \Q(\zeta_N)^{H_F} \subset F$ or $H_F = \{ \pm 1\}$ and $\Q(\zeta_N)^+ = \Q(\zeta_N)^{H_F} \subset F$.  \\
f) Since $N = 2^a$ with $a \geq 3$, $(\Z/N\Z)^{\times}$ has three elements of
order $2$: $-1$ and $2^{a-1} \pm 1$.  So we have $H_F \subsetneq \{\pm 1, 2^{a-1} \pm 1 \}$, and thus \[F \supset \Q(\zeta_N)^{H_F} \supsetneq \Q(\zeta_N)^{ \{ \pm 1, 2^{a-1} \pm 1 \}} = \Q(\zeta_{N/2})^+. \qedhere \]
\end{proof}

\begin{thm}(Real Cyclotomy I)
\label{REALCYCI}
\label{4.8}
Let $\Delta$ be an imaginary quadratic discriminant, and let $K = \Q(\sqrt{\Delta})$.  Let $N \in \Z^+$ be such that $\gcd(N,\Delta) = 1$.
Let $F \not \supseteq K$ be a number field, and let $E_{/F}$ be an $\OO(\Delta)$-CM elliptic curve.  Suppose that $E(F)$ contains a point of
order $N$.  \\
a) We have $(\Z/N\Z)^2 \subset E(FK)$.  \\
b) $F$ contains an index $2$ subfield of $\Q(\zeta_N)$.  \\
c) If $N$ is an odd prime power, then $\Q(\zeta_N)^+ \subsetneq F$.  If
$N \geq 8$ is an even prime power, then $\Q(\zeta_{N/2})^+ \subsetneq F$. \\
d) If $F$ is real and $N \geq 3$, then $\Q(\zeta_N)^+ \subsetneq F$.
\end{thm}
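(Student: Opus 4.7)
The plan is to derive parts (b)--(d) from part (a) via Lemma \ref{CYCLOLEMMA}, so the core task is part (a): showing that an $F$-rational point $P$ of order $N$ forces $E[N] \subset E(FK)$. By Lemma \ref{LEMMA2.6}a), $E[N]$ is free of rank one over $\OO_N := \OO \otimes \Z/N\Z$, and by the Serre--Tate result cited just above Lemma \ref{LEMMA2.7}, the image of $\ggg_{FK}$ in $\Aut E[N]$ lies in $\OO_N^{\times}$. Hence once I know that $P$ is an $\OO_N$-module generator of $E[N]$, then $\sigma(P) = P$ for every $\sigma \in \ggg_{FK}$ forces $\sigma$ to act trivially on the whole free rank-one module, yielding $E[N] \subset E(FK)$.

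The heart of the argument is thus showing $P$ generates $E[N]$ over $\OO_N$. Reducing via the Chinese remainder theorem to $N = \ell^n$, and using that $\gcd(N,\Delta) = 1$ makes $N$ coprime to both the conductor $\mathfrak{f}$ and $\Delta_K$, the ring $\OO/\ell^n$ is \'etale over $\Z/\ell^n$: a local ring with maximal ideal $(\ell)$ in the inert case, or a product $\Z/\ell^n \times \Z/\ell^n$ in the split case. The inert case is immediate, since an element of additive order $\ell^n$ cannot lie in the maximal ideal $(\ell)$. The main obstacle is the split case, where an element of additive order $\ell^n$ need not have both coordinates coprime to $\ell$. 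I would break this by invoking the nontrivial element $c \in \Gal(FK/F)$, which acts on $\OO_N$ by swapping the two factors (because complex conjugation swaps the two primes of $\OO_K$ above $\ell$) while fixing $P$. Fixing an $\OO_N$-generator $e$ of $E[\ell^n]$ and writing $P = \alpha e$ with $\alpha = (a,b)$, semilinearity of $c$ gives $c(P) = \bar\alpha\,\beta_c\,e$ where $\beta_c := c(e)/e \in \OO_N^{\times}$; the equation $c(P) = P$ then translates to $a = bu$ and $b = av$ for units $u,v$, so the $\ell$-adic valuations of $a$ and $b$ agree. Thus either both coordinates are units or both are non-units, and since $P$ has additive order $\ell^n$ at least one coordinate is a unit. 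Hence $\alpha \in \OO_N^{\times}$ and $P$ generates.

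Given part (a), Lemma \ref{CYCLOLEMMA} delivers parts (b)--(d) quickly. Part (b) follows from \ref{CYCLOLEMMA}a) together with the Weil-pairing inclusion $\Q(\zeta_N) \subset FK$; part (c) follows from \ref{CYCLOLEMMA}e) for odd prime powers and from \ref{CYCLOLEMMA}f) for $N = 2^a$ with $a \geq 3$ (the latter already strict); part (d) is immediate from \ref{CYCLOLEMMA}b). The one remaining detail is strictness $\Q(\zeta_N)^+ \subsetneq F$ in the odd prime power case of (c) and in (d). If instead $F = \Q(\zeta_N)^+$, then $FK = \Q(\zeta_N)^+ K$, and since $\Q(\zeta_N) \subset FK$ with $[\Q(\zeta_N):\Q(\zeta_N)^+] = 2 = [FK:F]$, we would get $FK = \Q(\zeta_N)$ and in particular $K \subset \Q(\zeta_N)$. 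This is impossible under $\gcd(N,\Delta_K) = 1$, since any imaginary quadratic subfield of $\Q(\zeta_N)$ ramifies only at primes dividing $N$ and so has discriminant supported on such primes.
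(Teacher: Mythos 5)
Your proposal is correct and takes essentially the same route as the paper's proof: reduce part a) to prime powers, show the $F$-rational point generates $E[N]$ as an $\OO_N$-module (immediate in the inert case; in the split case exploiting that an element of $\ggg_F$ not fixing $K$ interchanges the two factors of $\OO_N$), conclude from the Serre--Tate fact that $\ggg_{FK}$ acts through $\OO_N^{\times}$, and then derive b)--d) from Lemma \ref{CYCLOLEMMA} with strictness coming from the ramification/genus argument that $K \not\subset \Q(\zeta_N)$ when $\gcd(N,\Delta_K)=1$. Your split-case bookkeeping (semilinearity of conjugation mod $\ell^n$ forcing the two coordinates of the point to have equal valuation) is only a minor variant of the paper's Tate-module/eigenspace computation, not a different method.
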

\begin{proof}
a) We immediately reduce to the case that $N = \ell^a$ is a power of a prime $\ell$.  Let $\Ohell = \OO \otimes \Z_{\ell}$, and identify $\Ohell$ with its isomorphic image in $\End T_{\ell} (E)$.  For $b \in \Z^+$,
let $\OO_{\ell^b} = \Ohell/\langle \ell^b \rangle$.  The hypothesis
$\gcd(N,\Delta) = 1$ implies that $\Ohell$ is the maximal $\Z_{\ell}$-order
in $K_{\ell} = K \otimes \Q_{\ell}$.  We know that $T_{\ell}(E)$ is free of rank one as a $\Ohell$-module by Lemma \ref{LEMMA2.6}.  \\
Case 1: Suppose $\left(\frac{\Delta}{\ell}\right) = 1$.  Then $\Ohell \cong \Z_{\ell} \times \Z_{\ell}$.  Put $\iota = \left[ \begin{array}{cc} 0 & 1 \\ 1 & 0 \end{array} \right]$.  Then $N \Ohell^{\times} = \langle \Ohell^{\times}, \iota \rangle$.  Because $F$ does not contain $K$, there is
$\sigma \in \mathfrak{g}_F$ such that $\rho_{\ell^{\infty}}(\sigma) \in N \Ohell^{\times} \setminus \Ohell^{\times}$.  We may choose a $\Z_{\ell}$-basis $\widetilde{e_1}$, $\widetilde{e_2}$ of $T_{\ell}(E)$ and represent the $\Ohell$-action on $T_{\ell}(E)$ via $\{ \left[ \begin{array}{cc} \alpha & 0 \\ 0 & \beta \end{array} \right] \mid \alpha,\beta \in \Z_{\ell} \}$.
Let \[\widetilde{I_1} = \left[ \begin{array}{cc} 1 & 0 \\ 0 & 0 \end{array} \right], \
\widetilde{I_2} = \left[ \begin{array}{cc} 0 & 0 \\ 0 & 1 \end{array} \right] \in \Ohell. \]
For $i = 1,2$, let $\widetilde{V_i} = \langle \widetilde{e_i} \rangle_{\Z_{\ell}}$, and observe that each $\widetilde{V_i}$ is an $\Ohell$-submodule of $T_{\ell}(E)$.  For $i = 1,2$, put $V_i = \widetilde{V_i} \pmod{\ell} \subset E[\ell](\overline{F})$.  Because we may write
$\rho_{\ell^{\infty}}(\sigma)$ as $\iota M$ with $M \in \Ohell^{\times}$, we have
\[ \rho_{\ell^{\infty}}(\sigma)(\widetilde{V_1}) = \widetilde{V_2}, \
\rho_{\ell^{\infty}}(\sigma)(\widetilde{V_2}) = \widetilde{V_1} \]
and thus also
\[\rho_{\ell}(\sigma)(V_1) = V_2, \ \rho_{\ell}(\sigma)(V_2) = V_1. \]
Lift $P \in E[\ell^n](F)$ to a point $\tilde{P} = a \widetilde{e_1} +
b \widetilde{e_2} \in T_{\ell}(E)$.  We claim $a,b \in \Z_{\ell}^{\times}$: if not, $P' = [\ell^{n-1}]P \in V_1 \cup V_2 \setminus \{0\}$, and $\rho_{\ell}(\sigma)(P') = P'$ gives a contradiction.  It follows that for
$i = 1,2$, $\langle \widetilde{I}_i \tilde{P} \rangle_{\Z_{\ell}} = \widetilde{V_i}$, so the $\Ohell$-submodule
generated by $\tilde{P}$ is $T_{\ell}(E)$.  Going modulo $\ell^a$ we get
that the $\OO_{\ell^a}$-submodule generated by $P$ is $E[\ell^a]$, and thus
$(\Z/\ell^a \Z)^2 \subset E(FK)$.
\\
Case 2: Suppose $\left( \frac{\Delta}{\ell} \right) = -1$.  Then $\Ohell$
is a discrete valuation ring with uniformizing element $\ell$ and fraction field $K_{\ell}$ and thus
$\OO_{\ell^a}$ is a finite principal ring with maximal ideal $\langle \ell \rangle$.  The elements of $(\OO_{\ell^a},+)$ of order $\ell^a$
are precisely the units, so $\OO_{\ell^a}^{\times}$ acts transitively
on the order $\ell^a$ elements of $E[\ell^a]$ and thus the $\OO_{\ell^a}$-submodule of $E[\ell^a]$ generated by $P$ is $E[\ell^a]$. \\
b) This follows from Lemma \ref{CYCLOLEMMA}a).  \\
c) If $N \geq 3$ is an odd prime power then by Lemma \ref{CYCLOLEMMA}e) we have
$\Q(\zeta_N)^+ \subset F$.  Applying Lemma \ref{CYCLOLEMMA}c) we get
\[ 1 < [FK:\Q(\zeta_N)] = [F:\Q(\zeta_N)^+]. \]
The case of an even prime power $N \geq 8$ is similar but easier, since the strictness in the containment $\Q(\zeta_{N/2})^+ \subsetneq F$ comes from
Lemma \ref{CYCLOLEMMA}f).  For part d) we apply Lemma \ref{CYCLOLEMMA}b) and
deduce the strictness of the containment as above.
\end{proof}

\subsection{Aoki's Theorem}
\textbf{} \\ \\
For a number field $F$ and a prime number $\ell$, we denote by
$w_{\ell^{\infty}}(F)$ the cardinality of the group of $\ell$-power roots of unity in $F$.
\\ \\
Let $K$ be an imaginary quadratic field, and let $F$ be a number field which does not contain $K$.  For an odd prime
$\ell$, put \[m_{\ell}(FK/F) = \begin{cases} 1 & F \supset \Q(\zeta_{\ell}) \\ w_{\ell^{\infty}}(FK) & F \not \supset \Q(\zeta_{\ell}). \end{cases} \]
Suppose $w_{2^{\infty}}(FK) = 2^{\eta}$.  Put
\[ m_2(FK/F) = \begin{cases} 2 & \eta = 1 \text{ or }
F \supset \Q(\sqrt{-1}), \\
2^{\eta} & \eta \geq 2 \text{ and } F \cap \Q(\zeta_{2^{\eta}}) =
\Q( \cos \frac{\pi}{2^{\eta-1}}) \\
2^{\eta-1} & \eta \geq 3 \text{ and } F \cap \Q(\zeta_{2^{\eta}}) = \Q(\sqrt{-1} \sin \frac{\pi}{2^{\eta-1}}). \end{cases} \]
Finally, put
\[m(FK/F) = \prod_{\ell \in \mathcal{P}} m_{\ell}(FK/F). \]

\begin{remark}
a) We have $m(FK/F) \mid w(FK)$.  Equality holds if $F$
is real. \\
b) For $2 < \ell \in \mathcal{P}$, if $m_{\ell}(FK/F) > 1$,
then $FK = F(\zeta_{\ell})$.  \\
c) If $m_2(FK/F) > 2$, then $FK = F(\sqrt{-1})$.
\end{remark}

\begin{thm}(Aoki \cite[Thm. 9.4]{Aoki06})
\label{AOKITHM}
\label{THM4.10}
Let $K$ be an imaginary quadratic field, let $F$ be a number
field which \emph{does not} contain $K$, and let $E_{/F}$ be a $K$-CM elliptic curve.  Put $m = m(FK/F)$ and $n = w(FK)$ (so $m | n$, with equality if $F$ is real).  Then: \\
a) $\exp E(FK)[\tors] \mid n$ and $\# E(FK)[\tors] \mid m^2$. \\ b) $\# E(F)[\tors] \mid m$.  \\
c) $\bullet$ If $\# E(F)[2] \leq 2$, then $E(F)[\tors] \cong \Z/N\Z$ for some $N \in \Z^+$.  \\
$\bullet$ If $\# E(F)[2] = 4$, then $E(F)[\tors] \cong \Z/2\Z \oplus \Z/2N \Z$ for some $N \in \Z^+$.
\end{thm}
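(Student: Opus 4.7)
I would prove the three parts of Aoki's Theorem using the $\OO_K$-module structure of $E[N]$ (free of rank one, Lemma \ref{LEMMA2.6}), the identification of the cyclotomic character with $\det \circ \rho_{\ell^\infty}$ where $\rho_{\ell^\infty}$ is the Galois representation valued in $T_\ell(\OO_K)^\times$, and a descent from $FK$ to $F$ via the involution $c \in \Gal(FK/F)$, which acts semilinearly on the CM structure via $c \circ [\alpha] = [\bar\alpha] \circ c$.

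For part (a), to obtain the exponent bound $\exp E(FK)[\tors] \mid n$, I would fix $P \in E(FK)$ of order $N$ and use Lemma \ref{LEMMA2.6} to identify $E[N] \cong \OO_K/N\OO_K$ as $\OO_K$-module. Writing $P \leftrightarrow \alpha \in \OO_K/N\OO_K$, the stabilizer of $P$ inside $(\OO_K/N\OO_K)^\times$ has the form $1 + \mathrm{Ann}(\alpha)$, and the $\Z$-exponent of $\OO_K/\mathrm{Ann}(\alpha)$ is exactly $N$. Since $\rho(\Gal(\overline{FK}/FK)) \subset (\OO_K/N\OO_K)^\times$ is contained in this stabilizer, and the mod-$N$ cyclotomic character equals $N_{K/\Q} \circ \rho$, a short norm computation gives $N_{K/\Q}(1 + \mathrm{Ann}(\alpha)) \subset 1 + N\Z$. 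Therefore $\chi_N \equiv 1$ on $\Gal(\overline{FK}/FK)$, so $\zeta_N \in FK$ and $N \mid n$. For the size bound $\#E(FK)[\tors] \mid m^2$, I decompose into $\ell$-primary parts: each $E(FK)[\ell^\infty]$ is a $T_\ell(\OO_K)$-submodule of the cofree module $(K \otimes \Q_\ell)/T_\ell(\OO_K)$, and so has order bounded by the square of its $\Z$-exponent. The refinement from $n$ to $m$ uses the explicit $m_\ell$ formulas, which pin down precisely which roots of unity actually sit in $FK$.

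For part (b), I would take $c$-invariants: $E(F)[\tors] = E(FK)[\tors]^c$. The involution $c$ acts semilinearly and, for odd $\ell$, its fixed subgroup on a free rank-one $T_\ell(\OO_K)$-module has order equal to the square root of the module's size (the involution has trace $0$ and determinant $-1$ in an $\OO_K$-basis). Assembling contributions over primes, with the $2$-adic part handled through the three sub-cases of $m_2$, yields $\#E(F)[\tors] \mid m$. For part (c), if $\#E(F)[\ell] > \ell$ for some odd prime $\ell$ then $E[\ell] \subset E(F)$, forcing $K \subset F(E[\ell]) = F$ by Lemma \ref{LEMMA2.7} and contradicting $K \not\subset F$; hence $E(F)[\ell^\infty]$ is cyclic for odd $\ell$. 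At $\ell = 2$, Lemma \ref{LEMMA2.7} permits $(\Z/2\Z)^2 \subset E(F)$ only when $\Delta$ is even, and a similar argument applied at level $4$ restricts $E(F)[2^\infty]$ to the shape $\Z/2\Z \oplus \Z/2^k\Z$. Combining $\ell$-primary pieces via the Chinese Remainder Theorem gives the dichotomy in (c).

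\textbf{Main obstacle.} The bulk of the technical difficulty lies in the exact computation of $m_2(FK/F)$ in parts (a) and (b). The three sub-cases in its definition---$\eta = 1$ or $F \supset \Q(\sqrt{-1})$; $\eta \geq 2$ with $F \cap \Q(\zeta_{2^\eta}) = \Q(\cos(\pi/2^{\eta-1}))$; and $\eta \geq 3$ with $F \cap \Q(\zeta_{2^\eta}) = \Q(\sqrt{-1}\sin(\pi/2^{\eta-1}))$---reflect a delicate interplay between the $2$-adic $\OO_K$-module structure, the semilinear involution $c$ (which twists the $\OO_K$-action by a unit of norm $-1$), and the splitting behavior of $2$ in $K$. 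Translating the determinant character into the precise $m_2$ values, rather than any single conceptual ingredient, is where the bookkeeping becomes intricate and is hard to package cleanly.
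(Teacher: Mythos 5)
There is a genuine gap at the crucial step of your part (a), and it is exactly the point where all the real work in Aoki's theorem lives. The stabilizer of $\alpha$ in $(\OO/N\OO)^{\times}$ is indeed $(1+\mathrm{Ann}(\alpha))\cap(\OO/N\OO)^{\times}$, but the claimed ``short norm computation'' $N_{K/\Q}(1+\mathrm{Ann}(\alpha))\subset 1+N\Z$ is false. Take $K=\Q(i)$, $\OO=\Z[i]$, $N=5$: then $\OO/5\OO\cong \F_5\times\F_5$ (the two factors swapped by conjugation), and for $\alpha=(1,0)$ one has $1+\mathrm{Ann}(\alpha)=\{(1,z)\}$, whose norms $1\cdot z$ run over all of $\F_5^{\times}$. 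In matrix language: an element of a split Cartan fixing an eigenvector can have arbitrary determinant. The whole content of Aoki's theorem is that such elements nonetheless do not lie in $\rho_N(\ggg_{FK})$, and establishing this requires a substantive extra input: Aoki gets it from class field theory and the adelic main theorem of complex multiplication, while this paper does not reprove Aoki's theorem at all --- it is cited --- and instead proves only Real Cyclotomy II under the additional hypothesis that $F$ is \emph{real}, by uniformizing with a real lattice and choosing a basis in which complex conjugation $T$ forces the rational point to be $a e_1$ with $a$ a unit; in that (non-eigen)basis the Cartan matrices have their special shape with equal ``$\alpha$'' diagonal contributions, and fixing $ae_1$ then forces determinant $\equiv 1$. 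A symptom of your gap is that your argument for (a) never uses the hypothesis $K\not\subset F$; without it the statement is false, since over a field $L\supset K$ a CM curve can acquire a point of order $\ell$ ($\ell$ split) from a Galois-stable line without $\zeta_\ell\in L$.

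Two further, smaller points. The theorem concerns $K$-CM curves, so $\End E$ may be a nonmaximal order $\OO$, and Lemma \ref{LEMMA2.6} gives freeness of $E[N]$ over $\OO/N\OO$, not over $\OO_K/N\OO_K$; your write-up should not silently assume maximality. In part (b), the assertion that the fixed subgroup of the semilinear involution $c$ on a free rank-one module ``has order equal to the square root of the module's size'' is not correct as stated (the conclusion of the theorem is a divisibility $\#E(F)[\tors]\mid m$, not an equality, and the fixed points of a semilinear involution depend on the twisting cocycle); moreover (b) again inherits the unproved input from (a). Your part (c), by contrast, is essentially fine: full $\ell$-torsion for odd $\ell$ would force $K\subset F$ by Lemma \ref{LEMMA2.7}, and the level-$4$ variant handles the $2$-primary part.
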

\noindent
Aoki records the following immediate consequence.

\begin{cor}
\label{COR4.11}
Let $K$ be an imaginary quadratic field, $F$ a number field
not containing $K$, and $E_{/F}$ a $K$-CM elliptic curve.  If $E(FK)$ has a point of order $N \geq 3$, then $\Q(\zeta_N) \subset FK$ and thus
\[ \frac{\varphi(N)}{2} \mid [F:\Q]. \]
\end{cor}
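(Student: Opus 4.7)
The claim essentially reads off from Theorem \ref{THM4.10}a), so my plan is to trace through the consequences of that theorem combined with the hypothesis $F \not\supset K$. The key point is that Aoki's bound $\exp E(FK)[\tors] \mid n = w(FK)$ immediately forces the desired cyclotomic containment.

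First I would invoke Theorem \ref{THM4.10}a) to get $\exp E(FK)[\tors] \mid w(FK)$. By assumption $E(FK)$ contains a point of order $N$, so $N$ divides $\exp E(FK)[\tors]$ and hence divides $w(FK)$. By definition of $w(FK)$ as the number of roots of unity in $FK$, this forces $\zeta_N \in FK$, i.e.\ $\Q(\zeta_N) \subset FK$.

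For the divisibility statement, I would take degrees. We have $\varphi(N) = [\Q(\zeta_N):\Q] \mid [FK:\Q]$. Since $F$ does not contain the imaginary quadratic field $K$ by hypothesis, $[FK:F] = 2$, so $[FK:\Q] = 2[F:\Q]$. This gives $\varphi(N) \mid 2[F:\Q]$, and since $N \geq 3$ makes $\varphi(N)$ even, we may divide through to conclude $\varphi(N)/2 \mid [F:\Q]$.

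There is no real obstacle here — the whole content has been packaged into Theorem \ref{THM4.10}. The only subtlety worth flagging is the last parity step: one needs $N \geq 3$ precisely so that $\varphi(N)$ is even and one is entitled to halve the divisibility. The hypothesis $F \not\supset K$ is used only to ensure $[FK:F] = 2$ rather than $1$.
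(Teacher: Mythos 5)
Your derivation is correct and is exactly the route the paper intends: it records Corollary \ref{COR4.11} as an immediate consequence of Theorem \ref{THM4.10}, with the point of order $N$ dividing $w(FK)$ (so $\zeta_N \in FK$ since the roots of unity in $FK$ form a cyclic group of that order), followed by the same degree count $\varphi(N) \mid [FK:\Q] = 2[F:\Q]$ and halving via the evenness of $\varphi(N)$ for $N \geq 3$.
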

\noindent
Taking $F = \Q$ in Aoki's Theorem and Corollary \ref{COR4.11}, we recover Olson's Theorem.



\subsection{Real Cyclotomy II}

\begin{thm}(Real Cyclotomy II)
\label{JIM2}
\label{REALCYCTHM}
\label{4.9}
\label{REALCYCII}
Let $\mathcal{O}$ be an order of discriminant $\Delta$ in an imaginary quadratic field $K$,
let $F$ be a real number field, and let $E_{/F}$ be an $\OO$-CM elliptic curve.  Let $N \geq 1$, and suppose $E(F)$ contains a point of
order $N$.   \\
a) We have $\Q(\zeta_N)K_{\Delta} \subset FK$ and $\Q(\zeta_N)^+F_{\Delta} \subset F$.  \\
b) If $\gcd(N,\Delta_K) = 1$ and $N \ge 3$, then $\Q(\zeta_N)^+ \subsetneq F$. \\
c) Suppose $N \geq 3$, and let $\nu = \# \Pic(\OO)[2]$. 
Then
\begin{equation}
\label{SWEETCYCEQ1}
\frac{\varphi(N)}{2} \frac{h(\Delta)}{2^{\nu}}  \mid [F:\Q].
\end{equation}
d) If $N \geq 3$ and $h(\Delta)$ is odd -- cf. Lemma \ref{LEMMA2.5}b) -- then
\begin{equation}
\label{SWEETCYCEQ2}
\frac{ \varphi(N)h(\Delta)}{2} \mid [F:\Q].
\end{equation}
\end{thm}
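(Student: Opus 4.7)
The plan is to deduce parts (a), (c), (d) from Aoki's Theorem (Corollary \ref{COR4.11}) plus classical genus theory, and to handle (b) by a separate elementary field-theoretic argument. Throughout, note that since $F$ is real while $K$ is imaginary quadratic we have $F\not\supseteq K$ and $[FK:F]=2$; moreover $j(E)\in F$ gives $F_\Delta=\Q(j(E))\subset F$ and $K_\Delta\subset FK$. For (a), the cases $N\leq 2$ are trivial since $\Q(\zeta_N)=\Q$. For $N\geq 3$, Aoki's Theorem gives $\Q(\zeta_N)\subset FK$, hence $\Q(\zeta_N)K_\Delta\subset FK$. Since $F\subset\R$, $F\cap\Q(\zeta_N)\subset\Q(\zeta_N)^+$; and since $[\Q(\zeta_N):F\cap\Q(\zeta_N)]$ divides $[FK:F]=2$ while $[\Q(\zeta_N):\Q(\zeta_N)^+]=2$, we obtain $F\cap\Q(\zeta_N)=\Q(\zeta_N)^+$, giving $\Q(\zeta_N)^+F_\Delta\subset F$.

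For (b), suppose for contradiction that $F=\Q(\zeta_N)^+$; then $FK=\Q(\zeta_N)^+(\sqrt{\Delta_K})$. The other relevant quadratic extension of $\Q(\zeta_N)^+$ is $\Q(\zeta_N)=\Q(\zeta_N)^+(\zeta_N-\zeta_N^{-1})$, and these coincide iff $K\subset\Q(\zeta_N)$. But any quadratic subfield of $\Q(\zeta_N)$ ramifies only at primes dividing $N$, so the hypothesis $\gcd(N,\Delta_K)=1$ forces $K\not\subset\Q(\zeta_N)$. Hence $\Q(\zeta_N)\not\subset FK$, contradicting Corollary \ref{COR4.11}. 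Therefore $\Q(\zeta_N)^+\subsetneq F$.

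For (c), using $\Q(\zeta_N)^+F_\Delta\subset F$ and the fact that $\Q(\zeta_N)^+/\Q$ is Galois,
$$[\Q(\zeta_N)^+F_\Delta:\Q]=\frac{(\varphi(N)/2)\cdot h(\Delta)}{d},\qquad d:=[\Q(\zeta_N)^+\cap F_\Delta:\Q],$$
and this divides $[F:\Q]$; it thus suffices to show $d\mid 2^\nu$, where I use the notation of Lemma \ref{LEMMA2.5} so that $\#\Pic(\OO)[2]=2^\nu$. Since $\Q(\zeta_N)^+/\Q$ is abelian, $\Q(\zeta_N)^+\cap F_\Delta$ lies in the maximal abelian-over-$\Q$ subfield of $F_\Delta$, whose degree I compute via the Galois closure $K_\Delta$ of $F_\Delta/\Q$: its group $G=\Pic(\OO)\rtimes\langle c\rangle$ is the generalized dihedral group in which complex conjugation $c$ acts by inversion on $\Pic(\OO)$. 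A direct commutator calculation $[(a,1),(1,c)]=(a^2,1)$ and its variants shows $[G,G]=2\Pic(\OO)$, so $G^{\ab}\cong(\Pic(\OO)/2\Pic(\OO))\times\Z/2\Z$; since $|\Pic(\OO)/2\Pic(\OO)|=|\Pic(\OO)[2]|=2^\nu$ (the first equality holding in any finite abelian group, the second by Lemma \ref{LEMMA2.5}a), $|G^{\ab}|=2^{\nu+1}$. Hence the maximal abelian subfield of $K_\Delta$ over $\Q$ has degree $2^{\nu+1}$, and intersecting with $F_\Delta=K_\Delta^{\langle c\rangle}$ picks out its real subfield of degree $2^\nu$. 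Therefore $d\mid 2^\nu$, establishing (\ref{SWEETCYCEQ1}). Part (d) is the special case $\nu=0$, which by Lemma \ref{LEMMA2.5}b) is exactly the condition that $h(\Delta)$ be odd.

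The principal obstacle is the commutator-subgroup computation for $G$ and its interpretation through classical genus theory; this supplies the improvement factor $h(\Delta)/\#\Pic(\OO)[2]$ over Aoki's bound advertised in the introduction, with the rest of the proof being bookkeeping with Aoki's Theorem and elementary field theory.
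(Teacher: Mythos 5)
Your proposal is correct, but for the heart of the theorem it takes a genuinely different route from the paper. The paper's proof of part a) does not invoke Aoki's Theorem at all: it establishes $\zeta_N \in FK$ from scratch by uniformizing $E$ by a primitive proper \emph{real} $\OO$-ideal (Lemma \ref{lemma3.6}, Theorem \ref{HalterKoch}), writing explicit matrices for complex conjugation and for the Cartan subgroup in the resulting basis of $T_{\ell}(E)$, and showing that any $\sigma \in \ggg_{FK}$ fixing a point of order $\ell^n$ has $\det \rho_{\ell^n}(\sigma) \equiv 1$ (with a separate argument when $\ell = 2$); this case analysis over the possible shapes of real ideals is the paper's main technical content and is precisely what makes its proof of Real Cyclotomy II independent of Aoki's class-field-theoretic machinery. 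You instead import $\Q(\zeta_N) \subset FK$ wholesale from Corollary \ref{COR4.11}; that is logically legitimate, since Aoki's Theorem is quoted in the paper as a known result, and Remark \ref{4.14}b) explicitly endorses this route (it even yields the divisibilities under the weaker hypothesis $F \not\supset K$). What you lose is the self-contained, classical character of the paper's argument; what you gain is brevity. Your parts b), c), d) essentially coincide with the paper's: b) is the same ramification argument, and your commutator computation in $\Pic(\OO) \rtimes \langle c \rangle$ is an explicit rendering of the paper's appeal to \cite[Thm. 9.18]{Cox89} and genus theory, giving the same bound $[\Q(\zeta_N)^+ \cap F_{\Delta}:\Q] \mid 2^{\nu}$ (and you correctly read the statement's ``$\nu = \# \Pic(\OO)[2]$'' as $\# \Pic(\OO)[2] = 2^{\nu}$, consistent with Lemma \ref{LEMMA2.5} and the factor $2^{\nu}$ in (\ref{SWEETCYCEQ1})). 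One step worth making explicit in your part a): $[\Q(\zeta_N):F \cap \Q(\zeta_N)] = [F\Q(\zeta_N):F]$ because $\Q(\zeta_N)/\Q$ is Galois, and this degree divides $[FK:F] = 2$ because $F\Q(\zeta_N) \subset FK$.
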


\begin{remark}
\label{4.10}
When $\ell^n= 2$, Theorem \ref{JIM2}a) is vacuous.  Part b) holds in this case unless
$F = \Q$ and $j \in\{0,54000, -15^3,255^3\} $. These curves have CM by $\OO(\Delta)$ for $\Delta\in\{ -3,-12,-7, -28\}$ and a point of order 2.
\end{remark}


\begin{proof}
a) To establish $\Q(\zeta_N) \subset FK$ we reduce to the case
in which $N = \ell^n$ is a prime power.  It will then follow
that $\Q(\zeta_N)^+ = \Q(\zeta_N)^c \subset (FK)^c = F$.
\\ \indent
Let $\Lambda$ be the real lattice associated to $E$, unique up to $\R$-homothety.  By Lemma \ref{lemma3.6} there is $r \in \mathbb{R}^{\times}$ such that $\Lambda'=r \Lambda$ is a primitive proper $\mathcal{O}$-ideal.

First suppose $\Delta=4D$. Then $\mathcal{O} = [1,\sqrt{D}]$, and $\Lambda'$ is of type I or II as in Theorem \ref{HalterKoch} above. If it is of type I, it follows that
\[
\Lambda = \left[\frac{t}{r}, \frac{\sqrt{D}}{r} \right],\text{ where }t \in \mathbb{Z}^+\text{, } t | D \text{ and }\left(t, \frac{D}{t}\right)=1.
\]
With respect to this basis the action of complex conjugation is given by
\[
 T=\left[ \begin{array}{cc} 1 & 0 \\ 0 & -1 \end{array} \right],
 \]
 and the action of $\mathcal{O}$ on $\Lambda$ is given by
 \[
 \alpha+\beta\sqrt{D} \mapsto \left[ \begin{array}{cc} \alpha & \beta \left(\frac{D}{t}\right) \\ \beta t & \alpha \end{array} \right].
 \]

 We may choose a $\Z_{\ell}$-basis $\widetilde{e_1},\widetilde{e_2}$ for $T_{\ell}(E)$ such that the image of the Cartan subgroup in $\GL_2(\Z_{\ell})$ is
\[ \mathcal{C}_{\ell}^{\times} = \left\{ \left[ \begin{array}{cc} \alpha & \beta \left(\frac{D}{t}\right) \\ \beta t & \alpha \end{array} \right] \mid \alpha^2 -  \beta^2D \in \Z_{\ell}^{\times} \right\} \]
and the element $c \in \mathfrak{g}_F$ induced by complex conjugation corresponds to $\rho_{\ell^{\infty}}(c) =T$. Let $e_i = \tilde{e_i} \pmod{\ell^n}$.  For $ x \in E(F)[\ell^n] \setminus E(F)[\ell^{n-1}]$, we may choose $a,b \in \Z/\ell^n \Z$ such that
$ x = a e_1 + b e_2$.
Then
\[ae_1 + be_2 = x = Tx = ae_1 - b e_2, \]
so $2b \equiv 0 \pmod{\ell^n}$. We assume for the moment that $\ell$ is odd, so it follows that $b \equiv 0 \pmod{\ell^n}$. Thus $a \in (\Z/\ell^n \Z)^{\times}$.

Let $G_{\ell^{\infty}} = \rho_{\ell^{\infty}}(\ggg_F)$ be the image of the $\ell$-adic Galois representation. For $S = \left[ \begin{array}{cc} \alpha & \beta \left(\frac{D}{t}\right) \\ \beta t & \alpha \end{array} \right] \in \mathcal{C}_{\ell}^{\times} \cap G_{\ell^{\infty}}$, we have
\begin{equation}
\label{MatrixMult1}
ae_1 +be_2= x = Sx =\left(\alpha a+\beta b \left(\frac{D}{t}\right) \right)e_1 + (\beta at + \alpha b) e_2.
\end{equation}
Modulo $\ell^n$ this becomes
\[
ae_1= x = Sx =\alpha a e_1 + \beta at e_2.
\]
It follows that  $\alpha \equiv 1 \pmod{\ell^n}$ and $\beta t \equiv 0 \pmod{\ell^n}$, and thus
\[ S \equiv \left[ \begin{array}{cc} 1 &  \beta \left(\frac{D}{t}\right) \\   0 & 1 \end{array} \right] \pmod{\ell^n}. \]  Let $\sigma \in \mathfrak{g}_{FK}$. Then there exists $\beta_0 = \beta_0(\sigma)$ such that
\[ \rho_{\ell^n}(\sigma) = \left[ \begin{array}{cc} 1 &  \beta_0 \left(\frac{D}{t}\right) \\ 0 & 1 \end{array} \right]. \]
By Galois equivariance of the Weil pairing, $\sigma \zeta_{\ell^n} = \zeta_{\ell^n}^{\det \rho_{\ell^n}(\sigma)} = \zeta_{\ell^n}$,
so $\zeta_{\ell^n} \in FK$.

If $\ell=2$, we must adjust our approach by working mod $2^{n-1}$. Indeed, $Tx=x$ will only imply $b \equiv 0 \pmod{2^{n-1}}$ and $a \in (\Z/2^n \Z)^{\times}$.  For $S  \in \mathcal{C}_{2}^{\times} \cap G_{2^{\infty}}$, $Sx=x$ gives $\alpha \equiv 1 \pmod{2^{n-1}}$, $\beta t \equiv 0 \pmod{2^{n-1}}$. In fact, $\beta t \equiv 0 \pmod{2^{n}}$ as well.  Indeed, by (\ref{MatrixMult1}), $b=\beta at + \alpha b$, which means
\[a^{-1}b(1-\alpha) \equiv \beta t \pmod{2^n}.\]
As $2^{n-1} \mid b$ and $2^{n-1} \mid (1-\alpha)$, the claim follows since $n \geq 2$. Thus
\[
\det S = \alpha^2 - \beta^2 D =  \alpha^2 - \beta^2 t \frac{D}{t} \equiv  \alpha^2 \equiv 1 \pmod{2^n},
\]
and $\det \circ \rho_{2^n} |_{\ggg_{FK}}$ is trivial. We conclude $\zeta_{2^n} \in FK$.

If $\Lambda'$ is of type II, then
\[
\Lambda=\left[\frac{t}{r}, \frac{t+\sqrt{\Delta}}{2r} \right],\text{ where }t \in \mathbb{Z}^+\text{, }4t | t^2-\Delta \text{ and } \left(t, \frac{t^2-\Delta}{4t}\right)=1.
\]
With respect to this basis the action of complex conjugation is given by
\[
 T=\left[ \begin{array}{cc} 1 & 1 \\ 0 & -1 \end{array} \right],
 \]
 and the action of $\mathcal{O}$ on $\Lambda$ is given by
 \[
 \alpha+\beta\sqrt{D} \mapsto \left[ \begin{array}{cc} \alpha-\beta\left(\frac{t}{2}\right) & -\beta \left(\frac{t^2-\Delta}{4t}\right) \\ \beta t & \alpha + \beta \left(\frac{t}{2}\right) \end{array} \right].
 \]
 This gives rise to the Cartan subgroup
 \[
  \mathcal{C}_{\ell}^{\times} = \left\{ \left[ \begin{array}{cc} \alpha-\beta\left(\frac{t}{2}\right) & -\beta \left(\frac{t^2-\Delta}{4t}\right) \\ \beta t & \alpha + \beta \left(\frac{t}{2}\right) \end{array} \right] \mid \alpha^2 -  \beta^2\frac{t^2}{4}+\beta^2\left(\frac{t^2-\Delta}{4}\right) \in \Z_{\ell}^{\times} \right\}
 \]
As before, we may choose a $\Z_{\ell}$-basis $\widetilde{e_1},\widetilde{e_2}$ for $T_{\ell}(E)$ such that $\rho_{\ell^{\infty}}(\ggg_{FK}) \subset \mathcal{C}_{\ell}^{\times}$ and the element $c \in \mathfrak{g}_F$ induced by complex conjugation corresponds to $\rho_{\ell^{\infty}}(c) =T$.

Let $e_i = \tilde{e_i} \pmod{\ell^n}$.  For $ x \in E(F)[\ell^n] \setminus E(F)[\ell^{n-1}]$, we may choose $a,b \in \Z/\ell^n \Z$ such that
$ x = a e_1 + b e_2$.
Then
\[ae_1 + be_2 = x = Tx = (a+b)e_1 - b e_2, \]
so $b \equiv 0 \pmod{\ell^n}$ and $a \in (\Z/\ell^n \Z)^{\times}$. For $S = \left[ \begin{array}{cc} \alpha-\beta\left(\frac{t}{2}\right) & -\beta \left(\frac{t^2-\Delta}{4t}\right) \\ \beta t & \alpha + \beta \left(\frac{t}{2}\right) \end{array} \right] \in \mathcal{C}_{\ell}^{\times} \cap G_{\ell^{\infty}}$, we have
\[
ae_1 = x = Sx =\left(\alpha-\beta\left(\frac{t}{2}\right)\right) a e_1 + \beta a t e_2 \pmod{\ell^n}.
\]
Thus $\alpha - \beta\left(\frac{t}{2}\right) \equiv 1 \pmod{\ell^n}$ and $\beta t \equiv 0 \pmod{\ell^n}$. It follows that
\[
\alpha + \beta \left(\frac{t}{2}\right) \equiv \alpha+ \beta \left(\frac{t}{2}\right)-\beta t = \alpha - \beta \left( \frac{t}{2} \right) \equiv 1 \pmod {\ell^n}.
\]
Hence
\[ S \equiv \left[ \begin{array}{cc} 1 & -\beta \left(\frac{t^2-\Delta}{4t}\right)\\   0 & 1 \end{array} \right] \pmod{\ell^n}. \]
We conclude $\zeta_{\ell^n} \in FK$ as before.

Finally, we consider the case when $\Delta \equiv 1 \pmod{4}$. Then $\mathcal{O} =\left[1,\frac{1+\sqrt{\Delta}}{2}\right]$ and $\Lambda'$ is of type II as in Theorem \ref{HalterKoch}. Thus
\[
\Lambda=\left[\frac{t}{r}, \frac{t+\sqrt{\Delta}}{2r} \right],\text{ where }t \in \mathbb{Z}^+\text{, }4t | t^2-\Delta \text{ and } \left(t, \frac{t^2-\Delta}{4t}\right)=1.
\]
Following the method used above, we may choose a $\Z_{\ell}$-basis $\widetilde{e_1},\widetilde{e_2}$ for $T_{\ell}(E)$ such that the image of the Cartan subgroup in $\GL_2(\Z_{\ell})$ consists of matrices of the form
\[ \left[ \begin{array}{cc} \alpha-\beta \left(\frac{t-1}{2}\right) & \beta \left(\frac{\Delta-t^2}{4t}\right) \\ \beta t &\alpha + \beta\left(\frac{t+1}{2}\right) \end{array} \right]  \]
and the element $c \in \mathfrak{g}_F$ induced by complex conjugation corresponds to
\[
 T=\left[ \begin{array}{cc} 1 & 1 \\ 0 & -1 \end{array} \right].
\]

Let $e_i = \tilde{e_i} \pmod{\ell^n}$.  For $ x \in E(F)[\ell^n] \setminus E(F)[\ell^{n-1}]$, we again choose $a,b \in \Z/\ell^n \Z$ such that
$ x = a e_1 + b e_2$.
Then $Tx=x$ gives $b \equiv 0 \pmod{\ell^n}$ and hence $a \in (\Z/\ell^n \Z)^{\times}$. For $S = \left[ \begin{array}{cc} \alpha-\beta \left(\frac{t-1}{2}\right) & \beta \left(\frac{\Delta-t^2}{4t}\right) \\ \beta t & \alpha + \beta\left(\frac{t+1}{2}\right) \end{array} \right] \in \mathcal{C}_{\ell}^{\times} \cap G_{\ell^{\infty}}$, we have
\[
ae_1 = x = Sx =\left(\alpha-\beta\left(\frac{t-1}{2}\right)\right) a e_1 + \beta a t e_2 \pmod{\ell^n}.
\]
As before, this implies $\alpha - \beta \left( \frac{t-1}{2} \right) \equiv 1 \pmod{\ell^n}$ and $\beta t \equiv 0 \pmod{\ell^n}$. Since
\[
\alpha + \beta\left(\frac{t+1}{2}\right) \equiv \alpha + \beta\left(\frac{t+1}{2}\right) - \beta t = \alpha - \beta \left( \frac{t-1}{2} \right) \equiv 1 \pmod{\ell^n},
\]
we have
\[ S \equiv \left[ \begin{array}{cc} 1 &  \beta \left(\frac{\Delta-t^2}{4t}\right) \\   0 & 1 \end{array} \right] \pmod{\ell^n}. \]
It follows that $\zeta_{\ell^n} \in FK$. \\
b) Suppose $\gcd(N,\Delta_K) = 1$ and $N \geq 3$.  Seeking a contradiction, we suppose
that $F = \Q(\zeta_N)^+$.  Then $K \subset FK = \Q(\zeta_N)$, so $K$ is ramified
at some prime divisor of $N$ and thus $\gcd(N,\Delta_K) > 1$. \\
c) Let $G(\Delta) := K_{\Delta} \cap \Q^{\ab}$ be the maximal abelian subextension of $K_{\Delta}/\Q$.  We have $\Aut(K_{\Delta}/K) = \Pic \OO(\Delta)$, and by
\cite[Thm. 9.18]{Cox89}, if $M$ is a subextension of $K_{\Delta}/K$ then $M/\Q$ is generalized dihedral.  It follows that
\[ \Aut(G(\Delta)/K) = \Pic \OO(\Delta) / \Pic(\OO(\Delta))^2 \cong (\Z/2\Z)^{\nu} \]
and
\[ \Aut(G(\Delta)/\Q) \cong (\Z/2\Z)^{\nu + 1}. \]
Since $K \subset G(\Delta)$, we have
\[\Aut(G(\Delta) \cap F_{\Delta}/\Q) = \Aut(\Q^{\ab} \cap F_{\Delta} / \Q)
\cong (\Z/2\Z)^{\nu} \]
and thus
\[ \Aut(\Q(\zeta_N)^+ \cap F_{\Delta}/\Q) = (\Z/2\Z)^{\xi} \]
for some $0 \leq \xi \leq \nu$.  Equation (\ref{SWEETCYCEQ1}) follows immediately.  \\
d) This is an immediate consequence of part c).
\end{proof}

\begin{remark}
\label{4.14}
a) If $E$ has an $FK$-rational point of order $N$, then some quadratic twist $(E^t)_{/F}$ has an $F$-rational point of
order $N^* = \frac{N}{\gcd(N,2)}$ and thus Real Cyclotomy II implies $\frac{\varphi(N^*)}{2} \mid [F:\Q]$. \\
b) Combining Aoki's Theorem with the proof of parts c) and d) of Real Cyclotomy II, we find that (\ref{SWEETCYCEQ1}) and (\ref{SWEETCYCEQ2}) hold under the weaker assumptions that
$F \not \supset K$ and $E(FK)$ has a point of order $N$.
\end{remark}

\subsection{Parish's Theorem}
\textbf{} \\ \\ \noindent
Above we saw that Aoki's Theorem implies a generalization of Olson's Theorem.  A different such generalization
was given by J.L. Parish \cite{Parish89}.

\begin{thm}(Parish)
\label{PARISHTHM}
Let $F$ be a number field, and let $E_{/F}$ be an elliptic curve with CM by the imaginary quadratic order $\OO$, with
discriminant $\Delta$, in the imaginary quadratic field $K$. \\
a) If $F = \Q(j(E))$, then $E(F)[\tors]$ is an Olson group. \\
b) If $F = K(j(E))$ and $\Delta < -4$, then $E(F)[\tors]$ is either an Olson group or isomorphic to $\Z/2\Z \oplus \Z/4\Z$
or $\Z/2\Z \oplus \Z/6\Z$.
\end{thm}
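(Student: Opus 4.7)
The plan is to derive both parts of Parish's Theorem from Aoki's Theorem \ref{AOKITHM} combined with Parish's earlier Theorem \ref{PARISH2} (restricting $2$-torsion) and the genus-theoretic description of the maximal abelian subextension $G(\Delta) := K_\Delta \cap \Q^{\ab}$ that was already exploited in the proof of Real Cyclotomy II (Theorem \ref{REALCYCII}).

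For part (a), if $\Delta \in \{-3,-4\}$ then $h(\Delta)=1$, so $F_\Delta = \Q$ and Olson's Theorem \ref{OLSON1} applies directly. Assume $\Delta < -4$. Since $F = F_\Delta$ is totally real, $K \not\subset F$, and Aoki's Theorem yields $\#E(F)[\tors] \mid m(FK/F) = w(K_\Delta)$ (using that $F$ is real, so $m = n$). Parish's Theorem \ref{PARISH2} forces $\#E(F)[2] \le 2$, so Aoki part (c) gives $E(F)[\tors] \cong \Z/N\Z$. The crucial observation is that $\Gal(G(\Delta)/\Q) \cong (\Z/2\Z)^{\nu+1}$ (from the proof of Theorem \ref{REALCYCII}(c)), so $G(\Delta)$ is a compositum of quadratic extensions of $\Q$. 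Since every root of unity in $K_\Delta$ lies in $K_\Delta \cap \Q^{\ab} = G(\Delta)$, and $\Q(\zeta_n)/\Q$ has elementary abelian $2$-Galois group exactly when $n \mid 24$, we conclude $w(K_\Delta) \mid 24$, so $N \in \{1,2,3,4,6,8,12,24\}$.

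To complete part (a) we must exclude the non-Olson values $N \in \{8,12,24\}$. The plan is to combine the strict containment $\Q(\zeta_N)^+ \subsetneq F$ from Real Cyclotomy II(b) (with a variant argument in the ramified case $\gcd(N,\Delta_K) > 1$, reducing via part (a) which gives $\Q(\zeta_N)K_\Delta \subset FK$) with the explicit genus-theoretic description $G(\Delta) = K \cdot \Q(\sqrt{d_1},\ldots,\sqrt{d_\nu})$, where the $d_i$ come from the prime discriminants of $\Delta$. A case analysis tracking which quadratic subfields $\Q(\sqrt{d})$ lie in $G(\Delta)$, and hence which cyclotomic extensions can be embedded in it, produces the required contradiction by comparing $[F:\Q] = h(\Delta)$ against the degree of the required cyclotomic subextension. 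This delicate case analysis is the main technical obstacle.

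For part (b), $F = K_\Delta$ contains $K$, so Aoki's Theorem does not apply to $F$ directly but does apply to $F_\Delta$: $\exp E(K_\Delta)[\tors] \mid w(K_\Delta) \mid 24$ and $\#E(K_\Delta)[\tors] \mid w(K_\Delta)^2$. Writing $E(K_\Delta)[\tors] \cong \Z/a\Z \oplus \Z/b\Z$ with $a \mid b$, the Weil pairing forces $\zeta_a \in K_\Delta$, so $a \mid w(K_\Delta)$. Combined with the bound $w(K_\Delta) \mid 24$ from part (a)'s analysis, the non-Olson candidates narrow to $\Z/2\Z \oplus \Z/4\Z$, $\Z/2\Z \oplus \Z/6\Z$, and doubled groups $\Z/c\Z \oplus \Z/c\Z$ with $c \geq 3$. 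To exclude the latter, apply Theorem \ref{SCHERTZTHM}: $E[c] \subset E(K_\Delta)$ implies $K^{(c)} \subset K_\Delta$, hence $[K^{(c)}:K] \mid h(\Delta)$. The explicit formula for $[K^{(c)}:K^{(1)}]$ used in the proof of Theorem \ref{OLDNEWTHM2} then produces the required contradiction in each case, leaving exactly the groups stated.
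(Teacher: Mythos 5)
You should first know that the paper does not prove Theorem \ref{PARISHTHM}: it is quoted from Parish's paper [Pa89], and the only statement the authors prove themselves is the strictly weaker Corollary \ref{ODDDEGREEPARISH} (exponent dividing $24$ over $\Q(j)$; Olson in odd degree), derived from Aoki's Theorem essentially as in your first paragraph. So the portion of your argument that is actually complete --- cyclicity from Theorem \ref{PARISH2} plus Aoki part c), and $N \mid w(K_\Delta) \mid 24$ from the multiquadratic structure of $G(\Delta)$ --- reproduces what the paper already records, while the real content of Parish's theorem is exactly the part you defer, and the strategies you sketch for it cannot succeed as stated.

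For part a), the exclusion of $N \in \{8,12,24\}$ is not a consequence of real cyclotomy together with the genus-theoretic description of $G(\Delta)$ and degree comparisons, because all of those constraints can be simultaneously satisfied: take $\Delta = -64$, the order of conductor $4$ in $\Q(i)$, for which $h(\Delta) = 2$, $K_\Delta = \Q(\zeta_8)$ and $F = \Q(j) = \Q(\sqrt{2}) = \Q(\zeta_8)^+$; then $\zeta_8 \in K_\Delta$, $\Q(\zeta_8)^+ \subset F$, and every containment or divisibility you propose to exploit holds with $N = 8$, so no contradiction can emerge from this data --- ruling out a point of order $8$ over $\Q(j)$ requires finer input (Parish's explicit analysis of division points over $\Q(j)$, or a Galois-image/ray-class-field argument in the spirit of Theorem \ref{OLDNEWTHM2}), which your proposal never supplies. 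Part b) has three further gaps. First, the given curve $E_{/K(j)}$ need not be a base change of a curve over $\Q(j)$: for $\Delta < -4$ it is a quadratic twist of such a curve by some $d \in K_\Delta^\times$, and $d$ need not be representable modulo squares by an element of $F_\Delta^\times$ (e.g.\ the twist by $\sqrt{-7}$ of an $\OO(-7)$-CM curve over $\Q(\sqrt{-7})$), so you cannot apply Aoki's Theorem ``over $F_\Delta$'' to bound $E(K_\Delta)[\tors]$ without an additional descent or twisting argument. Second, even granting exponent dividing $24$, your narrowed list of candidates silently omits $\Z/8\Z$, $\Z/12\Z$, $\Z/24\Z$, $\Z/2\Z\oplus\Z/8\Z$ and $\Z/2\Z\oplus\Z/12\Z$, none of which contains a subgroup $\Z/c\Z\oplus\Z/c\Z$ with $c \geq 3$. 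Third, the proposed contradiction from Theorem \ref{SCHERTZTHM} does not materialize ``in each case'': for $\Delta = -12$ and $c = 3$ one has $K^{(3)} = K$ and $h(-12) = 1$, so $[K^{(c)}:K] \mid h(\Delta)$ holds and no contradiction arises, even though $\Z/3\Z\oplus\Z/3\Z$ indeed does not occur there (Table 2 gives minimal degree $6$ for the $(3,3)$-structure with $\Delta = -12$). In short, the easy half of the statement follows from the paper's toolkit as you say, but the exclusions that constitute Parish's theorem remain unproved in your proposal.
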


\begin{remark}
The table of Theorem \ref{MAINTHM} shows that the CM discriminants $\Delta = -3$ and $\Delta = -4$ are indeed exceptions
to the statement of Theorem \ref{PARISHTHM}b).
\end{remark}
\noindent
Aoki's Theorem quickly implies a slightly weaker version of Theorem \ref{PARISHTHM}a).

\begin{cor}(Parish In Odd Degree)
\label{ODDDEGREEPARISH}
 \\
a) Let $E_{/F}$ be a $K$-CM elliptic curve with $F = \Q(j(E))$.  Then the exponent of $E(F)[\tors]$ divides $24$.  \\
b) If $[F:\Q]$ is odd, then $E(F)[\tors]$ is an Olson group.
\end{cor}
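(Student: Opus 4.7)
The approach is to exploit Aoki's Theorem \ref{AOKITHM} together with structural information about roots of unity in ring class fields, which is already encoded in the proof of Theorem \ref{REALCYCII}c).

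For part (a), I would first observe that by Fact 1(e), $F = \Q(j(E)) = F_\Delta$ is real, hence $F \not\supset K$, and $FK = K(j(E)) = K_\Delta$ is the ring class field of $\OO(\Delta)$. Since $F$ is real, the Aoki invariants satisfy $m(FK/F) = w(FK) = w(K_\Delta)$, so Theorem \ref{AOKITHM}(b) gives
\[ \exp E(F)[\tors] \;\mid\; \# E(F)[\tors] \;\mid\; w(K_\Delta). \]
The task then reduces to showing $w(K_\Delta)\mid 24$. If $\zeta_n \in K_\Delta$, then $\Q(\zeta_n)\subset K_\Delta \cap \Q^{\ab} = G(\Delta)$. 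As computed inside the proof of Theorem \ref{REALCYCII}c), $\Gal(G(\Delta)/\Q)\cong (\Z/2\Z)^{\nu+1}$ has exponent at most $2$, so $(\Z/n\Z)^\times$ has exponent dividing $2$. The standard classification of such $n$ yields $n\mid 24$, hence $w(K_\Delta)\mid 24$, proving (a).

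For part (b), still with $F = \Q(j(E))$ but now with $[F:\Q] = h(\Delta)$ odd, Lemma \ref{LEMMA2.5}a) gives $\nu = 0$, so $G(\Delta) = K$. Repeating the argument above, every root of unity in $K_\Delta$ already lies in $K$, so $w(K_\Delta) = w(K) \in \{2,4,6\}$. In particular $\# E(F)[\tors] \mid w(K) \leq 6$ by Aoki's bound.

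The finish is a short case split using Theorem \ref{AOKITHM}(c). If $\#E(F)[2]\leq 2$, then $E(F)[\tors]$ is cyclic of order dividing $w(K)\in\{2,4,6\}$, which is Olson. If $\#E(F)[2]=4$, write $E(F)[\tors]\cong \Z/2\Z \oplus \Z/2M\Z$; then $4M\mid w(K)\leq 6$ forces $M=1$ and $w(K)=4$, so $E(F)[\tors]\cong (\Z/2\Z)^2$, again Olson. I expect the only mild subtlety is packaging the ``$G(\Delta)$ has exponent $2$'' fact — it is immediate from the generalized dihedral structure of $K_\Delta/\Q$ combined with classical genus theory, both invoked in the earlier proof of Theorem \ref{REALCYCII}c).
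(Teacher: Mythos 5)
Your proposal is correct, and while your part a) runs along essentially the same lines as the paper's (Aoki's Theorem plus the fact, extracted from the proof of Theorem \ref{REALCYCII}c), that $K_{\Delta} \cap \Q^{\ab}$ has Galois group of exponent $2$ over $\Q$, so any root of unity in $FK = K_{\Delta}$ has order dividing $24$), your part b) takes a genuinely different route. The paper deduces b) from a): a point of order $N \mid 24$ gives $\varphi(N) \mid [FK:\Q] = 2[F:\Q]$, which for odd $[F:\Q]$ rules out $N = 8, 12, 24$; the remaining non-cyclic possibility $\Z/2\Z \oplus \Z/N\Z$ is then handled by Corollary \ref{COR4.3} (hence ultimately by Parish's $2$-torsion result via Theorem \ref{THM4.2}), which forces $\Delta = -4$ and $F = \Q(j) = \Q$, reducing to Olson's Theorem. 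You instead observe that odd $[F:\Q] = h(\Delta)$ makes $\Pic \OO(\Delta)[2]$ trivial (Lemma \ref{LEMMA2.5}), so the abelian part $G(\Delta)$ of the ring class field is just $K$ and hence $w(FK) = w(K) \in \{2,4,6\}$; Aoki's divisibility $\# E(F)[\tors] \mid w(FK)$ together with the structural dichotomy of Theorem \ref{AOKITHM}c) then finishes the argument without ever invoking Corollary \ref{COR4.3}, Parish's theorem, or Olson's theorem. Your route buys a more self-contained argument (and, in a), the slightly stronger conclusion $\# E(F)[\tors] \mid w(K_{\Delta}) \mid 24$ rather than just a bound on the exponent), at the cost of leaning on the full strength of Aoki's Theorem (parts b) and c)) rather than only on its cyclotomic consequence, Corollary \ref{COR4.11}, which is all the paper uses.
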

\begin{proof}
a) Suppose $E(F)$ contains a point of order $N$.  Since $F$ does not contain $K$, Aoki's Theorem applies: $\Q(\zeta_N) \subset FK = K(j)$.
The maximal abelian subextension of $K(j)/\Q$ has exponent dividing $2$ (cf. proof of Theorem \ref{REALCYCII}, part c).  Thus $(\Z/N\Z)^{\times}$ itself has
exponent dividing $2$, which holds (if and) only if $N \mid 24$.  \\
b) We have $\varphi(N) \mid [FK:\Q]$.  If $[F:\Q]$ is odd, this eliminates $N = 8, 12, 24$.  Since $F$ is real, $E(F)[\tors]$
is either of the form $\Z/N\Z$ or $\Z2\Z \oplus \Z/N\Z$ with $N$ even.  By Corollary \ref{COR4.3}, since $[F:\Q]$ is odd, the latter can only
happen when $\Delta = -4$, in which case $F = \Q(j) = \Q(1728) = \Q$, and we have reduced to Olson's Theorem.
\end{proof}


\subsection{Square-Root SPY Bounds}

\begin{prop} \label{hmgen} \label{4.12} (\cite[Corollary 3.2.4]{Cohen2})
Let $K$ be a number field, and let $\mathfrak{m}$ a nonzero ideal
of $\mathcal{O}_K$, hence also a modulus in the sense of class field theory.  Let $U = \Ok^{\times}$ and $U_{\mathfrak{m}} =  \{ \alpha \in U :  \ord_{\mathfrak{p}}(\alpha -1) \geq \ord_{\mathfrak{p}} \mm \text{ for all } \mathfrak{p} \mid \mathfrak{m} \}$.  Then
\begin{center}
$[K_{\mathfrak{m}}:K] = \frac{h(K)}{[U:U_{\mathfrak{m}}]} [\OO_K:\mm] \cdot \prod_{\mathfrak{p} \mid \mathfrak{m}} \left( 1- [\OO_K:\pp]^{-1} \right)$,
\end{center}
where $K_{\mathfrak{m}}$ is the ray class field of $K$ with modulus $\mathfrak{m}$.
\end{prop}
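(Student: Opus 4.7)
The plan is to derive the formula from the standard six-term exact sequence of class field theory that ties the ray class group $\Cl_{\mm}(K)$ to the ordinary class group $\Cl(K)$ via the local unit group at $\mm$. Since $\mm$ is taken to be a genuine ideal (no archimedean places), the relevant exact sequence is
\[ 1 \longrightarrow U_{\mm} \longrightarrow U \longrightarrow (\OO_K/\mm)^{\times} \longrightarrow \Cl_{\mm}(K) \longrightarrow \Cl(K) \longrightarrow 1, \]
where $U \to (\OO_K/\mm)^{\times}$ is reduction mod $\mm$ (with kernel $U_{\mm}$ by definition) and the map $(\OO_K/\mm)^{\times} \to \Cl_{\mm}(K)$ sends a reduced unit $\bar{\alpha}$ to the class of the principal ideal $(\alpha)$ in the ray class group. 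Exactness at $(\OO_K/\mm)^{\times}$ reflects the fact that a principal ideal $(\alpha)$ is trivial in $\Cl_{\mm}(K)$ precisely when it has a generator congruent to $1$ mod $\mm$; exactness at $\Cl_{\mm}(K)$ expresses that the ray class group surjects onto the ordinary class group with kernel given by principal ideals modulo those with generators congruent to $1$ mod $\mm$.

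Next I would take orders of all groups in the sequence. Since $U$ is finitely generated and $U_{\mm}$ has finite index in $U$, the quotient $U/U_{\mm}$ is finite, and Artin reciprocity gives $[K_{\mm}:K]=\#\Cl_{\mm}(K)$, so multiplicativity in the exact sequence yields
\[ [K_{\mm}:K] \;=\; \frac{h(K)\cdot \#(\OO_K/\mm)^{\times}}{[U:U_{\mm}]}. \]
The remaining task is to evaluate $\#(\OO_K/\mm)^{\times}$. Writing $\mm=\prod_{\pp}\pp^{a_\pp}$ and applying the Chinese Remainder Theorem gives $\OO_K/\mm \cong \prod_{\pp \mid \mm}\OO_K/\pp^{a_\pp}$, and for each local factor the usual residue-field counting argument (the nonunits in $\OO_K/\pp^{a_\pp}$ are exactly the image of $\pp$) shows that $\#(\OO_K/\pp^{a_\pp})^\times = [\OO_K:\pp]^{a_\pp}\bigl(1-[\OO_K:\pp]^{-1}\bigr)$. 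Taking the product and using $\prod_{\pp}[\OO_K:\pp]^{a_\pp}=[\OO_K:\mm]$ gives $\#(\OO_K/\mm)^\times=[\OO_K:\mm]\prod_{\pp\mid\mm}(1-[\OO_K:\pp]^{-1})$, which upon substitution yields exactly the claimed formula.

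There is no real obstacle here beyond bookkeeping; the only subtlety worth flagging is the correct definition of $U_{\mm}$ (the local conditions $\ord_{\pp}(\alpha-1)\geq \ord_{\pp}\mm$ for each $\pp \mid \mm$, as stated in the proposition) so that the kernel of reduction $U \to (\OO_K/\mm)^\times$ really is $U_{\mm}$, and the fact that because $\mm$ has no archimedean component, no signs or totally positive conditions enter. Once the exact sequence and the CRT computation are in hand, the formula is immediate, so the proof is essentially a one-line consequence of the structure theory of ray class groups, and I would simply cite \cite[Corollary 3.2.4]{Cohen2} (or sketch the three lines above) rather than repeat the derivation.
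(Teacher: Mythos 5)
Your argument is correct and is exactly the standard derivation (the exact sequence $1 \to U_{\mm} \to U \to (\OO_K/\mm)^{\times} \to \Cl_{\mm}(K) \to \Cl(K) \to 1$ together with the CRT count of $\#(\OO_K/\mm)^{\times}$) that underlies the result; the paper itself gives no proof but simply cites \cite[Corollary 3.2.4]{Cohen2}, which is the same route you end by proposing. Nothing is missing, and your remark that no archimedean sign conditions enter because $\mm$ is a finite modulus is the only subtlety worth noting.
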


\begin{thm}(Square-Root SPY Bounds)
\label{SQRTSPY} \label{4.13}
Let $\OO$ be an imaginary quadratic order of discriminant $\Delta$ and fraction field $K$, let $F$ be a number field, and let $E_{/F}$ be an $\OO$-CM elliptic curve.
Let $N \geq 3$, and suppose $E(F)$ has a point of order $N$.  \\
a) If $(\Z/N\Z)^2 \subset E(FK)$, then
\[ \varphi(N) \leq \sqrt{\frac{[F:\Q]w(K)}{h(K)}}. \]
b) The hypothesis of part a) is satisfied when $K \not \subseteq F$ and $\gcd(\Delta,N) = 1$.
\end{thm}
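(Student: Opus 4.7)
The plan is as follows. Part (b) is immediate from Real Cyclotomy I (Theorem \ref{REALCYCI}a)): under the hypotheses $K \not\subseteq F$ and $\gcd(\Delta,N)=1$, that theorem asserts $(\Z/N\Z)^2 \subset E(FK)$, which is precisely the hypothesis of part (a). The substance is part (a), and the strategy is to translate the full $N$-torsion hypothesis into a ray class field containment, then extract a factor of $\varphi(N)^2$ from the degree formula.

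For part (a), since $(\Z/N\Z)^2 \subset E(FK)$ says $E[N] \subset E(FK)$, a fortiori $\mathfrak{h}(E[N]) \subset FK$ for the Weber function $\mathfrak{h}$. By Ray Class Field Containment (Theorem \ref{SCHERTZTHM}),
\[ K^{(N)} \subset FK(\mathfrak{h}(E[N])) = FK. \]
I would then compute $[K^{(N)}:K]$ with Proposition \ref{hmgen} applied to the modulus $\mm = N\OO_K$. For $N \geq 3$ one verifies $U_{\mm}=\{1\}$: a root of unity $\alpha\neq 1$ in $\OO_K$ satisfies $|\alpha-1|^2 \leq 4$, so $\alpha \equiv 1 \pmod{N\OO_K}$ would force $N^2 \mid N_{K/\Q}(\alpha-1) \leq 4$, impossible. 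Hence $[U:U_{\mm}]=w(K)$, and since $[\OO_K:\mm]\prod_{\pp\mid\mm}(1-[\OO_K:\pp]^{-1})$ is exactly $\#(\OO_K/N\OO_K)^{\times}$, we obtain
\[ [K^{(N)}:K] = \frac{h(K)\cdot \#(\OO_K/N\OO_K)^{\times}}{w(K)}. \]

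The crux is the prime-by-prime lower bound $\#(\OO_K/N\OO_K)^{\times} \geq \varphi(N)^2$. By CRT it suffices to check this at each prime power $p^a \| N$ separately. There are three cases for how $p$ behaves in $K$: if $p$ splits then $\OO_K/p^a\OO_K \cong (\Z/p^a\Z)^2$, with unit group of order exactly $\varphi(p^a)^2$; if $p$ is inert the ring is local with residue field $\F_{p^2}$ and unit group of order $p^{2a-2}(p^2-1) \geq \varphi(p^a)^2$; if $p$ ramifies the unit group has order $p^{2a-1}(p-1) = (p/(p-1))\varphi(p^a)^2 \geq \varphi(p^a)^2$. Multiplying across primes yields the claim. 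I expect this case check to be the main technical step, but it is routine.

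Finally, since $F \cap K$ is either $\Q$ or $K$, we always have $[FK:K] \leq [F:\Q]$. Combining everything,
\[ \varphi(N)^2 \;\leq\; \frac{w(K)}{h(K)}[K^{(N)}:K] \;\leq\; \frac{w(K)}{h(K)}[FK:K] \;\leq\; \frac{w(K)[F:\Q]}{h(K)}, \]
and taking square roots gives the bound.
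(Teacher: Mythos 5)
Your proposal is correct and follows essentially the same route as the paper: part (b) is quoted from Real Cyclotomy I, and part (a) combines Theorem \ref{SCHERTZTHM} (to get $K^{(N)} \subset FK$) with the ray class field degree formula of Proposition \ref{hmgen} and the same split/inert/ramified case analysis showing the local factors multiply to at least $\varphi(N)^2$. The only cosmetic differences are that you compute $U_{\mm}=\{1\}$ exactly where the paper just uses $[U:U_{\mm}] \leq w(K)$, and you phrase the local bound via $\#(\OO_K/N\OO_K)^{\times}$ rather than the Euler-factor products, which is equivalent.
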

\begin{proof}
a) We have
\[ \prod_{\pp \mid N \OO_K} \left(1-[\OO_K:\pp]^{-1} \right) = \prod_{p \mid N}
\prod_{\pp \mid p \OO_K} \left(1-[\OO_K:\pp]^{-1} \right). \]
Further, we have
\[ \prod_{\pp \mid p\OO_K} \left(1-[\OO_K:\pp]^{-1} \right) = \begin{cases}
(1-\frac{1}{p})^2, & \left(\frac{\Delta_K}{p} \right) = 1 \\
(1-\frac{1}{p}), & \left(\frac{\Delta_K}{p} \right) = 0 \\
(1-\frac{1}{p^2}), & \left(\frac{\Delta_K}{p} \right) = -1
\end{cases}, \]
so
\[ \prod_{\pp \mid N \OO_K} \left(1-[\OO_K:\pp]^{-1} \right) \geq
\prod_{p \mid N} \left(1-\frac{1}{p}\right)^2. \]
Applying Lemma \ref{LEMMA2.7} and Theorem \ref{SCHERTZTHM}, we get
\[ FK = F(E[N]) \supset K^{(N)} \]
and thus
\[ [F:\Q] \geq \frac{ [FK:\Q]}{2} \geq \frac{[K^{(N)}:\Q]}{2} = [K^{(N)}:K] \]
\[ = \frac{h(K)}{[U:U_{N\OO_K}]} [\OO_K:N\OO_K] \prod_{\pp \mid N \OO_K} \left(1-[\OO_K:\pp]^{-1} \right) \]
\[ \geq \frac{h(K)}{w(K)} N^2 \prod_{p \mid N} \left(1-\frac{1}{p}\right)^2 =
\frac{h(K)}{w(K)} \varphi(N)^2.  \]
b) This is Theorem \ref{REALCYCI}a).
\end{proof}

\begin{example}
\label{EXAMPLE3.16}
Let $N \geq 3$, let $F$ be a cubic number field, and let $E_{/F}$ be a CM elliptic curve such that $E(F)$ contains a point of order $N$.  The SPY Bounds give
$\varphi(N) \leq [F:\Q] w(K) \leq 18$; in particular the largest prime value of $N$ permitted is $19$.  The odd
order torsion subgroup of $E(F)$ has size at most $13$ \cite[Main Theorem]{Clark-Xarles08}; in particular, the largest
prime value permitted is $13$.  Real Cyclotomy II (Theorem \ref{REALCYCTHM}) gives
$\varphi(N) \mid 6$ and thus $N \mid 4$, $N \mid 14$ or $N \mid 18$; in particular,
the largest prime value permitted is $7$.  Theorems \ref{IMPRIMITIVELEMMA} and \ref{MAINTHM}
show that all of these values of $N$ are actually attained except for $N = 18$.  \\ \indent
Suppose now that $E_{/F}$ is $\OO(\Delta)$-CM and $\gcd(\Delta,N) = 1$.  Then the Square Root SPY Bounds give
\[\varphi(N) \leq \lfloor \sqrt{18} \rfloor = 4, \ K = \Q(\sqrt{-3}), \]
\[ \varphi(N) \leq \lfloor \sqrt{12} \rfloor = 3, \ K = \Q(\sqrt{-1}), \]
\[ \varphi(N) \leq \lfloor \sqrt{6} \rfloor = 2, \ K \notin \{ \Q(\sqrt{-3}), \Q(\sqrt{-1}) \}. \]
Combining with Real Cyclotomy II, we get that the only prime values of $N$ permitted in this case
are the ``Olson primes'' $2$ and $3$.  The four elliptic curves in rows 13 through 16 of the table
in Theorem \ref{MAINTHM} do not satisfy the Square Root SPY Bounds.  It follows from Theorem \ref{SQRTSPY} that for $N = 9$ and $N = 14$ we \emph{do not have} $(\Z/N\Z)^2 \subset E(FK)$.  This shows that the hypothesis $\gcd(\Delta,N) = 1$ in Real Cyclotomy I (Theorem \ref{REALCYCI}) is necessary in order for this stronger form of real cyclotomy to hold.  On the other hand, we have
\[ \Q[b]/(b^3-15b^2-9b-1) \cong \Q[b]/(b^3+105b^2-33b-1) \cong \Q(\zeta_9)^+, \]
\[ \Q[b](b^3-4b^2+3b+1) \cong \Q[b]/(b^3-186b^2+3b+1) \cong \Q(\zeta_{14})^+, \]
in accordance with Real Cyclotomy II (Theorem \ref{REALCYCTHM}).
\end{example}

\section{The Odd Degree Theorem}

\subsection{Statement of Theorem}
\textbf{} \\ \\ \noindent
Let $F/\Q$ be a number field of odd degree.  Then $F$ is real.  Hence we get a natural terrain of applicability of Real Cyclotomy I,
Aoki's Theorem and Real Cyclotomy II: if $F$ is an odd degree number field, $E_{/F}$ is a CM elliptic curve, and $E(F)$ has a point
of order $N \geq 5$ (as we may as well assume, since points of order $1 \leq N \leq 4$ occur already over $\Q$), then Real Cyclotomy II implies that $\frac{\varphi(N)}{2} \mid [F:\Q]$.  But this implies $\varphi(N) \equiv 2 \pmod{4}$, which is very restrictive!  Indeed, $\varphi(N)$ is always even; moreover $\varphi(N)$ is divisible by $4$ when $N \geq 5$ is divisible by $4$, by a prime $p \equiv 1 \pmod{4}$, or by two distinct odd primes.  So there is a prime $p \equiv 3 \pmod{4}$ and natural numbers $\epsilon,a$ with $0 \leq \epsilon \leq 1$ such that $N = 2^{\epsilon} p^a$.
\\ \\
The odd degree hypothesis also imposes the following restriction.

\begin{thm}
\label{JIMSLEMMA}
Let $\OO$ be an order in an imaginary quadratic field $K$, of discriminant $\Delta$.
Let $F$ be an odd degree number field and $E_{/F}$ an $\OO$-CM elliptic curve. \\
a) (Aoki) If $\Z/\ell\Z \hookrightarrow E(F)$ for some prime $\ell > 2$, then
$\ell \equiv 3 \pmod{4}$ and $K =  \Q(\sqrt{ -\ell})$. \\
b) If $\Z/2Z \oplus \Z/2\Z \hookrightarrow E(F)$, then: \\
(i) (Aoki) $E(F)[\tors] \cong \Z/2\Z \oplus \Z/2\Z$ and $K = \Q(\sqrt{-1})$. \\
(ii) $\Delta = -4$. \\
c) If $\Z/4\Z \hookrightarrow E(F)$, then: \\
(i) (Aoki) $E(F)[\tors] \cong \Z/4\Z$ and $K  = \Q(\sqrt{-1})$. \\
(ii) $\Delta \in \{-4,-16\}$.
\end{thm}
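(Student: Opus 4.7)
The plan is to exploit three inputs: (i) Aoki's Corollary~\ref{COR4.11}, which forces $\Q(\zeta_N) \subset FK$ whenever $E(F)$ has a point of order $N$; (ii) Aoki's structural classification of $E(F)[\tors]$ (Theorem~\ref{AOKITHM}c); and (iii) the fact that $F_{\Delta} = \Q(j(E)) \subset F$ has degree $h(\Delta)$ over $\Q$, so $h(\Delta)$ divides $[F:\Q]$ and is therefore odd. The refinements on $\Delta$ in parts (b)(ii) and (c)(ii), which go beyond what Aoki's arguments yield directly, come from Corollary~\ref{COR4.3} (controlling full $2$-torsion via the CM discriminant) and from Lemma~\ref{LEMMA2.5}(b) (classifying imaginary quadratic discriminants of odd class number).

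For part (a), Aoki gives $\Q(\zeta_\ell) \subset FK$, so $\ell-1 \mid 2[F:\Q]$. Oddness of $[F:\Q]$ forces $(\ell-1)/2$ to be odd, i.e.\ $\ell \equiv 3 \pmod{4}$. The unique quadratic subfield of $\Q(\zeta_\ell)$ is then $\Q(\sqrt{-\ell})$, so $\Q(\sqrt{-\ell}) \subset FK$. Writing $K = \Q(\sqrt{-d})$ with $d$ squarefree and $\sqrt{-\ell} = a + b\sqrt{-d}$ with $a,b \in F$, I would square and compare to get $ab = 0$ and $a^2 - b^2 d = -\ell$. The case $b = 0$ is impossible since $F$ is real and $-\ell < 0$, so $a = 0$ and $b^2 d = \ell$. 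Since $b^2 \in \Q$ and $[F:\Q]$ is odd, $b \in \Q$, and then both $d$ and $\ell$ being squarefree positive integers forces $d = \ell$, i.e.\ $K = \Q(\sqrt{-\ell})$.

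For part (b), Corollary~\ref{COR4.3} immediately gives $\Delta = -4$ (and hence $K = \Q(i)$), which is (b)(ii). To obtain (b)(i), apply Aoki to get $E(F)[\tors] \cong \Z/2\Z \oplus \Z/2N\Z$. A prime $2 \mid N$ would give $4$-torsion in $E(F)$, contradicting Corollary~\ref{odd2tors}; an odd prime $p \mid N$ would, via part (a), force $K = \Q(\sqrt{-p}) \neq \Q(i)$. Hence $N = 1$.

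For part (c), if $\#E(F)[2] = 4$ then together with the given point of order $4$ we would have $E(F) \supset \Z/2\Z \oplus \Z/4\Z$, contradicting (b)(i); so $\#E(F)[2] = 2$ and Aoki gives $E(F)[\tors] \cong \Z/N\Z$ with $4 \mid N$. Then $\varphi(N)/2 \mid [F:\Q]$ odd forces $N = 4$ (any odd prime $>2$ divisor of $N$, or any extra factor of $2$ beyond $4$, introduces a forbidden even factor in $\varphi(N)/2$). The inclusion $i = \zeta_4 \in FK$ and the same expansion argument as in part (a) yield $K = \Q(i)$, so $\Delta = -4\mathfrak{f}^2$. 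Finally, $h(\Delta)$ odd together with Lemma~\ref{LEMMA2.5}(b) restricts $\Delta$ to $\{-4,-8,-16\}$ or to $-2^{\epsilon}\ell^{2a+1}$ with $\ell \equiv 3 \pmod{4}$; the latter form is incompatible with $-4\mathfrak{f}^2$ (which has every odd prime appearing to even power), while of the former only $-4$ and $-16$ are of the form $-4\mathfrak{f}^2$. The main obstacle is the field-theoretic identification of $K$ from the cyclotomic containment, but this becomes routine once one observes that $FK/F$ is a CM extension over the real base $F$ and a direct expansion in the basis $\{1,\sqrt{-d}\}$ settles matters.
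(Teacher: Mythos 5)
Your proposal is correct and follows essentially the same route as the paper: the cyclotomic containment $\zeta_N \in FK$ with $\frac{\varphi(N)}{2} \mid [F:\Q]$, Corollary \ref{COR4.3} and Corollary \ref{odd2tors} for the statements involving $2$-torsion, and oddness of $h(\Delta)$ (via $\Q(j(E)) \subset F$) combined with Lemma \ref{LEMMA2.5}b) to get $\Delta \in \{-4,-16\}$. The only cosmetic difference is that you identify $K$ by expanding $\sqrt{-\ell}$ (resp.\ $i$) in an $F$-basis of $FK$, whereas the paper simply observes that $[FK:\Q] \equiv 2 \pmod{4}$ leaves room for only one quadratic subfield; the substance is identical.
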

\begin{proof}
a) Real Cyclotomy II gives $\frac{\ell-1}{2} \mid [F:\Q$]; since $[F:\Q]$ is odd, we
 have $\ell \equiv 3 \pmod{4}$.  It also gives $\zeta_\ell \in FK$, so $FK$ contains both $K$ and $\Q(\sqrt{ (-1)^{\frac{\ell-1}{2}}\ell})$.  Since
$[FK:\Q] \equiv 2 \pmod{4}$, we have $K = \Q(\sqrt{ (-1)^{\frac{\ell-1}{2}}\ell})= \Q(\sqrt{ -\ell})$.
\\
b) Corollary \ref{COR4.3} gives $\Delta = -4$, so $K = \Q(\sqrt{-1})$.
By part a, $E(F)[\tors]$ is a $2$-group, and by Corollary \ref{4.5} we have $E(F)[\tors] \cong \Z/2\Z \times \Z/2\Z$.
\\
c) By Real Cyclotomy II, $FK$ contains both $K$ and $\Q(\sqrt{-1})$.  Again, $[FK:\Q] \equiv 2 \pmod{4}$ forces $K = \Q(\sqrt{-1})$.  By parts a) and b), $E(F)[\tors] \cong \Z/2^a\Z$ for some $a \geq 2$.  By Real Cyclotomy II, $2^{a-2} \frac{\varphi(2^a)}{2} \mid [F:\Q]$, so $a = 2$.  The order of conductor $\mathfrak{f}$ in $\Q(\sqrt{-1})$ has odd class number iff $\mathfrak{f} \in \{1,2\}$.
\end{proof}

\begin{remark} \textbf{} \\
It follows from Corollary \ref{7.5} that if $\ell \equiv 3 \pmod{4}$ is a prime, then for all $a \geq 0$, there is an odd degree number field $F$ and an elliptic curve $E_{/F}$ with CM by the quadratic order of discriminant $-\ell^{2a+1}$ and an
$F$-rational point of order $\ell$.  \\ \indent
Olson's work shows that there are elliptic curves over $\Q$ with CM by the quadratic
orders of discriminants $-4$ and $-16$ with $\Q$-rational points of order $4$.
\end{remark}
\noindent
Since $F$ is real, $E(F)[\tors]$ is of the form $\Z/N\Z$ or $\Z/2\Z \oplus \Z/2N \Z$.  Thus the discussion above and Theorem \ref{JIMSLEMMA} imply $E(F)[\tors]$ has order dividing $4$ or is of the form $\Z/2^{\epsilon} \ell^n\Z$ with $\epsilon \in \{0,1\}$, $n \in \Z^+$
and $\ell \equiv 3 \pmod{4}$ a prime.  We have recovered a result of Aoki \cite[Cor. 9.4]{Aoki95}, via an approach which gives slightly more precise information in the case $4 \mid \# E(F)[\tors]$.  This brings us to the main result of this section, which promotes Aoki's Theorem to a complete determination of the torsion subgroups of CM elliptic curves over odd degree number fields.

\begin{thm}(Odd Degree Theorem)
\label{ODDDEGREETHM}
Let $F$ be a number field of odd degree, let $E_{/F}$ be a $K$-CM elliptic curve, and let $T = E(F)[\tors]$.  Then: \\
 a) One of the following occurs:
\begin{enumerate}
\item $T$ is isomorphic to the trivial group $\{ \bullet\}, \, \Z/2\Z, \, \Z/4\Z,$ or $\Z/2\Z \times \Z/2\Z$;
\item $T \cong \Z/\ell^n \Z$ for a prime $\ell \equiv 3 \pmod{8}$ and $n \in \mathbb{Z}^+$ and
$K = \Q(\sqrt{-\ell})$;
\item $T \cong \Z/2\ell^n \Z$ for a prime $\ell \equiv 3 \pmod{4}$ and $n \in \mathbb{Z}^+$ and
$K = \Q(\sqrt{-\ell})$.
\end{enumerate}
b) If $E(F)[\tors] \cong \Z/2\Z \oplus \Z/2\Z$, then $\End E$ has discriminant $\Delta = -4$.  \\
c) If $E(F)[\tors] \cong \Z/4\Z$, then $\End E$ has discriminant $\Delta \in \{ -4,-16\}$. \\
d) Each of the groups listed in part a) arises up to isomorphism as the torsion subgroup $E(F)$ of a CM elliptic curve $E$ defined over an odd degree number field $F$.
\end{thm}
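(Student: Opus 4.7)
Parts (b) and (c) are immediate from Theorem \ref{JIMSLEMMA}, parts (b)(ii) and (c)(ii). For part (a), my plan is to combine Theorem \ref{JIMSLEMMA} with two further inputs. First, $F$ real with $K \not\subset F$ forces $T$ to be $\Z/N\Z$ or $\Z/2\Z \oplus \Z/2N\Z$. Second, Real Cyclotomy II applied to a point of order $N \geq 3$ gives $\frac{\varphi(N)}{2} \mid [F:\Q]$, and the oddness of $[F:\Q]$ then forces $\varphi(N) \equiv 2 \pmod{4}$, so $N \in \{4\} \cup \{\ell^b, 2\ell^b : \ell \equiv 3 \pmod{4} \text{ prime}, b \geq 1\}$.

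Combining these with Theorem \ref{JIMSLEMMA}---a group containing $\Z/4\Z$ must equal $\Z/4\Z$; a group containing $\Z/2\Z \oplus \Z/2\Z$ must equal it; and $K$ is uniquely determined by any odd prime divisor of $|T|$---rules out every remaining shape except those in cases (1)--(3) of the theorem, with $\ell \equiv 3 \pmod 4$ so far. The main obstacle is the refinement $\ell \equiv 3 \pmod 4 \Rightarrow \ell \equiv 3 \pmod 8$ in case (2). My plan is to assume $T \cong \Z/\ell^n\Z$ with $\ell \equiv 7 \pmod 8$ and derive a contradiction by exhibiting an $F$-rational point of order $2$. Since $[F:\Q]$ is odd and $h(\Delta) = [\Q(j(E)):\Q] \mid [F:\Q]$, Lemma \ref{LEMMA2.5}(b) forces $\Delta \in \{-\ell^{2a+1}, -4\ell^{2a+1}\}$ for some $a \geq 0$.

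In the even-discriminant case, Theorem \ref{THM4.2}(d) gives $[F_\Delta(E[2]):F_\Delta] = 2$; any order-$2$ element of $\GL_2(\F_2) = S_3$ has a one-dimensional fixed space, so $|E(F_\Delta)[2]| = 2$. In the odd-discriminant case, $\ell \equiv 7 \pmod 8$ gives $\Delta \equiv -\ell \equiv 1 \pmod 8$; Theorem \ref{THM4.2}(b) gives $F_\Delta(E[2]) = F_\Delta K$, and since $2$ splits in $\OO$, the nontrivial element of $\Gal(F_\Delta K/F_\Delta)$ swaps the two idempotents of $\OO/2\OO \cong \F_2 \times \F_2$ (acting via the normalizer of the mod-$2$ Cartan on the rank-$1$ module $E[2]$), fixing the diagonal nontrivial $2$-torsion point. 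Since $2$-torsion is invariant under quadratic twists, the choice of model over $F_\Delta$ is immaterial, and this point persists in $E(F)[2]$, contradicting $T \cong \Z/\ell^n\Z$.

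For part (d), the Olson groups are realized over $\Q$ by Theorem \ref{OLSON1}, so they appear in odd degree $1$. For $T \cong \Z/\ell^n\Z$ (with $\ell \equiv 3 \pmod 8$) and $T \cong \Z/2\ell^n\Z$ (with $\ell \equiv 3 \pmod 4$), my plan is to use the constructions of $\S 5.3$--$5.4$: begin with the existence, for any prime $\ell \equiv 3 \pmod 4$, of an odd-degree real number field carrying an $\OO(-\ell^{2a+1})$-CM elliptic curve with an $F$-rational point of order $\ell$ (as promised in the remark following Theorem \ref{JIMSLEMMA}); climb to $\ell^n$-torsion using CM isogenies between orders $\OO(-\ell^{2a+1})$ of growing conductor inside $K = \Q(\sqrt{-\ell})$; and then either adjoin or kill the $\Z/2\Z$-factor by a suitable quadratic twist, as permitted by Theorem \ref{IMPRIMITIVELEMMA}(c). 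The chief bookkeeping challenge is maintaining odd degree throughout the construction.
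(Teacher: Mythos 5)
Your treatment of parts a)--c) is sound and is essentially the paper's own route: Real Cyclotomy II plus oddness of $[F:\Q]$ forces $\varphi(N)\equiv 2 \pmod 4$, Theorem \ref{JIMSLEMMA} handles the $2$-primary possibilities and pins down $K$, and your exclusion of $\ell\equiv 7\pmod 8$ in case (2) -- producing a rational $2$-torsion point over $\Q(j(E))$ from Theorem \ref{THM4.2}b),d) because $\Delta$ is then even or $\equiv 1\pmod 8$, and noting that $2$-torsion is twist-invariant for $\Delta<-4$ -- is exactly the Lemma of $\S 5.2$ (you do not even need the class-number-parity detour: $\Delta=\mathfrak{f}^2\Delta_K$ with $\Delta_K=-\ell\equiv 1\pmod 8$ already gives the dichotomy).

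The genuine gap is in part d), which is the bulk of the section and which your sketch does not actually prove. First, the proposed ``climb to $\ell^n$-torsion using CM isogenies between orders of growing conductor'' does not deliver what is needed: by Theorem \ref{HalterKoch} and Corollary \ref{COR3.8}, the primitive proper \emph{real} ideals of $\OO(-\ell^{2a+1})$ of $\ell$-power index have index exactly $\ell^{2a+1}$ (so one cannot produce rational cyclic subgroups of every order $\ell^n$ this way), and in any case a rational cyclic subgroup plus Twisting at the Bottom only yields \emph{some} field with a point of order $\ell^n$ -- it gives no control over the exact torsion subgroup there. The paper instead fixes the maximal order, proves $F(E[\ell])=K^{(\ell)}$ after one twist (Corollary \ref{COR5.7}), bootstraps to $F(E[\ell^n])=K^{(\ell^n)}$ (Lemma \ref{MinField}), and passes to the index-$2$ subfield $\tilde F=F(E[\ell^n])^c$ fixed by complex conjugation, which has odd degree $\tfrac12 h(K)(\ell-1)\ell^{2n-1}$ and contains a point of order $\ell^n$; exactness of the $\ell$-part then comes from the degree count $[K^{(\ell^{n+1})}:K^{(\ell^n)}]=\ell^2$, and the $2$-part from an analysis of $F(E[2])=K^{(2)}$ (with a separate ramification-at-$2$ argument for $y^2=x^3+16$ when $\ell=3$). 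Second, your plan to ``adjoin or kill the $\Z/2\Z$-factor by a suitable quadratic twist, as permitted by Theorem \ref{IMPRIMITIVELEMMA}c)'' cannot work: quadratic twists do not change the $2$-torsion subgroup at all (for $\Delta<-4$ the $2$-torsion field is model-independent, which is precisely why $\ell\equiv 7\pmod 8$ forces a point of order $2$ in part a)), and Theorem \ref{IMPRIMITIVELEMMA}c) kills the \emph{odd} torsion, i.e.\ it would destroy the point of order $\ell^n$ you constructed. In the paper the $\Z/2\Z$-factor is \emph{adjoined} by passing to the degree-$3$ extension generated by a root of the $2$-division polynomial (keeping the degree odd), and when $\ell\equiv 3\pmod 8$ its absence over $\tilde F$ is proved by the ray class field and ramification arguments just described, not by twisting.
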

\noindent
We have already established most of parts a) through c).  What remains in those parts is to show that $\Z/\ell^n\Z$ cannot occur as a torsion subgroup for $\ell \equiv 7 \pmod{8}$, which we handle next.  The rest of the section is devoted to the proof of part d).

\subsection{Primes Equivalent to $7 \pmod{8}$}

\begin{lemma}
Let $F$ be an odd degree number field and $E_{/F}$ a CM elliptic curve. If $\Z/\ell\Z \hookrightarrow E(F)$ for some prime $\ell  \equiv 7 \pmod{8}$, then $E(F)$ has a point of order 2.
\end{lemma}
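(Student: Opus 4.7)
The plan is to reduce the question to a statement about an $F_{\Delta}$-rational model of $E$, where $F_{\Delta} = \Q(j(E))$, and then appeal to the explicit determination of the $2$-torsion field in Theorem~\ref{4.2}. First I would invoke Theorem~\ref{JIMSLEMMA}a): since $\ell \equiv 7 \pmod{8}$ in particular satisfies $\ell \equiv 3 \pmod{4}$, we get $K = \Q(\sqrt{-\ell})$. Writing $\mathfrak{f}$ for the conductor of $\OO = \End E$, this forces $\Delta = -\ell \mathfrak{f}^{2}$, and in particular $\Delta < -4$, so $j(E) \notin \{0, 1728\}$.

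The decisive step is a congruence calculation modulo $8$. Since $\ell \equiv 7 \pmod{8}$ gives $-\ell \equiv 1 \pmod{8}$, and since $\mathfrak{f}^{2} \equiv 1 \pmod{8}$ when $\mathfrak{f}$ is odd, we see that $\Delta \equiv 1 \pmod{8}$ whenever $\mathfrak{f}$ is odd, and $\Delta$ is even whenever $\mathfrak{f}$ is even. Either way, $\Delta \not\equiv 5 \pmod{8}$; this is what rules out the only troublesome case of Theorem~\ref{4.2}. Pick an $F_{\Delta}$-rational model $E_{0}$ of $E$ (which exists since $j(E) \in F_{\Delta}$). By Theorem~\ref{4.2}b) in the case $\Delta \equiv 1 \pmod{8}$, and by Theorem~\ref{4.2}d) in the case $\Delta$ even, the extension $F_{\Delta}(E_{0}[2])/F_{\Delta}$ has degree exactly $2$. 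Its Galois group therefore acts on $E_{0}[2] \cong (\Z/2\Z)^{2}$ by a nontrivial element of $\Aut(E_{0}[2]) \cong S_{3}$, i.e.\ by a transposition, which fixes exactly one nonzero point. Hence $E_{0}(F_{\Delta})[2] \cong \Z/2\Z$.

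Finally, since $j(E) \neq 0, 1728$, the geometric automorphism group of $E_{0}$ is just $\{\pm 1\}$, so $E_{/F}$ is a quadratic twist of $(E_{0})_{F}$. Quadratic twists preserve the Galois module structure of the $2$-torsion (one sees this concretely by noting that for $E_{0} : y^{2} = x^{3} + ax + b$ and its twist $E_{0}^{d} : y^{2} = x^{3} + d^{2} a x + d^{3} b$, the map $x \mapsto dx$ puts the $x$-coordinates of $E_{0}^{d}[2]$ and $E_{0}[2]$ in bijection over $F$). Thus $E[2] \cong E_{0}[2]$ as $\mathfrak{g}_{F}$-modules, so
\[ E(F)[2] \supset E_{0}(F_{\Delta})[2] \cong \Z/2\Z, \]
which contains the desired point of order $2$. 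I do not foresee any real obstacle: the argument is organized entirely around the observation that the case $\Delta \equiv 5 \pmod{8}$ in Theorem~\ref{4.2} (which is the only one where the $2$-torsion field has degree $6$ over $F_{\Delta}$, leaving no $F_{\Delta}$-rational $2$-torsion) cannot arise when $\ell \equiv 7 \pmod{8}$.
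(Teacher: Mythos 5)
Your proof is correct and follows essentially the same route as the paper: pin down $K=\Q(\sqrt{-\ell})$ via Theorem \ref{JIMSLEMMA}, note that $\Delta=-\ell\mathfrak{f}^2$ is either even or $\equiv 1 \pmod 8$, apply Theorem \ref{THM4.2}b),d) to get a $2$-torsion point rational over $\Q(j(E))$, and conclude by invariance of the $2$-torsion under quadratic twists (valid since $\Delta<-4$). The only difference is cosmetic: you spell out the transposition argument showing that a degree-$2$ splitting field of $E_0[2]$ leaves one rational point of order $2$, which the paper leaves implicit.
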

\begin{proof}
Let $E$ be a $K$-CM elliptic curve with $\Z/\ell\Z \hookrightarrow E(F)$ for some prime $\ell  \equiv 7 \pmod{8}$. By Theorem \ref{JIMSLEMMA} we have $K = \Q(\sqrt{-\ell})$, so the CM discriminant $\Delta$ is either even or is $1$ modulo $8$.  Then by Theorem \ref{THM4.2}, every $\OO(\Delta)$-CM elliptic curve defined over $\Q(j(\OO))$ has an $F$-rational point of order $2$.  Since $\Delta \notin \{-3,-4\}$ and quadratic twists preserve the $2$-torsion subgroup, every $\OO(\Delta)$-CM elliptic curve defined over a field of characteristic $0$ has a rational point of order $2$.
\end{proof}

\subsection{Twisting, Top to Bottom}
\textbf{} \\ \\ \noindent
In order to prove part d) of Theorem \ref{ODDDEGREETHM}, we need some preliminary results on twisting, not all of which are inherently concerned with elliptic curves, complex multiplication or odd degree number fields(!).  With an eye to future use we work a bit more generally.  (It costs nothing to do so.)
\textbf{} \\ \\ \noindent
For $N \in \Z^+$, let $U(N) = (\Z/N\Z)^{\times}$ and
let $U(N)^{\pm} = U(N)/(\pm 1)$.

\begin{thm}(Twisting at the Top)
\label{7.2}
Let $N \geq 3$ be an integer.  Let $F$ be a field, let $A_{/F}$ be an abelian
variety, and let $C \subset_F A$ be an \'etale subgroup scheme which is cyclic
of order $N$.
Then there is
an abelian extension $L/F$ with $[L:F] \mid \frac{\varphi(N)}{2}$ and a quadratic twist $A'$ of $A_{/L}$ such that $A'(L)$ has a point of order $N$.
\end{thm}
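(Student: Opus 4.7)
The plan is to extract the character $\chi_C \colon \mathfrak{g}_F \to \Aut(C(\overline{F})) \cong U(N)$ describing the Galois action on $C$, and then exploit the fact that a quadratic twist by $[-1]$ alters this character by multiplication with a $\{\pm 1\}$-valued character. This lets me kill the image of $\chi_C$ in $U(N)^{\pm} = U(N)/\{\pm 1\}$ by passing to a field extension, and kill the residual sign ambiguity by choosing an appropriate twist. The key numerical observation is that $|U(N)^{\pm}| = \varphi(N)/2$ for $N \geq 3$, so any subgroup of $U(N)^{\pm}$ has order dividing $\varphi(N)/2$.

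First I would let $H = \chi_C(\mathfrak{g}_F) \subset U(N)$ and take $L$ to be the fixed field in $\overline{F}$ of the kernel of the composition $\mathfrak{g}_F \xrightarrow{\chi_C} U(N) \twoheadrightarrow U(N)^{\pm}$. Then $\Gal(L/F)$ is naturally identified with the image of $H$ in $U(N)^{\pm}$, which is abelian and of order dividing $\varphi(N)/2$, giving the claimed abelian extension with $[L:F] \mid \varphi(N)/2$.

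Next, the restriction $\chi_C|_{\mathfrak{g}_L}$ takes values in $\{\pm 1\}$ by construction, so it is either trivial or equal to the quadratic character $\chi_t$ attached to a quadratic extension $L(\sqrt{t})/L$ for some $t \in L^{\times}$ (taking $t=1$ in the trivial case). I would then form the quadratic twist $A' = A_L^t$ of $A_L$ defined by the cocycle $\sigma \mapsto [-1]^{\chi_t(\sigma)} \in \Aut(A)$. There is an isomorphism $\psi \colon A_{L(\sqrt{t})} \xrightarrow{\sim} A'_{L(\sqrt{t})}$ satisfying $\sigma(\psi) = \psi \circ [\chi_t(\sigma)]$ for all $\sigma \in \mathfrak{g}_L$. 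The Galois action on $\psi(C) \subset A'[N](\overline{L})$ is then described by the product character $\chi_C \cdot \chi_t$, which is trivial on $\mathfrak{g}_L$ by the choice of $t$; hence $\psi(C)$ is pointwise $L$-rational and furnishes an $L$-point of order $N$ in $A'(L)$.

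The main technical point is the twist formula $\sigma(\psi(x)) = \chi_t(\sigma)\,\psi(\sigma x)$ for $x \in A[N](\overline{L})$, which follows directly from the cocycle definition of $A^t$ but merits explicit verification to confirm that the character indeed multiplies by $\chi_t$ and not by some other quantity. The étaleness hypothesis on $C$ ensures that $C(\overline{F})$ is a genuine $\mathfrak{g}_F$-module regardless of the characteristic of $F$, which is what makes the initial setup go through.
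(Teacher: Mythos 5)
Your proposal is correct and follows essentially the same route as the paper: form the isogeny character $\ggg_F \ra U(N)$, pass to the fixed field $L$ of the kernel of its composite with $U(N) \ra U(N)^{\pm}$ (giving the abelian extension of degree dividing $\varphi(N)/2$), and then twist away the residual $\{\pm 1\}$-valued character so that the generator of $C$ becomes $L$-rational. The only cosmetic difference is that you make the twisting cocycle and the formula $\sigma(\psi(x)) = \chi_t(\sigma)\psi(\sigma x)$ explicit, and you write the quadratic extension as $L(\sqrt{t})$, where for a general field one should (as the paper does) phrase it as the separable quadratic extension cut out by the character to cover characteristic $2$.
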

\begin{proof}
The action of $\ggg_F$ on $C$ gives rise to an isogeny character
\[ \Phi: \ggg_F \ra U(N). \]
Composing with $U(N) \ra U(N)^{\pm}$, we get the \textbf{plus-minus isogeny character}
\[ \Phi^{\pm}: \ggg_F \ra U(N)^{\pm}. \]
Let $M = (F^{\sep})^{\Ker \Phi}$.  Then $M/F$ is abelian, $[M:F] \mid \varphi(N)$
and $\Phi|_{\ggg_M}$ is trivial, so any generator of $C$ gives a point of order
$N$ on $A(M)$.  We can however gain back the last factor of $2$ by \emph{twisting at the top}. Let $L = (F^{\sep})^{\Ker \Phi^{\pm}}$.  Then $L/F$ is abelian, $[L:F] \mid \frac{\varphi(N)}{2}$, and
\[ \Phi(\ggg_L) \subset \{ \pm 1\}. \]
If $\Phi|_{\ggg_L} \equiv 1$, we're done: a generator of $C$ gives an $L$-rational point on $A$.  Otherwise $\Phi|_{\ggg_L} = \epsilon$ is a nontrivial
quadratic character on $\ggg_L$, with corresponding separable quadratic extension $M/L$.  Let $A'_{/L}$
be the quadratic twist of $A$ by $M/L$.  The twisted $\ggg_L$-action on $C$
is by $\Phi \cdot \epsilon = \epsilon \cdot \epsilon = 1$.  So $A'(L)$
has a point of order $N$.
\end{proof}


\begin{thm}(Twisting at the Bottom)
\label{THM3}
Let $\ell \equiv 3 \pmod{4}$ be a prime, $n \in \Z^+$, and put $N = \ell^n$.  Let $F$ be a field, $A_{/F}$ an abelian variety, and
$C \subset_F A$ an \'etale subgroup scheme which is cyclic of order $N$.  There is a quadratic
twist $A'$ of $A_{/F}$ and a cyclic extension $L/F$ of degree dividing
$\frac{\varphi(N)}{2}$ such that $A'(L)$ has a point of order $N$.
\end{thm}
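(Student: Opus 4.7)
The plan is to imitate the proof of Theorem \ref{7.2}, but to exploit the special structure of $U(N)=(\Z/N\Z)^{\times}$ when $N=\ell^n$ with $\ell\equiv 3\pmod 4$ in order to move the twist from the top field down to the base $F$.

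The decisive arithmetic observation is that $\varphi(N)=\ell^{n-1}(\ell-1)$ has $2$-adic valuation exactly $1$, since $\ell-1\equiv 2\pmod 4$. Thus $U(N)$ is cyclic of order $2m$ with $m=\varphi(N)/2$ odd, and therefore decomposes \emph{canonically} as a direct product
\[ U(N)\;=\;\{\pm 1\}\times U(N)^{\mathrm{odd}}, \]
where $U(N)^{\mathrm{odd}}$ is the unique subgroup of odd order $m$. Let $\mathrm{pr}_{\pm}\colon U(N)\to\{\pm 1\}$ be the projection onto the first factor; this is just the unique surjection of $U(N)$ onto $\{\pm 1\}$.

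The proof then proceeds as follows. The Galois action on $C$ gives an isogeny character $\Phi\colon\gk_F\to U(N)$. Form the quadratic character
\[ \epsilon\;:=\;\mathrm{pr}_{\pm}\circ\Phi\;:\;\gk_F\longrightarrow\{\pm 1\}. \]
Because $\epsilon$ is defined over $F$, it corresponds (via Kummer theory or descent by the cocycle $\epsilon\cdot[-1]$) to an $F$-rational quadratic twist $A'$ of $A$. On the $N$-torsion the twisted Galois action on (the identified subgroup) $C\subset A'$ is $\sigma\cdot P=\epsilon(\sigma)\,\Phi(\sigma)P$, so the new isogeny character is
\[ \Phi'\;=\;\epsilon\cdot\Phi\;:\;\gk_F\longrightarrow U(N). \]
By construction $\mathrm{pr}_{\pm}\circ\Phi'=\epsilon\cdot\epsilon=1$, so $\Phi'$ factors through $U(N)^{\mathrm{odd}}$, which is cyclic of odd order $m=\varphi(N)/2$.

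Let $L=(F^{\sep})^{\Ker\Phi'}$. Then $L/F$ is a cyclic extension whose degree divides $m=\varphi(N)/2$, and $\Phi'|_{\gk_L}$ is trivial, so any generator of $C$ gives an $L$-rational point of order $N$ on $A'$, as required. I do not expect any genuine obstacle here: everything rests on the canonical splitting of $U(N)$, which is available precisely because $\ell\equiv 3\pmod 4$ (for $\ell\equiv 1\pmod 4$ the $2$-part of $U(N)$ is larger and such an $F$-rational descent of the twist is not in general possible — this is exactly why Theorem \ref{7.2} had to twist over $L$ rather than over $F$).
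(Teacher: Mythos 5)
Your proof is correct and is essentially the paper's own argument: since $U(N)$ is cyclic with $v_2(\varphi(N))=1$, your projection $\mathrm{pr}_{\pm}$ coincides with the paper's quotient map $q\colon U(N)\to U(N)/U(N)^2\cong\{\pm 1\}$ (the squares being exactly the odd part, because $-1$ is a nonsquare when $\ell\equiv 3\pmod 4$), and twisting by $\epsilon=\mathrm{pr}_{\pm}\circ\Phi$ over $F$ so that $\Phi'=\epsilon\Phi$ lands in the odd-order subgroup of order $\varphi(N)/2$ is precisely the paper's ``twisting at the bottom.''
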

\begin{proof}
Let $\Phi: \ggg_F \ra U(N)$ be the isogeny character associated to $C$.  Let
$q: U(N) \ra U(N)/U(N)^2$ be the quotient map.  Since $N = \ell^n$ is an odd
prime power, $U(N)/U(N)^2$ has order $2$ and is thus (uniquely!) isomorphic
to $\{ \pm 1 \}$.  Moreover, since $p \equiv 3 \pmod{4}$, $-1$ is not a
square modulo $p$, so \emph{a fortiori} is not a square modulo $N$.  Thus
under the canonical isomorphism $U(N)/U(N)^2 \ra \{ \pm 1\}$, the
class of $-1$ maps to $-1$.  Let
\[ \epsilon_{\Phi} = q \circ \Phi: \ggg_F \ra U(N)/U(N)^2 = \{ \pm 1\}. \]
Let $A'$ be the quadratic twist of $A$ by $\epsilon_{\Phi}$
(so $A' = A$ iff $\epsilon_{\Phi}$ is trivial), and let
\[ \Phi' = \epsilon_{\Phi} \Phi \]
be the associated isogeny character.  By the above remarks, about $-1 \in U(N)$
mapping to the nontrivial element of $U(N)/U(N)^2$ it follows that $\epsilon_{\Phi'}$ is the trivial quadratic character, and thus $\Phi'(\ggg_F) \subset U(N)^2$.  Thus if $L = (F^{\sep})^{\Ker \Phi'}$, then $A'$ has an
$L$-rational point of order $N$ and $[L:F] \mid \# U(N)^2 = \frac{\varphi(N)}{2}$.
\end{proof}


\begin{prop}
\label{7.4}
Let $E_{/F}$ be an $\OO$-CM elliptic curve with $F = \Q(j(E))$.  Let $\ell \mid \Delta(\OO)$ be a prime.\\
a) There is a unique prime ideal $\pp_{\ell}$ of $\OO$ such that $\pp_{\ell} \cap \Z = (\ell)$. \\
b) $E[\pp_{\ell}] = \{x \in E(\C) \mid \alpha x = 0 \ \forall \alpha \in \pp_{\ell}\}$ is an $F$-rational subgroup
of order $\ell$.
\end{prop}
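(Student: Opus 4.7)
The plan is to leverage the structure of $\OO_\ell := \OO \otimes \Z/\ell\Z$ for $\ell \mid \Delta(\OO)$, together with Lemma \ref{LEMMA2.6}.

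For part a), recall that the proof of Theorem \ref{OLDNEWTHM2} already used the fact (from \cite[\S 2.3]{TORS1}) that when $\ell \mid \Delta$ one has $\OO_\ell \cong \F_\ell[t]/(t^2)$. This is a local ring with unique maximal ideal $(t)$. Prime ideals $\pp$ of $\OO$ with $\pp \cap \Z = (\ell)$ correspond bijectively to nonzero prime ideals of $\OO/\ell\OO = \OO_\ell$, of which there is exactly one. I would spend at most a sentence or two citing the structure result and extracting $\pp_\ell$ as the preimage of the maximal ideal.

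For part b), the counting portion is a direct module-theoretic calculation. By Lemma \ref{LEMMA2.6}a), $E[\ell]$ is free of rank $1$ over $\OO_\ell$, so after fixing an $\OO_\ell$-basis we can transport $E[\pp_\ell]$ to the annihilator $\mathrm{Ann}_{\OO_\ell}(\pp_\ell)$. Working in $\F_\ell[t]/(t^2)$ with $\pp_\ell$ corresponding to $(t)$, we get $\mathrm{Ann}_{\OO_\ell}((t)) = (t)$, which has order $\ell$; hence $\#E[\pp_\ell] = \ell$.

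For the $F$-rationality (the only mildly subtle step), I would argue that $E[\pp_\ell]$ is automatically $\ggg_{FK}$-stable because the $\OO$-action commutes with $\ggg_{FK}$. It then suffices to show stability under the nontrivial $c \in \Gal(FK/F)$, which exists because $F = \Q(j(E))$ is a real field by Fact \ref{FACT1}e) and hence does not contain $K$. The element $c$ acts on $\End E \supset \OO$ as the Galois involution of $K/\Q$, i.e., as complex conjugation on $\OO$. For $\sigma \in \ggg_F$ projecting to $c$ and any $x \in E[\pp_\ell]$, one checks via $\sigma(\alpha \cdot x) = \sigma(\alpha) \cdot \sigma(x)$ that $\sigma(x) \in E[\overline{\pp_\ell}]$. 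But complex conjugation permutes the primes of $\OO$ over $\ell$, and by part a) there is only one such prime, so $\overline{\pp_\ell} = \pp_\ell$ and $\sigma(x) \in E[\pp_\ell]$. The main (and only) obstacle is this Galois-stability argument; everything else is bookkeeping using the local structure of $\OO_\ell$.
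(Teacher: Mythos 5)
Your proof is correct, and it is worth recording how it sits relative to the paper's. Part a) and the $F$-rationality argument are essentially the paper's: both extract the unique prime above $\ell$ from $\OO/\ell\OO \cong \F_{\ell}[\epsilon]/(\epsilon^2)$, and both conclude rationality from $\overline{\pp_{\ell}} = \pp_{\ell}$ together with the fact that $\ggg_{FK}$ acts $\OO$-linearly while the nontrivial element of $\Aut(FK/F)$ carries $E[I]$ to $E[\overline{I}]$ (this is exactly the discussion preceding Theorem \ref{THM3.9}, which the paper invokes without repeating; your appeal to Fact \ref{FACT1}e) for $K \not\subset F$ is the same justification). Where you genuinely diverge is the order count in part b). The paper chooses an embedding with $j(E) = j(\C/\OO)$, identifies $E[\pp_{\ell}]$ with the colon-ideal quotient $(\OO:\pp_{\ell})/\OO$, regards this as a vector space over $\F_{\ell} = \OO/\pp_{\ell}$, and gets dimension one from two commutative-algebra facts: any two elements of a fractional ideal are $\OO$-linearly dependent, and $(\OO:\pp_{\ell}) \supsetneq \OO$ because $\OO$ is a one-dimensional Noetherian domain. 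You instead stay module-theoretic: since $\ell \in \pp_{\ell}$ we have $E[\pp_{\ell}] \subset E[\ell]$ (state this one inclusion explicitly before invoking freeness), and Lemma \ref{LEMMA2.6} makes $E[\ell]$ free of rank one over $\OO_{\ell} \cong \F_{\ell}[t]/(t^2)$, under which $E[\pp_{\ell}]$ becomes the annihilator of the maximal ideal $(t)$, namely $(t)$ itself, of cardinality $\ell$. Your route avoids the analytic identification $E \cong_{\C} \C/\OO$ and the Noetherian-domain input, using only the freeness statement already in the paper's toolkit; the paper's route, in exchange, produces the explicit lattice description $E[\pp_{\ell}] = (\OO:\pp_{\ell})/\OO$, which meshes with the isogeny formalism ($E/E[I] \cong E_{(\Lambda:I)}$) developed in Section 3 and reused elsewhere. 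Both arguments are complete; neither has a gap.
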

\begin{proof}
\renewcommand{\aa}{\mathfrak{a}}
a) Since $\ell \mid \Delta$, we have $\OO/\ell \OO \cong \Z/\ell \Z[\epsilon]/(\epsilon^2)$,
so there is a unique ideal $\pp_{\ell}$ of $\OO$ with $\# \OO/\pp_{\ell} = \ell$.  By uniqueness $\overline{\pp_{\ell}} = \pp_{\ell}$; thus $E[\pp_{\ell}]$ is an $F$-rational subgroup. \\
b) When $\OO = \OO_K$, we have $\# E[\aa] = \# \OO_K/\aa$ for all
nonzero ideals $\aa$ of $\OO_K$ \cite[Prop. II.1.4]{SilvermanII}.
In the general case, we may embed
$\Q(j) \hookrightarrow \C$ so as to have $j(E) = j(\C/\OO)$, and then we get
\[ E[\pp_{\ell}] = \{ x \in \C \mid x \pp_{\ell} \subset \OO\}/\OO. \]
We have
\[ \{ x \in \C \mid x \pp_{\ell} \subset \OO\} = \{ x \in K \mid x \pp_{\ell} \subset \OO\}= (\OO:\pp_{\ell}).\]
Observe that $E[\pp_{\ell}] = (\OO:\pp_{\ell})/\OO$ is a vector space over the field $\F_{\ell} = \OO/\pp_{\ell}$; it remains to compute its dimension.  For any domain $R$, any two elements of a fractional
$R$-ideal are $R$-linearly dependent, so $\dim_{\F_{\ell}} (\OO:\pp_{\ell})/\OO \in \{0,1\}$.  Since $\OO$ is a one-dimensional Noetherian domain and $\pp_{\ell} \subsetneq \OO$ we have $(\OO:\pp_{\ell}) \supsetneq \OO$ \cite[$\S 10.2$, Lemma 4]{Jacobson}
and thus $1 = \dim_{\F_{\ell}} (\OO:\pp_{\ell})/\OO = \dim_{\F_{\ell}} E[\pp_{\ell}]$.
\end{proof}

\begin{cor}
\label{7.5}
\label{COR5.7}
Let $\OO$ be an imaginary quadratic order of discriminant $\Delta$, and let $\ell > 2$ be a prime dividing $\Delta$. \\
a)  There is a number field $L$ of degree $h(\Delta) \left(\frac{\ell-1}{2}\right)$ and an $\OO$-CM elliptic
curve $E_{/L}$ with an $L$-rational torsion point of order $\ell$.  \\
b) Suppose $\ell \equiv 3 \pmod{4}$ and $\OO$ is the quadratic order of
discriminant $-\ell$, i.e., the ring of integers of $K = \Q(\sqrt{-\ell})$.
Let $j = j(\OO)$ and $F = \Q(j)$.  Then: \\
(i) The number field $F(\zeta_{\ell}+\zeta_{\ell}^{-1})$ has degree $h(K)(\ell-1)/2$, an odd number. \\
(ii) There is an elliptic curve $E_{/F}$ such that $E(F(\zeta_{\ell}+\zeta_{\ell}^{-1}))$ has a point of order $\ell$.
\end{cor}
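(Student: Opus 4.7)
For part (a), I would apply Proposition~\ref{7.4} to an $\OO$-CM elliptic curve $E_0$ over $F_\Delta = \Q(j(\OO))$ to obtain the $F_\Delta$-rational order-$\ell$ subgroup $C = E_0[\pp_\ell]$. Twisting at the Top (Theorem~\ref{7.2}) applied with $N = \ell$ then produces an abelian extension $L_0/F_\Delta$ with $[L_0:F_\Delta] \mid (\ell-1)/2$ and a quadratic twist $E$ of $E_0$ over $L_0$ such that $E(L_0)$ contains a point of order $\ell$. If the divisibility is strict, enlarge $L_0$ to a field $L$ with $[L:F_\Delta] = (\ell-1)/2$ by adjoining a root of an irreducible polynomial of the appropriate degree; the base change of $E$ to $L$ remains $\OO$-CM and retains the $L$-rational point of order $\ell$, so $[L:\Q] = h(\Delta)(\ell-1)/2$ as required.

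For part (b)(i), the key point is $F \cap \Q(\zeta_\ell)^+ = \Q$. Since $\ell \equiv 3 \pmod 4$, Lemma~\ref{LEMMA2.5}(b) gives that $h(K)$ is odd, so $\Pic(\OO_K)[2] = \{1\}$. Following the genus-theoretic argument from the proof of Theorem~\ref{4.9}(c), the maximal-abelian-over-$\Q$ subextension of $FK = K_\Delta$ equals $K$. Thus $F \cap \Q^{\ab} \subset FK \cap \Q^{\ab} = K$, and since $F \subset \R$ while $K$ is imaginary quadratic, $F \cap K = \Q$. Hence $F \cap \Q(\zeta_\ell)^+ = \Q$, so $[F(\zeta_\ell + \zeta_\ell^{-1}):\Q] = h(K) \cdot (\ell-1)/2$, which is odd because both $h(K)$ and (owing to $\ell \equiv 3 \pmod 4$) $(\ell-1)/2$ are odd.

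For part (b)(ii), I would take an $\OO_K$-CM elliptic curve $E/F$ and its $F$-rational subgroup $C = E[\pp_\ell]$ (Proposition~\ref{7.4}), then apply Twisting at the Bottom (Theorem~\ref{THM3}) with $N = \ell$ to obtain a quadratic twist $E'/F$ whose isogeny character $\Phi': \ggg_F \to U(\ell)$ takes values in the subgroup $U(\ell)^2$ of squares. The heart of the argument is to identify the fixed field of $\ker \Phi'$ with $F(\zeta_\ell + \zeta_\ell^{-1})$ exactly. In a basis $e_1, e_2$ of $E'[\ell]$ with $e_1$ spanning $E'[\pp_\ell]$, the endomorphism $[\sqrt{-\ell}]$ is represented by $\bigl(\begin{smallmatrix} 0 & c \\ 0 & 0 \end{smallmatrix}\bigr)$ with $c \in \F_\ell^\times$, while $\rho_\ell(\sigma) = \bigl(\begin{smallmatrix} \Phi'(\sigma) & * \\ 0 & \Psi(\sigma) \end{smallmatrix}\bigr)$ is upper triangular. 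Exploiting $\sigma \circ [\sqrt{-\ell}] \circ \sigma^{-1} = \pm [\sqrt{-\ell}]$, with sign $+$ or $-$ according to whether $\sigma|_K$ is trivial or complex conjugation, a direct matrix computation gives $\Psi(\sigma) = \pm \Phi'(\sigma)$ and hence $\chi_\ell(\sigma) = \det \rho_\ell(\sigma) = \pm \Phi'(\sigma)^2$. Squaring removes the sign: $\chi_\ell^2 = (\Phi')^4$ on all of $\ggg_F$. Since $\Phi'(\ggg_F) \subset U(\ell)^2$ is cyclic of odd order $(\ell-1)/2$ (coprime to $4$), the fourth-power map is a bijection on this image, so $\ker (\Phi')^4 = \ker \Phi'$; combined with $\ker \chi_\ell^2 = \ggg_{F(\zeta_\ell + \zeta_\ell^{-1})}$, this yields $\ker \Phi' = \ggg_{F(\zeta_\ell + \zeta_\ell^{-1})}$, and hence $E'$ has a point of order $\ell$ rational over $F(\zeta_\ell + \zeta_\ell^{-1})$.

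The principal obstacle is the sign bookkeeping in part (b)(ii): correctly tracking how the nontrivial element of $\Gal(K/\Q)$ flips the sign in $\sigma \circ [\sqrt{-\ell}] \circ \sigma^{-1} = \pm[\sqrt{-\ell}]$ is what upgrades the relation $\chi_\ell = \pm (\Phi')^2$ into the clean identity $\chi_\ell^2 = (\Phi')^4$, which in turn pins down $L$ as $F(\zeta_\ell + \zeta_\ell^{-1})$ itself rather than merely a subfield.
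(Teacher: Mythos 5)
Your proposal is correct, and while parts a) and b)(i) follow the paper's own route, your part b)(ii) is a genuinely different argument. For a) you combine Proposition \ref{7.4} with Twisting at the Top exactly as the paper does, and your extra padding step (enlarging $L_0$ to force $[L:F_\Delta] = \frac{\ell-1}{2}$ on the nose) correctly handles a point the paper leaves implicit, since Theorem \ref{7.2} only gives divisibility of the degree; b)(i) is the paper's genus-theory argument verbatim (odd $h(K)$ forces the genus field of $FK$ to be $K$, whence $F \cap \Q(\zeta_\ell)^+ = \Q$ and the degree multiplies). For b)(ii), the paper applies Twisting at the Bottom to get $L/F$ of degree dividing $\frac{\ell-1}{2}$ with an $L$-rational point of order $\ell$, notes $L$ is real (odd degree over $\Q$), invokes Real Cyclotomy II to get $F(\zeta_\ell+\zeta_\ell^{-1}) \subset L$, and concludes $L = F(\zeta_\ell+\zeta_\ell^{-1})$ by comparing degrees. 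You instead identify the kernel field of the twisted isogeny character directly: with $e_1$ spanning $E'[\pp_\ell]$ the endomorphism $[\sqrt{-\ell}]$ acts on $E'[\ell]$ by a nonzero nilpotent upper-triangular matrix, conjugation by $\rho_\ell(\sigma)$ sends it to $\pm[\sqrt{-\ell}]$ according to $\sigma|_K$, and the resulting relation $\chi_\ell = \pm(\Phi')^2$, hence $\chi_\ell^2 = (\Phi')^4$, together with the oddness of the image of $\Phi'$ (guaranteed by Theorem \ref{THM3}), gives $\ker \Phi' = \ker \chi_\ell^2 = \ggg_{F(\zeta_\ell+\zeta_\ell^{-1})}$. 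I checked the matrix computation and the kernel identifications; they are correct (your only imprecision is that $\Phi'(\ggg_F)$ has order \emph{dividing} $\frac{\ell-1}{2}$ rather than equal to it, which is all you need). Your route is more self-contained -- it avoids Real Cyclotomy II entirely and is in the spirit of the ramified-Cartan computation in the proof of Theorem \ref{OLDNEWTHM2} -- and it even yields the exact equality of the field of definition of the point with $F(\zeta_\ell+\zeta_\ell^{-1})$ rather than just the containment needed; the paper's route is shorter on the page because it reuses machinery already established, and the same degree-count pattern is reused later (e.g.\ in the proof of Theorem \ref{MAINTHM}).
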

\begin{proof}
a) Combine Proposition \ref{7.4} and Theorem \ref{7.2}.  \\
b) (i) Since $[F:\Q]=h(K)$ is odd by Lemma  \ref{LEMMA2.5}, the genus field $FK \cap \Q^{\ab}$ is $K$
and thus $F \cap \Q(\zeta_N)^+ = \Q$, i.e., $F$ and $\Q(\zeta_\ell+\zeta_\ell^{-1})$
are linearly disjoint over $\Q$.  It follows that $[F(\zeta_{\ell}+\zeta_{\ell}^{-1}):\Q] = h(K) \left( \frac{\ell-1}{2} \right)$, which is odd. \\
(ii) Let $E_{/F}$ be an $\OO$-CM elliptic curve. By Proposition \ref{7.4}, $E$ has an $F$-rational subgroup of order $\ell$.
By Theorem \ref{THM3}, after replacing $E$ by a quadratic twist \emph{over $F$},
there is an extension
$L/F$ of degree dividing $\frac{\ell-1}{2}$ such that $E(L)$ has a point of order $\ell$.  By Real Cyclotomy II, $F(\zeta_{\ell} + \zeta_{\ell}^{-1}) \subset L$.  Thus
\[ [L:\Q] \mid h(K) \left(\frac{\ell-1}{2}\right) = [F(\zeta_\ell + \zeta_\ell^{-1}):\Q], \]
so $L = F(\zeta_\ell + \zeta_\ell^{-1})$.
\end{proof}

\begin{remark}
Part b) is due to Lozano-Robledo when $F = \Q$ \cite[Cor. 9.8]{Alvaro}.
\end{remark}

\subsection{Existence of Odd Degree Torsion}
\textbf{} \\ \\ \noindent
The groups $\{ \bullet\}, \, \Z/2\Z, \, \Z/4\Z,$ or $\Z/2\Z \times \Z/2\Z$ are Olson groups, so occur already over $\Q$.  To complete Theorem \ref{ODDDEGREETHM}, it remains to construct CM elliptic curves over
odd degree number fields with torsion subgroups isomorphic to $\Z/\ell^n \Z$ or $\Z/2\ell^n \Z$ for $\ell$ as in Theorem \ref{ODDDEGREETHM} above.

\begin{lemma}
\label{MinField}
Let $E_{/F}$ be an $\OO_K$-CM elliptic curve such that $F(E[\ell])=K^{(\ell)}$ for some prime $\ell> 2$. Suppose additionally that $K \neq \Q(i)$, and $\ell =3$ if $K=\Q(\sqrt{-3})$. Then $F(E[\ell^n])=K^{(\ell^n)}$ for all $n \in \Z^+$.
\end{lemma}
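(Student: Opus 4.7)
The plan is induction on $n$, the base case $n = 1$ being the hypothesis. Fix $n \geq 2$ and assume $F(E[\ell^{n-1}]) = K^{(\ell^{n-1})}$. I will show that $F(E[\ell^n])$ contains $K^{(\ell^n)}$ and that $[F(E[\ell^n]):F(E[\ell^{n-1}])] \leq [K^{(\ell^n)}:K^{(\ell^{n-1})}]$, forcing equality.

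Since $K \subseteq K^{(\ell^{n-1})} = F(E[\ell^{n-1}]) \subseteq F(E[\ell^n])$, the field $F(E[\ell^n])$ contains $FK$, and the $F$-rational Weber function values $\mathfrak{h}(E[\ell^n])$ lie in $F(E[\ell^n])$. Theorem \ref{SCHERTZTHM} then yields
\[ F(E[\ell^n]) \supseteq FK\bigl(\mathfrak{h}(E[\ell^n])\bigr) \supseteq K^{(\ell^n)}. \]

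Next I will bound $[F(E[\ell^n]):F(E[\ell^{n-1}])]$ from above. Because $K \subseteq F(E[\ell^{n-1}])$, the $\ell$-adic Galois representation restricted to $\mathfrak{g}_{F(E[\ell^{n-1}])}$ lies in the Cartan subgroup $(\OO_K/\ell^n\OO_K)^\times$, and since it fixes $E[\ell^{n-1}]$ pointwise, its image lies in the principal congruence subgroup $1 + \ell^{n-1}\OO_K/\ell^n\OO_K$, which has cardinality $|\OO_K/\ell\OO_K| = \ell^2$. Hence $[F(E[\ell^n]):F(E[\ell^{n-1}])] \leq \ell^2$.

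On the ray class side, a short case analysis by the splitting type of $\ell$ in $K$ gives $|(\OO_K/\ell^n\OO_K)^\times|/|(\OO_K/\ell^{n-1}\OO_K)^\times| = \ell^2$ for every $n \geq 2$. The lemma's hypotheses are exactly what makes the unit factor $[\OO_K^\times : U_{\ell^k\OO_K}]$ constant in $k$: for $K \not\in \{\Q(i), \Q(\sqrt{-3})\}$ one has $\OO_K^\times = \{\pm 1\}$ and $-1 \not\equiv 1 \pmod{\ell}$ for odd $\ell$, while for $K = \Q(\sqrt{-3})$ with $\ell = 3$ one checks directly that $\omega - 1$ is a uniformizer at the unique prime above $3$, so $\omega \not\equiv 1 \pmod{3\OO_K}$. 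In both cases $U_{\ell^k\OO_K} = \{1\}$ for all $k \geq 1$, so Proposition \ref{hmgen} yields $[K^{(\ell^n)}:K^{(\ell^{n-1})}] = \ell^2$.

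Assembling the steps, the chain $F(E[\ell^{n-1}]) = K^{(\ell^{n-1})} \subseteq K^{(\ell^n)} \subseteq F(E[\ell^n])$ together with $[F(E[\ell^n]):F(E[\ell^{n-1}])] \leq \ell^2 = [K^{(\ell^n)}:F(E[\ell^{n-1}])]$ forces $F(E[\ell^n]) = K^{(\ell^n)}$, completing the induction. The main subtlety lies in the degree match for the ray class field: the hypotheses on $K$ and $\ell$ are precisely what prevent the unit index from growing at some level, which would otherwise introduce a spurious discrepancy between the two sides.
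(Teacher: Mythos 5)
Your argument is correct, but it is not the paper's argument. The paper splits into two cases: for $K \neq \Q(i),\Q(\sqrt{-3})$ it uses that the Weber function is the $x$-coordinate to get $[F(E[\ell^n]):K^{(\ell^n)}] \leq 2$, and then derives a contradiction because a nontrivial element of that group would have order $2$ while lying in the kernel of $\GL_2(\Z/\ell^n\Z) \to \GL_2(\Z/\ell\Z)$, which is an $\ell$-group for odd $\ell$; for $K=\Q(\sqrt{-3})$, $\ell=3$ it counts Cartan elements congruent to $1$ mod $3$ and matches this against $[K^{(3^n)}:K]=3^{2n-2}$. Your proof is a uniform, inductive version closer in spirit to the paper's second case: the step $[F(E[\ell^n]):F(E[\ell^{n-1}])]$ is bounded by $\ell^2$ via the principal congruence subgroup $1+\ell^{n-1}\OO_K/\ell^n\OO_K$ of the Cartan (Lemma \ref{LEMMA2.6} plus Serre--Tate), the ray class step $[K^{(\ell^n)}:K^{(\ell^{n-1})}]$ equals $\ell^2$ by Proposition \ref{hmgen}, and Theorem \ref{SCHERTZTHM} supplies the containment that squeezes the two towers together. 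This buys uniformity (no Weber-function degree case analysis) at the cost of invoking the ray class number formula at every level. One comment on your closing remark: the hypotheses $K \neq \Q(i)$, and $\ell=3$ when $K=\Q(\sqrt{-3})$, are \emph{not} what keeps the unit index constant. For any imaginary quadratic $K$ and any odd $\ell$ one has $U_{\ell^k\OO_K}=\{1\}$ for all $k \geq 1$, since for a nontrivial root of unity $\zeta \in \OO_K^\times$ the element $\zeta-1$ has norm $1$, $2$, $3$ or $4$, and in the only borderline case ($K=\Q(\sqrt{-3})$, $\zeta$ of order $3$, $\ell=3$) its valuation at the prime above $3$ is $1 < 2 = \ord_{\pp}(3)$; so your induction step in fact goes through without those hypotheses (in the paper they are used only to control the Weber-function index in its Case 1). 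Also, for $K=\Q(\sqrt{-3})$ you should strictly speaking rule out all nontrivial subgroups of the cyclic group $\mu_6$, i.e.\ check $-1 \not\equiv 1$ as well as $\omega \not\equiv 1$ modulo $3\OO_K$, but that check is identical to the generic case, so this is a trivial omission rather than a gap.
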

\begin{proof}
We will handle the two cases separately. \\
\indent Case 1: Let $E_{/F}$ be an elliptic curve with CM by the maximal order in $K \neq \Q(i), \Q(\sqrt{-3})$ such that $F(E[\ell])=K^{(\ell)}$ for a prime $\ell>2$. Suppose, for the sake of contradiction, that $F(E[\ell^n]) \neq K^{(\ell^n)}$ for some positive integer $n$. By \cite[Thm. II.5.6]{SilvermanII}, we have \[
K^{(\ell^n)}=K(j(E),\mathfrak{h}(E[\ell^n]))=FK(\mathfrak{h}(E[\ell^n])) \subset F(E[\ell^n]).
\] Since $K \neq \Q(i), \Q(\sqrt{-3})$, we may take $\mathfrak{h}(E[\ell^n])=x(E[\ell^n])$. Thus $[F(E[\ell^n]):K^{(\ell^n)}]=2$ if $F(E[\ell^n]) \neq K^{(\ell^n)}$.  Let $\sigma \in \mathfrak{g}_F$ generate $\Aut(F(E[\ell^n])/K^{(\ell^n)})$. Then $\sigma$ corresponds to a matrix of order 2 in $\GL_2(\Z/\ell^n \Z)$ which is trivial mod $\ell$ since $F(E[\ell]) \subset K^{(\ell^n)}$.  But the kernel of $\GL_2(\Z/\ell^n \Z) \to \GL_2(\Z/\ell \Z)$ is an $\ell$-group. \\
\indent Case 2: If $K = \Q(\sqrt{-3})$ and $\ell =3$, we have $\Ok \otimes \Z_{3} \cong \Z_{3}[\sqrt{-3}]$. Thus we may choose a basis $\widetilde{e_1},\widetilde{e_2}$ for $T_{3}(E)$ for which $\rho_{3^{\infty}}(\mathfrak{g}_{FK})$ lands in the Cartan subgroup

\[ \mathcal{C}_{3}^{\times} = \left\{ \left[ \begin{array}{cc} \alpha & \beta \\ -3\beta & \alpha \end{array} \right] \mid \alpha^2 +3 \beta^2 \in \Z_{3}^{\times} \right\}. \]

 Elements of $\Aut(F(E[3^n])/F(E[3]))=\Aut(F(E[3^n])/K)$ correspond to elements of $\mathcal{C}_{3}^{\times}$ modulo $3^n$ which are congruent to the identity matrix modulo 3. There are precisely $3^{2n-2}$ such matrices, giving an upper bound on $\#\Aut(F(E[3^n])/K)$. On the other hand, by \cite[Thm. II.5.6]{SilvermanII} we have

\[
K^{(3^n)}=K(j(E),\mathfrak{h}(E[3^n]))\subset F(E[3^n]).
\]
As $[K^{(3^n)}:K]=3^{2n-2}$, we have $[F(E[3^n]):K]=3^{2n-2}$ and $F(E[3^n])=K^{(3^n)}$.
\end{proof}

\begin{prop}
\label{Prop1.4}
Let $\ell \equiv 3 \pmod{4}$ be prime, and let $n \in \Z^+$. There exists an elliptic curve $E$ defined over a number field $F$ such that: \\
(i) $E$ has CM by the full ring of integers in $K=\Q(\sqrt{-\ell})$; \\
(ii) $[F:\Q]$ is odd; and \\
(iii) $E(F)$ has a point of order $\ell^n$.
\end{prop}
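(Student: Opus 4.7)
The strategy is to locate an $F$-rational cyclic subgroup of $E_0$ of order $\ell^n$ for a small odd-degree number field $F$, and then apply Twisting at the Bottom (Theorem~\ref{THM3}) to convert this into a rational point of order $\ell^n$ on a quadratic twist.

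Let $E_0/F_0$ be an $\OO_K$-CM elliptic curve with $F_0 = \Q(j(E_0))$. Since $\Delta_K = -\ell$ with $\ell \equiv 3 \pmod{4}$, Lemma~\ref{LEMMA2.5}b) gives $[F_0:\Q] = h(K)$ odd. The prime $\ell$ ramifies in $K$, so there is a unique prime $\pp_\ell \subset \OO_K$ above $\ell$, with $\pp_\ell^2 = \ell\OO_K$ and $\overline{\pp_\ell} = \pp_\ell$; by the discussion preceding Theorem~\ref{THM3.9}, the subgroup scheme $E_0[\pp_\ell^{2n-1}]$ is then $F_0$-rational.

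Next I would analyze the abelian group structure of $E_0[\pp_\ell^{2n-1}] \cong \OO_K/\pp_\ell^{2n-1}$. Using $\pp_\ell^2 = \ell\OO_K$, one checks that $1 \in \OO_K/\pp_\ell^{2n-1}$ has additive order $\ell^n$ (because $\ell^{n-1} \notin \pp_\ell^{2n-1}$ but $\ell^n \in \pp_\ell^{2n} \subset \pp_\ell^{2n-1}$), while the total order is $\ell^{2n-1}$; hence
\[ E_0[\pp_\ell^{2n-1}] \cong \Z/\ell^n\Z \oplus \Z/\ell^{n-1}\Z \]
as an abelian group. A direct count shows this group has $(\ell-1)\ell^{2n-2}$ elements of exact order $\ell^n$ and therefore exactly $\ell^{n-1}$ cyclic subgroups of order $\ell^n$. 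The key observation is that $\ell^{n-1}$ is odd, so the natural action of $\ggg_{F_0}$ on this set of $\ell^{n-1}$ cyclic subgroups must have at least one orbit of odd size.

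Pick a cyclic subgroup $C$ of order $\ell^n$ in an odd-size orbit and let $F$ be the fixed field of $\mathrm{Stab}_{\ggg_{F_0}}(C)$, so $[F:F_0]$ is odd and $C \subset_F E_0$ is an $F$-rational \'etale cyclic subgroup of order $\ell^n$. Applying Theorem~\ref{THM3} to $(E_0/F, C)$ yields a quadratic twist $E/F$ of $E_0$ and a cyclic extension $L/F$ of degree dividing $\varphi(\ell^n)/2 = (\ell-1)\ell^{n-1}/2$ such that $E(L)$ has a point of order $\ell^n$. Since $\ell \equiv 3 \pmod 4$ makes $(\ell-1)/2$ odd, $[L:\Q] = [L:F][F:F_0][F_0:\Q]$ is a product of odd integers, hence odd; and $E$ retains $\OO_K$-CM because quadratic twisting preserves the endomorphism ring. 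The main obstacle is the structural calculation of $E_0[\pp_\ell^{2n-1}]$ and the count of its cyclic subgroups of order $\ell^n$; once this count is identified as the odd integer $\ell^{n-1}$, a parity argument produces the required odd-size orbit and the rest of the argument is a clean application of machinery already developed in the paper.
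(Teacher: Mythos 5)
Your argument is correct, but it takes a genuinely different route from the paper. You manufacture the $\ell^n$-torsion point directly: since $\ell$ ramifies in $K$, the ideal $\pp_\ell^{2n-1}$ is real, so the kernel $E_0[\pp_\ell^{2n-1}]$ is rational over $F_0=\Q(j)$ (of odd degree $h(K)$); its underlying group is $\Z/\ell^n\Z\oplus\Z/\ell^{n-1}\Z$, which contains exactly $\ell^{n-1}$ cyclic subgroups of order $\ell^n$, an odd number, so the Galois action has an odd orbit and some such subgroup becomes rational over an odd-degree extension $F/F_0$; Twisting at the Bottom (Theorem \ref{THM3}) with $N=\ell^n$ then finishes, the final degree being odd because $\varphi(\ell^n)/2$ is odd for $\ell\equiv 3\pmod 4$. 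The paper instead only twists to obtain a point of order $\ell$ (Corollary \ref{COR5.7}), deduces from this that $F(E[\ell])=K^{(\ell)}$, bootstraps via Lemma \ref{MinField} to $F(E[\ell^n])=K^{(\ell^n)}$, and then uses the real structure: $E(\R)$ contains a point of order $\ell^n$ fixed by complex conjugation, whose field of definition sits inside $F(E[\ell^n])^c$, a field of odd degree $\tfrac12 h(K)(\ell-1)\ell^{2n-1}$ (with a separate explicit computation for $\ell=3$ using $y^2=x^3+16$). Your approach is softer and more uniform: it avoids the ray class field identifications, Lemma \ref{MinField}, and the special case $\ell=3$, at the cost of only needing the standard fact $E_0[\pp_\ell^{2n-1}]\cong\OO_K/\pp_\ell^{2n-1}$ (which follows from Lemma \ref{LEMMA2.6}). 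What the paper's longer proof buys is precision that is used downstream: in $\S 5.4$ the specific field $\tilde F=F(E[\ell^n])^c\subset K^{(\ell^n)}$ and the equality of division fields with ray class fields are what allow the authors to pin down the \emph{full} torsion subgroup (no points of order $\ell^{n+1}$, control of $2$-torsion), whereas your construction produces an odd-degree field with a point of order $\ell^n$ but with less control over what else is rational there. As a minor polish point, you should cite the module structure $E_0[\pp_\ell^{2n-1}]\cong\OO_K/\pp_\ell^{2n-1}$ explicitly (maximal order, rank-one freeness) rather than assert it, but this is background rather than a gap.
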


\begin{proof}
Let $E$ be an elliptic curve with CM by $\Ok$. Choose a model of $E$ defined over $F=\Q(j(E))$. For now, suppose $\ell \neq 3$. By Corollary \ref{7.5}, there is a quadratic twist $E'_{/F}$ of $E$ such that $E'(F(\zeta_{\ell}+\zeta_{\ell}^{-1}))$ has a point of order $\ell$. Hence we may assume $E(F(\zeta_{\ell}+\zeta_{\ell}^{-1}))$ has a point of order $\ell$. In fact, this implies $F(E[\ell])=K^{(\ell)}$, as we will now show. Since $E(F(\zeta_{\ell}))$ contains a point of order $\ell$,
\[
\Aut(F(E[\ell])/F(\zeta_{\ell})) \cong \left<
\left[\begin{smallmatrix}
1&b\\ 0&1
\end{smallmatrix}\right]
\right>,
\]
where $b \in \mathbb{F}_{\ell}$. Thus $[F(E[\ell]):F(\zeta_{\ell})]=1$ or $\ell$. By Theorem \ref{3.16}, we have $K^{(\ell)} \subset F(E[\ell])$, so $[K^{(\ell)}:\Q]=h(K) \ell (\ell-1)$ divides $[F(E[\ell]):\Q]$. This forces $[F(E[\ell]):F(\zeta_{\ell})]=\ell$ and $F(E[\ell])=K^{(\ell)}$.

\begin{center}
\begin{tikzpicture}[node distance=2cm]
\node (Q){$\Q$};
\node (F) [above of=Q, node distance=2cm] {$F=\Q(j(E))$};
\node (Fmu) [above of =F, node distance=1.8cm] {$F(\zeta_{\ell})$};
\node (Ftor) [above of =Fmu, node distance=1.8cm] {$F(E[\ell])$};
\node (K)  [above right of=Q, node distance=2 cm]   {$K$};
\node (Kl) [above of=K, node distance=3.2 cm]   {$K^{(\ell)}$};

 \draw[-] (Q) edge node[auto] {$h(K)$} (F);
 \draw[-] (F) edge node[auto] {$\ell-1$} (Fmu);
 \draw[-] (Fmu) edge node[auto] {$\ell$} (Ftor);
  \draw[-] (Q) edge node[auto] {$2$} (K);
 \draw[-] (K) edge node[right] {$\frac{1}{2}h(K)\ell(\ell-1)$} (Kl);
 \draw[-] (Kl) edge node[left] {} (Ftor);

\end{tikzpicture}
\end{center}

\noindent By Lemma \ref{MinField}, it follows that $F(E[\ell^n])=K^{(\ell^n)}$.  Viewing $F \subset \R$, we have $E(\R) \cong \R/\Z$ or $\R/\Z \times \Z/2\Z$. Thus we have an element of order $\ell^n$ fixed by the element $c \in \mathfrak{g}_F$ induced by complex conjugation. Then $F(E[\ell^n])^c$ contains the coordinates of a point of order $\ell^n$, and
\begin{align*}
[F(E[\ell^n])^c:\Q]&=\frac{1}{2}[F(E[\ell^n]):\Q]\\
&=\frac{1}{2}[K^{(\ell^n)}:\Q]\\
&=\frac{1}{2}h(K)(\ell-1)\ell^{2n-1}.
\end{align*}
Lemma \ref{LEMMA2.5} gives that $h(K)$ is odd, so $[F(E[\ell^n])^c:\Q]$ is odd as desired.

If $\ell=3$, consider the elliptic curve $y^2=x^3+16$. This curve has CM by the full ring of integers in $K=\Q(\sqrt{-3})$ and one finds (e.g. by direct calculation) that $\Q(E[3])=K^{(3)}=K$. By Lemma \ref{MinField}, $\Q(E[3^n])=K^{(3^n)}$. As before, $\Q(E[3^n])^c$ contains the coordinates of a point of order $3^n$, and
\begin{align*}
[\Q(E[3^n])^c:\Q]&=\frac{1}{2}[\Q(E[3^n]):\Q]\\
&=\frac{1}{2}[K^{(3^n)}:\Q]\\
&=3^{2n-2}. \qedhere
\end{align*}
\end{proof}

\begin{thm}
Let $n \in \Z^+$.
\begin{enumerate}
\item If $\ell \equiv 3 \pmod{8}$, there exists a CM elliptic curve $E$ defined over a number field $F$ of odd degree such that $E(F)[\tors] \cong \Z/\ell^n\Z$.
\item If $\ell \equiv 3 \pmod{4}$, there exists a CM elliptic curve $E$ defined over a number field $F$ of odd degree such that $E(F)[\tors] \cong \Z/2\ell^n\Z$.
\end{enumerate}
\end{thm}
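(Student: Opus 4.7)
The plan is to bootstrap Proposition \ref{Prop1.4}, which produces for each prime $\ell \equiv 3 \pmod{4}$ and each $n \in \Z^+$ an $\OO_K$-CM elliptic curve $E$ (with $K = \Q(\sqrt{-\ell})$) over an odd degree number field $F$ with $E(F) \supset \Z/\ell^n\Z$. By the Odd Degree Theorem, $E(F)[\tors]$ must be of the form $\Z/\ell^m\Z$ or $\Z/2\ell^m\Z$ with $m \geq n$, so the task reduces to pinning down the exponents: controlling both the $\ell$-power torsion and the $2$-torsion. The $\ell$-power torsion is pinned at $\ell^n$ by a Cartan analysis: since $\ell$ is ramified in $K$, $\OO_K \otimes \Z_\ell = \Z_\ell[\pi]$ with uniformizer $\pi = \sqrt{-\ell}$, and $T_\ell(E)$ is free of rank $1$ over it by Lemma \ref{LEMMA2.6}; the image of $\ggg_{FK} = \ggg_{K^{(\ell^n)}}$ in the Cartan lies in $1+\pi^{2n}\OO/\pi^{2m}\OO$, whose fixed points on $E[\ell^m]$ are exactly $E[\pi^{2n}] = E[\ell^n]$, so $E(FK)[\ell^\infty] = E[\ell^n]$ and hence $E(F)[\ell^\infty] = \Z/\ell^n\Z$ (cyclicity from $F \subset \R$ and $E(\R) \cong \R/\Z$).

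The case $\ell \equiv 7 \pmod{8}$ of part (2) now follows from the first lemma of $\S 5.2$: the curve from Proposition \ref{Prop1.4} automatically has a rational $2$-torsion point, so $E(F)[\tors] = \Z/2\ell^n\Z$. For part (1) with $\ell \equiv 3 \pmod{8}$, I will show $E(F)[2] = 0$. For $\ell > 3$, $\Delta = -\ell \equiv 5 \pmod{8}$ and $K \neq \Q(\sqrt{-3})$, so Theorem \ref{4.2}(c) gives $F_0(E[2]) = F_0 K^{(2)} = K^{(2)}$ (using $F_0 = \Q(j(E)) \subset K^{(1)} \subset K^{(2)}$), a Galois $S_3$-extension of $F_0$. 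The coprime-conductor intersection formula for ray class fields gives $K^{(2)} \cap K^{(\ell^n)} = K^{(1)} = F_0 K$, whence
\[ F_0(E[2]) \cap F \subset F_0 K \cap \R = F_0, \]
the second equality because $F_0 K$ is totally complex (as $\sqrt{-\ell}$ is purely imaginary under every embedding of $F_0$). Hence $E(F)[2] \subseteq E(F_0)[2] = 0$, the latter vanishing because each nontrivial $2$-torsion point generates a cubic subextension of $F_0(E[2])/F_0$. For the exceptional case $\ell = 3$, I use $E: y^2 = x^3 + 16$: its nontrivial $2$-torsion lies over $\Q(\sqrt[3]{2})$, and since $(2)$ is inert in $K = \Q(\sqrt{-3})$ with $x^3-2$ Eisenstein there, $K(\sqrt[3]{2})/K$ is ramified at $(2)\OO_K$, whereas $K^{(3^n)}/K$ is unramified away from $3$. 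So $\sqrt[3]{2} \notin F$, $E(F)[2] = 0$, and the Odd Degree Theorem gives $E(F)[\tors] = \Z/\ell^n\Z$.

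For part (2) with $\ell \equiv 3 \pmod{8}$, take $E/F$ from part (1) and adjoin a real $2$-torsion point $P$ (which exists since the three $2$-torsion $x$-coordinates are roots of a real cubic over $F_0$), setting $F' = F(P)$. Since $F \cap F_0(P) \subset F \cap F_0(E[2]) = F_0$, we have $[F':F] = [F_0(P):F_0] = 3$, so $F'/\Q$ has odd degree $3[F:\Q]$ and remains real, giving $E(F') \supset \langle P \rangle \oplus \Z/\ell^n\Z \cong \Z/2\ell^n\Z$. Corollary \ref{COR4.3} rules out $(\Z/2\Z)^2 \subset E(F')$ (as $[F':\Q]$ is odd and $\Delta \neq -4$), and the same Cartan analysis as above (now over $F'K \subset K^{(2\ell^n)}$) pins the $\ell$-part at $\Z/\ell^n\Z$, so the Odd Degree Theorem yields $E(F')[\tors] = \Z/2\ell^n\Z$; the $\ell = 3$ case is analogous with $F' = F(\sqrt[3]{2})$. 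The main obstacle is the ramification-and-intersection bookkeeping with ray class fields, especially handling the exceptional case $\ell = 3$ by hand via $y^2 = x^3 + 16$.
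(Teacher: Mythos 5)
Your route is essentially the paper's: start from Proposition \ref{Prop1.4}, pin the $\ell$-primary torsion via the identification of torsion fields with ray class fields, control the $2$-torsion by playing $K^{(2)}$ against $K^{(\ell^n)}$ through $K^{(2)} \cap K^{(\ell^n)} = K^{(1)}$, treat $\ell = 3$ by hand with $y^2 = x^3+16$ and ramification of $2$, and produce $\Z/2\ell^n\Z$ for $\ell \equiv 3 \pmod{8}$ by adjoining a real $2$-torsion point (degree $3$, so the degree stays odd); the case $\ell \equiv 7 \pmod{8}$ is immediate from the automatic rational point of order $2$. The $2$-torsion bookkeeping, including the exceptional case $\ell=3$, is correct and matches the paper's.

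There is, however, a genuine gap at the central step. You write that the image of $\ggg_{FK} = \ggg_{K^{(\ell^n)}}$ in the Cartan lies in $1+\pi^{2n}(\OO \otimes \Z_{\ell})$, ``whose fixed points on $E[\ell^m]$ are exactly $E[\ell^n]$, so $E(FK)[\ell^{\infty}] = E[\ell^n]$.'' This inference runs the wrong way: if the Galois image is merely \emph{contained} in a group $H$ whose fixed points are $E[\ell^n]$, you only learn $E[\ell^n] \subseteq E(FK)[\ell^{\infty}]$, since a proper subgroup of $H$ can fix strictly more. To get the needed upper bound you must bound the image from \emph{below}, e.g.\ show it is the full congruence subgroup $1+\pi^{2n}(\OO\otimes\Z_{\ell})$ modulo $\ell^m$; this is exactly where Lemma \ref{MinField} and the ray class degree computation enter: $F(E[\ell^m])=K^{(\ell^m)}$ for all $m$, and $[K^{(\ell^m)}:K^{(\ell^n)}]=\ell^{2(m-n)}$ equals the order of that congruence subgroup acting on $E[\ell^m]$, so the image is everything. (The paper packages this as a count: Galois elements over $\tilde F K=K^{(\ell^n)}$ fixing a point of order $\ell^{n+1}$ form a group of order at most $\ell$, whereas $[K^{(\ell^{n+1})}:K^{(\ell^n)}]=\ell^2$, forced by Real Cyclotomy II and Lemma \ref{MinField}.) The same issue recurs when you pass to $F'=F(P)$: there you additionally need that the image of $\ggg_{F'K}$ is still large, which follows because it has index dividing $[F'K:FK]\mid 3$ in the pro-$\ell$ group $1+\pi^{2n}(\OO\otimes\Z_{\ell})$, hence equals it (alternatively, redo the degree count with modulus $2\ell^m$). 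Once these surjectivity statements are inserted, your argument closes and coincides in substance with the paper's proof.
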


\begin{proof}
Suppose $\ell \equiv 3 \pmod{4}$, and let $K=\Q(\sqrt{-\ell})$. By Proposition \ref{Prop1.4}, there is an $\Ok$-CM elliptic curve $E$ defined over $F=\Q(j(E))$ and an extension $\tilde{F}:=F(E[\ell^n])^c$ of odd degree over $\Q$ such that $E(\tilde{F})[\tors]$ contains a point of order $\ell^n$. Since $\tilde{F}$ is odd, we know $E(\tilde{F})[\tors]$ is either cyclic or of the form $\Z/2N\Z \times \Z/2\Z$; otherwise full $N$-torsion would force $\Q(\zeta_{N}) \subset \tilde{F}$ by the Weil pairing. We first establish that $\ell^n$ is the largest power of $\ell$ dividing $\#E(\tilde{F})[\tors]$.

Suppose $\ell^{n+1}$ divides $\#E(\tilde{F})[\tors]$. By Real Cyclotomy II, $\Q(\zeta_{\ell^{n+1}}) \subset \tilde{F}K=K^{(\ell^n)}$. Then elements of
\[
\Gal(F(E[\ell^{n+1}])/F(E[\ell^{n}]))=\Gal(K^{(\ell^{n+1})}/K^{(\ell^n)})
\]
correspond to matrices of the form

\[  \left\{ \left[ \begin{array}{cc} 1 & \beta \\ 0 & 1 \end{array} \right] \mid \beta \equiv 0 \pmod{\ell^n} \right\}. \]
There are $\ell$ such matrices. However, $[K^{(\ell^{n+1})}:K^{(\ell^n)}]=\ell^2$. Thus $\ell^n$ is the largest power of $\ell$ dividing $\#E(\tilde{F})[\tors]$.

Suppose $\ell \equiv 3 \pmod{8}$. Since $\ell^n$ is the largest power of $\ell$ dividing $\#E(\tilde{F})[\tors]$, it suffices to show that $E(\tilde{F})$ contains no point of order 2. Suppose first that $\Delta \neq -3$. Since $\Delta \equiv 5 \pmod{8}$, there are no points of order 2 rational over $F=\Q(j(E))$ by Theorem \ref{THM4.2}. By construction, $\tilde{F} \subset F(E[\ell^n])=K^{(\ell^n)}$, and $F(E[2])=K^{(2)}$ by the proof of Theorem \ref{THM4.2}: $F(E[2])$ has degree $6h(K)$ over $\Q$ and contains $K^{(2)}$, which also has degree $6h(K)$. Thus if $\tilde{F}$ contains the coordinates of a point of order 2, then $3 \mid [\tilde{F} \cap K^{(2)}:F]$ and hence $3 \mid [K^{(\ell^n)} \cap K^{(2)}:F] = [K^{(1)}:F] = 2$.

We now consider the case where $\Delta = -3$. The curve $y^2=x^3+16$ has no $\Q$-rational points of order 2. If $E(\tilde{F})$ contains a point of order 2, then $\Q(E[3^n])=K^{(3^n)}$ contains a root $\alpha$ of $x^3+16$. But this cannot be, since 2 ramifies in $\Q(\alpha)/\Q$ and is unramified in $K^{(3^n)}$.

Thus if $\ell \equiv 3 \pmod{8}$, we have verified there is an $\Ok$-CM elliptic curve $E$ defined over a number field $\tilde{F}$ of odd degree such that $E(\tilde{F})[\tors] \cong \Z/\ell^n\Z$. If $(\alpha, 0)$ is a point of order 2, then $[\tilde{F}(\alpha):\Q]=3\cdot[F:\Q]$ is odd and $E(\tilde{F}(\alpha))[\tors] \cong  \Z/2\ell^n\Z$.

If $\ell \equiv 7 \pmod{8}$, as described above we have an $\Ok$-CM elliptic curve $E$ defined over $F=\Q(j(E))$ and an extension $\tilde{F}/F$ of odd degree over $\Q$ such that $E(\tilde{F})[\tors]$ contains a point of order $\ell^n$ and no point of order $\ell^{n+1}$. Since $\Delta \equiv 1 \pmod{8}$, by Theorem \ref{THM4.2} we have a $2$-torsion point rational over $F=\Q(j(E))$. Corollary \ref{4.3} ensures we do not have full $2$-torsion. Thus we have $E(\tilde{F})[\tors] \cong \Z/2\ell^n\Z$.
\end{proof}

\subsection{Number Fields of $S_d$-Type}
\textbf{} \\ \\
Let $G$ be a finite group.  A number field $F$ is \textbf{of G-type}
if the automorphism group of the Galois closure of $F/\Q$ is isomorphic to $G$.

\begin{thm}
\label{100PERCENTJIM}
Let $d$ be an odd positive integer, and let $F$ be a degree $d$ number field
of $S_d$-type.  Then every CM elliptic curve $E_{/F}$ is Olson.
\end{thm}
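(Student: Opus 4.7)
The plan is to combine the Odd Degree Theorem (Theorem \ref{ODDDEGREETHM}) with Real Cyclotomy II (Theorem \ref{REALCYCII}) and a purely group-theoretic observation about $S_d$. The case $d=1$ is immediate from Olson's Theorem \ref{OLSON1}, so I may assume $d \geq 3$.

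By the Odd Degree Theorem, $T := E(F)[\tors]$ is either an Olson group (in which case we are done) or else $T \cong \Z/\ell^n\Z$ with $\ell \equiv 3 \pmod 8$, or $T \cong \Z/2\ell^n\Z$ with $\ell \equiv 3 \pmod 4$, for some $n \geq 1$. In the latter two cases, writing $N$ for the order of $T$, we have $N \geq 3$ and $E(F)$ contains a point of order $N$, so Real Cyclotomy II gives $\Q(\zeta_N)^+ \subset F$.

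Next I would show that $F$ has no subfield strictly between $\Q$ and $F$. The Galois closure $\widetilde{F}/\Q$ has group $S_d$, and the subgroup fixing $F$ is the point stabilizer $H \cong S_{d-1}$ for the standard action of $S_d$ on $d$ points. That action is $2$-transitive, hence primitive, so $H$ is a maximal subgroup of $S_d$; the Galois correspondence then yields the claim. Thus $\Q(\zeta_N)^+$ is either $\Q$ or $F$. If $\Q(\zeta_N)^+ = F$, then $F$ would be abelian over $\Q$, so its Galois closure would be $F$ itself, contradicting that $S_d$ is nonabelian for $d \geq 3$. Therefore $\Q(\zeta_N)^+ = \Q$, forcing $\varphi(N) \leq 2$, i.e.\ $N \in \{3,4,6\}$.

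The remaining arithmetic cleanup is routine: in the $\Z/\ell^n\Z$ case, $N = \ell^n \in \{3,4,6\}$ with $\ell \equiv 3 \pmod 8$ forces $(\ell,n) = (3,1)$ and $T \cong \Z/3\Z$; in the $\Z/2\ell^n\Z$ case, $N = 2\ell^n \in \{3,4,6\}$ with $\ell \equiv 3 \pmod 4$ forces $(\ell,n) = (3,1)$ and $T \cong \Z/6\Z$. Both outcomes are Olson groups, completing the argument. The only substantive ingredient beyond the previously proved theorems is the primitivity of the standard $S_d$-action, which is what rules out any nontrivial proper intermediate field; I do not anticipate any serious obstacle.
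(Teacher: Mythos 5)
Your proof is correct, and its arithmetic skeleton is the same as the paper's: Real Cyclotomy II forces $\Q(\zeta_N)^+\subset F$ for the order $N$ of any rational torsion point, one then shows $\Q(\zeta_N)^+=\Q$, so $N\in\{1,2,3,4,6\}$, and the Odd Degree Theorem finishes. Where you differ is the group-theoretic step. The paper works with abelianizations: since $[S_d,S_d]=A_d$, the maximal abelian subextension of the Galois closure $M/\Q$ has degree $2$, and as $[F:\Q]=d$ is odd the maximal abelian subextension of $F/\Q$ is $\Q$; the cyclotomic field $\Q(\zeta_N)^+$, being abelian over $\Q$, is then forced to be $\Q$ directly. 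You instead prove the stronger statement that $F$ has \emph{no} proper intermediate subfields, via maximality of the point stabilizer (primitivity of the natural action), and then exclude $\Q(\zeta_N)^+=F$ because an abelian $F$ would be its own Galois closure, contradicting nonabelianness of $S_d$ for $d\geq 3$. Both routes are sound; yours yields a slightly stronger structural fact about $F$, while the paper's is marginally more robust in that it never needs to identify $\Gal(M/F)$ inside $S_d$. On that point, note that your phrase ``the subgroup fixing $F$ is the point stabilizer $S_{d-1}$'' is not literally part of the hypothesis (which only says $\Gal(M/\Q)\cong S_d$ abstractly): you should invoke the standard fact that a core-free subgroup of index $d$ in $S_d$ is a point stabilizer (valid since $d\neq 6$ for $d$ odd), or argue maximality directly (for odd $d\geq 5$ the only proper subgroup of index less than $d$ is $A_d$, which cannot contain an index-$d$ subgroup when $d$ is odd). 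This is a one-line gloss, not a gap; the final case check ($\ell^n=3$ or $2\ell^n=6$) matches the paper's conclusion.
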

\begin{proof}
Step 1: Let $M$ be the normal closure of $F/\Q$, and choose an isomorphism
$S_d \cong \Aut(M/\Q)$.  Let $A$ (resp. $B$) be the maximal abelian subextension of $F/\Q$ (resp. of $M/\Q$).  Then \[B = M^{[S_d,S_d]} = M^{A_d}, \]
so $[B:\Q] = 2$.  Since $\Q \subset A \subset B \cap F$
and $[F:\Q] = d$ is odd, we have $A = \Q$.   \\
Step 2: Let $E_{/F}$ be a CM elliptic curve, and suppose $E(F)$ contains a point of order $N$.  By Real Cyclotomy II, $F$ contains $\Q(\zeta_N)^+$.  By Step 1, $\Q(\zeta_N)^+ = \Q$ so $N \in \{1,2,3,4,6\}$.  The Odd
Degree Theorem now implies that $E_{/F}$ is Olson.
\end{proof}





\section{Restricted Odd Degrees}

\subsection{The Shifted Prime Degree Theorem}

\begin{thm}(Shifted Prime Degree Theorem)
\label{NEWABBEYTHM}
\label{THM6.1} \label{SPDT} \\
a) There is a function $C: \Z^+ \ra \Z^+$ such that: for all
$k \in \Z^+$, all primes $p$, all number fields
$F/\Q$ of degree $(2k-1)p$, and all CM elliptic curves
$E_{/F}$, we have
\[ \# E(F)[\tors] \leq C(k). \]
b) There is a function $P: \Z^+ \ra \Z^+$ such that:
for all $k \in \Z^+$, all primes $p \geq P(k)$ all number fields $F$ of degree $(2k-1)p$, and all CM elliptic curves $E_{/F}$, there is a subfield $F_0 \subset F$ of degree
dividing $(2k-1)$ and an $F_0$-rational model $(E_0)_{/F_0}$ of $E_{/F}$ such that $E_0(F_0)[\tors] = E(F)[\tors]$.
\end{thm}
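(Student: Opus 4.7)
The plan is first to establish (a) --- a uniform bound on $\#E(F)[\tors]$ depending only on $k$ --- and then to leverage this bound to descend both the curve and its torsion to a subfield of bounded degree for (b). Throughout write $T := E(F)[\tors]$ and $N := \#T$. When $p = 2$ the degree $[F:\Q] = 2(2k-1)$ depends only on $k$, so Merel's theorem already gives a bound depending only on $k$ and absorbs into both $C(k)$ and $P(k)$. From now on I assume $p$ is odd, so $[F:\Q]$ is odd, $F$ is real, and $F \not\supset K$.

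For (a), the Odd Degree Theorem (Theorem~\ref{ODDDEGREETHM}) restricts $T$ to one of $\{\bullet\}, \Z/2\Z, \Z/4\Z, \Z/2\Z\oplus\Z/2\Z$, $\Z/\ell^n\Z$ with $\ell \equiv 3 \pmod 8$, or $\Z/2\ell^n\Z$ with $\ell \equiv 3 \pmod 4$; only the last two can be unbounded, and in both $K = \Q(\sqrt{-\ell})$ so $\ell \mid \Delta$. I would combine three divisibility constraints: Aoki's $\varphi(\ell^n) \mid 2(2k-1)p$ from Corollary~\ref{COR4.11}; the ramified SPY bound $(\ell-1)h(-\ell) \mid 2(2k-1)p$ from Theorem~\ref{4.6}(c)(1); and Real Cyclotomy~II $\tfrac{\varphi(\ell^n)h(\Delta)}{2} \mid (2k-1)p$ from Theorem~\ref{REALCYCII}, using that $h(\Delta)$ is odd in odd degree so $\nu = 0$ by Lemma~\ref{LEMMA2.5}. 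If $\ell = p$, SPY forces $(p-1)h(-p) \mid 2(2k-1)$ and hence $p \leq 4k-1$. Otherwise $\gcd(\ell,p) = 1$, and Aoki gives $\ell^{n-1} \mid 2(2k-1)$, bounding the exponent $n$. For $\ell$ itself, SPY yields either $\ell-1 \mid 2(2k-1)$ (bounded $\ell$) or $\ell-1 = pm$ with $m \mid 2(2k-1)$, in which case cancelling $p$ in SPY gives $h(-\ell) \mid 2(2k-1)$; the Heilbronn--Stark finiteness of imaginary quadratic fields of bounded class number then bounds $\ell$ in terms of $k$. The finitely many small-$p$ cases ($p \leq 4k-1$) each have fixed degree, so Merel applies. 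Combining yields $N \leq C(k)$.

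For (b), the analysis of (a) pushes one step further to also bound $h(\Delta)$: the odd-$h(\Delta)$ shape $\Delta \in \{-\ell^{2a+1}, -4\ell^{2a+1}\}$ forces $h(\Delta) = c \cdot h(-\ell) \cdot \ell^a$ with $c \in \{1,3\}$, and Real Cyclotomy~II combined with $\gcd(\ell,p)=1$ forces $\ell^a \mid 2(2k-1)$, so $h(\Delta) \leq D(k)$. Let $\mathfrak{h}$ be a Weber function on $E$ and set
\[ F_0 := \Q(j(E), \mathfrak{h}(T)) \subseteq F. \]
Since $F_0 K \subseteq K(j(E), \mathfrak{h}(E[N]))$, Theorem~\ref{SCHERTZTHM} together with the bounds on $h(\Delta)$ and $N$ yields $[F_0:\Q] \leq B(k)$ for a function $B$ of $k$ alone. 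Taking $P(k) > B(k)$, the divisibility $[F_0:\Q] \mid (2k-1)p$ combined with $[F_0:\Q] < p$ forces $[F_0:\Q] \mid (2k-1)$. Since $j(E) \in F_0$, there is an $F_0$-rational curve $E_1$ with $j(E_1) = j(E)$, and $E_{/F}$ is a quadratic twist $E_1^t$. The containment $\mathfrak{h}(T) \subseteq F_0$ ensures the Weber images of a generator of $T$ lie in $F_0$, and I would choose $t_0 \in F_0^\times/F_0^{\times 2}$ (matching the $y$-coordinate of a generator of the cyclic part of $T$) so that on $E_0 := E_1^{t_0}$ the group $T$ becomes $F_0$-rational; the reverse inclusion $E_0(F_0)[\tors] \subseteq E_0(F)[\tors] = E(F)[\tors] = T$ is automatic.

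The hard part in (a) will be the ``large-$\ell$'' regime, where Aoki's divisibility alone admits $\ell$ as large as $2(2k-1)p$; only the marriage of Aoki, ramified SPY, and the Heilbronn--Stark finiteness eliminates these possibilities. The hard part in (b) will be verifying that the twist parameter $t$ can be represented in $F_0^\times/F_0^{\times 2}$: this is transparent when $j \notin \{0,1728\}$ (where $\mathfrak{h}$ is simply the $x$-coordinate), but requires an extra check in the $K = \Q(\sqrt{-3})$ exception ($j = 0$, $\ell = 3$) that survives the Odd Degree Theorem.
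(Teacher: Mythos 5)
Your part a) is essentially the paper's own argument: dispose of $p=2$ by a bound depending only on the degree (the paper uses Hindry--Silverman where you invoke Merel), then for odd $p$ use the Odd Degree Theorem to write $T\cong \Z/2^{\epsilon}\ell^n\Z$ with $K=\Q(\sqrt{-\ell})$, and bound $\ell$ and $n$ by playing divisibilities against $(2k-1)p$ -- the paper uses Real Cyclotomy II, you use Aoki plus the ramified SPY bound, but in both cases the large-$\ell$ regime is killed by forcing $h(-\ell)$ to be bounded and appealing to the (ineffective) finiteness of imaginary quadratic fields of bounded class number, and the exponent $n$ is bounded because $\ell^{n-1}$ must divide $2(2k-1)$ once $p\neq\ell$. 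Part b) is where you genuinely diverge. Both you and the paper first bound $\Delta$ (you via $h(\Delta)=c\,h(-\ell)\ell^{a}$, the paper via the relative class number formula bounding the conductor), but the paper then passes to the modular curve: the pair $(E,P)$ induces one of finitely many closed points on the \emph{fine} moduli space $X_1(2^{\epsilon}\ell^n)$, whose residue fields have bounded degree dividing $(2k-1)p$, hence dividing $2k-1$ after discarding finitely many $p$, and rigidity supplies the unique model over that field. You instead take $F_0=\Q(j(E),\mathfrak{h}(T))$ and descend by an explicit quadratic twist; for $j\notin\{0,1728\}$ this is a correct hands-on version of the same descent (your $F_0$ is exactly the residue field of the corresponding point of $X_1(N)$, and your twist by $d=x_1^3+ax_1+b$ does simultaneously rationalize the generator and recover $E$ over $F$, since $d\equiv t \bmod F^{\times 2}$).

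Two steps in b) need repair. First, the bound $[F_0:\Q]\leq B(k)$ does not follow from Theorem \ref{SCHERTZTHM}: that theorem asserts $K^{(N)}\subset FK(\mathfrak{h}(E[N]))$, a \emph{lower} containment, and gives no upper bound on $K(j(E),\mathfrak{h}(E[N]))$. The bound you want is true, but should be obtained differently: fix the normalized (model-independent) Weber function, so that $\mathfrak{h}(E[N])=\mathfrak{h}(E_1[N])$ for a model $E_1$ over $\Q(j)$; then $F_0\subset \Q(j,E_1[N])$ and $[F_0:\Q]\leq h(\Delta)\cdot \# \GL_2(\Z/N\Z)$, which is bounded in terms of $k$ by part a) and your bound on $h(\Delta)$. (The same normalization is needed for your identification of $\mathfrak{h}(P)$ with an $x$-coordinate on the $F_0$-model; with the paper's unnormalized definition, $\Q(j,\mathfrak{h}(T))$ is not even well defined independently of choices.) Second, the case $j=0$ (i.e.\ $\Delta=-3$, torsion $\Z/9\Z$, $\Z/27\Z$, $\Z/2\cdot 3^n\Z$, \dots) is genuinely in scope -- see the degree-$3$ rows of Theorem \ref{MAINTHM} -- and there sextic twists intervene, so your ``match the $y$-coordinate'' recipe does not apply; you flag this but leave it open. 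The clean way to close it, and the device the paper uses for all $j$ at once, is rigidity: a point of order $N\geq 4$ is fixed by no nontrivial automorphism of $E$, so $X_1(N)$ is a fine moduli space even over $j\in\{0,1728\}$, and the field of moduli $\Q(j,\mathfrak{h}(P))$ is automatically a field of definition of the pair $(E,P)$. With those two patches your route goes through and is a legitimate alternative to the paper's moduli-theoretic finish.
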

\begin{proof}
a) Fix $k \in \Z^+$, let $p$ be a prime, let $F/\Q$ be a number field of degree $(2k-1)p$, and let $E_{/F}$ be a CM elliptic curve.  By \cite{HS99} we have \[\# E(F)[\tors] \leq 1977408(2k-1)p \log ((2k-1)p), \] and we may assume that $p$ is odd.  We may also assume $\# E(F)[\tors] > 6$.  By
the Odd Degree Theorem we have $E(F)[\tors] \cong \Z/\ell^n \Z$ or $\Z/2\ell^n \Z$ for some prime $\ell \equiv 3 \pmod{4}$ and $E$ has $\Q(\sqrt{-\ell})$-CM.  Real Cyclotomy II implies
\begin{equation}
\label{NEWABBEYEQ1}
\frac{(\ell-1) \cdot h_{\Delta}}{2} \mid (2k-1)p .
\end{equation}If $\frac{\ell-1}{2} > (2k-1)$, then $p \mid \frac{\ell-1}{2}$ and thus
\begin{equation}
\label{NEWABBEYEQ2}
\ell > 4k-1 \implies h_{\Q(\sqrt{-\ell})} \mid h_{\Delta} \mid 2k-1.
\end{equation}
There are only finitely many such $\ell$; for each,
if $n \geq 2$ then by Real Cyclotomy II,
\[ \ell^{n-1} \mid \frac{\varphi(\ell^n)}{2} \mid [F:\Q] =
(2k-1) p \]
and thus
\[ n-2 \leq \ord_{\ell}(2k-1) \leq \log_{\ell} (2k-1)
\leq \log_2 (2k-1). \]
b) Fix $k \in \Z^+$, and let $F$ be a number field of degree $(2k-1)p$ for some odd prime $p$.  Let $E_{/F}$ be a non-Olson elliptic curve with CM by the imaginary quadratic
order $\OO(\Delta)$ with fraction field $K$.  By part a) we have $E(F)[\tors] \cong \Z/2^{\epsilon} \ell^n \Z$ for $\epsilon \in \{0,1\}$,
$\ell \equiv 3 \pmod{4}$ a prime number, $n \in \Z^+$, $K = \Q(\sqrt{-\ell})$ for one of finitely many integers $2^{\epsilon} \ell^n$.  We may assume that $p$ is not equal to any of these finitely many $\ell$.  We then observe that for each
$2^{\epsilon} \ell^n$ and all remaining $p$, there are only finitely many possibilities for the discriminant $\Delta$.  Indeed $\Delta = -\mathfrak{f}^2 \ell$, so it suffices to bound the conductor $\mathfrak{f}$.  Moreover $h_{\Delta}$ is odd, so by Lemma \ref{LEMMA2.5} we have $\mathfrak{f} = 2^{\delta} \ell^m$ for $\delta \in \{0,1\}$ and $m \in \Z^+$.  The relative class number formula -- e.g. \cite[Thm. 7.24]{Cox89} -- shows that
\[ \ord_{\ell} h_{\Delta} = \ord_{\ell} (h_{-(2^{\delta} \ell^m)^2\ell}) \geq m - 1, \]
and thus since $p \neq \ell$ we have $m \leq \ord_{\ell}(2k-1)+1$.  \\ \indent
Let $P \in E(F)$ be a point of order
$2^{\epsilon} \ell^n$.  Having thrown away finitely many primes $p$, it now follows
that $j(E)$ lies in a finite set and thus the pair $(E,P)$ induces one of a finite set of $\overline{\Q}$-valued points, say $\{P_1,\ldots,P_r\}$, on the modular curve $X_1(2^{\epsilon} \ell^n)$.  For $1 \leq i \leq r$, let
$\Q(P_i)$ be the field of definition (= residue field of
the corresponding closed point on the $\Q$-scheme $X_1(2^{\epsilon} \ell^n)$), and let $d_i = [\Q(P_i):\Q]$.  For some $1 \leq i \leq r$ we have an injection $\Q(P_i) \hookrightarrow F$ and thus
\[ d_i \mid (2k-1)p. \]
For each $i$, there is at most one prime, say $p_i$ such that $d_i \mid (2k-1)p_i$ but $d_i \nmid (2k-1)$.  Throwing away
these finitely many primes $p_i$ we get
\[ d_i \mid (2k-1). \]
Since $X_1(2^{\epsilon} \ell^n)$ is a fine moduli space, there is a unique model of $(E,P)$ defined over $\Q(P_i)$, a field
of degree dividing $(2k-1)$.
\end{proof}

\begin{remark}
Along with Siegel's minoration $h_{\Q(\sqrt{-\ell})} \gg_{\epsilon}
\ell^{\frac{1}{2} - \epsilon}$, the proof shows that for all $\epsilon \in (0,\frac{1}{2})$, there is an (ineffective) $c_{\epsilon} > 0$ such that we can take
 \[ C(k) = \max \left\{3954816(2k-1) \log(4k-2), 2 \left(\frac{2k-1}{c_{\epsilon}}\right)^{\frac{2+\log_2(2k-1)}{\frac{1}{2}-\epsilon}}
\right\}. \]
\end{remark}

\subsection{Proof of the Prime Degree Theorem}
\textbf{} \\ \\ \noindent
In this section we will prove Theorem \ref{MAINTHM}.  We need just one preliminary result.

\begin{lemma}
\label{LITTLEPRIMEDEGREELEMMA}
Let $F \subset \C$, let $j \in F \setminus \{0,1728\}$,
and let $N \geq 3$ be an \emph{odd} integer.  Suppose that there are elliptic curves $E_{/F}$ and $E'_{/F}$ with $j(E) = j(E') = j$, that $E(F)$
and $E'(F)$ each have a point of order $N$, and that
$E \not \cong_F E'$.  Then: \\
a) There is a quadratic extension $F'/F$ such that $F(E[N]) \subset F'$.  \\
b) If $E$ has $K$-CM, then $F' \supset K^{(N)}$.
\end{lemma}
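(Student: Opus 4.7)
The plan is to exploit the fact that $E$ and $E'$ must be nontrivial quadratic twists of one another and then combine the two given rational points of order $N$ into a generating set for $E[N]$ that is rational over a quadratic extension of $F$. Part (b) will then follow by feeding this into Lemma \ref{LEMMA2.7} and Theorem \ref{SCHERTZTHM}.

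Since $j = j(E) = j(E') \notin \{0, 1728\}$, the geometric automorphism group of $E$ is $\{\pm 1\}$, so $E$ and $E'$ are classified as quadratic twists by some $d \in F^{\times}/F^{\times 2}$. The hypothesis $E \not\cong_F E'$ forces $d$ to be nontrivial, so $F' := F(\sqrt{d})$ is a proper quadratic extension, and over $F'$ there is an isomorphism $\phi \colon E' \to E$ for which the nontrivial $\sigma \in \Gal(F'/F)$ satisfies $\sigma(\phi(Q)) = -\phi(Q)$ whenever $Q \in E'(F)$.

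Let $P \in E(F)$ and $Q \in E'(F)$ be the two given points of order $N$, and set $Q' := \phi(Q) \in E(F')$. Then $\sigma(Q') = -Q'$ while $\sigma(P) = P$. The crux is to show that $P$ and $Q'$ are $\Z/N\Z$-linearly independent in $E[N]$; once this is established, $\langle P, Q' \rangle$ has order $N^2$, hence equals $E[N]$, so $E[N] \subset E(F')$ and part (a) follows. To see independence, suppose $aP + bQ' = 0$; applying $\sigma$ and then adding to and subtracting from the original relation yields $2aP = 2bQ' = 0$. Since $N$ is odd, multiplication by $2$ is invertible on $E[N]$, forcing $a \equiv b \equiv 0 \pmod{N}$. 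This is the one nontrivial step in the argument, and it is precisely where the odd hypothesis on $N$ is essential; for even $N$, collisions in the $2$-torsion would sink the argument.

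For part (b), since $N \geq 3$, Lemma \ref{LEMMA2.7} gives $K \subset F(E[N]) \subset F'$, so $F'K = F'$. Because $j \notin \{0, 1728\}$, the endomorphism order $\End E$ has unit group $\{\pm 1\}$, and hence the $x$-coordinate serves as a Weber function for $E$. Theorem \ref{SCHERTZTHM} then yields
\[ K^{(N)} \subset FK(x(E[N])) \subset F'K = F', \]
completing the proof.
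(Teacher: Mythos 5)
Your proposal is correct and follows essentially the same route as the paper: both identify $E'$ as a nontrivial quadratic twist of $E$, use the oddness of $N$ to see that the two rational points of order $N$ generate all of $E[N]$ over $F' = F(\sqrt{d})$ (the paper cites the decomposition $E(F')[N] \cong E(F)[N] \oplus E^t(F)[N]$, which your $\sigma$-eigenspace computation proves directly), and then deduce part b) from Lemma \ref{LEMMA2.7} and Theorem \ref{SCHERTZTHM}. The only difference is that you spell out the linear-independence step that the paper quotes as a standard fact.
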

\begin{proof}
a) The hypotheses imply that $E' = E^t$ is the quadratic
twist of $E$ by some $t \in F^{\times} \setminus F^{\times 2}$.  Since $N$ is odd, we have $E(F)[N] \oplus E^d(F)[N] \cong E(F(\sqrt{t}))[N]$.  Taking
$F' = F(\sqrt{d})$, the result follows. \\
b) This follows from part a), Lemma \ref{LEMMA2.7}, and Theorem \ref{SCHERTZTHM}.
\end{proof}
\noindent
Now we begin the proof of Theorem \ref{MAINTHM}. \\
Step 1: Let $F/\Q$ be a quadratic field, and let $E_{/F}$ be a non-Olson CM elliptic curve.  By Theorem \ref{PARISHTHM}, we must have $j(E) \in \Q$.  By the Main Theorem of \cite{Clark-Xarles08}, the subgroup of $E(F)$ of points of finite odd order has size at most $9$.  In particular the largest possible prime divisor of $\# E(F)[\tors]$ is $7$.\footnote{This approach is not very representative of the type of computations used for larger even degrees -- but it is a bit shorter in this case.}   Thus if $E_{/F}$ is not Olson, $E(F)[\tors]$ \emph{contains}
one of the following subgroups:
\[ \Z/5\Z, \, \Z/7\Z, \, \Z/8\Z, \, \Z/9\Z, \, \Z/12\Z, \, \Z/2\Z \oplus \Z/4\Z, \, \Z/2\Z \oplus \Z/6\Z, \, \Z/3\Z \oplus \Z/3\Z. \]
For each such group, we look at the corresponding row in Table 2; for each entry of $2$, we build the corresponding elliptic curve, compute its torsion subgroup, and check for isomorphism over the ground field.  Thus: \\
$\bullet$ For $\Z/5\Z$, we have $\Delta = -4$.  Here the full torsion subgroup is $\Z/10\Z$.  \\
$\bullet$ For $\Z/7\Z$, we have $\Delta = -3$. \\
$\bullet$ For $\Z/8\Z$, $\Z/9\Z$ and $\Z/12\Z$,  there are no such elliptic curves. \\
$\bullet$ For $\Z/2\Z \oplus \Z/4\Z$, we have $\Delta = -4$,
$\Delta = -7$, $\Delta = -8$, or $\Delta = -16$.\footnote{Here, in five out of the six cases, two distinct points on the modular curve $X(2,4)$ give rise to elliptic curves which are isomorphic over the minimal field of definition.  This can be understood by reflecting on the moduli problem, a task we leave to the interested reader.} \\
$\bullet$ For $\Z/2\Z \oplus \Z/6\Z$, we have $\Delta = -3$ or $\Delta = -12$.\\
$\bullet$ For $\Z/3\Z \oplus \Z/3\Z$, we have $\Delta = -3$.\\
Step 2: Let $F/\Q$ be a number field with prime degree $p > 2$, and let $E_{/F}$ be an elliptic curve with CM by the order $\OO$ of discriminant $\Delta$ in the imaginary quadratic field $K$ such that $E(F)[\tors]$ is \emph{not} Olson.  As above, by Real Cyclotomy II and the Odd Degree Theorem: if $E(F)[\tors]$ is isomorphic to $\Z/\ell^n \Z$ or $\Z/2\ell^n \Z$ for some prime $\ell \equiv 3 \pmod{4}$, then $K \cong \Q(\sqrt{-\ell})$ and $h_{\Delta}$ is odd, so
\[ \frac{ (\ell-1) h_{\Delta}}{2} \mid [F:\Q] = p. \]
$\bullet$ If $\ell = 3$ then since $E$ is not Olson, $E(F)$ has a point order $9$.  Thus $\frac{\varphi(9)}{2} =3 \mid p$.  So $p = 3$ and $\Delta \in \{-3,-12,-27\}$.
If $\ell > 3$, then $\frac{\ell-1}{2} > 1$, so $h_{\Delta} = 1$ -- thus Parish's Theorem comes for free in this context -- hence we must have
\[
\Delta \in \{-7,-11,-19,-28,-43,-67,-163\} \]
and
\[ p = \frac{\ell-1}{2}, \]
i.e., $p$ is a \textbf{Sophie Germain prime} (equivalently, $\ell$ is a \textbf{safe prime}). \\
$\bullet$ If $\ell = 7$, then $\frac{\varphi(7)}{2} = 3 \mid p$.  So $p = 3$.  \\
$\bullet$ If $\ell = 11$, then $\frac{\varphi(11)}{2} = 5 \mid p$.  So $p = 5$.  \\
$\bullet$ No $\ell \in \{19,43,67,163\}$ is safe -- i.e., $\frac{\ell-1}{2} \in \{9,21,33,81\}$ is not prime.  It follows that if $p \geq 7$, then $E(F)[\tors]$ is Olson.
\\
Step 3: Suppose $p = 3$. \\
Step 3a): Suppose $\ell = 3$.  From Step 2, $F = \Q(\zeta_9)^+$ and $\Delta \in \{-3,-12,-27\}$.  Using Table 2, we find that when $\Delta \in \{-3,-27\}$ there is exactly one $\OO(\Delta)$-CM elliptic curve over a degree $3$ number field with an
$F$-rational point of order $9$, and that when $\Delta = -12$ there are no such curves.\\
Step 3b): Suppose $\ell = 7$.  From Step 2, $F = \Q(\zeta_7)^+$ and $\Delta \in \{-7,-28\}$, i.e.,
$j(E) \in \{-3375,16581375\}$.  We claim that for each of these two $j$-invariants there is a \emph{unique} CM elliptic curve $E/\Q(\zeta_7)^+$ with a rational point of order $7$, up to isomorphism over $\Q(\zeta_7)^+$.  The existence is guaranteed by Corollary \ref{COR5.7}.  The
uniqueness follows from Lemmma \ref{LITTLEPRIMEDEGREELEMMA}: if there were two such elliptic curves, then there would be a quadratic extension $F'/\Q(\zeta_7)^+$ such that $\Q(\sqrt{-7})^{(7)} \subset F'$.  But $[\Q(\sqrt{-7})^{(7)}:\Q] = 42$ and $[F':\Q] = 6$: a contradiction.
\\
Step 4: Let $p = 5$.  From Step 2, $F = \Q(\zeta_{11})^+$ and $\Delta = -11$.  Arguing as in Step 3b), we find that there is exactly one CM elliptic curve $E_{/F}$ with an $F$-rational point of order $11$: $[\Q(\sqrt{-11})^{(11)}:\Q] = 110 > 2[\Q(\zeta_{11})^+:\Q] = 10$.  It follows from Theorem \ref{THM4.2} and Real Cyclotomy II that $E(F)[\tors] \cong \Z/11\Z$. This completes the proof of Theorem \ref{MAINTHM}.

\begin{remark}
The degree sequences in Table 2 and the corresponding elliptic curves are computed using the methods of \cite{TORS2}.  The classification of torsion \emph{groups} of CM elliptic curves in degrees up to $13$ was done there.  The method yields the full list of ``new'' elliptic curves, but unfortunately these lists were not recorded.  The computation in degrees $2$ and $3$ had already been done by the second author in 2004 \cite{Clark04}, and the argument of Step 0 essentially reproduces this.  However, looking back at the degree sequences recorded in \cite{Clark04} we found several errors (presumably of transcription), so we decided to recompute Table 2 from scratch.
\end{remark}

\subsection{The Prime Squared Degree Theorem}

\begin{thm}
\label{PSDT}
Let $p$ be a prime, let $F/\Q$ be a number field of degree $p^2$, and let $E_{/F}$ be a CM elliptic curve. Then: \\
a) If $p = 2$, then $E_{/F}$ is Olson or $E(F)[\tors]$ is isomorphic to one of the following:
\begin{align*}
& \Z/m\Z, \hspace{.3 cm}  m \in \{5, 7, 8, 10, 12, 13, 21\}\\
& \Z/2\Z \oplus \Z/m\Z, \hspace{.3cm}  m \in \{4, 6, 8, 10\}\\
& \Z/3\Z \oplus \Z/m\Z, \hspace{.3cm}  m \in \{3,6\} \\
& \Z/4\Z \oplus \Z/4\Z \hspace{.3cm}
\end{align*}
b) If $p = 3$, then $E_{/F}$ is Olson or $E(F)[\tors]$ is isomorphic to one of the following:
\[ \Z/9\Z, \, \Z/14\Z, \, \Z/18\Z, \, \Z/19\Z, \, \Z/27\Z.                              \]
c) If $p = 5$, then $E_{/F}$ is Olson or $E(F)[\tors] \cong \Z/11\Z$.  \\
d) If $p \geq 7$, then $E_{/F}$ is Olson.
\end{thm}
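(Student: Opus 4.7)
The case $p=2$ is resolved by combining Table~1 with the classification in degrees $1$ and $2$ (Olson's Theorem and Step~1 of the proof of Theorem~\ref{MAINTHM}): since $|\mathcal{T}_{\CM}^{\new}(4)|=9$ and $|\mathcal{T}_{\CM}^{\new}(2)|=5$, part~(a) follows by listing $\mathcal{T}_{\CM}(1)\cup\mathcal{T}_{\CM}(2)\cup\mathcal{T}_{\CM}^{\new}(4)$ and matching the result to the stated list; I would verify the nine new groups in degree~$4$ by re-running the enumerative procedure of \cite{TORS2}.

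For odd $p$, let $E_{/F}$ be non-Olson with $[F:\Q]=p^2$. Since $F$ is real, the Odd Degree Theorem forces $T:=E(F)[\tors]\cong\Z/2^{\epsilon}\ell^n\Z$ with $\epsilon\in\{0,1\}$, $\ell\equiv 3\pmod 4$ prime, $K=\Q(\sqrt{-\ell})$, $h_{\Delta}$ odd, and $\ell\equiv 3\pmod 8$ when $\epsilon=0$. Real Cyclotomy~II in its odd-class-number form (\ref{SWEETCYCEQ2}) then yields the master divisibility
\[
\frac{\ell^{n-1}(\ell-1)\,h_{\Delta}}{2} \;\Big|\; p^2.
\]
Since $p^2$ has a single prime factor, a coprimality argument forces $n=1$ with $(\ell-1)h_{\Delta}/2$ a power of $p$, the only alternative being $\ell=p$, which is incompatible with $\gcd(p-1,p)=1$ as soon as $p\geq 5$.

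For $p=3$ the enumeration gives $\ell\in\{3,7,19\}$ and the candidate non-Olson groups $\Z/9\Z,\Z/18\Z,\Z/27\Z,\Z/54\Z,\Z/14\Z,\Z/19\Z,\Z/38\Z$; for $p=5$ it gives $\ell\in\{3,11\}$ with candidates $\Z/11\Z,\Z/22\Z$. The candidates not in the theorem's list, namely $\Z/54\Z,\Z/38\Z$ (for $p=3$) and $\Z/22\Z$ (for $p=5$), must be ruled out. For $\Z/38\Z$ (resp.\ $\Z/22\Z$), Real Cyclotomy~II pins $F$ to the cyclic field $\Q(\zeta_{19})^+$ (resp.\ $\Q(\zeta_{11})^+$); Theorem~\ref{THM4.2}(c) identifies $\Q(E[2])=K^{(2)}$ as an $S_3$-extension of $\Q$ whose real cubic subfield $M$ is non-Galois, and therefore $M$ cannot embed into the cyclic field $F$, contradicting the $F$-rationality of a $2$-torsion point. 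For $\Z/54\Z$ the same $S_3$-vs.-cyclic argument works directly at $\Delta=-27$ (the $2$-torsion $x$-coordinates lie in $\Q(\sqrt{3})\not\subset\Q(\zeta_{27})^+$), while at $\Delta\in\{-3,-12\}$ a more delicate analysis is required: at $j=0$ one combines the sextic-twist structure with the explicit computation $[\Q(E[27])^c:\Q]=81$ from Proposition~\ref{Prop1.4}, and at $\Delta=-12$ one traces through the $2$-isogeny to a $j=0$ model.

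For $p\geq 7$, the master divisibility together with $\ell\equiv 3\pmod 4$ forces $\ell\in\{2p+1, 2p^2+1\}$ (the remaining option $\ell=3$ yields only Olson torsion), each with $h_{-\ell}\in\{1,p\}$; a finite check against the standard class-number tables for imaginary quadratic orders confirms no such $\ell$ exists for $p\geq 7$. Alternatively and more elegantly, the Shifted Prime Degree Theorem applied with $2k-1=p$ descends $(E,T)$ to a subfield of degree dividing $p$ once $p\geq P((p+1)/2)$, and the Prime Degree Theorem then forces $T$ Olson; the finitely many $p\geq 7$ below the threshold are mopped up by the direct divisibility check. The main obstacle is exactly the exclusion step for the near-miss candidates $\Z/38\Z,\Z/22\Z,\Z/54\Z$: Real Cyclotomy~II alone is not strong enough, and the decisive additional input is the Galois-theoretic incompatibility of the $2$-torsion $S_3$-extension (or, in the $\Z/54\Z$ case, of the $27$-torsion twist structure at $j=0$) with the cyclic real cyclotomic field to which $F$ is pinned.
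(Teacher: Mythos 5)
Your reduction via the Odd Degree Theorem and the divisibility $\frac{\ell^{n-1}(\ell-1)h_{\Delta}}{2}\mid p^2$ matches the paper's strategy for parts c) and d), and your exclusion of $\Z/38\Z$ in degree $9$ is sound (there $F=\Q(\zeta_{19})^+$ really is forced, and Theorem \ref{THM4.2}c) applies since $K\neq\Q(\sqrt{-3})$). But the heart of part b) --- ruling out $\Z/54\Z$ --- is not actually proved in your proposal, and this is precisely the point where the paper does \emph{not} argue theoretically but cites the computations of \cite[$\S 4.9$]{TORS2}. Your parenthetical for $\Delta=-27$ is false as stated: the $2$-torsion $x$-coordinates are roots of an irreducible cubic, not elements of $\Q(\sqrt{3})$, and Theorem \ref{THM4.2}c) is unavailable because $K=\Q(\sqrt{-3})$ is excluded from its hypotheses (indeed $K^{(2)}=K$ here), so even the ``$S_3$ versus abelian'' argument needs a separate proof that the $2$-division cubic is irreducible. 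For $\Delta=-3$ the argument collapses entirely: sextic twists $y^2=x^3+b$ can have a rational point of order $2$ (take $-b$ a cube), and the computation $[\Q(E[27])^c:\Q]=81$ concerns the single curve $y^2=x^3+16$, hence says nothing about which twists over $\Q(\zeta_{27})^+$ acquire a point of order $27$ (compare the $(1,9)$ row of Table 2, where a twist does better than the canonical curve). For $\Delta=-12$ the situation is worse: since $\Delta$ is even, \emph{every} model over \emph{every} field has a rational point of order $2$, so you must show that no $\OO(-12)$-CM curve over a degree $9$ field has a point of order $27$ at all; ``tracing through the $2$-isogeny'' is a gesture, not an argument. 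Also, for $\Z/22\Z$ in degree $25$ your stated reason is wrong: $F$ has degree $25$ and merely contains the degree $5$ field $\Q(\zeta_{11})^+$, so it is neither pinned nor known to be cyclic; the correct (and the paper's) reason is simply that a $2$-torsion point forces $3\mid[F:\Q]=25$ by Theorem \ref{THM4.2}c).

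Part d) also has a gap. Your case $\ell=2p+1$ with $h_{\Delta}=p$ ranges over infinitely many primes $p$, so it cannot be settled by ``a finite check against the standard class-number tables'': what is needed is an upper bound on class numbers, and the paper supplies exactly this, using $h_{\Q(\sqrt{-\ell})}\leq\sqrt{\ell}\log\ell$ (via \cite{LP92}) to force $\ell\leq 78$ and only then performing a finite check. Your proposed ``more elegant'' alternative via the Shifted Prime Degree Theorem is circular: applying it to degree $p^2=(2k-1)p$ requires $2k-1=p$, hence $p\geq P\bigl(\tfrac{p+1}{2}\bigr)$, and since the threshold $P(k)$ grows with $k$ this is not known for any large $p$, so there is no ``finite list of exceptional $p$'' to mop up. Finally, for part a) your plan is in substance identical to the paper's (defer to the enumeration of \cite{TORS2}), since Table 1 records only the cardinality of $\mathcal{T}^{\new}_{\CM}(4)$ and not the groups themselves.
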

\begin{proof}
a) See \cite[$\S 4.4$]{TORS2}.  b) See \cite[$\S 4.9$]{TORS2}.  \\
c) Since $\gcd(3,25) = 1$, if $E_{/F}$ is not Olson then $E(F)[\tors] \cong \Z/2^{\epsilon} \ell^n \Z$ with $\epsilon \in \{0,1\}$, $3 < \ell \equiv 3 \pmod{4}$, $n \in \Z^+$, $K = \Q(\sqrt{-\ell})$ and $\frac{(\ell-1) h_{\Delta}}{2} \mid 25$.  Thus $\ell = 11$ and $\Delta = -11$.  We have seen that there is a unique $\OO(-11)$-CM elliptic curve defined over a degree $5$ number field
with a point of order $11$, so by Theorem \ref{IMPRIMITIVELEMMA} there is a CM elliptic curve
defined over a degree $25$ number field with a point
of order $11$.  (There are infinitely many up to
$F$-isomorphism, but they all induce the same closed point on the modular curve $X_1(11)$.)  We cannot have a point of order $11^2$, since then $11 \mid \frac{\varphi(11)^2}{2} \mid [F:\Q] = 25$.  By Theorem
\ref{4.2}c) no $\OO(-11)$-CM elliptic curve has a
point of order $2$ over a number field of degree prime
to $3$. \\
d) Seeking a contradiction, we suppose that $p \geq 7$ and $E_{/F}$ is not Olson.  Then there is a prime $\ell \equiv 3 \pmod{4}$, $\ell \neq 3$, such that $E(F)$ has a point of order $\ell$ and $E$ has $\Q(\sqrt{-\ell})$-CM.  By Real Cyclotomy II,
\[ \frac{\ell-1}{2} \cdot h_{\Q(\sqrt{-\ell})} \mid  \frac{\ell-1}{2} \cdot h_{\Delta} \mid p^2. \]
Since $\ell > 3$, we must have $h_{\Q(\sqrt{-\ell})} = 1$ or
\begin{equation}
\label{PSQUAREDEQ}
\frac{\ell-1}{2} = p =  h_{\Q(\sqrt{-\ell})}.
  \end{equation}
If $h_{\Q(\sqrt{-\ell})} = 1$, then $\ell \in \{7,11,19,43,67,163\}$ and, since $p \geq 7$, $\frac{\ell-1}{2} \notin \{p,p^2\}$.  So (\ref{PSQUAREDEQ}) holds.  But using e.g. \cite[$\S 2$]{LP92} we have
\[ \frac{\ell-1}{2} = p = h_{\Q(\sqrt{-\ell})} \leq \sqrt{\ell} \log \ell. \]
Thus $\ell \leq 78$, and there are no solutions of
(\ref{PSQUAREDEQ}) in this range.
\end{proof}


\subsection{Fields of Degree a Fixed Even Number Times a Variable Prime}
\textbf{} \\ \\ \noindent
Let us recall the statement of Schinzel's Hypothesis H \cite{SS58}, \cite[$\S 1.8.3$]{NABD}.

\begin{conj}(Schinzel's Hypothesis H) Let $f_1,\ldots,f_r \in \Q[t]$ be irreducible and integer-valued.  Suppose: for all $m \geq 2$ there is $n \in \Z^+$ such that $m \nmid f_1(n) \cdots f_r(n)$.  Then $\{ n \in \Z^+ \mid
|f_1(n)|,\ldots,|f_r(n)| \text{ are all primes} \}$ is infinite.
\end{conj}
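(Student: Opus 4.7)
The statement is Schinzel's Hypothesis H, one of the most famous open problems in analytic number theory, so I must be candid at the outset: I do not expect to produce a proof, and any plan I sketch amounts at best to a strategy for partial results. The only case fully in the literature is $r=1$, $\deg f_1 = 1$, which is Dirichlet's theorem on primes in arithmetic progressions. The linear case $r \geq 2$ is Dickson's conjecture, still open, and is essentially equivalent to the prime $k$-tuples conjecture. What follows is a description of what a proof would have to look like, together with a frank identification of the obstruction.

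First I would rephrase the admissibility hypothesis sieve-theoretically: by multiplicativity and the Chinese remainder theorem, the assertion that for every $m \geq 2$ some $n$ makes $f_1(n) \cdots f_r(n)$ coprime to $m$ is equivalent to the local condition that for every prime $p$ the number $\omega(p)$ of residues $n \pmod p$ at which $p \mid f_1(n) \cdots f_r(n)$ is strictly less than $p$. I would then form the Bateman--Horn singular series
\[ \mathfrak{S} = \prod_p \left(1 - \frac{\omega(p)}{p}\right) \left(1 - \frac{1}{p}\right)^{-r}, \]
which under this hypothesis is a convergent, strictly positive Euler product, and aim for the quantitative statement that the counting function of $n \leq x$ at which every $|f_i(n)|$ is prime is asymptotic to $\mathfrak{S} \cdot x / ((\log x)^r \prod_i \deg f_i)$, from which the infinitude is immediate.

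The natural attack is a sieve. An upper-bound sieve of Selberg or Rosser--Iwaniec type, applied to the indicator that $f_1(n) \cdots f_r(n)$ has no prime factor below a level $y$, already gives an upper bound of the conjectured order of magnitude $x / (\log x)^r$ provided $y$ is a small power of $x$. To convert this to a matching lower bound, one would need a sieve with level of distribution essentially at $x$, and for polynomial sequences of degree at least $2$ no such distribution result is unconditionally available; even in the linear case with $r \geq 2$ one would need a multi-dimensional Bombieri--Vinogradov or Elliott--Halberstam input which is itself wide open.

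The real obstacle, and the reason I would not claim a proof, is Selberg's parity problem: a pure sieve cannot distinguish integers with an even from an odd number of prime factors, and so cannot on its own produce lower bounds of the conjectured order for products of two or more forms. All known results adjacent to Hypothesis H, e.g.\ Chen's theorem on $p+2$ being prime or semiprime, Iwaniec's work on $an^2+b$, the Maynard--Tao bounded gaps results, and the Green--Tao--Ziegler theorem on linear configurations in the primes, either relax to almost-primes or exploit bilinear/nilsequence structure to sidestep parity. Accordingly, the realistic deliverable in the sequel is not a proof of Hypothesis H but rather a proof of the specific consequence the paper needs, which, judging from the surrounding discussion of Sophie Germain primes in degrees $p_1 p_2$, concerns one or two linear forms and can be handled either conditionally on Hypothesis H or, in an almost-prime form, unconditionally via Chen--Iwaniec type sieves.
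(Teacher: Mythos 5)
You are right not to claim a proof: the statement is Schinzel's Hypothesis H, and in the paper it appears only as a \emph{conjecture} (cited from Schinzel--Sierpi\'nski), never as a theorem. The paper offers no proof and does not pretend to; it merely \emph{assumes} Hypothesis H as a hypothesis in Theorem \ref{5.2}, applying it to the two linear forms $f_1(x)=x$ and $f_2(x)=6kx+1$ to produce infinitely many primes $p$ with $6kp+1$ prime, exactly the kind of conditional use of one or two linear forms you anticipated. (The later Theorem \ref{TWOODDPRIMESTHM2} on degrees $p_1p_2$ is not deduced from Hypothesis H at all; it is an equivalence with the infinitude of a set involving primality of class numbers, supported only by a heuristic.) Your sieve-theoretic reformulation, the Bateman--Horn singular series, and the identification of the parity problem as the fundamental obstruction are all accurate, so there is nothing to correct: the honest answer here is that no proof exists, which is also the paper's position.
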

\noindent

\begin{thm}
\label{5.2}
\label{TWICEPRIMETHM}
Assume Schinzel's Hypothesis H, and let $k \in \Z^+$.  Then
\[ \limsup_{p \in \mathcal{P}} \# \mathcal{T}^{\new}_{\CM}(2kp) \geq 1. \]
\end{thm}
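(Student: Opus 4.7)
The plan is to apply Schinzel's Hypothesis H to produce, for each fixed $k \in \Z^+$, infinitely many primes $p$ for which I can exhibit a new torsion subgroup $T_p$ in degree $2kp$. The key observation is that if $\ell_p := 2kp + 1$ is prime, then $\Q(\zeta_{\ell_p})$ has degree exactly $2kp$ over $\Q$; moreover, when $\ell_p \equiv 3 \pmod{4}$ the field $K_p := \Q(\sqrt{-\ell_p})$ is an imaginary quadratic subfield of $\Q(\zeta_{\ell_p})$, making $\Q(\zeta_{\ell_p})$ a natural setting for a $K_p$-CM elliptic curve whose torsion is tuned to degree $2kp$.

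First I would apply Schinzel's Hypothesis H to the polynomials $f_1(t) = t$ and $f_2(t) = 2kt + 1$, augmented by an auxiliary linear polynomial fixing the residue of $p$ modulo $8$. The local obstruction is trivial since $\gcd(n, 2kn+1) = 1$ for all $n$. For the cleaner case when $k$ is odd, I would choose the residue class so that $\ell_p \equiv 7 \pmod{8}$, which forces $\Delta_{K_p} = -\ell_p \equiv 1 \pmod{8}$, putting us in the regime of Theorem \ref{4.2}(b): any $\OO_{K_p}$-CM elliptic curve has full $2$-torsion rational over $K_p$.

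Next I would construct an $\OO_{K_p}$-CM elliptic curve $E_p$ over $F_p := \Q(\zeta_{\ell_p})$ realizing $T_p := \Z/2\Z \oplus \Z/2\ell_p\Z$ as a subgroup of $E_p(F_p)[\tors]$. A model over $F_p$ exists because Siegel's lower bound $h(K_p) = O(\sqrt{\ell_p}\log \ell_p) < kp = [\Q(\zeta_{\ell_p}):K_p]$ for $p$ sufficiently large implies that the Hilbert class field of $K_p$ is contained in $F_p$. Full $2$-torsion of $E_p$ over $F_p$ is automatic since $K_p \subset F_p$. An $F_p$-rational point of order $\ell_p$ comes from a twist: following the pattern of Theorem \ref{THM3} and Corollary \ref{7.5}, one twists $E_p$ to kill the isogeny character on $E_p[\mathfrak{p}]$, where $\mathfrak{p}$ is the unique prime of $\OO_{K_p}$ above $\ell_p$. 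One checks that no extraneous torsion appears; for instance, $\Z/\ell_p^2\Z$-torsion is excluded because $\varphi(\ell_p^2)/2 = \ell_p \cdot kp$ does not divide $2kp$.

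The final step is to verify $T_p \in \mathcal{T}^{\new}_{\CM}(2kp)$. Every proper divisor $d$ of $2kp$ is either bounded by $2k$, in which case $\# T_p = 4\ell_p$ eventually exceeds the Hindry--Silverman CM torsion bound in degree $d$, or it is of the form $d = ep$ with $e \mid 2k$ and $e < 2k$. In the latter case, Real Cyclotomy II (with the odd part of $h_\Delta$ equal to $1$) forces $kp \mid d$, so necessarily $e = k$ and $d = kp$. But $kp$ is odd (since $k$ is odd), hence any number field of degree $kp$ is real; Theorem \ref{JIMSLEMMA}(a) then forces the CM field to be $K_p = \Q(\sqrt{-\ell_p})$ and in particular $\Delta \neq -4$, while Corollary \ref{COR4.3} rules out full $2$-torsion for $\Delta \neq -4$ over an odd-degree number field, a contradiction. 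The main obstacle is the isogeny-character twist in the construction step: unlike quadratic twisting, killing the character on $E_p[\mathfrak{p}]$ requires an explicit Hecke character argument rather than a simple quadratic twist; a secondary wrinkle is the case of even $k$, where $\ell_p \equiv 1 \pmod{4}$ and $K_p \not\subset \Q(\zeta_{\ell_p})$ necessitate working inside the compositum $\Q(\zeta_{\ell_p})^+ \cdot K_p$ of degree $2kp$ and rerunning the parity analysis with $\Delta = -4\ell_p$.
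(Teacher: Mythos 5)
Your construction step fails, and it fails for a reason the paper's own bounds make precise. You want an $\OO_{K_p}$-CM elliptic curve, $K_p = \Q(\sqrt{-\ell_p})$ with $\ell_p = 2kp+1$, over a degree $2kp$ field with a point of order $\ell_p$. But $\ell_p$ is \emph{ramified} in $K_p$, so Theorem \ref{OLDNEWTHM2}c) gives $(\ell_p-1)\,h(K_p) \mid [FK_p:\Q] \mid 2[F:\Q] = 4kp$, i.e.\ $2kp\cdot h(K_p) \mid 4kp$, forcing $h(K_p) \le 2$; since $h(K_p)$ is odd this means $h(K_p)=1$, which holds for only finitely many $\ell$ (namely $\ell \in \{3,7,11,19,43,67,163\}$). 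So for all but finitely many of the primes $p$ your Schinzel application produces, the curve you want to build does not exist over \emph{any} degree $2kp$ field, let alone over $\Q(\zeta_{\ell_p})$. The specific justification you offer for a model over $F_p=\Q(\zeta_{\ell_p})$ is also a non sequitur: the inequality $h(K_p) < kp$ does not give containment of the Hilbert class field $H$ of $K_p$ in $F_p$, and in fact $H \cap \Q(\zeta_{\ell_p}) = K_p$ because $H/K_p$ is unramified while $\Q(\zeta_{\ell_p})/K_p$ is totally ramified above $\ell_p$; equivalently, $\Q(j)\cap\Q^{\ab} = \Q$ for the prime discriminant $-\ell_p$, so $j \in \Q(\zeta_{\ell_p})$ already forces $h(K_p)=1$. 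The later "newness" analysis inherits these problems (and the step "Real Cyclotomy II forces $kp \mid d$" does not follow as stated), but the construction is the fatal gap: no amount of twisting or Hecke-character bookkeeping can evade the class number obstruction.

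The paper sidesteps exactly this obstruction by not using the ramified prime of a large-discriminant field. It applies Hypothesis H to $f_1(x)=x$, $f_2(x)=6kx+1$, so $N = 6kp+1$ is a prime with $N \equiv 1 \pmod 3$, hence \emph{split} in the fixed field $K=\Q(\sqrt{-3})$, where $h(K)=1$ and $w(K)=6$; by \cite[Theorem 3]{TORS1} there is then an $\OO_K$-CM curve over a field $F$ of degree exactly $\frac{N-1}{3} = 2kp$ with an $F$-rational point of order $N$. Newness is then immediate from the sharp lower bound of \cite[Theorem 1]{TORS1}: for all sufficiently large primes $N$, any CM curve with a rational point of order $N$ requires degree at least $\frac{N-1}{3}$, so this torsion subgroup occurs in no degree smaller than $2kp$ and in particular lies in $\mathcal{T}^{\new}_{\CM}(2kp)$. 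If you want to salvage your outline, you must replace $K_p=\Q(\sqrt{-\ell_p})$ by a fixed class-number-one field in which the target prime splits, which is essentially the paper's proof.
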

\begin{proof}
Applying Schinzel's Hypothesis H with $f_1(x) = x$, $f_2(x) = 6kx + 1$, we
get infinitely many primes $p$ such that $N = 6kp + 1$ is prime.  Thus
$\frac{N-1}{3} = 2kp$.  In particular $N \equiv 1 \pmod{3}$, so $N$ splits in
$K = \Q(\sqrt{-3})$, and then there is an $\OO_K$-CM
elliptic curve $E$ defined over a number field $F$ of degree $\frac{N-1}{3} = 2kp$ with an $F$-rational point of order $N$ \cite[Theorem 3]{TORS1}.  We claim that for sufficiently large $N$, $E(F)[\tors] \in \mathcal{T}^{\new}_{\CM}(2kp)$: if so, the result follows.  There is $N_0 \in \Z^+$ such that for all primes $N \geq N_0$, if $E_{/F}$ is a CM elliptic curve over a number field $F$ with an $F$-rational point of order $N$, then $[F:\Q] \geq \frac{N-1}{3}$ \cite[Theorem 1]{TORS1}.  Thus for all primes $N \geq N_0$,  $E(F)[\tors]$ is a torsion subgroup that does not occur in any degree
\emph{smaller} than $[F:\Q]$, which certainly implies $E(F)[\tors] \in
\mathcal{T}^{\new}_{\CM}(2kp)$.
\end{proof}

\subsection{Fields of Degree a Product of Two Odd Primes}

\begin{thm}
\label{TWOODDPRIMESTHM1}
Let $p_1 \geq 7$ be a prime.  For all sufficiently large primes $p_2$, every CM elliptic curve defined over a number field of degree $p_1p_2$ is Olson.
\end{thm}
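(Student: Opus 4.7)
The plan is to reduce the problem, via the Shifted Prime Degree Theorem (Theorem \ref{SPDT}), to classifying CM torsion over fields of degree dividing the fixed prime $p_1$, and then to quote the Prime Degree Theorem (Theorem \ref{MAINTHM}) for the degree-$p_1$ case. Concretely, I will invoke Theorem \ref{SPDT} with the parameter choice $2k-1 = p_1$, so $k = (p_1+1)/2 \in \Z^+$. The conclusion of part (b) is that whenever $p_2 \geq P(k)$ and $E_{/F}$ is a CM elliptic curve over a number field $F$ of degree $p_1 p_2$, there exist a subfield $F_0 \subset F$ with $[F_0:\Q] \mid p_1$ and an $F_0$-rational model $(E_0)_{/F_0}$ of $E_{/F}$ satisfying $E_0(F_0)[\tors] = E(F)[\tors]$.

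Since $p_1$ is prime, the subfield $F_0$ has degree either $1$ or $p_1$. In the degree-$1$ case, $E_0$ is defined over $\Q$ and Olson's Theorem (Theorem \ref{OLSON1}) immediately gives that $E_0(\Q)[\tors]$ is an Olson group. In the degree-$p_1$ case, $F_0$ is a number field of prime degree $p_1 \geq 7$, so I will apply the Prime Degree Theorem: if $E_0$ were non-Olson, then $F_0$ would be isomorphic to one of the explicit fields tabulated in Theorem \ref{MAINTHM}. Inspection of that table shows that every field listed has degree $2$, $3$, or $5$, all strictly less than $7$. Hence no non-Olson $\OO$-CM elliptic curve can exist over $F_0$ when $[F_0:\Q] = p_1 \geq 7$, forcing $E_0(F_0)[\tors]$ to be Olson. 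In either case $E(F)[\tors] = E_0(F_0)[\tors]$ is an Olson group, completing the proof.

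The main obstacle is already dispatched by the two theorems we cite: Theorem \ref{SPDT} does the heavy lifting by collapsing the variable prime $p_2$ out of the problem for $p_2$ large, and Theorem \ref{MAINTHM} provides the explicit finite list that is disjoint from the set of primes $\geq 7$. So beyond verifying that $p_1 = (2k-1)$ is a legitimate parameter choice and that the list of prime degrees appearing in Theorem \ref{MAINTHM} is contained in $\{2,3,5\}$, no additional input is needed.
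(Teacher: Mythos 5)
Your proposal is correct and is essentially the paper's own argument: the paper proves this result as an immediate consequence of Theorem \ref{SPDT} (applied with $2k-1 = p_1$) together with Theorem \ref{MAINTHM}, exactly the reduction you carry out. Your only added detail is spelling out the two cases $[F_0:\Q] \in \{1, p_1\}$ (handled by Olson's Theorem and by inspection of the degrees $2,3,5$ in the table of Theorem \ref{MAINTHM}, respectively), which is precisely what "immediate consequence" encodes.
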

\begin{proof}
This is an immediate consequence of Theorems \ref{NEWABBEYTHM} and \ref{MAINTHM}.
\end{proof}

\begin{thm}
\label{TWOODDPRIMESTHM2}
The following are equivalent: \\
(i) The set $\mathcal{S} = \{k \in \Z^+ \mid \text{$4k+3$, $2k+1$ and $h_{\Q(\sqrt{-4k-3})}$ are all prime}\}$ is infinite. \\
(ii) There are infinitely many primes $\ell$ such that there are odd primes $p_1,p_2$, a number field $F$ of degree $p_1 p_2$ and a CM elliptic curve $E_{/F}$
with an $F$-rational point of order $\ell$.  \\
(iii) As $d$ ranges over products $p_1p_2$ of two
odd primes, $F$ ranges over number fields of degree
$p_1 p_2$ and $E_{/F}$ ranges over CM elliptic curves, $\# E(F)[\tors]$ is unbounded.
\end{thm}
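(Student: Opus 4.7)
My plan is to close the cycle (i) $\Rightarrow$ (iii) $\Rightarrow$ (ii) $\Rightarrow$ (i). A single structural observation powers all three directions: for any CM elliptic curve $E_{/F}$ over an odd-degree number field, with CM order $\OO = \OO(\Delta)$, the inclusion $K_\Delta \subset FK$ yields $h(\Delta) \mid [FK:K] = [F:\Q]$. Hence when $[F:\Q] = p_1 p_2$ is a product of two odd primes, $h(\Delta)$ is odd, $\nu = \#\Pic(\OO)[2] = 0$, and Real Cyclotomy II (\ref{SWEETCYCEQ1}) simplifies to
\[
\frac{(\ell - 1)\,h(\Delta)}{2} \;\Big|\; p_1 p_2
\]
whenever $E(F)$ contains a point of order $\ell \geq 3$.

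For (i) $\Rightarrow$ (iii): given $k \in \mathcal{S}$, set $\ell = 4k+3$, $p = (\ell-1)/2 = 2k+1$, and $h = h_{\Q(\sqrt{-\ell})}$, all odd primes. Corollary \ref{COR5.7}(b), applied to $K = \Q(\sqrt{-\ell})$, produces an $\OO_K$-CM elliptic curve $E_{/F_{-\ell}}$ such that $E(F_{-\ell}(\zeta_\ell + \zeta_\ell^{-1}))$ contains a point of order $\ell$; the field $F_{-\ell}(\zeta_\ell + \zeta_\ell^{-1})$ has degree $ph$ over $\Q$, a product of two odd primes. As $k$ ranges over $\mathcal{S}$, $\ell$ grows without bound, so the torsion order is unbounded. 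For (iii) $\Rightarrow$ (ii): an unbounded sequence of torsion orders yields, via the Odd Degree Theorem (after passing to a subsequence), $E_i(F_i)[\tors] \cong \Z/\ell_i^{n_i}\Z$ or $\Z/2\ell_i^{n_i}\Z$ with $\ell_i \equiv 3 \pmod{4}$ prime; the displayed divisibility forces $\ell_i^{n_i - 1} \mid p_1^{(i)} p_2^{(i)}$, so $n_i \leq 3$ for each fixed $\ell_i$, and $\ell_i^{n_i} \to \infty$ therefore requires $\ell_i \to \infty$, giving (ii).

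The main direction is (ii) $\Rightarrow$ (i). Fix a prime $\ell$ realized by a configuration $(F, E, p_1, p_2)$; the Odd Degree Theorem gives $K = \Q(\sqrt{-\ell})$, and the structural observation makes $h(\Delta)$ odd. By Lemma \ref{LEMMA2.5}(b), $\Delta \in \{-\ell^{2a+1}, -4\ell^{2a+1} : a \geq 0\}$. The ring class number formula (using $[\OO_K^\times : \OO^\times] = 1$ for $\ell > 3$) yields $h(\Delta) = \ell^a h_K$ when $\Delta = -\ell^{2a+1}$, and $h(\Delta) = \ell^a h_K$ (if $\ell \equiv 7 \pmod{8}$) or $3\ell^a h_K$ (if $\ell \equiv 3 \pmod{8}$) when $\Delta = -4\ell^{2a+1}$. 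Substituting into the divisibility: any case with $a \geq 1$ introduces a factor of $\ell$, forcing $\ell \in \{p_1, p_2\}$ and then $(\ell-1)h_K/2$ to divide a single odd prime, pinning $h_K = 1$; the case $\Delta = -4\ell$ with $\ell \equiv 3 \pmod{8}$ similarly forces $3 \in \{p_1, p_2\}$ and $h_K = 1$. In each exceptional case, the classical finiteness of imaginary quadratic fields with class number one disposes of all but finitely many $\ell$. For all but finitely many realized $\ell$, therefore, $h(\Delta) = h_K$, and the divisibility reads $(\ell-1)h_K/2 \mid p_1 p_2$; since $(\ell-1)/2 \geq 2$ and $h_K \geq 2$ for $\ell > 163$, and since the only factorization of $p_1 p_2$ into two factors each $\geq 2$ is $\{p_1, p_2\}$, we obtain $\{(\ell-1)/2, h_K\} = \{p_1, p_2\}$: both are odd primes, so $k := (\ell-3)/4 \in \mathcal{S}$, and infinitely many realized $\ell$ produce infinitely many $k \in \mathcal{S}$. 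The main obstacle is precisely this casework on $\Delta$ in the non-maximal orders; the resolving mechanism is that the extra factor of $\ell$ or $3$ in $h(\Delta)$ meets the tight two-prime divisibility to force $h_K = 1$, a finiteness statement.
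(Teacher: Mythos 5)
Your proposal is correct, and its main ingredients are the paper's: Corollary \ref{COR5.7} for the constructive direction, the Odd Degree Theorem to pin $K = \Q(\sqrt{-\ell})$, and the Real Cyclotomy II divisibility $\frac{\ell-1}{2}h \mid p_1p_2$ together with the bounded-exponent observation $\ell^{n-1} \mid p_1 p_2$. Where you genuinely diverge is in the hard direction. The paper proves (iii) $\Rightarrow$ (i) and opens by invoking its earlier boundedness results (Theorems \ref{SPDT} and \ref{PSDT}) to assume $p_1 \neq p_2$ and $p_1, p_2 \geq 7$; this instantly rules out the class-number-one fields, since then $\frac{\ell-1}{2} \in \{3,5,9,21,33,81\}$ cannot divide $p_1p_2$, and it only needs $h_K \mid h_\Delta$, never the ring class number formula. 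You instead prove (ii) $\Rightarrow$ (i) with no restriction on $p_1,p_2$, decomposing $h(\Delta)$ for the non-maximal orders $-\ell^{2a+1}$, $-4\ell^{2a+1}$ via the class number formula and discharging the exceptional configurations through the finiteness of imaginary quadratic fields with class number one (which the paper is happy to use elsewhere, so this is a legitimate input). Both routes work: the paper's reduction buys a shorter argument by leaning on results already proved in the same section, while yours is more self-contained at this point and makes explicit exactly which CM orders could intervene, at the cost of importing the class number formula and class-number-one finiteness. Two small points: your ``$\nu = \#\Pic(\OO)[2] = 0$'' should read $2^{\nu} = \#\Pic(\OO)[2] = 1$, which is the normalization of Lemma \ref{LEMMA2.5} and what (\ref{SWEETCYCEQ1}) requires; and in the case $a \geq 1$ with conductor $2\ell^a$ and $\ell \equiv 3 \pmod 8$, the extra factor of $3$ makes the divisibility impossible outright, which is even stronger than the conclusion $h_K = 1$ you draw there.
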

\begin{proof}
(i) $\implies$ (ii): if $\ell = 4k+3$ is prime,
then by Corollary \ref{COR5.7} there is a CM elliptic curve defined over a
number field $F$ of degree $\frac{\ell-1}{2} h_{\Q(\sqrt{-\ell})} = (2k+1) h_{\Q(\sqrt{-4k-3})}$ with an $F$-rational point of degree $\ell$.
(ii) $\implies$ (iii) is immediate. \\
(iii) $\implies$ (i): Let $p_1$ and $p_2$ be odd primes, and let $F$ be a number field of degree $p_1 p_2$.  In view of previous results, we may assume that $p_1$ and $p_2$ are distinct and each at least $7$.  Then if $E_{/F}$ is not Olson, it has a point of prime order $\ell = 4k+3 > 3$, and then as usual we have \[\frac{\ell-1}{2} h_{\Q(\sqrt{-\ell})} = (2k+1) h_{\Q(\sqrt{-\ell})} \mid p_1 p_2 \]
and thus $(2k+1) = p_1$, $h_{\Q(\sqrt{-\ell})} = p_2$.  It remains to show that the unboundedness of $\#E(F)[\tors]$ forces there to be infinitely many such $k$: given that $F/\Q$ has odd degree and a point of prime order $\ell > 3$, its order must be of the form
$2^{\epsilon} \ell^n$ for $\epsilon \in \{0,1\}$ and $n \in \Z^+$.  Thus if the torsion were unbounded but $\mathcal{S}$ were finite, we would get points of order
$\ell^n$ for arbitrarily large $n$.  But by Real Cyclotomy II, $\ell^2 \mid \frac{\varphi(\ell^3)}{2} \mid [F:\Q] = p_1 p_2$, contradiction.
\end{proof}
\noindent
By no means are we in a position to show that the equivalent conditions of Theorem \ref{TWOODDPRIMESTHM2}
hold: it is not even known whether there are infinitely
Sophie Germain primes, and the assertion that $\mathcal{S}$ is infinite is clearly much stronger than that.  \indent
Nevertheless we believe that these conditions hold.  For $X \geq 1$, let
\[ \mathcal{S}(X) = \# \mathcal{S} \cap [1,X]. \]
Since we have three primality conditions on positive integers of size about $k$, it is reasonable to guess that as $X \ra \infty$, we have
\[ \mathcal{S}(X) \approx \frac{X}{\log^3 X}. \]
This is quite crude: $h_{\Q(\sqrt{-4k-3})}$ being \emph{odd} forces $4k+3$ to have no more than one odd prime divisor.  This made one of us think that perhaps
$\mathcal{S}(X) \approx \frac{X}{\log^2 X}$.  But P. Pollack has showed us a more refined heuristic -- incorporating the heuristics of
Cohen-Lenstra on class groups and Soundararajan on the number of imaginary quadratic fields with a given prime
class number -- which leads him to conjecture
\[ \mathcal{S}(X) \sim \frac{C_{\operatorname{PP}}}{X/\log^3(X)} \]
for an \emph{explicit constant} \[C_{\operatorname{PP}} \approx  7.96903. \]
Computations in gp/pari give
$\mathcal{S}(10^9) = 953,967$, so $\frac{\mathcal{S}(10^9)}{10^9/\log^3 10^9} \approx 8.49$,
providing some support for the heuristic.  Certainly it gives evidence that $\mathcal{S}$ is large!

\section{A Degree Non-Divisibility Theorem}
\noindent
We end with a result that exhibits a distinction between the CM and non-CM cases.  If $E_{/F}$ is a $K$-CM
elliptic curve with an $F$-rational point of prime order
$\ell > 2$, then by Theorem \ref{OLDNEWTHM2} we have \[\ell-1 \ \big{\vert} \ \frac{12(\ell-1) h(K)}{w(K)} \ \big{\vert} \ 12[F:\Q]. \]  In contrast, while the existence of a point of order $N$ on a non-CM elliptic curve $E$ defined over a number field $F$ imposes an lower bound on $[F:\Q]$ in terms of $N$ -- Merel's Theorem -- it imposes no divisibility condition on $[F:\Q]$ whatsoever.

\begin{thm}
\label{INDEXTHM}
Let $N \in \Z^+$, and let $p$ be a prime.    The set of algebraic numbers $j \in \overline{\Q}$ such that there is a number field $F$ with $p \nmid [F:\Q]$ and an elliptic curve $E_{/F}$ with $j(E) = j$ and a point of order $N$ in $E(F)$ is infinite.
\end{thm}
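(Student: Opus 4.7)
The plan is to produce infinitely many closed points on the modular curve $X_1(N)$ over $\Q$ with residue fields of degree coprime to $p$; by the fine moduli property, each such point yields an elliptic curve with a rational point of order $N$, and since the forgetful map $X_1(N) \to X(1)$ is finite, infinitely many such closed points will give infinitely many $j$-invariants. First, I reduce to the case $N \geq 4$: choose $M \in \Z^+$ so that $MN \geq 4$ and observe that an $F$-rational point of order $MN$ yields one of order $N$ by multiplication by $M$, so it suffices to prove the theorem with $N$ replaced by $MN$. Thus assume $N \geq 4$, so that $Y_1(N)$ is a fine moduli space and $X_1(N)$ is a smooth, projective, geometrically irreducible $\Q$-curve equipped with a $\Q$-rational cusp $c_\infty$.

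Let $g$ denote the genus of $X_1(N)$. The key tool is Riemann-Roch applied to the divisor $n \cdot c_\infty$: for $n \geq 2g+1$ one has $h^0(n c_\infty) = n - g + 1$ and $h^0(n c_\infty) > h^0((n-1) c_\infty)$, so there exists $f \in H^0(X_1(N), \mathcal{O}(n c_\infty))$ with pole of order exactly $n$ at $c_\infty$ and regular elsewhere. This $f$ defines a $\Q$-morphism $\phi_n \colon X_1(N) \to \PP^1_\Q$ of degree $n$. Fix such an $n \geq 2g+1$ with $\gcd(n,p) = 1$, e.g.\ any sufficiently large prime distinct from $p$.

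Let $T \subset \PP^1(\Q)$ be the finite set consisting of the critical values of $\phi_n$ together with the $\phi_n$-images of the (finitely many) cusps of $X_1(N)$. For each $t \in \Q \setminus T$, the scheme-theoretic fiber $\phi_n^{-1}(t)$ is reduced, disjoint from the cusps, and decomposes as a disjoint union of closed points $x_1, \ldots, x_r$ with $\sum_i [\Q(x_i) : \Q] = n$; since $\gcd(n,p) = 1$, at least one of the residue field degrees is coprime to $p$. Letting $t$ range over the infinite set $\Q \setminus T$ yields infinitely many distinct closed points $x \in Y_1(N)$ with $p \nmid [\Q(x):\Q]$, and by the fine moduli property each such $x$ corresponds to a pair $(E_x, P_x)$ with $E_x$ an elliptic curve over $\Q(x)$ and $P_x \in E_x(\Q(x))$ a point of order $N$. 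Finally, the $j$-morphism $X_1(N) \to X(1)$ is finite, so every $j \in \overline{\Q}$ has only finitely many preimages in $X_1(N)(\overline{\Q})$, and hence the set $\{j(E_x)\}$ is infinite. No step poses a serious obstacle; the crux is the Riemann-Roch construction of a $\Q$-morphism of degree coprime to $p$, and the only care needed is the initial reduction to the fine moduli range $N \geq 4$.
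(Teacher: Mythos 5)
Your argument is correct, but it takes a genuinely different route from the paper's. The paper argues via the \emph{index} of $X_1(N)$: it takes an effective $\Q$-rational divisor on $X(1)$ supported on a prescribed finite set of $j$-invariants together with the cusps, pulls it back, and invokes a weak-approximation result (\cite[Lemma 12]{Clark07}) to find a divisor of degree $1$ on $X_1(N)$ supported away from this bad locus -- the degree is $1$ because the rational cusp makes the index of $X_1(N)$ equal to $1$ -- so some closed point in its support has degree prime to $p$; a pair $(E,x)$ over the residue field is then extracted via \cite[Proposition VI.3.2]{Deligne-Rapoport}, with no reduction to the fine-moduli range, and infinitude follows because any finite subset of the target set can be avoided. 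You instead reduce to $N \geq 4$ so that $Y_1(N)$ is a fine moduli space, use Riemann--Roch at the rational cusp to build a $\Q$-morphism $\phi_n\colon X_1(N) \to \PP^1$ of degree $n$ coprime to $p$ whose polar divisor is concentrated at $c_\infty$, and harvest, from each unramified non-cuspidal fiber over a rational value, a closed point whose degree is prime to $p$ (since the degrees sum to $n$); infinitude then comes from infinitely many disjoint fibers together with finiteness of the $j$-map. Both proofs hinge on exactly the same arithmetic input -- the $\Q$-rational cusp forcing the existence of points of degree prime to $p$ -- but the paper's divisor-theoretic formulation buys the ability to dodge an arbitrary prescribed finite set and to treat all $N$ uniformly without the fine-moduli reduction, whereas your construction is more self-contained, resting only on Riemann--Roch, genericity of fibers in characteristic $0$, and the standard representability of $Y_1(N)$ for $N \geq 4$, with the initial multiplication-by-$M$ reduction handling small $N$ cleanly.
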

\begin{proof}
Let $\mathcal{S}(p,N)$ be the set of $j \in \overline{\Q}$ such that there is a number
field $K$ with $p \nmid [F:\Q]$ and an elliptic curve $E_{/F}$ with $j(E) = j$
and such that $E(F)$ has a point of order $N$.  We want to show that $\mathcal{S}(p,N)$
is infinite.  It suffices to show that for every finite subset $S \subset \mathcal{S}(p,N)$ -- including the empty set -- there is $j \in \mathcal{S}(p,N) \setminus S$.  \\ \indent
Let $\pi: X_1(N) \ra X(1)$ be the natural map.  Let $S \subset \mathcal{S}(p,N)$ be finite.  Identifying $Y(1)$ with $\mathbb{A}^1$, we view $S$ as a finite set of $\overline{\Q}$-valued points of $Y(1)$.  Let $Z_1$ be an effective $\Q$-rational divisor on $X(1)$ whose suppport\footnote{We use the standard bijection between closed points and $\mathfrak{g}_{\Q}$-orbits of $\overline{\Q}$-valued points.} contains $S$ and all the cusps, let $Z_N = \pi^*(Z_1)$, and let $U = X_1(N) \setminus \operatorname{supp}(Z_N)$.  By weak approximation, the least positive degree of a divisor on
$U$ is the least positive degree of a divisor on $X_1(N)$: see \cite[Lemma 12]{Clark07} for a complete treatment of a stronger result.  Since the cusp at $\infty$ is a $\Q$-rational point on $X_1(N)$, this common
quantity is $1$, and thus there is a divisor $\sum_i n_i [P_i]$ supported on $U$
such that $\sum_i n_i [\Q(P_i):\Q] = 1$.  For at least
one $i$ we must have $d \nmid [\Q(P_i): \Q]$.  The point $P_i$ corresponds to at least one pair $(E,x)_{/\Q(P_i)}$ where $E$ is an elliptic curve and
$x \in E(\Q(P_i))$ has order $N$ \cite[Proposition VI.3.2]{Deligne-Rapoport}.  By construction, we have
$j(E) \in \mathcal{S}(p,N) \setminus S$. \qedhere

\end{proof}

\section*{Appendix: Table of Degree Sequences}
\noindent
Let $m \mid n$ be positive integers; we exclude the pairs $(1,1)$, $(1,2)$,
$(1,3)$, $(2,2)$.  Then the modular curve $Y(m,n)$ classifying 
$\left(\mu_m \times \Z/n\Z\right)$-structures on elliptic curves is a fine moduli space.  For every
$j \in \overline{\Q}$, the fiber of the morphism $Y(m,n) \ra Y(1)$
over $j$ is a finite $\Q(j)$-scheme.  The reduced subscheme of the fiber is
therefore isomorphic to a finite product $\prod_{i=1}^N K_i$ of number fields.  By the  \textbf{degree sequence} for $(m,n)$ and $j$ we mean
the sequence of degrees of the number fields $K_i(\zeta_m)$, written in non-decreasing order. These are the degrees of the (unique minimal) fields of definition $K$ such that there is an elliptic curve $E_{/K}$ with $\OO(\Delta)$-CM and an injection $\Z/m\Z \times \Z/n\Z \hookrightarrow E(K)$.
\\ \\
In the table below we list the degree sequences for the $13$ class number one imaginary quadratic discriminants for certain pairs $(m,n)$.  The results of this table are used in the proofs of Corollary \ref{odd2tors} and Theorem \ref{MAINTHM}.
\vspace{.1cm}
{\footnotesize
\begin{center}
    \begin{tabular}{ c|c|c|c|c|c|c|c}

     & -3 & -4 & -7& -8 & -11 & -12 & -16\\  \hline
$(1,4)$ & 2 & 1,2 & 2,2,2 & 2,4 & 6 & 2,4 & 1,1,4  \\ [.5 ex]
$(1,5)$ & 4 & 2,4 & 12 &12 & 4,8 &12 &4,8 \\ [.5 ex]

$(1,6)$ & 1,3  &2,4 & 4,8 &2,2,4,4 &6,6 & 1,2,3,6 & 4,8 \\ [.5 ex]

$(1,7)$ & 2,6 & 12 & 3,21 & 24 & 24 & 6,18 & 24 \\[.5 ex]

$(1,8)$ &  8 & 4,8 & 4,4,8,8 & 8,16 & 24 & 8,16 & 4,4,16 \\[.5 ex]

$(1,9)$ & 3,9 & 18 & 36 & 6,12,18 & 6,12,18 & 9,27 & 36 \\[.5 ex]

$(1,10)$ & 12 & 2,4,4,8 & 12,24 & 12,24 & 12,24 & 12,24 & 4,8,8,16 \\[.5 ex]

$(1,11)$ & 20 & 30 & 10,50 & 10,50& 5,55& 60 & 60 \\[.5 ex]

$(1,12)$ & 4,12 & 8,16 & 16,16,16 & 8,8,16,16 &24,24 &4,8,12,24 & 8,8,32 \\[.5 ex]

$(1,13)$ & 4,24 & 6,36 & 84 & 84 & 84 & 12,72 & 12,72 \\[.5 ex]

$(2,4)$ & 4 & 2,2,2 & 2,2,2,2,4 & 2,2,4,4 & 12 & 4,4,4 & 2,2,4,4 \\[.5 ex]

$(2,6)$ & 2,6 & 4,4,4 & 8,8,8 & 4,4,4,4,4,4 & 6,6,12 & 2,2,2,6,6,6 & 8,8,8 \\[.5 ex]

$(2,8)$ & 16 & 8,8,8 & 4,4,4,4,8,8,16 & 8,8,16,16 & 48 & 8,8,16,16 & 4,4,8,16,16 \\[.5 ex]

$(3,3)$ & 2,2,2 & 4,4 & 8,8 & 4,4,4,4 & 4,4,4,4 & 6,6,6 & 8,8\\[.5 ex]

\end{tabular}
\end{center}
\vspace{.1 cm}

 \begin{center}
    \begin{tabular}{ c|c|c|c|c|c|c}

   & -19 & -27 & -28 & -43 & -67 & -163\\  \hline
    $(1,4)$ & 6 & 6 &2,4 & 6 & 6 & 6\\ [.5 ex]
$(1,5)$& 4,8 & 12 & 12 & 12 & 12 & 12\\ [.5 ex]

$(1,6)$ & 12 & 3,9 & 4,8 & 12 & 12 & 12\\ [.5 ex]

$(1,7)$  & 6,18 & 6,18 & 3,21 & 24 & 24& 24\\[.5 ex]

$(1,8)$  & 24 & 24 & 4,4,16 & 24 & 24 & 24\\[.5 ex]

$(1,9)$ & 36 & 3,6,27 & 36 & 36 & 36 & 36\\[.5 ex]

$(1,10)$& 12,24 & 36 & 12,24 & 36 & 36 & 36\\[.5 ex]

$(1,11)$ & 10,50  & 60 & 10,50 & 10,50 & 60 & 60\\[.5 ex]

$(1,12)$ & 48 & 12,36 & 16,32 & 48 & 48 & 48\\[.5 ex]

$(1,13)$ & 84 & 12,72 & 84 & 12,72 & 84 & 84\\[.5 ex]

$(2,4)$  & 12 & 12 & 4,4,4 & 12 & 12 & 12\\[.5 ex]

$(2,6)$ & 24 & 6,18 & 8,8,8 & 24 & 24 & 24\\[.5 ex]

$(2,8)$ & 48 & 48 & 8,8,16,16 & 48 & 48 & 48\\[.5 ex]

$(3,3)$ & 8,8& 6,6,6 & 8,8 & 8,8 & 8,8 & 8,8\\[.5 ex]

\end{tabular}
\[ \text{\sc table 2} \]
\end{center}
}


\end{document}